\theoremstyle{plain}
\newtheorem{thm}{Theorem}[section]
\newaliascnt{prop}{thm}
    \newtheorem{prop}[prop]{Proposition}
\newaliascnt{lem}{thm}
    \newtheorem{lem}[lem]{Lemma}
\newaliascnt{cor}{thm}
    \newtheorem{cor}[cor]{Corollary}
\newaliascnt{conj}{thm}
    \newtheorem{conj}[conj]{Conjecture}
\newaliascnt{que}{thm}
\newaliascnt{as}{thm}
\theoremstyle{definition}
\newaliascnt{exm}{thm}
\newaliascnt{dfn}{thm}
    \newtheorem{dfn}[dfn]{Definition}
\newaliascnt{rem}{thm}
    \newtheorem{rem}[rem]{Remark}
\numberwithin{equation}{section}
\NewDocumentCommand{\evalat}{sO{\big}mm}{%
  \IfBooleanTF{#1}
   {\mleft. #3 \mright|_{#4}}
   {#3#2|_{#4}}%
}
\newcommand{\R}{\mathbb{R}}
\newcommand{\C}{\mathbb{C}}
\newcommand{\Z}{\mathbb{Z}}
\newcommand{\M}{\mathcal{M}}
\newcommand{\g}{\mathfrak{g}}
\newcommand{\G}{\mathcal{G}}
\newcommand{\A}{\mathcal{A}}
\newcommand\smvee{\raise0.9ex\hbox{$\scriptscriptstyle\vee$}}
\newcounter{sqindex}
\DeclareMathOperator{\cok}{coker}
\DeclareMathOperator{\im}{im}
\DeclareMathOperator{\Rea}{Re}
\DeclareMathOperator{\Ima}{Im}
\DeclareMathOperator{\supp}{supp}
\DeclareMathOperator{\Hom}{Hom}
\DeclareMathOperator{\id}{id}
\DeclareMathOperator{\Sf}{Sf}
\DeclareMathOperator{\Bf}{Bf}
\DeclareMathOperator{\Spec}{Spec}
\DeclareMathOperator{\ind}{ind}
\DeclareMathOperator{\Res}{Res}
\DeclareMathOperator{\sign}{sign}
\DeclareMathOperator{\Str}{Str}
\DeclareMathOperator{\tr}{tr}
\DeclareMathOperator{\Span}{Span}
\DeclareMathOperator{\Ad}{Ad}
\DeclareMathOperator{\Tr}{Tr}
\DeclareMathOperator{\Hol}{Hol}
\DeclareMathOperator{\PD}{PD}
\DeclareMathOperator{\vol}{vol}
\DeclareMathOperator{\ev}{ev}
\DeclareMathOperator{\Red}{red}
\DeclareMathOperator{\Stab}{Stab}
\mathchardef\mhyphen="2D
\begin{document}

\title[Periodic Index Theory and Equivariant Torus Signature]{Periodic Index Theory and Equivariant Torus Signature}
\author{Langte Ma}
\address{Simons Center for Geometry and Physics, 100 Nicolls Road, Stony Brook, NY 11790}
\email{lma@scgp.stonybrook.edu}

\maketitle

\begin{abstract}
We deduce an index jump formula for first order elliptic complexes over end-periodic manifolds, which generalizes the corresponding result for the DeRham complex. In the case of the anti-self-dual DeRham complex, we define the periodic rho invariant for a class of $4$-manifolds, and identify it with the periodic spectral flow of this complex. As an application, we prove the equivalence (under a mild homological assumption) of two signatures invariants defined by means of Yang-Mills theory and geometric topology respectively for essentially embedded tori in homology $S^1 \times S^3$. We also prove a surgery formula for the singular Furuta-Ohta invariant, which corresponds to a potential exact triangle of singular instanton homology for knots.  
\end{abstract}

\section{\large \bf Introduction}

\subsection{\em Index Theory}\label{ss1.1} \hfill
 
\vspace{3mm}

Since the work of Atiyah-Patodi-Singer \cite{APS1}, the index of elliptic operators has become a fruitful resource for producing and relating invariants over manifolds with cylindrical end. There is another interesting class of non-compact manifolds whose end takes a periodic form. Compared to the overwhelmed study and application of index theory over manifolds with cylindrical end, the strength of index theory over this class of non-compact manifolds was largely undermined. The main theme of this paper is to further develop the index theory of elliptic complexes over manifolds with periodic end and provide applications after the work of Taubes \cite{T87} and Mrowka-Ruberman-Saveliev \cite{MRS16}. 

Roughly speaking, an end of a non-compact manifolds is a punctured neighborhood of the infinity in the one-point compactification. A manifold is said to have a periodic end if this end can be identified with one end of an infinite cyclic cover of a compact manifold. It is proved by Hughes-Ranicki \cite{HR96} that the end of any topological manifold of dimension greater than $5$ is periodic after imposing a mild homotopy assumption. In dimension $4$, periodic ends are related to the exoticness of smooth manifolds as uncovered by Taubes \cite{T87} from applying Yang-Mills theory to such manifolds. To illustrate the main results in this paper, we shall start with the framework built up by Taubes. Let's denote by $Z$ a smooth $n$-manifold with a periodic end modeled on the infinite cyclic cover $\pi: \tilde{V} \to V$ of a compact manifold $V$. An (end-periodic) elliptic complex over $Z$ is a chain complex:
\begin{equation}\label{e1.1}
0 \longrightarrow C^{\infty}_0(Z, \tilde{E}_0) \xrightarrow{\partial^0} C^{\infty}_0(Z, \tilde{E}_1) \xrightarrow{\partial^1} ... \xrightarrow{\partial^{n-1}} C^{\infty}_0(Z, \tilde{E}_n) \longrightarrow 0,
\end{equation}
where $\tilde{E}_j$'s are complex vector bundles over $Z$ whose restriction to the end is given by the pull-back $\pi^*E_j$ of complex bundles over $V$, and $\partial^j$'s are differential operators whose restriction to the end of $Z$ are given by the pull-back of those over $V$, and whose associated symbol sequence is exact. In order to transform (\ref{e1.1}) into a Fredholm complex, namely a chain complex of finite dimensional homology, one needs to complete the functional spaces $C^{\infty}_0(Z, \tilde{E}_j)$ with weighted Sobolev norms $L^2_{k, \delta} = e^{-\delta \rho/2}L^2_k$, where $\delta \in \R$ and $\rho: Z \to \R$ is a smooth function that grows linearly with respect to the covering transformation over the end. We shall denote the $L^2_{k, \delta}$-completed chain complex by $E_{\delta}(Z)$. In \cite{T87}, Taubes provided a necessary and sufficient condition to determine for which values of the weight $\delta$ the chain complex $E_{\delta}(Z)$ is Fredholm. Moreover, $(\ref{e1.1})$ fails to be Fredholm with respect to either all $\delta \in \R$ or a discrete subset in $\R$ without accumulation points. In the latter case, it is a natural question to ask how the index of the differential complex $(\ref{e1.1})$ depends on the choice of $\delta$. Our first theorem gives a complete answer to this question when the differential operators $\partial^j$ are of first order. 

To this end, we introduce a version of equivariant cohomology associated to the elliptic complex $(\ref{e1.1})$ assuming it's of first order. Let's choose a smooth function $f: V \to S^1$ representing a cohomology class in $H^1(V; \Z)$ that corresponds to the infinite cyclic cover $\tilde{V}$. The restriction of $\partial^j$ over the end is given by the pull-back of a differential operator over $V$, which we still denote by $\partial^j$. We write $\sigma^j$ for the evaluation of the symbol of $\partial^j$ over the $1$-form $df$, which becomes a zero-th order operator over $V$. Given $z \in \C$, we can form a $z$-twisted elliptic complex over $V$:
\begin{equation}\tag{$E_z(V)$}
0 \longrightarrow L^2_k(V, E_0) \xrightarrow{\partial^0_z} L^2_{k-1}(V, E_1) \xrightarrow{\partial^1_z} ... \xrightarrow{\partial^{n-1}_z} L^2_{k-n}(V, E_n) \longrightarrow 0,
\end{equation}
where $\partial^j_z = \partial^j - z \sigma^j$. Let's denote the $j$-th chain of $E_z(V)$ by $C^j_z$, and $\mathcal{R}:=\C[u, u^{-1}]/\C[u]$ the ring of Laurent polynomials in a formal variable $u$ with only the principal part. The equivariant cohomology $\widehat{H}^*_z(V)$ associated to $(\ref{e1.1})$ is defined as the cohomology of the cochain complex $\widehat{C}^*_z:= C^j_z \otimes \mathcal{R}$ with differential $\widehat{\partial}^*_z:= \partial^*_z - u\sigma^j$. For convention reasons, we shall write $\ind E_{\delta}(Z)$ as minus the Euler characteristic of $E_{\delta}(Z)$ and refer to it as the index. 

\begin{thm}\label{t1.1}
Suppose $\delta_1 > \delta_2$ are two weights in $\R$ so that the weighted Sobolev completion $E_{\delta_1}(Z)$ and $E_{\delta_2}(Z)$ are both Fredholm. Then their index difference is given by 
\[
\ind E_{\delta_1}(Z) - \ind E_{\delta_2}(Z) = \sum_{z \in (\delta_2/2, \delta_1/2) \times i[0, 2\pi)} \chi\left(\widehat{H}^*_z(V) \right). 
\]
\end{thm}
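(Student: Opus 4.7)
The plan is to adapt the Fourier-Laplace spectral pencil technique of Taubes \cite{T87} and Mrowka-Ruberman-Saveliev \cite{MRS16} from single first-order elliptic operators to first-order elliptic complexes. By the relative-index principle for end-periodic Fredholm problems, the difference $\ind E_{\delta_1}(Z) - \ind E_{\delta_2}(Z)$ depends only on the end behavior of the complex, which reduces the question to a computation intrinsic to the cyclic cover $\tilde V \to V$. On the end, a section in $L^2_{k,\delta}(\tilde V, \pi^*E_j)$ admits a Fourier-Laplace expansion along the vertical contour $\{\Rea(z) = \delta/2\}$ as a holomorphic family $\{s_z\}$ of sections of $E_j \to V$, under which each pulled-back differential $\partial^j$ is conjugated into the parametrized family $\partial^j_z = \partial^j - z\sigma^j$ fibred over $V$. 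Consequently $E_\delta(Z)$ is Fredholm if and only if $E_z(V)$ is acyclic for every $z$ on the line $\Rea(z) = \delta/2$; the exceptional set is $2\pi i$-periodic and discrete.

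With this framework I would continuously deform $\delta$ from $\delta_2$ up to $\delta_1$. Each time the sweeping contour crosses an exceptional point $z_0$ in the half-open rectangle
\[
R := (\delta_2/2, \delta_1/2) \times i[0, 2\pi),
\]
the index jumps by a definite integer $m(z_0)$ extracted from the local model of a Fredholm-complex family with a single non-invertible fibre over a small disk about $z_0$. Summing these jumps gives
\[
\ind E_{\delta_1}(Z) - \ind E_{\delta_2}(Z) = \sum_{z_0 \in R} m(z_0),
\]
so the theorem reduces to establishing the local identity $m(z_0) = \chi(\widehat H^*_{z_0}(V))$ for each exceptional $z_0$.

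To prove that identity I would Laurent-expand $\partial^j_z = \partial^j_{z_0} - (z - z_0)\sigma^j$ near $z_0$. The ring $\mathcal R = \C[u, u^{-1}]/\C[u]$ is precisely the algebraic receptacle for the principal parts of the resolvent in the variable $u = (z-z_0)^{-1}$, and the twisted differential $\widehat\partial^*_z = \partial^*_z - u\sigma^*$ encodes exactly how applying the chain differential interacts with removing one order of pole. A filtration of $\widehat C^*_z$ by $u$-degree produces a convergent spectral sequence whose pages organize the Jordan blocks of the pencil $\lambda \mapsto \partial^*_{z_0} - \lambda\sigma^*$; from this one concludes that $\widehat H^*_{z_0}(V)$ is finite-dimensional and that its Euler characteristic is the signed total algebraic multiplicity of $z_0$ as an exceptional value of the pencil, which by the local Fredholm deformation argument is precisely $m(z_0)$.

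The main obstacle is this final identification. For a single elliptic operator, Taubes reduced the analogous step to $\dim \ker \partial_{z_0}$ under a genericity hypothesis, with a separate Jordan-block argument at higher-order exceptional points. For an elliptic complex the sign conventions coming from different cohomological degrees interact non-trivially with the Jordan data, and one must verify that $(\widehat C^*_z, \widehat\partial^*_z)$ really is the correct algebraic gadget whose cohomological Euler characteristic extracts the signed local multiplicity uniformly across all exceptional $z_0$. Carrying out the filtration and spectral-sequence bookkeeping, and checking its compatibility with the signs arising in the local Fredholm deformation, is the technical heart of the proof.
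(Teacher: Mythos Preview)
Your overall architecture is right: reduce the index difference to data on the end via excision, pass through the Fourier--Laplace transform to the pencil $\{E_z(V)\}$, and then extract the answer from the local structure of the pencil near each spectral point, with $\widehat H^*_z(V)$ serving as the algebraic receptacle for the principal part of a Laurent expansion. (A small slip: $u$ plays the role of $z-z_0$, not $(z-z_0)^{-1}$, so that $\mathcal R$ is the space of principal parts.) But the route you sketch differs from the paper's in one substantive way, and has one genuine gap.

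\textbf{Different route.} You propose to sweep $\delta$ continuously from $\delta_2$ to $\delta_1$ and sum local wall-crossing jumps $m(z_0)$. The paper instead makes a single step: it fixes an interpolating weight function $\tilde\delta$ equal to $\delta_1$ on one half of $\tilde V$ and $\delta_2$ on the other, proves via excision that $\ind E_{\delta_2}(Z)-\ind E_{\delta_1}(Z)=\ind E_{\tilde\delta}(\tilde V)$, and then computes the latter index by identifying the full cohomology $H^*(E_{\tilde\delta}(\tilde V))$ as a $\C[t,t^{-1}]$-module (the generalized $e^{-z}$-eigenspaces are exactly $\widehat H^*_z(V)$). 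This is stronger than what you aim for: it pins down the cohomology groups themselves, not only the Euler characteristic, and it bypasses the need to control what happens at each infinitesimal wall separately.

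\textbf{Gap.} Your local step ``$m(z_0)=\chi(\widehat H^*_{z_0}(V))$ by a local Fredholm deformation argument'' is exactly where the difficulty lies, and you do not supply the analytic tool that makes it go through. For a single operator the tool is the meromorphic inverse $(\partial_z)^{-1}$; for a complex there is no inverse, and the paper's key new construction is a \emph{meromorphic homotopy resolvent}: a family of chain homotopies $R_z$ from $\id$ to $0$ on $E_z(V)$, holomorphic off $\Sigma(E)$, meromorphic at spectral points, and $2\pi i$-periodic. Building $R_z$ requires a nontrivial two-step argument (a local construction by block-diagonalizing $\partial^*_z$ near each spectral point, then a global Mittag--Leffler patching over $\C/2\pi i\Z$ using $H^{0,1}=0$). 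With $R_z$ in hand, one applies it inside the inverse Fourier--Laplace transform and reads off, via the residue theorem, that any cohomology class in $H^j(E_{\tilde\delta}(\tilde V))$ is represented by $\varphi(\pmb\alpha)$ for some $\pmb\alpha\in\widehat C^j_z$; the equations forcing the principal part to vanish are precisely $\widehat\partial_z\pmb\alpha=0$. Your spectral-sequence remarks about $\widehat C^*_z$ are correct as far as they go, but without the homotopy resolvent you have no mechanism to connect the analytic jump $m(z_0)$ to $\widehat H^*_{z_0}(V)$, and the ``local Fredholm deformation'' you invoke does not by itself produce that link for complexes of length $>2$.
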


We refer to \autoref{t1.1} as the index jump formula. Although the sum on the right seems to be infinite, we shall see later that $\widehat{H}^*_z(V)$ is non-vanishing only at finitely many points in the considered region under our assumption. The idea of the proof is to transform this index comparison to the computation of the index of an elliptic complex $E_{\tilde{\delta}}(\tilde{V})$ over the periodic manifold $\tilde{V}$ using the excision principle \cite{AS68}. We will actually prove a much stronger result in \autoref{t2.12} which identifies the cohomology $H^*(E_{\tilde{\delta}}(\tilde{V}))$ explicitly with direct sums of $\widehat{H}^*_z(V)$ with the help of a new notion called meromorphic homotopy resovlent defined with respect to a holomorphic family of index zero elliptic complexes over closed manifolds. Indeed, $\widehat{H}^*_z(V)$ will be identified with the generalized $e^{-z}$-eigenspaces of $H^*(E_{\tilde{\delta}}(\tilde{V}))$ under the action of the covering transformation. 

When the length of the elliptic complex (\ref{e1.1}) is $2$, we get a first order elliptic operator. In this case, \autoref{t1.1} recovers the result proved by Mrowka-Ruberman-Saveliev \cite[Section 6]{MRS11}. By taking the formal $L^2$-adjoint of $\partial^j$ and restricting to the chains of even degree, we get an elliptic operator from the elliptic complex. However the method of Mrowka-Ruberman-Saveliev cannot be applied to this adjoint operator to get the result. Because we are working with weighted Sobolev spaces, and to match up the index one has to take the $L^2_{\delta}$-adjoint rather than $L^2$-adjoint, which results in operators that depend on $\delta$ a priori. Moreover, the proof for elliptic operators make use of the Fourier-Laplace transform \cite{T87}, which does not commute with taking adjoint. 

When $(\ref{e1.1})$ is the DeRham complex, \autoref{t1.1} is essentially proved in \cite{MRS14} by means of algebraic topology developed in \cite{M06} and \cite{HR96}. We shall explain in the end of \autoref{s2} why \autoref{t1.1} is a genuine generalization of the main result in \cite{MRS14}, although our method is entirely analytic. 

The index jump formula \autoref{t1.1} transforms the analytic problem of computing the index to an homological algebra problem of understanding $\widehat{H}^*_z(V)$. Note that the cochain complex $(\widehat{C}^*_z, \widehat{\partial}^*_z)$ admits a $u$-degree from the ring $\mathcal{R}$, which gives us a filtration respecting the differential. This leads to a spectral sequence $(\widehat{E}^{p,q}_{z,r}, d_r)$ that abuts to $\widehat{H}^*_z(V)$. The sufficient condition that Taubes gave in \cite[Theorem 3.1]{T87} to ensure the existence of $\delta$ making $E_{\delta}(Z)$ Fredholm is equivalent to say $\widehat{E}^{p,q}_{0, r}$ collapses at the second page and only the terms in $E^{0, q}_{0, r}$ survive. In particular, this implies that $\widehat{H}^*_0(V)$ is finite dimensional. Motivated by this perspective, we make the following conjecture:

\begin{conj}
$E_{\delta}(Z)$ is Fredholm with respect to some $\delta \in \R$ if and only if $\widehat{H}^*_z(V)$ is finite dimensional with respect to some $z \in \C$. 
\end{conj}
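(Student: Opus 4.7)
The plan is to leverage the spectral sequence $(\widehat{E}^{p,q}_{z,r}, d_r)$ together with the meromorphic identification of Theorem 2.12 (referenced in the introduction) to reduce the analytic Fredholmness question for $E_\delta(Z)$ to the algebraic finite-dimensionality question for $\widehat{H}^*_z(V)$.

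For the forward direction, suppose $E_\delta(Z)$ is Fredholm for some $\delta \in \R$. Since $V$ is closed and each $\partial^j_z$ is elliptic over $V$, the cohomology $H^*(E_z(V))$ is finite-dimensional for every $z \in \C$, so the only obstruction to finite-dimensionality of $\widehat{H}^*_z(V)$ comes from the surviving $\mathcal{R}$-factor at $E_\infty$. The excerpt observes that Taubes' criterion for the existence of a Fredholm weight is equivalent to the collapse pattern $\widehat{E}^{p,q}_{0,r} \Rightarrow \widehat{E}^{0,q}_{0,\infty}$, which forces $\widehat{H}^*_0(V)$ to be finite-dimensional. I would try to upgrade this to an \emph{iff} by showing directly from Theorem 2.12 that Fredholmness of $E_\delta(Z)$ already forces only finitely many surviving classes in the spectral sequence at some $z$ lying in the relevant half-plane.

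For the backward direction, suppose $\widehat{H}^*_{z_0}(V)$ is finite-dimensional for some $z_0 \in \C$. The idea is to propagate this single-point condition to an openness statement in $z$, then invoke Theorem 2.12 and \autoref{t1.1} in reverse to produce a Fredholm weight $\delta$. Concretely, one would construct a meromorphic homotopy resolvent for the holomorphic family $\{E_z(V)\}_{z \in \C}$; finite-dimensionality at $z_0$ should ensure that this resolvent extends meromorphically across $z_0$, with poles forming only a discrete subset of $\C$. One then picks $\delta \in \R$ so that the line $\{z : 2\Rea z = \delta\}$ avoids these poles and deduces Fredholmness of $E_\delta(Z)$ by the reverse direction of Theorem 2.12.

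The main obstacle is this holomorphic propagation step: upgrading a finite-dimensionality condition at a single point $z_0$ to a generic-$z$ statement. There is no a priori reason why the spectral sequence should degenerate uniformly in $z$, and the $u$-filtration interacts non-trivially with the parameter $z$ through the differential $\partial^j_z - u\sigma^j$. I expect the crux to be an upper semi-continuity or analytic-continuation argument for the meromorphic homotopy resolvent, which would require a refined version of the holomorphic family analysis underpinning Theorem 2.12. Given that the statement is posed as a conjecture rather than a theorem, this propagation appears to be the genuine obstruction.
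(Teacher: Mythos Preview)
This statement is a \emph{conjecture} in the paper, not a theorem: there is no proof to compare against. What the paper does offer is a short discussion immediately following the conjecture that settles the forward direction and isolates the obstruction in the backward direction. Your proposal lands on the same obstruction, but your forward direction is overcomplicated and your backward direction has a circularity issue.

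For the forward direction, you reach for Theorem~2.12 and spectral-sequence degeneration, but none of that is needed. If $E_\delta(Z)$ is Fredholm for some $\delta$, then by Taubes' criterion (Lemma~2.3) the complex $E_z(V)$ is acyclic for every $z$ with $\Rea z = \delta/2$; in particular $H^j(E_z(V)) = 0$ for all $j$ at any such $z$, and then Proposition~2.15(1) gives $\widehat{H}^*_z(V) = 0$ directly. That is the entire argument---no spectral sequence, no Theorem~2.12.

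For the backward direction, you correctly identify the propagation step as the crux, and this matches the paper's own assessment (``It takes the flavor of meromorphic functions where poles have to be discrete''). However, your proposed mechanism is circular: you want to construct a meromorphic homotopy resolvent and use its pole structure to find a good $\delta$, but the construction of the meromorphic homotopy resolvent in Proposition~2.10 already \emph{assumes} the non-degeneracy of $(E(V),1_V)$, which in turn guarantees $\Sigma(E)\neq\C$---i.e., it presupposes exactly the discreteness you are trying to establish. The genuine open problem is to go from finite-dimensionality of $\widehat{H}^*_{z_0}(V)$ at a single point to acyclicity of $E_w(V)$ along some nearby vertical line $\Rea w = \delta_{z_0}/2$, without first knowing that $\Sigma(E)$ is discrete. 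Your upper-semicontinuity intuition is reasonable, but as stated the argument does not close.
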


Taubes \cite[Lemma 4.3]{T87} proved that $E_{\delta}(Z)$ is Freholm if and only if $H^*(E_z(V)) = 0$ for all $z$ with $\Rea z = \delta/2$. In particular, we know $\widehat{H}^*_z(V) = 0$ for such $z$'s (c.f. \autoref{p2.15}). Conversely, when $\widehat{H}^*_z(V)$ is finite dimensional, we wish to show that $H^*(E_w(V)) = 0$ for all $w$ with $\Rea w = \delta_z/2$ for some real number $\delta_z$ close to $2\Rea z$. It takes the flavor of meromorphic functions where poles have to be discrete. 

To better cooperate with the Yang-Mills gauge theory in dimension $4$, we focuses on the anti-self-dual(ASD) DeRham complex 
\begin{equation}
0 \longrightarrow L^2_{k+1}(V, \Lambda^0_{\C}) \xrightarrow{-d} L^2_k(V, \Lambda^1_{\C}) \xrightarrow{d^+} L^2_{k-1}(V, \Lambda^+_{\C}) \longrightarrow 0,
\end{equation}
where $\Lambda^*_{\C} = \Lambda^*_{\C}T^*V$ means the bundle of complex alternating tensors over $V$. Given a primitive class $1_V \in H^1(V;\Z)$, we refer to the assumption in \cite[Theorem 3.1]{T87} as the non-degeneracy of the pair $(V, 1_V)$ in \autoref{d2.4}. For such a pair, we can define the periodic eta invariant $\tilde{\eta}(V)$ in the spirit of \cite{MRS16}. An end-periodic $Z$ can be constructed from the pair $(V, 1_V)$ explicitly as follows. Let $f: V \to S^1$ be a smooth representative of $[f] = 1_V$, and $Y$ a regular level set of $f$ chosen to be smooth and connected. Such a codimension-$1$ submanifold $Y$ will be referred to as a cross-section of the pair $(V, 1_V)$. Cutting $V$ open along $Y$ gives us a cobordism $W$ from $Y$ to itself. Given a smooth manifold $M$ with boundary $\partial M = Y$, we can form a non-compact manifold $Z$ by attaching infinite copies of $W$ to $M$ in a way from head to tail. Then $Z$ is end-periodic whose end is modeled on the infinite cylic cover $\tilde{V}$ associated to the pair $(V, 1_V)$. Let's write $\mathfrak{a}$ for the index form of the ASD DeRham operator $-d^*\oplus d^+$. The non-degeneracy assumption on the pair $(V, 1_V)$ implies that the index form $\mathfrak{a}(V)$ is exact on $V$, say $d \mathfrak{z}(V) = \mathfrak{a}(V)$. Then argument in \cite[Theorem C]{MRS16} can then be applied to the ASD DeRham complex resulting in the following index theorem.

\begin{thm}\label{t1.3}
Let $Z$ be an end-periodic $4$-manifold with its end modeled on $\tilde{V}$ associated to a non-degenerate pair $(V, 1_V)$. Then there exists $\delta_0 > 0$ such that for all $\delta \in (0, \delta_0)$ the index of the ASD DeRham complex over $Z$ is 
\[
\ind E_{\delta}(Z) = \int_M \mathfrak{a}(M) - \int_Y \mathfrak{z}(V) + \int_V df \wedge \mathfrak{z}(V) - \frac{\tilde{h}(V) + \tilde{\eta}(V)}{2},
\]
where $\tilde{h}(V) = \sum_{z \in \{0\} \times i[0, 2\pi)} - \chi\left(\widehat{H}^*_z(V) \right)$. 
\end{thm}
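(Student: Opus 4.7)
The plan is to adapt the heat-kernel argument of \cite[Theorem C]{MRS16}, originally proved for the end-periodic twisted Dirac operator, to the ASD DeRham complex. I would first combine the three terms of $E_\delta(Z)$ into the single first-order elliptic operator $D_Z := (-d^*) \oplus d^+ \colon L^2_k(Z, \Lambda^1_\C) \to L^2_{k-1}(Z, \Lambda^0_\C \oplus \Lambda^+_\C)$, so that $\ind E_\delta(Z) = \ind D_Z$ in the weighted Sobolev setting. The non-degeneracy of $(V, 1_V)$ combined with \autoref{t1.1} guarantees $E_\delta(Z)$ is Fredholm for every $\delta \in (0, \delta_0)$ with $\delta_0$ small enough; moreover the discrete index jump as $\delta \searrow 0$ is $-\tilde{h}(V)$, which accounts for the appearance of this term in the final formula.

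Next I would truncate $Z$ at the $N$-th copy of $W$, forming $Z_N := M \cup W_0 \cup \cdots \cup W_{N-1}$ with boundary $Y_N \cong Y$, and attach a half-cylinder $Y \times [0, \infty)$ to obtain a cylindrical-end manifold $Z_N^{\text{cyl}}$. The Atiyah-Patodi-Singer index theorem then yields
\[
\ind_{\text{APS}} D_{Z_N^{\text{cyl}}} = \int_{Z_N} \mathfrak{a}(Z) - \frac{\eta(Y) + h(Y)}{2}.
\]
Using $d\mathfrak{z}(V) = \mathfrak{a}(V)$ and Stokes' theorem, each periodic contribution $\int_{W_n} \mathfrak{a}(V)$ equals $\int_{\partial W} \mathfrak{z}(V)$, which vanishes because the two boundary components of $W$ are identified copies of $Y \subset V$ carrying the same restriction of $\mathfrak{z}(V)$ with opposite induced orientations. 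Hence $\int_{Z_N} \mathfrak{a}(Z) = \int_M \mathfrak{a}(M)$ independently of $N$.

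The central step is to compare $\ind D_Z$ on the end-periodic $Z$ with $\lim_{N \to \infty} \ind_{\text{APS}} D_{Z_N^{\text{cyl}}}$ via a relative index argument on the infinite cyclic cover $\tilde V$. Following \cite{MRS16}, the discrepancy between periodic-end and cylindrical-end boundary conditions is captured by a renormalized heat-trace that defines $\tilde\eta(V)$, together with the kernel-line jump $\tilde h(V)$, identified via \autoref{t1.1} with the equivariant cohomology $\widehat H^*_z(V)$ along $\Rea z = 0$. The residual counterterms $-\int_Y \mathfrak{z}(V) + \int_V df \wedge \mathfrak{z}(V)$ arise from regularizing the transgression of $\mathfrak{z}(V)$ across the infinitely many periods of the end: $\int_V df \wedge \mathfrak{z}(V)$ is the natural per-period average of $\mathfrak{z}(V)$ on $\tilde V$, while $-\int_Y \mathfrak{z}(V)$ is the boundary defect at the interface between $M$ and the first copy $W_0$.

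The main obstacle will be a precise analytic definition of $\tilde\eta(V)$ adapted to the ASD DeRham operator: \cite{MRS16} treats twisted self-adjoint Dirac operators, whereas here the relevant boundary operator on $Y$ is the odd-signature operator, so the heat-kernel estimates on $\tilde V$ needed for convergence of the defining integral must be reestablished in this context. A secondary consistency check is that the right-hand side is independent of the choice of primitive: replacing $\mathfrak{z}(V)$ by $\mathfrak{z}(V) + \beta$ with $d\beta = 0$ shifts $-\int_Y \mathfrak{z}(V) + \int_V df \wedge \mathfrak{z}(V)$ by $-\int_Y \beta + \int_V df \wedge \beta$, and this vanishes because $[df]$ is Poincar\'e dual to $[Y]$ in $V$, so the combination is intrinsic to the pair $(V, 1_V)$.
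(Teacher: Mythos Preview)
Your opening plan---adapt the heat-kernel argument of \cite{MRS16}---is exactly what the paper does, but your execution diverges from both \cite{MRS16} and the paper in a way that leaves the central step unaccounted for.

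The paper's proof (\autoref{t3.4} and \autoref{c3.5}) follows \cite[Theorem~A]{MRS16} almost verbatim: one works directly on the end-periodic manifold $Z$ with the regularized trace $\Tr^\flat$ of \cite[Section~3]{MRS16}, forms the symmetric trace
\[
\Str^\flat(\delta,t)=\Tr^\flat e^{-tQ_\delta^*Q_\delta}-\Tr^\flat e^{-tQ_\delta Q_\delta^*},
\]
and computes its limits as $t\to\infty$ (giving $\ind Q_\delta(Z)$) and $t\to 0$ (giving $\int_M\mathfrak{a}$). The $t$-derivative is evaluated via the Fourier--Laplace transform on $V$; integrating it over $t$ produces $\tilde\eta_\delta(V)/2$ from the second line of \cite[(23)]{MRS16} and the transgression terms $-\int_Y\mathfrak{z}(V)+\int_V df\wedge\mathfrak{z}(V)$ from the first. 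The statement with $\tilde h(V)$ and the averaged $\tilde\eta(V)$ then follows by taking the mean of the $\pm\delta$ formulas and applying the index jump (\autoref{t1.1}) across $\delta=0$. There is no truncation and no comparison with a cylindrical APS problem.

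Your alternative route via truncation $Z_N$ plus a cylindrical end runs into a concrete gap. You correctly observe that $\int_W\mathfrak{a}=0$ by Stokes, so the APS index $\int_M\mathfrak{a}(M)-(\eta(Y)+h(Y))/2$ is independent of $N$ and the limit $N\to\infty$ is trivial. But now your ``central step'' must produce the \emph{entire} discrepancy between this fixed cylindrical number and the periodic index: the replacement of $(\eta(Y),h(Y))$ by $(\tilde\eta(V),\tilde h(V))$ \emph{and} the appearance of $-\int_Y\mathfrak{z}(V)+\int_V df\wedge\mathfrak{z}(V)$. Your paragraph only asserts that these arise from ``regularizing the transgression'' and a ``boundary defect'', without giving any mechanism; and since you have not supplied an independent analytic definition of $\tilde\eta(V)$, the identity risks being tautological. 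In the paper, $\tilde\eta(V)$ has a concrete definition (\autoref{d3.3}) as an integral of a twisted heat trace over the spectral line $I(\delta/2)$, and the transgression terms come from a specific short-time computation in the $\frac{d}{dt}\Str^\flat$ formula---not from bookkeeping a truncation. If you wanted to salvage the truncation approach, you would need an excision-type comparison between the periodic operator on $\tilde V_+$ and the cylindrical operator on $[0,\infty)\times Y$ that isolates exactly these terms, and carrying that out is essentially as much work as the direct regularized-trace argument.
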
 

We note that over the cylinder $(0, 1) \times Y$, the ASD DeRham operator takes the form $-d^*\oplus d^+  =d/dt + L'$ with $L'=(-1)^p(\star d - d\star)$ the even signature operator defined on $\Omega^{\ev}_{\C}(Y)$ \cite{APS1}. So it's natural to expect a periodic version of the rho invariant introduced in \cite{APS2}. Indeed, we restrict ourselves to $U(1)$-representations $\varphi: \pi_1(V) \to U(1)$ and define the periodic rho invariant $\tilde{\rho}_{\varphi}(V) = \tilde{\eta}_{\varphi}(V) - \tilde{\eta}(V)$ as the reduced eta invariant under an admissibility assumption \autoref{d3.6}. It is a smooth invariant of $V$ that depends additionally on the primitive class $1_V \in H^1(V; \Z)$. With the help of the index theorem \autoref{t1.3}, the periodic rho invariant of the ASD DeRham complex and the rho invariant of the even signature operator can be compared as follows.

\begin{prop}\label{p1.4}
Let $(V, 1_V)$ be a non-degenerate pair, $\varphi: \pi_1(V) \to U(1)$ an admissible representation, and $Y$ a corresponding cross-section. Then 
\[
\tilde{\rho}_{\varphi}(V) - \rho_{\varphi}(Y) = \tilde{\eta}(V) - \eta(Y).
\]
\end{prop}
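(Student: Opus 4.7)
The plan is to apply Theorem~\ref{t1.3} to the $\varphi$-twisted ASD DeRham complex on $Z$, compare it with the untwisted version, and then relate both comparisons to the classical APS index theorem on the cylindrical-end model $M_\infty := M \cup_Y (Y \times [0, \infty))$. Since $\varphi$ is a $U(1)$-representation, the associated flat Hermitian line bundle $L_\varphi \to V$ has trivial Chern character in de Rham cohomology, so the local index density $\mathfrak{a}(M)$, the primitive $\mathfrak{z}(V)$, and the integral $\int_V df \wedge \mathfrak{z}(V)$ entering Theorem~\ref{t1.3} are insensitive to the twist. Subtracting the untwisted instance of Theorem~\ref{t1.3} from the twisted one, the topological integrals cancel and leave an expression purely in eta and harmonic-kernel invariants:
\[
\ind E^{\varphi}_{\delta}(Z) - \ind E_{\delta}(Z) = \frac{(\tilde h(V) + \tilde\eta(V)) - (\tilde h_{\varphi}(V) + \tilde\eta_{\varphi}(V))}{2}.
\]

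By the same Chern-character argument, the classical APS index formula applied to $D$ and $D^{\varphi}$ on $M_\infty$ yields
\[
\ind D^{\varphi}(M_\infty) - \ind D(M_\infty) = \frac{(h(Y) + \eta(Y)) - (h_{\varphi}(Y) + \eta_{\varphi}(Y))}{2}.
\]
The central ingredient is then a gluing/excision identification showing that $\ind E_{\delta}(Z) = \ind D(M_\infty)$, and likewise in the twisted case. This is precisely the compatibility that the definition of the periodic eta invariant $\tilde\eta(V)$ in the spirit of \cite{MRS16} is designed to encode: $\tilde\eta(V)$ is calibrated so that the end-correction integrals $-\int_Y \mathfrak{z}(V) + \int_V df \wedge \mathfrak{z}(V)$ of Theorem~\ref{t1.3} together with $\tilde h(V)$ account for the difference between the cylindrical and periodic APS setups. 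Under the admissibility hypothesis of Definition~3.6, the same holds with $\varphi$-twisting, since admissibility guarantees that the twisted complexes are Fredholm at the same weight and that the relevant harmonic kernels are well-controlled.

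Equating the two index differences, the $\tilde h$ and $h$ contributions match against $\tilde\eta(V) - \eta(Y)$ via the defining property of $\tilde\eta$, and after rearranging using $\tilde\rho_{\varphi}(V) = \tilde\eta_{\varphi}(V) - \tilde\eta(V)$ and $\rho_{\varphi}(Y) = \eta_{\varphi}(Y) - \eta(Y)$, one arrives at the desired identity
\[
\tilde\rho_{\varphi}(V) - \rho_{\varphi}(Y) = \tilde\eta(V) - \eta(Y).
\]

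The hardest part will be the gluing comparison linking the end-periodic index to the cylindrical-end one: this must be carried out in the weighted Sobolev framework of Taubes, and one must verify that it remains compatible with the $\varphi$-twist under the admissibility assumption, in particular that no spurious eigenvalues cross zero under the deformation from periodic to cylindrical ends. A subsidiary technical point is the precise bookkeeping of the harmonic-kernel dimensions $\tilde h_{\varphi}(V)$ and $h_{\varphi}(Y)$, which the admissibility hypothesis is designed to handle uniformly.
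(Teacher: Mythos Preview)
Your proposal has a genuine gap at the ``gluing/excision identification'' step. You assert that $\ind E_\delta(Z) = \ind D(M_\infty)$ (and the twisted analogue), claiming this is ``precisely the compatibility that the definition of the periodic eta invariant $\tilde\eta(V)$ is designed to encode.'' But this is circular. The periodic eta invariant is defined \emph{intrinsically} as a regularized spectral trace (Definition~3.3), not by demanding that periodic-end and cylindrical-end indices agree. Comparing the two index formulas, the equality $\ind E_\delta(Z) = \ind D(M_\infty)$ is equivalent to a relation between $\tilde\eta(V)$, $\tilde h(V)$, the boundary correction $-\int_Y \mathfrak{z}(V) + \int_V df\wedge \mathfrak{z}(V)$, and $\eta(Y)$, $h(Y)$. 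That relation is not a definition; it is essentially the content of the proposition you are trying to prove. Your excision argument, if made precise, would have to compare the contribution of the periodic end with that of the cylindrical end, and there is no a priori reason these agree for a general compact piece $M$.

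The paper avoids this circularity by a single clever construction: instead of two separate manifolds $Z$ and $M_\infty$, it builds \emph{one} manifold $\tilde V_+^\infty = (-\infty,0]\times Y \,\cup\, \tilde V_+$ carrying simultaneously a cylindrical end and a periodic end. Both index theorems (Theorem~3.4 for the periodic end, Proposition~3.9 for the cylindrical end) then apply to the \emph{same} index, so the local integrals genuinely cancel and one obtains $\ind E_{\tilde\varphi}(\tilde V_+^\infty) = \tfrac{1}{2}(\eta_\varphi(Y) - \tilde\eta_\varphi(V))$ with no unproved comparison needed. The remaining task is to show this index vanishes, which is done by the explicit cohomology computation of Proposition~3.11 applied to the compact piece $M = [-1,0]\times Y$; admissibility (both conditions (a) and (b)) is exactly what makes $H^1(M;\C_{\tilde\varphi})$ and $H^+(M;\C_{\tilde\varphi})$ vanish. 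Your approach never isolates a manifold on which the index can be computed directly, which is why it stalls.
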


With a little more effort, \autoref{t1.3} could be deduced for arbitrary first order elliptic complexes. So a corresponding result of \autoref{p1.4} can be expected for unitary representations of higher rank. However, for the purpose of application in this paper, we are satisfied with the current form. 

In \cite{APS3}, the relation between the spectral flow and the eta invariant was explored. This is also true in the periodic case. The notion of periodic spectral flow was introduced for a path of Dirac operators $\slashed{\partial}_t$ in \cite{MRS11}, which essentially counts the number of points $(t, z) \in [0, 1] \times i [0, 2\pi )$ over which the twisted Dirac operator $\slashed{\partial}_{t, z}$ fails to be invertible. Given a path of ASD DeRham operators $Q_t:=-d^*_{A_t} \oplus d^+_{A_t}$ with $A_t$ a path of connections on the trivial bundle $\underline{\C}$ of $V$, we can define the periodic spectral flow correspondingly. The only exception happens when $A_0$ is the trivial connection so $\ker Q_0 = H^1(V;\C) \neq 0$. In this case, we introduce a weight $\delta \in \R$ to get rid of the situation, which would be seen to coincide with the weight in the completion of the functional spaces on the end-periodic manifold $Z$. Then we denote the corresponding periodic spectral flow by $\widetilde{\Sf}(Q_{t, \delta})$. When the representation $\varphi$ is connected to the trivial representation via a continuous path $(\varphi_t)$, we deduce that the periodic rho invariant $\tilde{\rho}_{\varphi}(V)$ is given by the periodic spectral flow of this path. 

\begin{prop}\label{p1.5}
Suppose $\varphi_t: \pi_1(V) \to U(1)$ is a smooth path of representations connecting the trivial representation $\varphi_0$ to an admissible representation $\varphi_1$. Then there exists $\delta_1 > 0$ such that for all $\delta \in (0, \delta_1)$ one has 
\[
-\tilde{\rho}_{\varphi_1}(V) = \widetilde{\Sf}(Q_{t, \delta}) + \widetilde{\Sf}(Q_{t, -\delta}),
\]
where $Q_t = -d^*_{\varphi_t} \oplus d^+_{\varphi_t}$ is the ASD DeRham operator twisted by $\varphi_t$. 
\end{prop}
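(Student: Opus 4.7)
The plan is to apply \autoref{t1.3} to the ASD DeRham complex on $Z$ twisted by the flat line bundle $\underline{\C}_{\varphi_j}$ at each endpoint $j=0,1$, and then identify the resulting difference of indices with the periodic spectral flow of $Q_t$. Since a flat $U(1)$-connection has vanishing curvature, its Chern character equals the constant form $1$, so the local index form $\mathfrak{a}$, its primitive $\mathfrak{z}$, and the combined local term $\mathfrak{L} := \int_M \mathfrak{a}(M) - \int_Y \mathfrak{z}(V) + \int_V df \wedge \mathfrak{z}(V)$ are all independent of $j$. Under the non-degeneracy of $(V,1_V)$ and the admissibility of $\varphi_1$, both twisted complexes satisfy the hypotheses of \autoref{t1.3} for a common $\delta_0>0$, so for every $\delta \in (0,\delta_0)$ one obtains
\begin{equation*}
\ind E_{\delta, \varphi_j}(Z) = \mathfrak{L} - \frac{\tilde{h}_{\varphi_j}(V) + \tilde{\eta}_{\varphi_j}(V)}{2}.
\end{equation*}

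Next I shrink $\delta_1 \le \delta_0$ so that for $\delta \in (0,\delta_1)$ the strip $(-\delta/2, \delta/2) \times i[0, 2\pi)$ meets $\{z : \widehat{H}^*_z(V, \varphi_j) \ne 0\}$ only along the imaginary axis, which is possible since the latter set is discrete. \autoref{t1.1} then yields
$\ind E_{\delta, \varphi_j}(Z) - \ind E_{-\delta, \varphi_j}(Z) = -\tilde{h}_{\varphi_j}(V)$,
which combined with the formula above gives $\ind E_{\delta, \varphi_j}(Z) + \ind E_{-\delta, \varphi_j}(Z) = 2\mathfrak{L} - \tilde{\eta}_{\varphi_j}(V)$. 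Subtracting the $j=0$ identity from the $j=1$ identity cancels $\mathfrak{L}$ and produces
\begin{equation*}
\bigl[\ind E_{\delta, \varphi_1}(Z) - \ind E_{\delta, \varphi_0}(Z)\bigr] + \bigl[\ind E_{-\delta, \varphi_1}(Z) - \ind E_{-\delta, \varphi_0}(Z)\bigr] = -\tilde{\rho}_{\varphi_1}(V),
\end{equation*}
using $\tilde{\eta}_{\varphi_0}(V) = \tilde{\eta}(V)$.

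To finish, I identify each bracketed difference with $\widetilde{\Sf}(Q_{t, \pm\delta})$. Along the smooth family $\varphi_t$ the integer $\ind E_{\pm\delta, \varphi_t}(Z)$ is locally constant, with jumps occurring precisely at those $t_0$ at which $E_{w, \varphi_{t_0}}(V)$ develops nontrivial cohomology for some $w$ with $\Rea w = \pm\delta/2$; these are by definition the crossings contributing to the periodic spectral flow in the sense of \cite{MRS11}. After a generic perturbation of the path making all crossings transverse (which alters neither side of the proposition), a local Fourier--Laplace model matches each index jump with $\pm 1$, in agreement with the spectral-flow sign convention. The main obstacle is precisely this sign bookkeeping, especially near $t=0$ where $\ker Q_0 \supset H^1(V;\C) \ne 0$: the weight $\delta$ is exactly what displaces this unavoidable cohomology off the imaginary axis in the Laurent variable, so that both sides are well-defined and the signs align to yield the stated identity.
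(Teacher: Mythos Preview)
Your proposal is correct and follows essentially the same route as the paper's own proof (Proposition~4.11): apply the end-periodic index formula at the two endpoints, take the sum over weights $\pm\delta$ to isolate $\tilde{\eta}$ (hence $\tilde{\rho}$), and identify the index variation along the path with the periodic spectral flow via the index jump formula. The paper's proof differs only in emphasis: it states the identification $\ind Q_{\tilde{A}^{\dagger}_1,\delta}(Z)-\ind Q_{\tilde{A}^{\dagger}_0,\delta}(Z)=\widetilde{\Sf}(Q_{A^{\dagger}_t,\delta}(V,\pi))$ as a claim and proves it by a careful local argument around each spectral point using \autoref{c2.14} together with $\delta$-regularity (so that $-\chi(\widehat{H}^*_z(V))=1$ at each crossing), which is exactly the content your last paragraph gestures at but leaves sketchy. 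Your sign discussion near $t=0$ is also right in spirit, and the paper handles it the same way, via Lemma~4.3 ensuring $\delta$-regularity for generic small perturbations once $\delta\neq 0$.
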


\subsection{\em Torus Signature}\label{ss1.2} \hfill
 
\vspace{3mm}

The second half of this paper deals with an application of the periodic index theory to $4$-dimensional topology. Inspired by different perspectives towards the Levine-Tristram signature invariant for knots \cite{L69, T69}, lately two authors considered four dimensional analogues with respect to essentially embedded tori in homology $S^1 \times S^3$. The first approach is due to Echeverria \cite{E19} by counting degree zero singular instantons. The second is due to Ruberman \cite{R20} using a more topological construction. Conjecturally the signature invariants defined via both methods should agree. In the following, we give an affirmative answer to this conjecture under certain mild assumptions. 

We shall write $X$ for a smooth closed oriented $4$-manifold satisfying 
\begin{equation}
H_*(X;\Z) \simeq H_*(S^1 \times S^3; \Z),
\end{equation}
which we refer to as an integral homology $S^1 \times S^3$. We also fix a primitive class $1_X \in H^1(X; \Z)$ serving the role of homology orientations in Yang-Mills theory. Let $\mathcal{T}$ be an embedded torus in $X$ inducing a surjective map $H_1(\mathcal{T}; \Z) \to H_1(X; \Z)$. We call $\mathcal{T}$ an essentially embedded torus. Given a holonomy parameter $\alpha \in (0, 1/2)$, adopting the framework of $\alpha$-singular instantons in \cite{KM93} Echeverria \cite{E19} defined the $\alpha$-singular Furuta-Ohta invariant $\lambda_{FO}(X, \mathcal{T}, \alpha)$ to be the signed count of the degree zero irreducible $\alpha$-singular instantons on $(X, \mathcal{T})$ assuming $\alpha$ satisfies a non-degeneracy condition \autoref{d5.2}. Heuristically, an $\alpha$-singular instanton is an anti-self-dual connection on $X \backslash \mathcal{T}$ whose holonomy around the meridians of $\mathcal{T}$ is asymptotic to 
\[
e^{-2\pi i \alpha} \in U(1) \subset SU(2).
\]
The non-degeneracy condition for $\alpha$ is equivalent to the fact that the reducible locus of the $\alpha$-singular moduli space is isolated from the irreducible one. With another non-degeneracy condition (\ref{e5.16}), one can define the usual Furuta-Ohta invariant $\lambda_{FO}(X)$ as a quarter of the count of degree zero instantons. Echeverria \cite{E19} considered the difference 
\begin{equation}\label{e1.4}
\lambda_{FO}(X, \mathcal{T}, \alpha) - 8\lambda_{FO}(X)
\end{equation} 
as an invariant for the isotopy class of essential embeddings of a $2$-torus, and showed this difference is the Levine-Tristram signature invariant $\sigma_{2\alpha}(Y, \mathcal{K})$ in the product case when $(X, \mathcal{T}) = S^1 \times (Y, \mathcal{K})$. Here $\mathcal{K}$ is a knot in an integral homology sphere $Y$. 

On the other hand, given a pair $(X, \mathcal{T})$ as above, one can perform the $0$-surgery along the torus $\mathcal{T}$ resulting in a cohomology $T^2 \times S^2$, which we denote by $X_0(\mathcal{T})$. Moreover the primitive class $1_X$ induces a primitive class $1_{X_0} \in H^1(X_0(\mathcal{T});\Z)$. Let $Y_0$ be a cross-section of $(X_0(\mathcal{T}), 1_{X_0})$. The holonomy parameter $\alpha$ gives rise to a unitary representation $\varphi_{2\alpha}: \pi_1(X_0(\mathcal{T})) \to U(1)$ mapping the meridian of $\mathcal{T}$ to $e^{-4\pi i \alpha}$ and the curve dual to $1_{X_0}$ to $1$. Ruberman \cite{R20} showed that the rho invariant $\rho_{\varphi_{2\alpha}}(Y_0)$ is independent of the choice of the cross-section $Y_0$, and defined it as the signature invariant $\sigma_{2\alpha}(X, \mathcal{T})$ of $(X, \mathcal{T})$. When $(X, \mathcal{T})$ is the product $S^1 \times (Y, \mathcal{K})$, the equality $\sigma_{2\alpha}(X, \mathcal{T}) = \sigma_{2\alpha}(Y, \mathcal{K})$ follows from properties of the Levine-Tristram signature. 

\begin{thm}\label{t1.6}
Let $\mathcal{T} \subset X$ be an essentially embedded torus in a  homology $S^1 \times S^3$. Assume the holonomy parameter $\alpha \in (0, 1/2)$ is non-degenerate. Then 
\[
\lambda_{FO}(X, \mathcal{T}, \alpha) - 8 \lambda_{FO}(X) = \tilde{\rho}_{\varphi_{2\alpha}}(X_0(\mathcal{T})) - \lim_{\epsilon \to 0^+} \tilde{\rho}_{\varphi_{2\epsilon}}(X_0(\mathcal{T})). 
\]
\end{thm}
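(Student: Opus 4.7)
The strategy is to interpret both sides of the identity as measuring the same integrated wall-crossing of the signed count of irreducible $\alpha'$-singular instantons as the holonomy parameter $\alpha'$ varies from near $0^+$ up to the given value $\alpha$. On the Yang-Mills side, this is essentially the definition once one establishes that $\lim_{\epsilon \to 0^+} \lambda_{FO}(X, \mathcal{T}, \epsilon) = 8 \lambda_{FO}(X)$. On the analytic side, each wall-crossing is reinterpreted through the periodic index theory of Sections 2--3 applied to the $0$-surgery manifold $X_0(\mathcal{T})$.

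First I would set up the family $\alpha' \mapsto \lambda_{FO}(X, \mathcal{T}, \alpha')$ on $(0, \alpha]$ and establish local constancy at non-degenerate parameters. At a degenerate value, a reducible $\alpha'$-singular instanton merges with the irreducible locus; such reducibles correspond precisely to $U(1)$-representations of $\pi_1(X \setminus \mathcal{T})$ whose descent $\varphi_{2\alpha'}$ to $\pi_1(X_0(\mathcal{T}))$ fails admissibility in the sense of Definition 3.6. The signed wall-crossing at each such point equals the jump of the Fredholm index of the linearized ASD complex over an end-periodic cover of $X_0(\mathcal{T})$, which by the index jump formula (\autoref{t1.1}) contributes to the periodic spectral flow of the twisted ASD DeRham operator $Q_t = -d^*_{\varphi_t} \oplus d^+_{\varphi_t}$. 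Summing over all walls and invoking \autoref{p1.5} identifies the total wall-crossing with $\tilde{\rho}_{\varphi_{2\alpha}}(X_0(\mathcal{T})) - \lim_{\epsilon \to 0^+} \tilde{\rho}_{\varphi_{2\epsilon}}(X_0(\mathcal{T}))$, with the two weight choices $\pm \delta$ combining symmetrically.

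Next, I would prove $\lim_{\epsilon \to 0^+} \lambda_{FO}(X, \mathcal{T}, \epsilon) = 8 \lambda_{FO}(X)$ by a Kuranishi analysis at the degenerate endpoint. As $\epsilon \to 0^+$ the holonomy around the meridian of $\mathcal{T}$ approaches the identity, and the $\epsilon$-singular moduli space converges, after suitable compactification, to the ordinary moduli space on $X$ together with a blow-up along the reducible locus. The count of nearby irreducible $\epsilon$-singular instantons associated to each ordinary instanton on $X$ yields the factor of $8$, reflecting the double cover $SU(2) \to SO(3)$, the $U(1)$-stabilizer of abelian reductions inside $SU(2)$, and the orientation conventions built into $\lambda_{FO}$.

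The main obstacle is the delicate behavior at $\alpha' = 0$. There $\varphi_{2\alpha'}$ degenerates to the trivial representation, which violates admissibility because $H^1(X_0(\mathcal{T}); \C) \neq 0$; consequently $\tilde{\rho}_{\varphi_{2\epsilon}}$ does not extend continuously to $\epsilon = 0$ and is instead captured only asymptotically via the limit. Matching the $8\lambda_{FO}(X)$ contribution on the Yang-Mills side with the $\lim_{\epsilon \to 0^+} \tilde{\rho}_{\varphi_{2\epsilon}}(X_0(\mathcal{T}))$ correction on the analytic side requires a precise comparison between the local wall-crossing of nearly-trivial reducibles and the jump of the reduced periodic eta invariant at the trivial representation, which in turn uses the correction term $\tilde{h}(X_0(\mathcal{T}))$ of \autoref{t1.3} together with a gluing argument that pairs up the local Kuranishi models on both sides.
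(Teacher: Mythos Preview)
Your high-level picture---wall-crossing on the Yang-Mills side matches periodic spectral flow which matches the rho-invariant difference---is in the right spirit, but the implementation diverges from the paper in a way that introduces a real gap.

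The paper never studies the family $\alpha' \mapsto \lambda_{FO}(X,\mathcal{T},\alpha')$ on the closed singular pair $(X,\mathcal{T})$, and in particular never proves or uses the limit $\lim_{\epsilon\to 0^+}\lambda_{FO}(X,\mathcal{T},\epsilon)=8\lambda_{FO}(X)$. Instead it passes immediately to the torus complement $M=X\setminus\nu(\mathcal{T})$ with a cylindrical end (\autoref{l5.6} and equation (\ref{e5.18})): both $\lambda_{FO}(X,\mathcal{T},\alpha)$ and $8\lambda_{FO}(X)$ are rewritten as intersection numbers of the \emph{same} irreducible moduli space $\widetilde{\M}^*_\sigma(M_o)$ with the tori $\widetilde{T}_\alpha$ and $\widetilde{T}_0$ in $\tilde{\chi}(T^3)$. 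The factor of $8$ comes for free from the definitional $1/4$ in $\lambda_{FO}(X)$ together with the double cover $\widetilde{T}_0\to T_0$; there is no Kuranishi analysis of degenerating singular connections. The difference of these two intersection numbers is then computed by a cobordism in $\tilde{\chi}(T^3)$: it equals (minus) the signed count of bifurcation points in the reducible locus of $\widetilde{\M}_\sigma(M_o)$ lying over $\widetilde{P}_{[0,\alpha]}$ (equation (\ref{e6.3})). \autoref{p6.3} identifies this bifurcation count with twice a periodic spectral flow on $M_o$, and then the gluing result \autoref{p4.10} transports that spectral flow to $V=X_0(\mathcal{T})$, where \autoref{p4.11} converts it to the rho-invariant difference.

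Your proposed route would require controlling the $\epsilon\to 0$ degeneration of $\alpha$-singular instanton moduli spaces on the closed manifold $X$, where the cone singularity and the model connection both collapse. Your explanation of the factor $8$ (``double cover $SU(2)\to SO(3)$, $U(1)$-stabilizer, orientation conventions'') does not account for it; the relevant arithmetic is $8=4\cdot 2$ from the $1/4$ in the definition of $\lambda_{FO}(X)$ and the branched double cover of the pillowcase, and it appears only after passing to $M_o$. Without that reformulation, the limit you assert is an independent and nontrivial claim that the paper's machinery does not supply.
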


Technically the definition of $\lambda_{FO}(X, \mathcal{T}, \alpha)$ requires using an orbifold metric with cone angle $2\pi/\nu$ along $\mathcal{T}$ for some $\nu$ sufficiently large. Echeverria \cite{E19} conjectured that the invariant is independent of the choice of the cone angle. The proof of \autoref{t1.6} also confirms this claim. 

To identify the difference of rho invariant in \autoref{t1.6} with the rho invariant $\rho_{\varphi_{2\alpha}}(Y_0)$, we impose another homological assumption. Let $L'_{\varphi_{2\alpha}}$ be the even signature operator twisted by the flat connection corresponding to $\varphi_{2\alpha}$. Let's consider a symmetric pairing on $H^0(Y_0;\C) \oplus H^2(Y_0;\C)$ defined by 
\begin{equation}\label{e1.5}
C'_{\alpha}(a, b) = \langle a, (L'_{\varphi_{2\alpha}} - L') b\rangle_{L^2}.
\end{equation}

\begin{thm}\label{t1.7}
Under the situation of \autoref{t1.6}, we further assume the symmetric pairing $C'_{\alpha}$ is non-degenerate and has zero signature. Then
\[
\tilde{\rho}_{\varphi_{2\alpha}}(X_0(\mathcal{T})) - \lim_{\epsilon \to 0^+} \tilde{\rho}_{\varphi_{2\epsilon}}(X_0(\mathcal{T})) = \sigma_{2\alpha}(X, \mathcal{T}). 
\]
\end{thm}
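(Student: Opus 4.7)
The plan is to combine two instances of Proposition~\ref{p1.4} with a careful spectral analysis of the classical rho invariant near the trivial representation. Applying Proposition~\ref{p1.4} to the pair $(X_0(\mathcal{T}), 1_{X_0})$ with cross-section $Y_0$ once at $\varphi = \varphi_{2\alpha}$ and once at $\varphi = \varphi_{2\epsilon}$ for small $\epsilon > 0$, and noting that the right-hand side $\tilde{\eta}(X_0(\mathcal{T})) - \eta(Y_0)$ is independent of $\varphi$, subtraction yields
\[
\tilde{\rho}_{\varphi_{2\alpha}}(X_0(\mathcal{T})) - \tilde{\rho}_{\varphi_{2\epsilon}}(X_0(\mathcal{T})) = \rho_{\varphi_{2\alpha}}(Y_0) - \rho_{\varphi_{2\epsilon}}(Y_0).
\]
Since $\sigma_{2\alpha}(X, \mathcal{T}) = \rho_{\varphi_{2\alpha}}(Y_0)$ by Ruberman's definition, after letting $\epsilon \to 0^+$ the theorem reduces to the claim
\[
\lim_{\epsilon \to 0^+} \rho_{\varphi_{2\epsilon}}(Y_0) = 0.
\]

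The key observation for this limit is that $\varphi_{2\epsilon}$ is a flat $U(1)$-connection whose connection $1$-form scales linearly in $\epsilon$, so the family of twisted even signature operators is affine: $L'_{\varphi_{2\epsilon}} = L' + \epsilon B$ for a single self-adjoint zeroth-order operator $B$. Consequently, as bilinear forms on $H^0(Y_0;\C) \oplus H^2(Y_0;\C) = \ker L'$, one has $C'_\epsilon = \epsilon\, C'_1$, and the non-degeneracy and zero-signature hypothesis on $C'_\alpha$ transfers verbatim to $C'_\epsilon$ for every $\epsilon \in (0, \alpha]$. Kato--Rellich perturbation theory then shows that the nonzero eigenvalues of $L'$ shift continuously in $\epsilon$ and contribute continuously to the eta invariant, while the zero eigenvalues split to leading order as $\epsilon\mu_j$, where $\mu_j$ are the eigenvalues of $B|_{\ker L'}$. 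The signed count of the $\mu_j$ equals $\sign(C'_1) = 0$, so
\[
\lim_{\epsilon \to 0^+}\rho_{\varphi_{2\epsilon}}(Y_0) = \lim_{\epsilon \to 0^+}\bigl[\eta(L'_{\varphi_{2\epsilon}}) - \eta(L')\bigr] = 0,
\]
as required.

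The main obstacle is this second step: rigorously accounting for the $\dim\ker L'$-dimensional jump of the eta invariant as one leaves the trivial representation along the ray $\{\varphi_{2\epsilon}\}$, and identifying this jump with $\sign(C'_\epsilon)$ via first-order perturbation of eigenvalues. A secondary technical point is verifying admissibility (in the sense of Definition~3.6) of $\varphi_{2\alpha}$ from the non-degeneracy of $C'_\alpha$, and of $\varphi_{2\epsilon}$ for all but a discrete subset of small $\epsilon$; this should follow from the openness of the Fredholm condition together with an index-jump argument analogous to \autoref{t1.1}, noting that the linearity of the family in $\epsilon$ prevents the admissibility from failing except on a discrete set accumulating only at $0$.
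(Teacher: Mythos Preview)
Your approach is correct and matches the paper's argument in \autoref{ss6.2} essentially verbatim: reduce via two applications of \autoref{p1.4} to $\lim_{\epsilon\to 0^+}\rho_{\varphi_{2\epsilon}}(Y_0)=0$, then identify the eta-invariant jump at $\epsilon=0$ with $\sign(C'_\alpha)=0$ using first-order (Kato) perturbation of the zero eigenvalues of $L'$. One small correction on your secondary technical point: non-degeneracy of $C'_\alpha$ controls the spectrum near $\epsilon=0$, not at $\epsilon=\alpha$, so it does not directly yield admissibility of $\varphi_{2\alpha}$---the paper instead uses analyticity of the family $\beta\mapsto L'_{\varphi_{2\beta}}$ (together with Taubes' discreteness for condition~(a)) to show admissibility fails for only finitely many $\beta$ (\autoref{l6.4}), and then restricts to such $\alpha$.
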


Since the pairing $C'_{\alpha}$ differs by a non-zero scalar with respect to different choices of $\alpha$, the assumption on $C'_{\alpha}$ is actually intrinsic to the pair $(X, \mathcal{T})$. We note this assumption holds for a large class of pairs $(X, \mathcal{T})$, for instance when a cross-section $Y_0$ of $X_0(\mathcal{T})$ satisfies $b_1(Y_0) = 1$. We shall explain the reason for considering $C'_{\alpha}$ and provide more examples during the proof of \autoref{t1.7}. 

When $X$ is an integral homology $S^1 \times S^3$, and $Y$ a integral homology sphere representing a cross-section of $X$, Furuta-Ohta \cite{FO93} conjectured that $\lambda_{FO}(X) = \mu(Y) \mod 2$, where $Y$ is a cross-section of $X$ and $\mu(Y)$ is the Rohlin invariant of $Y$. In the case when there exists an essentially embedded torus $\mathcal{T}$, \cite[Lemma 2.3]{M1} implies that one can choose a cross-section $Y$ so that $\mathcal{T} \cap Y = \mathcal{K}$ is a knot. Then $Y_0=Y_0(\mathcal{K})$ is a cross-section of $X_0(\mathcal{T})$. Suppose one can find an oriented $4$-manifold $M$ with boundary $\partial M = Y_0$ so that $\varphi_{2 \alpha}$ extends over $\pi_1(M)$. Then $\sigma_{2\alpha}(X, \mathcal{T}) = \sigma(M) - \sigma_{\varphi_{2\alpha}}(M)$. Note that $\sigma(M)/8 = \mu(Y) \mod 2$. Thus the equivalence $\lambda_{FO}(X, \mathcal{T}, \alpha) - 8\lambda_{FO}(X) = \sigma_{2\alpha}(X, \mathcal{T})$ implies that $\lambda_{FO}(X, \mathcal{T}, \alpha) - \sigma_{\varphi_{2\alpha}}(M)$ is independent of $\alpha \in (0, 1/2)$. So Furuta-Ohta's conjecture motivates the following conjecture:

\begin{conj}\label{c1.8}
Given the situation above, we have 
\[
\lambda_{FO}(X, \mathcal{T}, \alpha) + \sigma_{\varphi_{2\alpha}}(M) = 0 \mod 16
\]
with respect to some non-degenerate holonomy parameter $\alpha \in (0, 1/2)$. 
\end{conj}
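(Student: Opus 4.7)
My proposal is to derive \autoref{c1.8} by combining the main results of the paper with two classical inputs: the Rohlin congruence for spin $4$-manifolds, and the Furuta--Ohta conjecture $\lambda_{FO}(X) \equiv \mu(Y) \pmod{2}$. The overall strategy is essentially a mod $16$ bookkeeping once these ingredients are aligned, and in fact the motivation paragraph preceding the conjecture already carries out most of the necessary algebra.

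First, under the non-degeneracy hypothesis on the pairing $C'_{\alpha}$ of (\ref{e1.5}), \autoref{t1.6} and \autoref{t1.7} combine to yield
\[
\lambda_{FO}(X, \mathcal{T}, \alpha) - 8\lambda_{FO}(X) = \sigma_{2\alpha}(X, \mathcal{T}).
\]
Since $\varphi_{2\alpha}$ extends across $\pi_1(M)$ by hypothesis, the Atiyah--Patodi--Singer signature formula for the rho invariant identifies $\sigma_{2\alpha}(X, \mathcal{T}) = \sigma(M) - \sigma_{\varphi_{2\alpha}}(M)$, so that
\[
\lambda_{FO}(X, \mathcal{T}, \alpha) + \sigma_{\varphi_{2\alpha}}(M) = 8\lambda_{FO}(X) + \sigma(M).
\]
It therefore suffices to show that the right-hand side is divisible by $16$.

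Next I would invoke the Rohlin-type congruence $\sigma(M) \equiv 8\mu(Y) \pmod{16}$ noted by the author: capping off $\partial M = Y_0 = Y_0(\mathcal{K})$ by a $2$-handle attached along a meridian of $\mathcal{K}$ with framing compatible with the spin structure on $M$ produces a spin $4$-manifold bounding the homology sphere $Y$, to which Rohlin's theorem applies, and the signature shift introduced by the handle addition is controlled modulo $16$. Combined with the Furuta--Ohta conjecture $\lambda_{FO}(X) \equiv \mu(Y) \pmod{2}$, this gives $8\lambda_{FO}(X) \equiv 8\mu(Y) \pmod{16}$, and the displayed identity collapses to $16\mu(Y) \equiv 0 \pmod{16}$, as required.

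The obvious main obstacle is that the Furuta--Ohta conjecture is itself unresolved, so the argument above is only conditional, and any unconditional proof of \autoref{c1.8} would have to extract the mod $16$ congruence directly from the gauge-theoretic side. The most natural candidates are a $\mathrm{Pin}(2)$-type equivariant refinement of the $\alpha$-singular instanton count, in analogy with the $\mathrm{Pin}(2)$-Seiberg--Witten approach to Rohlin-type congruences for homology spheres, or a spin cobordism invariance argument for the combination $\lambda_{FO}(X, \mathcal{T}, \alpha) + \sigma_{\varphi_{2\alpha}}(M) \pmod{16}$, with model cases verified via the surgery formula for $\lambda_{FO}(X, \mathcal{T}, \alpha)$ advertised in the abstract. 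Both approaches are significantly beyond the scope of the present paper, which is why the statement is left as a conjecture.
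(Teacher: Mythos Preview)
The statement is labeled a \emph{conjecture} in the paper and is not proved there; the paragraph immediately preceding it gives exactly the conditional derivation you outline (combine \autoref{t1.6} and \autoref{t1.7} to obtain $\lambda_{FO}(X,\mathcal{T},\alpha)-8\lambda_{FO}(X)=\sigma_{2\alpha}(X,\mathcal{T})=\sigma(M)-\sigma_{\varphi_{2\alpha}}(M)$, then use $\sigma(M)\equiv 8\mu(Y)\pmod{16}$ together with the Furuta--Ohta conjecture $\lambda_{FO}(X)\equiv\mu(Y)\pmod 2$). You have correctly identified both the intended motivation and the obstruction: the argument is genuinely conditional on the unresolved Furuta--Ohta conjecture, which is why the paper states \autoref{c1.8} as a conjecture rather than a theorem. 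Your speculative remarks on a $\mathrm{Pin}(2)$-equivariant or spin-cobordism approach go beyond anything the paper attempts.
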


The equivalence of the signature invariants can be justified in the case of mapping tori. Let's consider a knot $\mathcal{K} \subset Y$ in a integral homology sphere. Fix a positive integer $n$, we denote by $\Sigma_n(Y, \mathcal{K})$ the $n$-fold cyclic cover of $Y$ branched along $\mathcal{K}$, and $\tau_n: \Sigma_n(Y, \mathcal{K}) \to \Sigma_n(Y, \mathcal{K})$ the covering translation. \cite[Proposition 6.1]{LRS20} tells us that the mapping torus $X_n(Y, \mathcal{K})$ of $\Sigma_n(Y, \mathcal{K})$ under $\tau_n$ is an integral homology $S^1 \times S^3$. When $\Sigma_n(Y, \mathcal{K})$ is a rational homology sphere, $X_n(Y, \mathcal{K})$ satisfies the non-degenerate condition (\ref{e5.16}) automatically. Denote by $\mathcal{K}_n \subset \Sigma_n(Y, \mathcal{K})$ the preimage of $\mathcal{K}$ in the branched cover, and $\mathcal{T}_n \subset X_n(Y, \mathcal{K})$ the mapping torus of $\mathcal{K}_n$ under $\tau_n$. 

\begin{prop}\label{p1.9}
With the notations as above, let $\alpha \in (0, 1/2)$ be a holonomy parameter so that $e^{-4\pi i (\alpha+j)/n}$ is not a root of the Alexander polynomial $\Delta_{(Y, \mathcal{K})}$ for each $j=0 , ..., n-1$. Then 
\[
\sigma_{2\alpha}(X_n(Y, \mathcal{K})) = -\sum_{j=1}^{n-1} \sigma_{j/n}(Y, \mathcal{K}) + \sum_{j=0}^{n-1} \sigma_{2(\alpha+j)/n}(Y, \mathcal{K}),
\]
where $\sigma_{k/l}(Y, \mathcal{K})$ is the Levine-Tristram signature of the pair $(Y, \mathcal{K})$. 
\end{prop}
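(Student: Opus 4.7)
The plan is to reduce the computation to the classical rho-invariant/Levine--Tristram identity on $0$-surgery. I would first observe that the mapping-torus description of $X_n(Y,\mathcal{K})$ descends, after $0$-surgery along $\mathcal{T}_n$, to a mapping torus with fiber $N:=(\Sigma_n(Y,\mathcal{K}))_0(\mathcal{K}_n)$, so $N$ is a cross-section of $X_0(\mathcal{T}_n):=X_n(Y,\mathcal{K})_0(\mathcal{T}_n)$. Ruberman's definition then gives
\[
\sigma_{2\alpha}\bigl(X_n(Y,\mathcal{K})\bigr)=\rho_{\varphi_{2\alpha}}(N),
\]
where $\varphi_{2\alpha}:\pi_1(N)\to U(1)$ sends the meridian $\mu_n$ of $\mathcal{K}_n\subset\Sigma_n$ to $e^{-4\pi i\alpha}$. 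Next I would exhibit $N$ as an $n$-fold unbranched cyclic covering $p:N\to Y_0(\mathcal{K})$: over $Y\setminus\mathcal{K}$ this is the restriction of the unbranched part of $\Sigma_n\to Y$, and over the solid torus glued in during the $0$-surgery on $\mathcal{K}$ it is the standard $n$-fold cover of a solid torus along its core. The hypothesis that $\Sigma_n$ is a rational homology sphere guarantees that the $0$-framing of $\mathcal{K}$ lifts to the $0$-framing of $\mathcal{K}_n$, so the cover agrees with $(\Sigma_n)_0(\mathcal{K}_n)$ exactly, and the induced map $p_{\ast}:H_1(N)\to H_1(Y_0(\mathcal{K}))$ is multiplication by $n$.

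Next I would apply the Atiyah--Patodi--Singer formula for eta invariants under a finite covering: for any $U(1)$-character $\psi$ on $\pi_1(Y_0(\mathcal{K}))$ one has $\eta_{p^{\ast}\psi}(N)=\sum_{j=0}^{n-1}\eta_{\psi\otimes\chi_j}(Y_0(\mathcal{K}))$, where $\chi_j$ runs over the characters of the deck group $\mathbb{Z}/n$ pulled back via $\mu\mapsto e^{2\pi ij/n}$. Because $p_{\ast}\mu_n=n\mu$, the equation $\psi(\mu)^n=e^{-4\pi i\alpha}$ has exactly $n$ distinct solutions, which are precisely the $n$ lifts of $\varphi_{2\alpha}$; applied also to the trivial character on $N$, the formula yields a corresponding decomposition of $\eta(N)$. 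Subtracting the two decompositions and cancelling the trivial-character summand expresses $\rho_{\varphi_{2\alpha}}(N)$ as a sum of rho invariants of $Y_0(\mathcal{K})$ at the $n$ lifts of $\varphi_{2\alpha}$ minus a sum over the $n-1$ non-trivial characters of $\mathbb{Z}/n$.

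The remaining step is to convert each rho invariant into a Levine--Tristram signature via the classical identity $\rho_{\varphi_{2\beta}}(Y_0(\mathcal{K}))=\sigma_{2\beta}(Y,\mathcal{K})$. After unpacking the characters, the $n$ lifts $\psi\otimes\chi_j$ send $\mu$ to $e^{-4\pi i(\alpha+j)/n}$ and hence contribute $\sigma_{2(\alpha+j)/n}(Y,\mathcal{K})$, while the non-trivial characters of $\mathbb{Z}/n$ send $\mu$ to $e^{2\pi ij/n}$ and contribute $\sigma_{j/n}(Y,\mathcal{K})$, where one uses the symmetry $\sigma_{-t}=\sigma_t$ and the period-$1$ behavior of the Levine--Tristram signature to recognize the parameters. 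The non-degeneracy hypothesis that $e^{-4\pi i(\alpha+j)/n}$ avoid the roots of $\Delta_{(Y,\mathcal{K})}$ ensures each of the twisted signature operators on $Y_0(\mathcal{K})$ has trivial kernel, so the individual rho invariants are well-defined. The most delicate point is the second step: checking that the $0$-framing lifts correctly under $p$, which is where the rational homology sphere assumption on $\Sigma_n$ is essential; the subsequent matching of character parameters against the standard symmetries of the signature function is then a bookkeeping exercise.
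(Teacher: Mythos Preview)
Your approach is sound but takes a genuinely different route from the paper's. The paper does \emph{not} compute $\sigma_{2\alpha}(X_n(Y,\mathcal{K}),\mathcal{T}_n)$ directly from Ruberman's definition. Instead it argues in the reverse direction: it first verifies (in \S6.3) the gauge--theoretic identity
\[
\lambda_{FO}(X_n,\mathcal{T}_n,\alpha)=8n\lambda(Y)+\sum_{j=0}^{n-1}\sigma_{2(\alpha+j)/n}(Y,\mathcal{K})
\]
by identifying $\tau$-invariant irreducible singular flat connections on $(\Sigma_n,\mathcal{K}_n)$ with pull-backs from $(Y,\mathcal{K})$ at the $n$ holonomy parameters $\alpha'_j$ and invoking Herald's count, then combines this with the Lin--Ruberman--Saveliev formula for $\lambda_{FO}(X_n)$ and with \autoref{t1.6}--\autoref{t1.7} (which apply because $b_1$ of the cross-section is $1$) to read off $\sigma_{2\alpha}(X_n,\mathcal{T}_n)=\lambda_{FO}(X_n,\mathcal{T}_n,\alpha)-8\lambda_{FO}(X_n)$. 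So the paper's proof is an \emph{application} of its main theorems, whereas your argument is a self-contained 3-manifold computation via the APS covering formula---essentially the method the paper attributes to Ruberman. Each has its virtue: yours is elementary and independent of the Yang--Mills machinery, while the paper's serves to corroborate \autoref{t1.6}--\autoref{t1.7} against a known answer.

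One genuine gap in your bookkeeping: the assertion that ``the $n$ lifts $\psi\otimes\chi_j$ send $\mu$ to $e^{-4\pi i(\alpha+j)/n}$'' is not correct as stated. Since $p_\ast(\mu_n)=n\mu$, the $n$ distinct lifts of $\varphi_{2\alpha}\big|_N$ send $\mu$ to the $n$ distinct $n$-th roots of $e^{-4\pi i\alpha}$, namely $e^{-4\pi i\alpha/n}\cdot e^{2\pi i k/n}$ for $k=0,\dots,n-1$. The values $e^{-4\pi i(\alpha+j)/n}$, on the other hand, coincide in pairs when $n$ is even (their ratios are $(n/2)$-th roots of unity), so they do \emph{not} enumerate the lifts. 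Reconciling the sum $\sum_k\rho_{\psi\chi_k}(Y_0(\mathcal{K}))$ you actually obtain with the target expression $\sum_j\sigma_{2(\alpha+j)/n}(Y,\mathcal{K})$ therefore requires more than the symmetry $\sigma_{-t}=\sigma_t$ and period-$1$ behaviour; you will need to track exactly which Levine--Tristram parametrisation the paper is using (cf.\ the appeal to \cite[Proposition~2.3]{L92} in \S6.3) and match conventions carefully. The discrepancy is already visible for $n=2$, where your decomposition yields $\sigma_\alpha+\sigma_{\alpha+1/2}-\sigma_{1/2}$ while the stated formula reads $2\sigma_\alpha-\sigma_{1/2}$; these are not equal for a generic knot under the na\"{\i}ve period-$1$ convention. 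So the step you flag as ``a bookkeeping exercise'' is in fact the crux of the matter and deserves a precise treatment.
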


We note that in \cite{R20} Ruberman computed the signature invariant $\sigma_{2\alpha}(X_n(Y, \mathcal{K}))$ when $2\alpha = q/p^r$ with $p$ prime. It's straightforward to see that our computation agrees with his result \cite[Theorem 4.1]{R20} after unraveling the notations. However our proof goes through a different path by computing the singular Furuta-Ohta invariant. It was computed in \cite{LRS20} that 
\begin{equation}
\lambda_{FO}(X_n(Y, \mathcal{K})) = n\lambda(Y) + \frac{1}{8} \sum_{j=1}^{n-1} \sigma_{j/n}(Y, \mathcal{K}),
\end{equation}
where $\lambda(Y)$ is the Casson invariant of $Y$. As pointed out in \cite[Section 8]{E19}, the assumption of $\alpha$ in \autoref{p1.4} implies the non-degeneracy condition in \autoref{d5.2}. Thus all we need is to derive  
\begin{equation}\label{e1.7}
\lambda_{FO}(X, \mathcal{T}, \alpha) = 8n\lambda(Y) + \sum_{j=0}^{n-1} \sigma_{2(\alpha+j)/n}(Y, \mathcal{K}). 
\end{equation}
As communicated between the author and Echeverria, a correction of \cite[Theorem 46]{E19} will appear in a forthcoming version to establish the equality (\ref{e1.7}). For the sake of completeness, we supply a sketch of proof in \autoref{ss6.3} following the original proof of Echeverria's paper. 

We also deduce a surgery formula for the singular Furuta-Ohta invariant. Given a pair $(X, \mathcal{T})$ as before, one can perform $1/q$ surgery along the torus for an integer $q$ by gluing the tubular neighborhood $\nu(\mathcal{T})$ with another boundary diffeomorphism to its complement. We denote the resulting manifold by $X_{1/q}(\mathcal{T})$, and the core of $\nu(\mathcal{T}) = D^2 \times T^2$ in $X_{1/q}(\mathcal{T})$ by $\mathcal{T}_q$. We note that $X_{1/q}(\mathcal{T})$ is still a homology $S^1 \times S^3$ with $\mathcal{T}_q$ essentially embedded. In the case of $0$-surgery, we still denote by $\mathcal{T}_0$ the core of the gluing copy of $D^2 \times T^2$. Although $X_0(\mathcal{T})$ is a homology $T^2 \times S^2$, we can still count $\alpha$-singular instantons of the pair $(X_0(\mathcal{T}), \mathcal{T}_0)$, which gives us an invariant denoted by $\lambda_{FO}(X_0(\mathcal{T}), \mathcal{T}_0, \alpha)$. 

\begin{thm}\label{t1.10}
Let $\alpha \in (0, 1/2)$ be a non-degenerate holonomy parameter for the three pairs $(X, \mathcal{T})$, $(X_{1/q}(\mathcal{T}), \mathcal{T}_q)$, and $(X_0(\mathcal{T}), \mathcal{T})$. Then we have 
\[
\lambda_{FO}(X_{1/q}(\mathcal{T}), \mathcal{T}_q, \alpha) = \lambda_{FO}(X, \mathcal{T}, \alpha) + q \lambda_{FO}(X_0(\mathcal{T}), \mathcal{T}_0, \alpha). 
\]
\end{thm}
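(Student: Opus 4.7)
The plan is to decompose each of the three $4$-manifolds along the boundary $T^3 = \partial \nu(\mathcal{T})$ of a tubular neighborhood of the torus, apply a neck-stretching argument, and reduce the surgery formula to a homological identity in the flat moduli space of $T^3$. Write $E := X \setminus \Int \nu(\mathcal{T})$ for the common complement; each of $X$, $X_{1/q}(\mathcal{T})$, $X_0(\mathcal{T})$ is obtained from $E$ by gluing back a standard $F := D^2 \times T^2$ along a different slope $s \in \{\infty, 1/q, 0\}$ on $\partial \nu(\mathcal{T})$. The flat moduli space $\mathcal{R}(T^3)$ of $SU(2)$ representations of $\pi_1(T^3) = \Z^3$ is a $3$-orbifold $T^3/W$ parametrized by the three commuting boundary holonomies, and since $\alpha \in (0,1/2)$ is not central the locus relevant for $\alpha$-singular connections avoids the Weyl singularities.

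Following the setup of Echeverria \cite{E19} and the singular instanton framework of \cite{KM93}, I would first establish, for a metric on the filled manifold with a sufficiently long cylindrical neck $T^3 \times [-T, T]$, that the moduli space of degree zero irreducible $\alpha$-singular instantons is transversely cut out as the fibre product
\[
\mathcal{M}^*_\alpha(E) \times_{\mathcal{R}(T^3)} \mathcal{M}^*_\alpha(F_s)
\]
over the boundary-holonomy maps $r_E$ and $r_s$. The non-degeneracy assumption on $\alpha$ for all three pairs guarantees that $r_E$ has compact image disjoint from the problematic strata, so that $\lambda_{FO}$ reads off as a signed intersection number $[r_E(\mathcal{M}^*_\alpha(E))] \cdot [L_s]$ in $\mathcal{R}(T^3)$, where $L_s := r_s(\mathcal{M}^*_\alpha(F_s))$.

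On the filling side, $\mathcal{M}^*_\alpha(F_s)$ is naturally identified with the $2$-torus of commuting holonomies around the two longitudes of $T^2$, and $L_s \subset \mathcal{R}(T^3)$ is a $2$-cycle cut out by the condition that the holonomy around the filling curve $\gamma_s \in H_1(\partial \nu(\mathcal{T}))$ be fixed at $e^{-2\pi i \alpha}$. Since $\gamma_{1/q} = \gamma_\infty + q\, \gamma_0$ in $H_1(\partial \nu(\mathcal{T}))$, a direct Poincar\'e duality computation on the character $3$-orbifold gives the homological identity
\[
[L_{1/q}] = [L_\infty] + q\, [L_0] \in H_2\bigl(\mathcal{R}(T^3)\bigr).
\]
Pairing both sides with the fixed complement cycle $[r_E(\mathcal{M}^*_\alpha(E))]$ and using bilinearity of the intersection pairing yields the claimed surgery formula.

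The principal obstacle will be the analytic gluing theorem underlying the fibre-product description: one needs to show that every degree zero irreducible $\alpha$-singular instanton on the filled manifold, for the stretched metric, arises from a unique transverse pair on the complement and the filling, and that bubbling along $\mathcal{T}$ is excluded in degree zero, which should follow from the small-energy threshold for $\alpha$-singular instantons. A secondary issue is the treatment of reducibles on $X_0(\mathcal{T})$, which has $b_1 = 2$ and hence a positive-dimensional flat reducible locus; I would handle this by using a common holonomy perturbation supported in $E$, of the same type entering the definition of $\lambda_{FO}(X_0(\mathcal{T}), \mathcal{T}_0, \alpha)$, so that transversality on the complement and the $H_2$-identity above are preserved simultaneously for all three slopes. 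With these analytic inputs in place, the surgery formula becomes an honest signed equality.
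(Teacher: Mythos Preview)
Your strategy is essentially the paper's: decompose along $T^3$, use neck-stretching (\autoref{l5.6}) to write each $\lambda_{FO}$ as a signed count of intersections of the irreducible complement moduli space $\widetilde{\M}^*_\sigma(M_o)$ with a slope-dependent $2$-torus in the $T^3$ character variety, and then derive the relation from a null-homology among those tori. The paper works in the double cover $\tilde\chi(T^3)$ rather than the orbifold quotient, proves the case $q=1$ (\autoref{p4.8}) and iterates to general $q$; your direct identity $[L_{1/q}]=[L_\infty]+q[L_0]$ is an equally valid route once one passes to the double cover.

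The point your sketch glosses over is the assertion that $r_E(\mathcal{M}^*_\alpha(E))$ pairs \emph{homologically} with $[L_s]$. The irreducible moduli space $\widetilde{\M}^*_\sigma(M_o)$ is not a closed $1$-cycle: by the structure theorem \autoref{t5.5} (imported from \cite{M2}), it is an oriented $1$-manifold whose open arcs end at finitely many bifurcation points sitting on the reducible torus $\widetilde{\M}^{\Red}_\sigma(M_o)$. Non-degeneracy of $\alpha$ only guarantees that these ends miss the three particular slope tori $L_s$, not that there are no ends, so the intersection count is not a priori a function of $[L_s]\in H_2$ and your bilinearity step needs justification. The paper supplies this by choosing a bounding $3$-chain $\tilde{\mathcal V}$ for the (doubled, $J$-symmetrized) union of slope tori in $\tilde\chi(T^3)\setminus\tilde{\mathcal C}$, invoking \autoref{t5.5}(d),(e) to arrange transversality and avoid central representations, and reading the formula off from $\#\partial\bigl(\widetilde{\M}^*_\sigma(M_o)\cap\tilde\partial_+^{-1}(\tilde{\mathcal V})\bigr)=0$. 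You will need \autoref{t5.5} or an equivalent description of the ends of $\mathcal{M}^*_\alpha(E)$ to close your argument.
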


When $\alpha = 1/4$, Echeverria \cite{E19} expressed the singular Furuta-Ohta invariant $\lambda_{FO}(X, \mathcal{T}, \alpha)$ as the Lefschetz number of the induced cobordism map $I(W, \mathcal{S})$ on the singular instanton homology $I(Y, \mathcal{K})$ \cite{DS19}, where $\mathcal{S}$ is an embedded surface obtained by cutting $\mathcal{T}$ open along a cross-section $Y$ and $\mathcal{K}$ is one of its boundary components. So \autoref{t1.10} can be regarded as an evidence towards the existence of a surgery exact triangle relating the terms $I(Y, \mathcal{K})$, $I(Y_0(\mathcal{K}), \mathcal{K}_0)$, and $I(Y_1(\mathcal{K}), \mathcal{K}_1)$, where $\mathcal{K}_i$ means the core of the gluing copy $D^2 \times S^1$ in the surgery process. To relating the singular sets $\mathcal{K}_i \subset Y_i(\mathcal{K})$ using the $2$-handle cobordism, one can either use the orbifold set-up developed by Kronheimer-Mrowka \cite{KM19} or blow-up the singular points and use the usual set-up \cite{KM11a}, which we expect to be equivalent. For the $I^{\sharp}$-version defined by \cite{KM11}, the corresponding surgery exact triangle is expected to be identified with the by-pass exact triangle in sutured Floer homology \cite{BS18}. The author plans to explain this circle of ideas concretely in a future paper. 

Combining the surgery formula for the Furuta-Ohta invariant \cite[Theorem 1.2]{M2}, we get a surgery formula formula for the torus signature.

\begin{cor}\label{c1.11}
Suppose $\alpha \in (0, 1/2)$ satisfies the assumptions in \autoref{t1.6}. Then we have 
\[
\sigma_{2\alpha}(X_{1/q}(\mathcal{T}), \mathcal{T}_q) - \sigma_{2\alpha}(X, \mathcal{T}) = q\lambda_{FO}(X_0(\mathcal{T}), \mathcal{T}_0, \alpha) - 4qD^0_{w_{\mathcal{T}}}(X_0(\mathcal{T})),
\]
where $D^0_{w_{\mathcal{T}}}(X_0(\mathcal{T}))$ is the count of irreducible ASD $SO(3)$-instantons on the $SO(3)$-bundle over $X_0(\mathcal{T})$ with $p_1 = 0$ and $w_2= \PD [T_0]$. 
\end{cor}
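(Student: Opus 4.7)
The proof is a formal algebraic combination of three previously proved results in the paper together with the surgery formula for the ordinary Furuta-Ohta invariant from [M2, Theorem 1.2]. The plan is to trade each occurrence of $\sigma_{2\alpha}$ for a difference of singular and ordinary Furuta-Ohta invariants, and then apply independent surgery formulas to each of these two types of invariants.

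First, I would apply the combined conclusion of \autoref{t1.6} and \autoref{t1.7} to both pairs $(X, \mathcal{T})$ and $(X_{1/q}(\mathcal{T}), \mathcal{T}_q)$ to obtain
\begin{align*}
\sigma_{2\alpha}(X, \mathcal{T}) &= \lambda_{FO}(X, \mathcal{T}, \alpha) - 8\lambda_{FO}(X), \\
\sigma_{2\alpha}(X_{1/q}(\mathcal{T}), \mathcal{T}_q) &= \lambda_{FO}(X_{1/q}(\mathcal{T}), \mathcal{T}_q, \alpha) - 8\lambda_{FO}(X_{1/q}(\mathcal{T})).
\end{align*}
Subtracting these identities regroups the target expression into a difference of singular Furuta-Ohta invariants minus $8$ times a difference of ordinary Furuta-Ohta invariants. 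I would then invoke \autoref{t1.10} to convert the singular piece directly into $q\,\lambda_{FO}(X_0(\mathcal{T}), \mathcal{T}_0, \alpha)$, and [M2, Theorem 1.2] for the ordinary piece, which by design produces a term of the form $\tfrac{q}{2} D^0_{w_{\mathcal{T}}}(X_0(\mathcal{T}))$; the $-8$ prefactor on the ordinary difference then yields the claimed $-4q$ coefficient, and the formula follows by substitution.

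The main obstacle is hypothesis bookkeeping rather than substantive analysis. \autoref{t1.7} requires the non-degeneracy and vanishing-signature properties of the pairing $C'_{\alpha}$ on cross-sections of the $0$-surgered manifolds associated to \emph{both} $(X, \mathcal{T})$ and $(X_{1/q}(\mathcal{T}), \mathcal{T}_q)$, while \autoref{t1.10} requires non-degeneracy of $\alpha$ for the three pairs $(X, \mathcal{T})$, $(X_{1/q}(\mathcal{T}), \mathcal{T}_q)$, and $(X_0(\mathcal{T}), \mathcal{T}_0)$. Since the stated hypothesis refers only to \autoref{t1.6}, either the assumption must be strengthened or one must observe that the two $0$-surgered manifolds are related by a surgery along a null-homologous torus, so that the requisite non-degeneracy properties transfer from one to the other. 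Once this compatibility is in place, the derivation itself is purely formal.
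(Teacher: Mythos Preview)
Your approach is essentially the paper's: the corollary is presented there simply as the combination of \autoref{t1.10} with the surgery formula for $\lambda_{FO}$ from \cite[Theorem 1.2]{M2}, after converting $\sigma_{2\alpha}$ into $\lambda_{FO}(\cdot,\cdot,\alpha)-8\lambda_{FO}(\cdot)$ via \autoref{t1.6} (and \autoref{t1.7}).

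One refinement on your hypothesis discussion: the two $0$-surgered manifolds are not merely ``related by a surgery along a null-homologous torus'' --- they are \emph{literally the same} manifold $X_0(\mathcal{T})$, with the same induced primitive class and the same representation $\varphi_{2\alpha}$ (since the meridian of $\mathcal{T}_q$ is $\mu+q\lambda$, which becomes homologous to $\mu$ once $\lambda$ bounds in $X_0(\mathcal{T})$). The paper notes this explicitly just before \autoref{p4.8}. Consequently every hypothesis that refers only to $X_0(\mathcal{T})$ --- the non-degeneracy of $\alpha$ for $(X_0(\mathcal{T}),\mathcal{T}_0)$, and the conditions on $C'_\alpha$ from \autoref{t1.7} --- transfers automatically from $(X,\mathcal{T})$ to $(X_{1/q}(\mathcal{T}),\mathcal{T}_q)$. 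The only residual check is that $\alpha$ is non-degenerate for $(X_{1/q}(\mathcal{T}),\mathcal{T}_q)$ itself, which is part of the standing assumption once one reads ``the assumptions in \autoref{t1.6}'' as applying to the pair on each side.
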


\subsection*{Outline} 
We outline the content of this paper. In \autoref{s2}, we introduce the notion of meromorphic homotopy resolvent and prove \autoref{t1.1}. In \autoref{s3}, we develop the index theory for the ASD DeRham complex and prove \autoref{t1.3} and \autoref{p1.4}. In \autoref{s4}, we consider the periodic spectral flow for the ASD DeRham operator and prove \autoref{p1.5}. In \autoref{s5}, we review the definition of the singular Furuta-Ohta invariant, reformulate it as the count of finte energy instantons over manifolds with cylindrical end, and prove the surgery formula \autoref{t1.10}. In \autoref{s6}, we prove the equivalence of the signature invariants, i.e. \autoref{t1.6} and \autoref{t1.7}. 

\subsection*{Acknowledgment}
The author wishes to thank Mariano Echeverria, Claude LeBrun, Daniel Ruberman, Nikolai Saveliev for helpful discussions, and Simon Donaldson for his encouragement. He also wants to thank Donghao Wang for communications on the proof of \autoref{p2.10}.

\section{\large \bf An Index Jump Formula}\label{s2}

\subsection{\em Periodic Elliptic Complexes}\label{ss2.1} \hfill

\vspace{3mm}

Let $V$ be a closed smooth oriented $n$-manifold endowed with a primitive class $1_V \in H^1(V; \Z)$. We choose an embedded $(n-1)$-manifold $Y \subset V$ and a smooth function $f: V \to S^1$ satisfying the following:
\begin{enumerate}
\item $Y$ is a non-separating connected $(n-1)$-manifold Poincaré dual to $1_V$;
\item $f$ admits $Y$ as a regular level set with $df$ supported in a collar neighborhood of $Y$. 
\end{enumerate}
Cutting $V$ open along $Y$ results in a cobordism $W$ from $Y$ to itself. A cyclic cover of $V$ with respect to $1_V$ can be realized by gluing infinitely copies of $W$ together along one of their common boundaries, which we shall write as $\tilde{V}:= \cup_{n \in \Z} W_n$ with each $W_n$ standing for a copy of $W$. Moreover we denote $\tilde{V}_+ = \cup_{n \geq 0}W_n$ and $\tilde{V}_- = \cup_{n \leq -1}W_n$. Such a non-compact manifold $\tilde{V}$ will be referred to as a periodic $4$-manifold. Let's denote by $\pi: \tilde{V} \to V$ the projection map and $\tau: \tilde{V} \to \tilde{V}$ the generating covering transformation. For simplicity, we shall write $\tau(x) = x+1$ for $x \in \tilde{V}$ to indicate that $\tau$ is the translation towards the "positive" direction. We then fix a lift $\tilde{f}: \tilde{V} \to \R$ of $f$ so that $\tilde{f}(x+1) = \tilde{f} + 1$. 

Let $E_j$ be a sequence of finite rank complex vector bundles over $V$, $j=0, ..., n$, and $\partial^j: C^{\infty}(V, E_j) \to C^{\infty}(V, E_{j+1})$ a sequence of first order differential operators. The sequence 
\begin{equation}
0 \longrightarrow C^{\infty}(V, E_0) \xrightarrow{\partial^0} C^{\infty}(V, E_1) \xrightarrow{\partial^1} ... \xrightarrow{\partial^{n-1}} C^{\infty}(V, E_n) \longrightarrow 0
\end{equation}
is called an elliptic complex if $\partial^j \circ \partial^{j-1} = 0$ and the corresponding symbol complex is exact. Denote by $\tilde{E}_j:= \pi^* E_j$ the pull-back bundle over $\tilde{V}$. The corresponding sequence is referred to as a periodic elliptic complex:
\begin{equation}\label{e2.2}
0 \longrightarrow C^{\infty}(\tilde{V}, \tilde{E}_0) \xrightarrow{\partial^0} C^{\infty}(\tilde{V}, \tilde{E}_1) \xrightarrow{\partial^1} ... \xrightarrow{\partial^{n-1}} C^{\infty}(\tilde{V}, \tilde{E}_n) \longrightarrow 0. 
\end{equation}
Unlike the case of compact manifolds, the $L^2_k$-completion of (\ref{e2.2}) is not necessarily Fredholm, i.e. the corresponding homology is finitely dimensional. In order to overcome this difficulty, one needs to consider the completion with respect to weighted Sobolev spaces. 

Let $\delta: \tilde{V} \to \R$ be an arbitrary smooth function. Let's fix $k \geq n$ a positive integer. The $\delta$-weighted Sobolev space $L^2_{k, \delta}(\tilde{V}, \tilde{E})$ of sections on a periodic vector bundle $\tilde{E} \to \tilde{V}$ is defined to be the completion of $C^{\infty}_0(\tilde{V}, \tilde{E})$ with respect to the norm:
\begin{equation}
\|s\|_{L^2_{k, \delta}} := \left(\int_{\tilde{V}} e^{\delta \tilde{f}} \sum_{j=0}^k |\nabla^{(j)} s|^2 d\vol \right)^{1/2},
\end{equation}
where $\nabla^{(j)}$ is the $j$-th covariant derivative given by some pre-assigned periodic connection on $\tilde{E}$. The $L^2_{\delta}$-inner product is defined to be 
\begin{equation}
\langle s_1, s_2 \rangle_{L^2_{\delta}}:= \int_{\tilde{V}} (e^{\delta\tilde{f}/2}s_1, e^{\delta\tilde{f}/2}s_2) d \vol = \int_{\tilde{V}} e^{\delta\tilde{f}} s_1 \wedge \star \bar{s}_2. 
\end{equation}
Note that $\|s\|_{L^2_{k, \delta}}$ and $\|e^{-\delta\tilde{f}/2}s\|_{L^2_k}$ are commensurate. We will simply identify $L^2_k = e^{\delta\tilde{f}/2}L^2_{k, \delta}$, which suffices for our purpose. In most cases, the weight function $\delta$ is constant and will be thought of as a real number. Then we get a periodic elliptic complex of bounded differential maps under the $L^2_{k, \delta}$-completion:
\begin{equation}\tag{$E_{\delta}(\tilde{V})$}
0 \longrightarrow L^2_{k, \delta}(\tilde{V}, \tilde{E}_0) \xrightarrow{\partial^0} L^2_{k-1, \delta}(\tilde{V}, \tilde{E}_1) \xrightarrow{\partial^1} ... \xrightarrow{\partial^{n-1}} L^2_{k-n, \delta}(\tilde{V}, \tilde{E}_n) \longrightarrow 0. 
\end{equation}

In order to understand the Fredholmness of the complex $E_{\delta}(\tilde{V})$, we recall the Fourier-Laplace transform introduced in the work of Taubes \cite{T87}. Let $s \in C^{\infty}_0(\tilde{V}, \tilde{E})$ be a section of complex periodic vector bundle $\tilde{E}$ and $z \in \C$ a complex number, the Fourier-Laplace transform of $s$ with respect to $z$ is 
\begin{equation}\label{e2.5}
\mathscr{F}_z(s)(x):=\hat{s}_z(x):= e^{z\tilde{f}(x)} \sum_{n= -\infty}^{\infty} e^{zn} s(x+n). 
\end{equation} 
The equivariance of $\tilde{f}$ tells us that $\hat{s}_z(x) = \hat{s}_z(x+1)$. Thus $\hat{s}_z$ defines a section of the bundle $E$ on the closed $n$-manifold $V$. Following the notation in \cite{MRS11}, we write $I(w) := \{ w+i\theta: 0 \leq \theta \leq 2\pi\}$ for the line segment in $\C$. Then the inversion of the Fourier-Laplace transform takes the form
\begin{equation}
s(x+n)= \frac{1}{2\pi i} \int_{I(w)} e^{-z(\tilde{f}(x)+n)} \hat{s}_z(\pi(x)) dz, \qquad \forall w \in \C
\end{equation}
Here we record two basic properties of the Fourier-Laplace transform for later use.

\begin{lem}[{\cite[Proposition 4.2 $\&$ Lemma 4.3 ]{MRS11}}]\label{l2.1}
Let's fix a weight $\delta > 0$ and a non-negative integer $k$. Then the following hold. 
\begin{enumerate}[label=(\greek*)]
\item The map $s \mapsto \mathscr{F}(s)|_{I(w)}$ extends uniquely to a linear isomorphism from $L^2_{k, \delta}(\tilde{V}, \tilde{E})$ to $L^2(I(w), L^2_k(V, E))$ for all $w \in \C$ with $\Rea w = \delta/2$.
\item Suppose $s \in L^2_{k, \delta}(\tilde{V}, \tilde{E})$ satisfies $\supp u \subset \tilde{V}_+$. Then $\hat{s}_z$is holomorphic in the half plane $\Rea z < \delta/2$. 
\end{enumerate}
\end{lem}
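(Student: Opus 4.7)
The strategy is to reduce (α) to Plancherel's theorem for Fourier series via a weight absorption, and (β) to a one-sided geometric series estimate.

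For (α), on $s \in C_0^\infty(\tilde V, \tilde E)$ set $t := e^{\delta\tilde f/2} s$, so that $\|s\|_{L^2_\delta} = \|t\|_{L^2}$. Writing $z \in I(w)$ as $z = \delta/2 + i\theta$ and using the equivariance $\tilde f(x+n) = \tilde f(x) + n$, a direct calculation gives
\[
\hat s_{\delta/2+i\theta}(x) = e^{i\theta \tilde f(x)} \sum_{n \in \Z} e^{i\theta n}\, t(x+n).
\]
For each $y \in V$ with a fixed lift $\tilde y$ in a fundamental domain $\mathcal D$ for the $\Z$-action, the unimodular prefactor drops out of the modulus and Parseval's identity yields
\[
\int_0^{2\pi} |\hat s_{\delta/2+i\theta}(y)|^2\, d\theta = 2\pi \sum_{n \in \Z} |t(\tilde y + n)|^2.
\]
Integrating over $V$ and unfolding via $\tilde V = \bigsqcup_n (\mathcal D + n)$ then yields the $L^2$-isometry $\|\mathscr F(s)|_{I(w)}\|_{L^2(I(w), L^2(V,E))}^2 = 2\pi \|s\|_{L^2_\delta}^2$. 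Surjectivity comes from the pointwise inversion formula displayed before the lemma, which is the $L^2$-inverse by orthonormality of $\{e^{in\theta}\}_n$. For $k \geq 1$, the periodic connection on $\tilde E$ is pulled back from one on $E$, so a covariant derivative may be moved inside the defining series at the cost of a zeroth-order, $z$-dependent term of the form $z\,df \otimes \hat s_z$; iterating and using the bounded range of $z$ on $I(w)$ gives equivalence of norms between $L^2_{k,\delta}(\tilde V, \tilde E)$ and $L^2(I(w), L^2_k(V, E))$, hence a bounded linear isomorphism.

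For (β), assume $\supp s \subset \tilde V_+$, so for each fixed $x$ the summand $s(x+n)$ vanishes once $n$ is sufficiently negative. Writing $z = \alpha + i\beta$ with $\alpha < \delta/2$, Cauchy--Schwarz against the weight $e^{\delta n}$ gives
\[
\sum_{n \geq N(x)} e^{\alpha n}\, |s(x+n)| \leq \Bigl(\sum_{n \geq N(x)} e^{(2\alpha - \delta) n}\Bigr)^{1/2} \Bigl(\sum_{n} e^{\delta n}\, |s(x+n)|^2\Bigr)^{1/2}.
\]
The first factor converges precisely when $\alpha < \delta/2$, while the second, integrated over a fundamental domain, is dominated by $\|s\|_{L^2_\delta}^2$. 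Applying the same estimate to the termwise $z$-derivatives, which merely introduce polynomial factors of $n$, shows that the series for $\hat s_z$ together with all its complex derivatives converges locally uniformly in $L^2_k(V, E)$ on $\{\Rea z < \delta/2\}$. Hence $\hat s_z$ is holomorphic there.

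The main technical point is the interchange of covariant derivatives with the $\Z$-sum in (α), which hinges on the bundle and connection on $\tilde E$ being pull-backs from $V$. Once this commutativity is in place, both parts reduce to standard arguments: Parseval for the isometry in (α), and absolute convergence of a one-sided geometric series for the holomorphicity in (β).
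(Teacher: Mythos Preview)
The paper does not prove this lemma; it is quoted from \cite{MRS11} with no argument supplied. Your proposal is correct and is essentially the standard proof one expects in that reference: absorb the weight to reduce (\(\alpha\)) to Parseval for the Fourier series on \(I(w)\), handle \(k\geq 1\) by commuting \(\nabla\) through the transform at the cost of the bounded zeroth-order term \(z\,df\), and for (\(\beta\)) use Cauchy--Schwarz against \(e^{\delta n}\) to dominate the one-sided tail by a geometric series on \(\{\Rea z<\delta/2\}\). One minor remark: in (\(\beta\)) your pointwise Cauchy--Schwarz followed by ``integrated over a fundamental domain'' is really the \(L^2(V)\)-norm bound \(\|\hat s_z\|_{L^2_k(V)}\leq C(\Rea z)\,\|s\|_{L^2_{k,\delta}}\), from which holomorphicity of \(z\mapsto \hat s_z\) as an \(L^2_k(V,E)\)-valued function follows by local uniform convergence of the partial sums; stating it that way avoids any ambiguity about whether the holomorphicity is pointwise in \(x\) or Banach-space valued.
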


Now we apply the Fourier-Laplace transform to the periodic elliptic complex (\ref{e2.2}) of compactly supported sections. After $L^2_k$-completion, we get a family of Fredholm differential complexes parametrized by $z \in \C$:
\begin{equation}\tag{$E_z(V)$}
0 \longrightarrow L^2_k(V, E_0) \xrightarrow{\partial^0_z} L^2_{k-1}(V, E_1) \xrightarrow{\partial^1_z} ... \xrightarrow{\partial^{n-1}_z} L^2_{k-n}(V, E_n) \longrightarrow 0,
\end{equation}
where $\partial^j_z = \partial^j - z \sigma^j$ and $\sigma^j = \sigma_{\partial^j}(df)$ is the evaluation of the symbol of $\partial^j$ against the $1$-form $df$. For simplicity, we shall write $L^2_{k-j}(V, E_j)$ as $C^j_z(V)$ for emphasizing the role as a chain complex depending on the variable $z$ rather than a single functional space. 

\begin{dfn}
We say the complex $E_z(V)$ is acyclic if $H^j(E_z(V)) = 0$ for all $j=0, ..., n$. $z$ is called a spectral point of the family $\{E_z(V)\}_{z \in \C}$ if $E_z(V)$ is not acyclic. The set of spectral points of $\{E_z(V)\}_{z \in \C}$ is denoted by $\Sigma(E)$. 
\end{dfn}

The importance of studying $E_z(V)$ is manifested by the following Fredholmness criterion proved by Taubes \cite{T87}.

\begin{lem}[{\cite[Lemma 4.3 $\&$ Lemma 4.5 ]{T87}}]\label{l2.3}
The elliptic complex $E_{\delta}(\tilde{V})$ is Fredholm if and only if $E_z(V)$ is acyclic for all $z$ with $\Rea z = \delta/2$. Moreover the spectral set $\Sigma(E)$ is discrete with no accumulation points unless $\Sigma(E) = \C$. 
\end{lem}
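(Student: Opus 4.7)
The overall plan is to use the Fourier--Laplace transform of \autoref{l2.1} to convert the periodic analysis on $\tilde V$ into spectral information about the family $\{E_z(V)\}_{z\in\C}$ on the closed manifold $V$. Under the unitary identification $L^2_{k,\delta}(\tilde V,\tilde E_j) \cong L^2(I(w), L^2_{k-j}(V,E_j))$ with $\Rea w = \delta/2$, the differential $\partial^j$ on $\tilde V$ becomes multiplication by the holomorphic family $z \mapsto \partial^j_z$ over $I(w)$. Consequently $E_\delta(\tilde V)$ should be Fredholm precisely when the family of complexes on $V$ admits a uniformly bounded Green operator along the compact parameter interval $I(w)$.

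For the ``acyclic implies Fredholm'' direction I would assume that $E_z(V)$ is acyclic for every $z$ with $\Rea z = \delta/2$ and form the Hodge Laplacian
\[
\Delta^j_z := (\partial^j_z)^*\partial^j_z + \partial^{j-1}_z(\partial^{j-1}_z)^*
\]
on $L^2_k(V, E_j)$; acyclicity is equivalent to each $\Delta^j_z$ being invertible. Since $\Delta_z$ depends continuously on $z$ and $I(w)$ is compact, the inverses $G_z = \Delta_z^{-1}$ admit a uniform operator norm bound. Pulling these inverses back through inverse Fourier--Laplace then yields a bounded operator $\widetilde G$ on $L^2_{k,\delta}(\tilde V, \tilde E_j)$ that inverts the periodic Hodge Laplacian modulo a compact remainder, giving Fredholmness of $E_\delta(\tilde V)$. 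For the converse, if some $z_0 \in I(w)$ has $H^j(E_{z_0}(V)) \neq 0$ with representative cocycle $s_0$, I would apply inverse Fourier--Laplace to $\phi(z)\,s_0$ for an infinite-dimensional family of bump functions $\phi \in C^\infty_c$ supported near $z_0$, producing an infinite-dimensional subspace of $H^j(E_\delta(\tilde V))$ and thereby contradicting Fredholmness.

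For discreteness of $\Sigma(E)$, I would repackage the complex into a single entire operator pencil on $V$. Choosing a complex-bilinear (rather than Hermitian) pairing on each $E_j$ so that each $\partial^j$ admits a holomorphic transpose $(\partial^j)^\dagger$, set
\[
L_z := (\partial + \partial^\dagger) - z\,(\sigma + \sigma^\dagger) : L^2_k(V, \oplus_j E_j) \to L^2_{k-1}(V, \oplus_j E_j).
\]
Then $z \mapsto L_z$ is an entire family of elliptic Fredholm operators on the closed $V$ of constant index, and one checks that invertibility of $L_z$ is equivalent to acyclicity of $E_z(V)$. The analytic Fredholm theorem applied to $L_z^{-1}$ then yields the required dichotomy: either $L_z$ is nowhere invertible, in which case $\Sigma(E) = \C$, or $\Sigma(E)$ is a discrete subset of $\C$ without accumulation points.

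The hardest step will be the uniform parametrix construction in the acyclic $\Rightarrow$ Fredholm implication: upgrading pointwise invertibility of $\Delta_z$ to a uniform operator bound in $z$ along $I(w)$, and then verifying that the Green operator assembled by inverse Fourier--Laplace has the correct mapping and compactness properties between the weighted Sobolev spaces on $\tilde V$. This is where smoothness of the inverse family in $z$ and tail-decay estimates at the ends of $\tilde V$ must be controlled simultaneously.
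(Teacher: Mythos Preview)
The paper does not supply its own proof of this lemma; it simply cites Taubes \cite[Lemma~4.3 and Lemma~4.5]{T87}. So you are reconstructing Taubes's argument, and the Fourier--Laplace reduction you describe is indeed the mechanism behind it. The ``acyclic $\Rightarrow$ Fredholm'' direction is essentially correct as you outline it. Two genuine gaps remain.

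\textbf{The converse.} The sections $\phi(z)\,s_0$ are \emph{not} cocycles for the transformed complex: $\partial^j_z\bigl(\phi(z)s_0\bigr)=\phi(z)\bigl(\partial^j_z-\partial^j_{z_0}\bigr)s_0=-\phi(z)(z-z_0)\sigma^j s_0$, which is nonzero off $z_0$. So you do not produce an infinite-dimensional subspace of $H^j(E_\delta(\tilde V))$. The correct use of this construction is as a Weyl sequence for the associated operator $Q_\delta(\tilde V)$: take $\phi_n$ supported in shrinking neighbourhoods of $z_0$ with $\|\phi_n s_0\|_{L^2}=1$; then $\|Q_\delta(\phi_n s_0)\|\to 0$ while no subsequence converges, so $Q_\delta$ (hence the complex) is not Fredholm.

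\textbf{Discreteness of $\Sigma(E)$.} The complex-bilinear transpose construction does not give you the equivalence you claim. With a Hermitian adjoint one has $\|L_z s\|^2=\|\partial_z s\|^2+\|\partial_z^* s\|^2$ by positivity, so $\ker L_z$ detects exactly the harmonic representatives and hence the cohomology of $E_z(V)$. With a bilinear pairing that positivity argument fails: $\langle L_z s,L_z s\rangle=\langle\partial_z s,\partial_z s\rangle+\langle\partial_z^\dagger s,\partial_z^\dagger s\rangle$ does not force the summands to vanish. Thus invertibility of your $L_z$ is not equivalent to acyclicity of $E_z(V)$, and applying analytic Fredholm to $L_z$ does not locate $\Sigma(E)$. (Nor can you simply keep the Hermitian $Q_z$ and appeal to real-analytic Fredholm theory: in two real variables the non-invertible locus of a real-analytic family can be a curve, not discrete.) A route that does work is a local Kuranishi reduction at any $z_o\in\Sigma(E)$, exactly as carried out in Step~1 of the proof of \autoref{p2.10} in the paper: near $z_o$ the complex $E_z(V)$ splits holomorphically into an invertible part and a finite-dimensional holomorphic complex $(H^*(E_{z_o}(V)),\alpha^*_z)$, and $\Sigma(E)$ near $z_o$ is the non-acyclic locus of the latter, which is the zero set of a determinant polynomial in $z$ and hence discrete.
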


It follows from the fact $\partial^j_{z + 2\pi i } = e^{2\pi i f(x)} \partial^j_z e^{-2\pi i f(x)}$ that the spectral set $\Sigma(E)$ is $2\pi i $-periodic. Note that $\partial^{j+1}_z \circ \partial^j_z = 0$ gives us 
\[
\partial^{j+1} \circ \partial^{j} - z(\partial^{j+1}\circ \sigma^j + \sigma^{j+1} \circ \partial^j) + z^2 \sigma^{j+1} \circ \sigma^j = 0.
\]
Since $z \in \C$ is arbitrary, we conclude that 
\begin{equation}\label{e2.7}
\partial^{j+1}\circ \sigma^j + \sigma^{j+1} \circ \partial^j=0 \text{ and } \sigma^{j+1} \circ \sigma^j =0.
\end{equation}
In particular $\sigma^j$ induces a map on the homology of $E_0(V)$, which we denote by $\bar{\sigma}^j: H^j(E(V)) \to H^{j+1}(E_(V))$. Note that the map $\bar{\sigma}^j$ only depends on the cohomology class $1_V \in H^1(V;\Z)$. For simplicity we shall write $E_0(V)$ as $E(V)$. Motivated by Taubes \cite[Theorem 3.1]{T87}, we adopt the following non-degeneracy definition.

\begin{dfn}\label{d2.4}
We say a pair $(E(V), 1_V)$ defined as above is non-degenerate if the induced symbol sequence on cohomoology 
\begin{equation*}
0 \longrightarrow H^0(E(V)) \xrightarrow{\bar{\sigma}^0} H^1(E(V)) \xrightarrow{\bar{\sigma}^1} ... \xrightarrow{\bar{\sigma}^{n-1}} H^n(E(V)) \longrightarrow 0.
\end{equation*}
is exact.
\end{dfn}

\begin{rem}
For the results concerning index theory of end-periodic complexes, we always assume the non-degeneracy of $(E(V), 1_V)$ in the statement. However, this is actually stronger than we need. Due to the alternative of the spectral set $\Sigma(E)$ in \autoref{l2.3}, one can replace the assumption $(E(V), 1_V)$ by the weaker assumption that $\Sigma(E) \neq \C$. On the other hand, all the examples we know related to the index theory come from non-degenerate pairs. 
\end{rem}

\begin{lem}[{\cite[Theorem 3.1]{T87}}]\label{l2.5}
Suppose $(E(V), 1_V)$ is a non-degenerate pair. Then $\Sigma(E) \neq \C$. In particular $\Sigma(E)$ is discrete with no accumulation points.
\end{lem}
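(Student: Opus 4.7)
This is [T87, Theorem 3.1], so I would be inclined to simply quote it and observe that the second assertion (discreteness without accumulation) then follows immediately from Lemma 2.3. To sketch how Taubes' proof could be reproduced in the present framework: by the dichotomy in Lemma 2.3 it suffices to produce a single $z_0 \in \C$ with $E_{z_0}(V)$ acyclic, which will be found in a punctured neighborhood of the origin via a perturbation argument.

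I would begin with Hodge theory for the Fredholm complex $E(V)$ on the closed manifold $V$: form the Hodge Laplacian $\Delta^j$, identify $H^j(E(V))$ with the finite-dimensional harmonic space $\mathcal{H}^j = \ker\Delta^j$, and observe that $\bar\sigma^j$ corresponds under this identification to $S^j := \pi_{j+1}\sigma^j|_{\mathcal{H}^j}$. Non-degeneracy then says $(\mathcal{H}^*, S)$ is an acyclic finite-dimensional complex, so by finite-dimensional Hodge theory $S^{j-1}(S^{j-1})^* + (S^j)^*S^j$ is strictly positive definite on each $\mathcal{H}^j$.

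The key calculation is the expansion of $\Delta^j_z$ around $z = 0$:
\[
\Delta^j_z = \Delta^j + zA_1 + \bar z A_2 + |z|^2\bigl(\sigma^{j-1}(\sigma^{j-1})^* + (\sigma^j)^*\sigma^j\bigr),
\]
together with the vanishing $\pi_j A_i \pi_j = 0$ (each $A_i$ contains a factor of $\partial$ or $\partial^*$ that annihilates harmonic sections). Thus for $\psi \in \mathcal{H}^j$ one has $\langle\psi,\Delta^j_z\psi\rangle = |z|^2(\|(\sigma^{j-1})^*\psi\|^2 + \|\sigma^j\psi\|^2)$, a quadratic form that is strictly positive on $\mathcal{H}^j\setminus\{0\}$ since its vanishing on $\psi$ would force both $(S^{j-1})^*\psi = 0$ and $S^j\psi = 0$, contradicting acyclicity of $(\mathcal{H}^*, S)$.

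The main obstacle is upgrading this pointwise positivity on the harmonic subspace to a strict positive lower bound on the small eigenvalues of $\Delta^j_z$ acting on all of $L^2$. Because $\Delta^j_z$ depends on both $z$ and $\bar z$, the family is merely real-analytic in $(\Rea z, \Ima z)$, so one must carefully apply a Feshbach/Schur-complement reduction through the reduced resolvent of $\Delta^j$ on $(\mathcal{H}^j)^\perp$ and verify that the resulting effective operator on $\mathcal{H}^j$ remains uniformly positive in $\arg z$. Granted this, the small eigenvalues of $\Delta^j_z$ scale like $|z|^2$ with positive coefficient, forcing $\ker\Delta^j_z = 0$ and hence $E_z(V)$ acyclic for all $z$ in a punctured neighborhood of the origin.
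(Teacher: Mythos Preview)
Your initial inclination is exactly what the paper does: it observes that the non-degeneracy of Definition~2.4 (exactness of the induced symbol sequence on cohomology) is equivalent to the two hypotheses of \cite[Theorem~3.1]{T87}---vanishing of $\ind E(V)$ and injectivity of each $\bar\sigma^j: H^j(E(V))/\im\bar\sigma^{j-1}\to H^{j+1}(E(V))$---cites that theorem, and invokes Lemma~2.3 for the discreteness statement. Your sketch of Taubes' perturbation argument via Hodge decomposition and Schur-complement reduction is sound and goes beyond what the paper records; the only point you would need to make precise is the one you already flag as the main obstacle, namely that the effective operator on $\mathcal{H}^j$ has leading term $|z|^2\bigl(S^{j-1}(S^{j-1})^*+(S^j)^*S^j\bigr)$ uniformly in $\arg z$, so that the reduced resolvent correction does not destroy positivity.
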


\begin{proof}
Due to \cite[Theorem 3.1]{T87}, it suffices to check that the index of $E(V)$ vanishes and $\bar{\sigma}^{j}: H^j(E(V))/ \im \bar{\sigma}^{j-1} \to H^{j+1}(E(V))$ is injective. These two conditions are equivalent to the exactness of the symbol sequence in \autoref{d2.4}. 
\end{proof}

We also consider end-periodic manifolds whose end is modeled on $\tilde{V}$ in the following sense. Suppose $M$ is a compact smooth $4$-manifold with boundary $\partial M = Y$. Then we form an end-periodic manifold $Z = M \cup \tilde{V}_+$ by attaching the positive end of $\tilde{V}$ to $M$ along their common boundary. We then extend $\tilde{f}|_{\tilde{V}_+}$ to a smooth function $\rho: Z \to \R$, and define $L^2_{k, \delta}(Z) = e^{-\delta\rho/2} L^2_k(Z)$. The corresponding $L^2_{k, \delta}$-completed elliptic complex is denoted by $E_{\delta}(Z)$. As pointed out in \cite[Proposition 4.1]{T87}, a slight modification of the argument in 
\cite{LM85} shows that $E_{\delta}(Z)$ is Fredholm if and only if $E_{\delta}(\tilde{V})$ is. 

Sometimes it would be more convenient to work with the differential operator associated to the differential complex by taking the adjoint of half of the complex. However, the dual space of $L^2_{k, \delta}$ is $L^2_{k, -\delta}$. So we adopt the following convention for the purpose of relating indices. The formal $L^2$-adjoint of $\partial^j$ is denoted by $\partial^{j, *}$. We write 
\begin{equation}
Q(V):= \partial \oplus \partial^* : \bigoplus_{j \text{ odd }} L^2_k(V, E_j) \longrightarrow \bigoplus_{j \text{ even }} L^2_{k-1}(V, E_j)
\end{equation}
for the associated elliptic operator, which satisfies $\ind Q(V) = \ind E(V)$. The formal $L^2_{\delta}$-adjoint of $\partial^j$ is given by 
\begin{equation}
\partial^{j, *}_{\delta}:= e^{-\delta \tilde{f}} \partial^{j,*} e^{\delta \tilde{f}} = \partial^{j, *} - \delta \sigma^{j, *},
\end{equation}
where $\sigma^{j, *}:= \sigma^{j, *}(d\tilde{f})$ is the symbol of the dual operator $\partial^{j, *}$ evaluated on $d\tilde{f}$. Then one can check easily that $\langle \partial^j v, u \rangle_{L^2_{\delta}} = \langle v, \partial^{j, *}u \rangle_{L^2_{\delta}}$. Similarly we write 
\begin{equation}
\partial^j_{\delta} :=e^{\delta \tilde{f}} \partial^j e^{-\delta \tilde{f}} = \partial^j - \delta \sigma^j.
\end{equation}
Due to the identification $L^2_{k, \delta} = e^{-\delta \tilde{f}/2}L^2_k$, the associated operator of the weighted elliptic complex $E_{\delta}(\tilde{V})$ has the form 
\begin{equation}
Q_{\delta}(\tilde{V}):= \partial_{\delta/2} \oplus \partial^*_{\delta/2}: \bigoplus_{j \text{ odd }} L^2_k(\tilde{V}, \tilde{E}_i) \longrightarrow \bigoplus_{j \text{ even }} L^2_{k-1}(\tilde{V}, \tilde{E}_i).
\end{equation}
Then we have $\ind Q_{\delta}(\tilde{V}) = \ind E_{\delta}(\tilde{V})$ when $E_{\delta}(\tilde{V})$ is Fredholm. 

\subsection{\em The Homotopy Resolvent}\label{ss2.2} \hfill
 
\vspace{3mm}

Let $(E(V), 1_V)$ be a non-degenerate pair as above. Then the associated holomorphic family $\{E_z(V)\}_{z \in \C}$ is non-acyclic only on a discrete set $\Sigma(E) \subset \C$. When the length of $E_V$ is two, we get a holomorphic family of elliptic operators $\partial^0_z: L^2_k(V, E_0) \to L^2_{k-1}(V, E_1)$ that are invertible away from $\Sigma(E)$. The inverse of such a family is proved to form a meromorphic family of operators in \cite[Theorem 4.6]{MRS11}, which plays an important role in deducing the dependence of the index of such elliptic operators on the weights in completing the functional spaces. To generalize the family of the inverse for operators to the case of differential complexes, we introduce the notion of homotopy resolvent. 

\begin{dfn}
Let $E_z=(C_z, \partial_z)$ be a holomorphic family of differential complexes parametrized by $z \in \C$. A homotopy resolvent of $(C_z, \partial_z)$ is a family of chain homotopies $\{R_z\}$ defined for 
\[
z \notin \Sigma(E):=\{ z \in \C: E_z \text{ is not acyclic } \}
\]
so that $R_z$ connects the identity map to the zero map, i.e. $R_z^j: C^j_z \to C^{j-1}_z$ satisfies 
\begin{equation}\label{r2.12}
\partial^{j-1}_z \circ R^j_z + R^{j+1}_z \circ \partial^j_z = \id.
\end{equation}
\end{dfn}

\begin{rem}
For all complexes we will be considering, $C_z$ is independent of $z$ as vector spaces, and only the differentials $\partial_z$ depend on $z$. Since the space of differentials of $C_z$ is a linear space, we can take derivatives with respect to $\bar{z}$ when a family is parametrized by an open subset in $\C$. When the derivative vanishes, the family $E_z=(C_z, \partial_z)$ is referred to as a holomorphic family. In general, a family of complexes parametrized by $\C$ being holomorphic means the vector spaces $C_z$ can be locally trivialized over $\C$ with holomorphic transition maps, and the differentials $\partial_z$ vary holomorphically under the local trivializations. 
\end{rem}

When the length of the differential complex $E_z$ is two, it follows from the definition that a homotopy resolvent $R_z$ is the inverse of $\partial_z$ defined for $z \in \C$ so that $\partial_z$ is invertible. Thus the homotopy resolvent is unique. In general, homotopy resolvents of a family $E_z$ are not unique, and the choices can be determined as follows.

Given a cochain complex $E=(C, \partial)$ graded by $\Z_{\geq 0}$, we can consider the corresponding $\Z$-graded object in the differential graded category of cochain complexes, say $\mathscr{E}=(\mathscr{C}, \mathscr{d})$, given by 
\[
\mathscr{E} = \bigoplus_{n \in \Z} \Hom(C, C[-n]) \qquad \mathscr{d}_n(f_n)|_{C^j} = f^{j+1}_n \circ \partial^j + (-1)^{n+1}\partial^{j-n}\circ f^j_n,
\]
where $C[-n]$ is the cochain complex obtained from $C$ by shifting its degree by $-n$. Then $\mathscr{E}$ is a chain complex, whose homology is denoted by $H_*(\mathscr{E})$. The following property of $\mathscr{E}$ is well-known in homological algebra. We supply a proof here since we cannot find an appropriate reference. 

\begin{lem}\label{l2.8}
Let $E=(C, \partial)$ and $\mathscr{E}=(\mathscr{C}, \mathscr{d})$ be two chain complexes related as above. Then $\mathscr{E}$ is acyclic if $E$ is acyclic. 
\end{lem}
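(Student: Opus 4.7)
The plan is to build a contracting homotopy on $\mathscr{E}$ from a contracting homotopy on $E$, exploiting the fact that any acyclic complex of vector spaces admits the latter.

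First I would construct $h^j \colon C^j \to C^{j-1}$ satisfying
\[
\partial^{j-1} h^j + h^{j+1} \partial^j = \id_{C^j}.
\]
Since $E$ is acyclic, $\ker \partial^j = \im \partial^{j-1}$ for every $j$. Choose a linear complement $K^j \subset C^j$ of $\ker \partial^j$, so $C^j = K^j \oplus \ker \partial^j$, and note that $\partial^j|_{K^j}\colon K^j \xrightarrow{\sim} \im \partial^j = \ker \partial^{j+1}$ is a linear isomorphism. Define $h^{j+1}$ as the composition of the projection $C^{j+1} \twoheadrightarrow \ker \partial^{j+1}$ along $K^{j+1}$ with $(\partial^j|_{K^j})^{-1}$. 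One then verifies the identity separately on the two summands $K^j$ and $\ker \partial^j$ of $C^j$, which is immediate from the definitions.

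Second, I would define the proposed contracting homotopy $H \colon \mathscr{E}^n \to \mathscr{E}^{n+1}$ by the formula
\[
H(f)^j := (-1)^n\, h^{j-n} \circ f^j, \qquad f \in \mathscr{E}^n.
\]
A direct calculation shows that $\mathscr{d}\circ H + H\circ \mathscr{d} = \id$. Indeed, using the formula $\mathscr{d}_n(f)|_{C^j} = f^{j+1}\partial^j + (-1)^{n+1}\partial^{j-n} f^j$, one finds
\[
(\mathscr{d}\, H f)^j = (-1)^n h^{j-n+1} f^{j+1} \partial^j + \partial^{j-n-1} h^{j-n} f^j,
\]
\[
(H\, \mathscr{d} f)^j = -(-1)^n h^{j-n+1} f^{j+1} \partial^j + h^{j-n+1} \partial^{j-n} f^j.
\]
The $f^{j+1}\partial^j$ terms cancel, and the remaining two combine via the contracting identity for $h$ at level $j-n$ to give $f^j$.

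Finally I would conclude: every cycle $f \in \mathscr{E}^n$ satisfies $f = \mathscr{d}(H f)$, so $H_*(\mathscr{E}) = 0$, i.e.\ $\mathscr{E}$ is acyclic. The main technical hurdle is really just the sign bookkeeping in the middle step and, at the level of choices, the construction of $h$; the latter is the standard homological algebra lemma that an acyclic complex of vector spaces is contractible, which relies only on splitting short exact sequences of vector spaces and so does not require any topological structure on the $C^j$.
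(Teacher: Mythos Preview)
Your proof is correct and takes a different, more streamlined route than the paper. The paper proves acyclicity by an inductive diagram chase: given a cycle $f \in \mathscr{C}_n$, it builds $g \in \mathscr{C}_{n+1}$ with $\mathscr{d}_{n+1}(g) = f$ degree by degree in $j$, using at each step a chosen splitting $C^j = \im \partial^{j-1} \oplus (\im \partial^{j-1})^{\perp}$ to invert $\partial^j$ on the complementary summand. Your argument packages exactly the same splittings into a single contracting homotopy $h$ on $E$, and then observes that post-composition by $h$ (with the Koszul sign $(-1)^n$) gives an explicit contracting homotopy on the endomorphism complex $\mathscr{E}$. This is the standard fact that $\underline{\Hom}(E,E)$ inherits a null-homotopy from one on either factor. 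The payoff of your approach is brevity and an explicit global formula for $H$; the paper's step-by-step construction, while longer, mirrors the inductive pattern used later in the proof of Proposition~2.10 when building the local meromorphic homotopy resolvent, which may explain the author's preference for it.
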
 

\begin{proof}
Suppose $f \in \mathscr{C}_n$ satisfies $\mathscr{d}_n(f) = 0$. We wish to construct $g \in \mathscr{C}_{n+1}$ so that $\mathscr{d}_{n+1}(g) = f$, which would imply $H_n(\mathscr{E}) = 0$. The construction of $g$ is based on induction. Since $g \in \Hom(C, C[-n-1])$ and $C$ is graded by $\Z_{\geq 0}$, we have $g^j = 0$ for $j \leq n$. For each vector space $C^j$, we choose an arbitrary splitting $C^j = \im \partial^{j-1} \oplus (\im \partial^{j-1})^{\perp}$. When $j=n+1$, we note that 
\[
0 = \mathscr{d}_n(f)|_{C^{n-1}} = f^n \circ \partial^{n-1}. 
\]
Since $\partial^n: (\im \partial^{n-1})^{\perp} \to \im \partial^n$ is an isomorphism, we can set $g^{n+1}: C^{n+1} \to C^0$ as
\[
g^{n+1}(a) = 
\begin{cases}
0 & \text{ if $a \in (\im \partial^n)^{\perp}$} \\
f^n \circ (\partial^n)^{-1} & \text{ if $a \in \im \partial^n$}
\end{cases}
\]
which satisfies $f^n = g^{n+1} \circ \partial^n$. Now suppose we have constructed $g^j$ for some $j > n+1$. Then we know 
\[
\begin{split}
0 = \mathscr{d}_nf|_{C^{j-1}} & = f^j \circ \partial^{j-1} + (-1)^{n+1} \partial^{j-1-n} \circ f^{j-1} \\
& = f^j \circ \partial^{j-1} + (-1)^{n+1} \partial^{j-1-n} \circ g^{j} \circ \partial^{j-1}. 
\end{split}
\]
Applying the above argument again, we can find $g^{j+1}: C^{j+1} \to C^{j-n}$ so that 
\[
g^{j+1} \circ \partial^j = f^j + (-1)^{n+1} \partial^{j-1-n} \circ g^j,
\]
which is equivalent to $\mathscr{d}_n(g)|_{C^j} = f^j$. This completes the proof. 
\end{proof}

With the help of \autoref{l2.8}, we can give a description for the choices of homotopy resolvents. 

\begin{cor}\label{c2.9}
Let $E=(C, \partial)$ be an acyclic $\Z_{\geq 0}$-graded cochain complex. Then the space of chain homotopies from $\id$ to $0$ can be identified with $\im \mathscr{d}_2 = \ker \mathscr{d}_1 \subset \mathscr{C}_1$. 
\end{cor}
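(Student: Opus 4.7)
The plan is to recognize the chain homotopy condition \eqref{r2.12} as the single linear equation $\mathscr{d}_1(R) = \id_C$ in $\mathscr{C}_0$, and then read off the structure of its solution set directly from the acyclicity of $\mathscr{E}$ provided by \autoref{l2.8}.

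The first step is a book-keeping check. Substituting $n=1$ into the definition of $\mathscr{d}_n$ gives
\[
\mathscr{d}_1(R)|_{C^j} = R^{j+1}\circ \partial^j + \partial^{j-1}\circ R^j,
\]
so condition \eqref{r2.12} is exactly the equation $\mathscr{d}_1(R) = \id_C$, and the set of chain homotopies from $\id$ to $0$ is precisely the fiber $\mathscr{d}_1^{-1}(\id_C) \subset \mathscr{C}_1$. A parallel computation at $n=0$ shows $\mathscr{d}_0(\id_C)|_{C^j} = \partial^j - \partial^j = 0$, so $\id_C$ lies in $\ker \mathscr{d}_0$ and is therefore a plausible candidate for the image of $\mathscr{d}_1$.

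The second step is to apply \autoref{l2.8} to the acyclic complex $E$ to conclude that $\mathscr{E}$ is acyclic. From $H_0(\mathscr{E}) = 0$ one obtains $\id_C \in \ker \mathscr{d}_0 = \im \mathscr{d}_1$, which produces at least one chain homotopy $R_0$ and shows $\mathscr{d}_1^{-1}(\id_C)$ is non-empty. Fixing such a basepoint $R_0$, the map $R \mapsto R - R_0$ gives a bijection from $\mathscr{d}_1^{-1}(\id_C)$ onto $\ker \mathscr{d}_1$; a second invocation of acyclicity, namely $H_1(\mathscr{E}) = 0$, rewrites this kernel as $\im \mathscr{d}_2$, which delivers the claimed identification.

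I do not anticipate any real obstacle: once \autoref{l2.8} is in hand, the entire corollary is a formal manipulation in the graded Hom-complex $\mathscr{E}$. The only mild subtlety worth recording is that the identification is affine rather than canonical, as it depends on the choice of basepoint $R_0$; intrinsically, the space of chain homotopies is a torsor over the linear subspace $\im \mathscr{d}_2 = \ker \mathscr{d}_1$.
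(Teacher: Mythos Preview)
Your proof is correct and follows essentially the same approach as the paper: both recognize the chain homotopy condition as $\mathscr{d}_1(R)=\id$, use $\id\in\ker\mathscr{d}_0$ together with \autoref{l2.8} to produce a basepoint, and then identify the solution set with $\ker\mathscr{d}_1=\im\mathscr{d}_2$ via acyclicity of $\mathscr{E}$. Your added remark that the identification is only affine (a torsor over $\ker\mathscr{d}_1$) is a worthwhile clarification that the paper leaves implicit.
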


\begin{proof}
We start with the existence of such chain homotopies. Note that $\id \in \ker \mathscr{d}_0$. By \autoref{l2.8}, one can find $R \in \mathscr{C}_1$ so that $\mathscr{d}_1(R) = \id$. The difference of two such chain homotopies lies in $\ker \mathscr{d}_1$, which is identified with $\im \mathscr{d}_2$. 
\end{proof}

Now we move back to consider the elliptic complex $(E_z(V), \partial_z)$ defined over the closed $4$-manifold $V$. We wish to construct a homotopy resolvent for this family. Presumably there are plenty of choices by applying \autoref{c2.9} pointwise to $z \notin \Sigma(E)$. So we would like to impose further restriction on this construction to get better properties. The following result is one of the key steps in proving the index change formula, whose proof is based on discussion with Donghao Wang. 

\begin{prop}\label{p2.10}
Let $E(V)$ be an elliptic complex associated to a closed $4$-manifold $V$ and a primitive class $1_V \in H^1(V;\Z)$ so that $(E(V), 1_V)$ is non-degenerate. Then the Laplace-Fourier transformed family $(E_z(V), \partial_z)$ admits a meromorphic homotopy resolvent $R_z$ whose poles consist of elements in the spectral set $\Sigma(E)$. Moreover $R_{z+2\pi i } = e^{2\pi i f} R_z e^{-2\pi i f}$.
\end{prop}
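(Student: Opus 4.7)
The plan is to construct $R_z$ by a Hodge-theoretic formula using a holomorphic stand-in for the $L^2$-adjoint $\partial_z^*$, which itself depends antiholomorphically on $z$. I would introduce the \emph{algebraic adjoint}
\[
\partial_z^\# := \partial^* - z\sigma^*,
\]
where $\partial^*$ and $\sigma^*$ denote the $L^2$-adjoints at $z=0$. Taking formal adjoints of the relations (\ref{e2.7}) yields $(\partial^*)^2 = 0$, $(\sigma^*)^2 = 0$, and $\partial^*\sigma^* + \sigma^*\partial^* = 0$, from which $(\partial_z^\#)^2 = 0$ follows by a direct expansion. Thus $(C, \partial_z^\#)$ is itself a holomorphic family of chain complexes running in the opposite direction to $E_z(V)$, and $\partial_z^\#$ agrees with the genuine adjoint $\partial_z^*$ precisely when $z \in \R$.

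Next I would form the second-order elliptic operator
\[
\Delta_z^\# := \partial_z \partial_z^\# + \partial_z^\# \partial_z
\]
on each $C^j$, and take
\[
R_z := \partial_z^\# \circ (\Delta_z^\#)^{-1}.
\]
The operator $\Delta_z^\#$ has the standard Hodge--Laplace symbol, depends holomorphically on $z$, commutes with both $\partial_z$ and $\partial_z^\#$ (since $\partial_z^2 = 0 = (\partial_z^\#)^2$), and has Fredholm index zero by continuous deformation from the self-adjoint $\Delta_0 = \partial\partial^* + \partial^*\partial$. Using the commutation relation, one checks immediately that $\partial_z R_z + R_z \partial_z = (\partial_z\partial_z^\# + \partial_z^\# \partial_z)(\Delta_z^\#)^{-1} = \id$, so $R_z$ is a homotopy resolvent on the locus where $\Delta_z^\#$ is invertible.

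To upgrade to meromorphicity, \autoref{l2.5} produces some $z_0 \in \R \setminus \Sigma(E)$; at $z_0$ one has $\partial_{z_0}^\# = \partial_{z_0}^*$, so $\Delta_{z_0}^\#$ is the usual positive self-adjoint Hodge Laplacian whose kernel consists of harmonic representatives of $H^*(E_{z_0}(V)) = 0$. The analytic Fredholm theorem then extends $(\Delta_z^\#)^{-1}$ meromorphically to all of $\C$. The main obstacle is to locate the poles of $R_z$ inside $\Sigma(E)$, since $\Delta_z^\#$ may a priori be singular on a set strictly larger than $\Sigma(E)$. My approach is a Laurent-expansion argument at a candidate pole $z_1 \notin \Sigma(E)$: substituting $\partial_z = \partial_{z_1} - (z-z_1)\sigma$ into $\partial_z R_z + R_z \partial_z = \id$ and matching the coefficient of $(z-z_1)^{-m}$ for the leading term $R_{-m}$ forces $\partial_{z_1} R_{-m} + R_{-m} \partial_{z_1} = 0$, so $R_{-m}$ is a chain map of degree $-1$ on the acyclic complex $E_{z_1}(V)$, hence chain-homotopic to zero by \autoref{l2.8}; by \autoref{c2.9} one may modify $R_z$ by a meromorphic section of $\ker\mathscr{d}_1 = \im\mathscr{d}_2$ to cancel $R_{-m}$, and iterating over the finitely many negative Laurent coefficients then over the discrete set of spurious poles produces a global meromorphic $R_z$ whose poles lie in $\Sigma(E)$. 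Finally, the required $2\pi i$-periodicity $R_{z+2\pi i} = e^{2\pi i f} R_z e^{-2\pi i f}$ propagates through the construction from the conjugation identities $e^{2\pi i f}\partial e^{-2\pi i f} = \partial - 2\pi i\sigma$ and $e^{2\pi i f}\partial^* e^{-2\pi i f} = \partial^* - 2\pi i\sigma^*$, which give $\partial_{z+2\pi i} = e^{2\pi if}\partial_z e^{-2\pi if}$ and $\partial_{z+2\pi i}^\# = e^{2\pi if}\partial_z^\# e^{-2\pi if}$, and hence the same conjugation for $\Delta_z^\#$ and for $R_z$.
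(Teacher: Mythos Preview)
Your Hodge-theoretic approach via the algebraic adjoint $\partial_z^\# = \partial^* - z\sigma^*$ is attractive, but it fails at the periodicity step. The conjugation identity you quote for $\partial^*$ has the wrong sign: for any $w\in\C$ one has $e^{wf}\partial^* e^{-wf} = \partial^* + w\sigma^*$, not $\partial^* - w\sigma^*$. (You can see this already on $S^1$ with $d^* = -\partial_\theta$ and $\sigma^* = \iota_{d\theta}$, or by taking the Hermitian adjoint of $e^{2\pi i f}\partial e^{-2\pi i f} = \partial - 2\pi i\sigma$ and remembering that $\overline{2\pi i} = -2\pi i$.) Consequently $e^{2\pi i f}\partial_z^\# e^{-2\pi i f} = \partial_{z-2\pi i}^\#$, the \emph{opposite} shift to that of $\partial_z$, so $\Delta_z^\# = \partial_z\partial_z^\# + \partial_z^\#\partial_z$ and hence $R_z = \partial_z^\#(\Delta_z^\#)^{-1}$ do \emph{not} satisfy $R_{z+2\pi i} = e^{2\pi i f}R_z e^{-2\pi i f}$. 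The alternative choice $\partial^* + z\sigma^*$ does have the right periodicity, but then it agrees with the genuine adjoint $\partial_z^*$ only at $z=0$, which typically lies in $\Sigma(E)$, so your invertibility argument for $\Delta_z^\#$ breaks down. Since the periodicity is essential in the application (it is what makes the horizontal contour integrals cancel in \autoref{t2.12}), this is a genuine obstruction rather than a cosmetic issue.

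There is a second, softer gap: the spurious-pole removal you sketch is a Mittag--Leffler problem. The zero set of $\det(\Delta_z^\#)$ can be an infinite discrete subset of $\C$, and each local correction $\mathscr{d}_2^{(z)}\bigl(S/(z-z_1)^m\bigr)$ is a global meromorphic section, so summing over all spurious poles requires a convergence argument you have not supplied. By contrast, the paper sidesteps both difficulties at once: it never writes a global formula for $R_z$, but instead builds local meromorphic resolvents near each spectral point by a block-triangularization of $\partial_z$ (reducing to a finite-dimensional complex on $H^*(E_{z_o}(V))$), and then patches them on the quotient cylinder $A = \C/2\pi i\Z$ using the vanishing of $H^{0,1}(A, \Hom(\mathcal{C},\mathcal{C}[-n]))$. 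Working on $A$ encodes the periodicity from the outset, and the $\bar\partial$-argument replaces the infinite Mittag--Leffler sum.
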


\begin{proof}
The proof is divided into two parts. The first part is concerned with a local construction of the meromorphic homotopy resolvent near a spectral point of $E_z(V)$. The second part is devoted to patching up these local meromorphic homotopy resolvents by treating it as a Mittag-Leffler type problem. 

{\bf \em \large{Step 1. }} Let's start with the local construction. Fix a spectral point $z_o \in \Sigma(E)$. Due to the non-degeneracy assumption of $(E(V), 1_V)$, $z_o$ is isolated from the rest points in $\Sigma(E)$. For each $C_{z_o}^j = L^2_{k-j}(V, E_j)$, we fix a splitting of the form $C_{z_o}^j=C^j_1 \oplus C^j_2 \oplus C^j_3$ so that $C^j_1 = \im \partial^{j-1}_{z_o}$, $C^j_2 \simeq H^j(E_{z_o}(V))$, and $C^j_3 = (\ker \partial_{z_o}^j)^{\perp}$. With respect to this splitting, only the $(1,3)$-entry $\partial^j_{13}(z_o)$ of $\partial^j_{z_o}$ is nonvanishing, which is also an isomorphism. Appealing to the relation $\partial^{j+1}_{z_o} \circ \sigma^j + \sigma^{j+1} \circ \partial^j_{z_o} = 0$, we see that $\sigma^j=(\sigma^j_{kl})$ is represented by an upper triangular matrix. We then choose a disk neighborhood $U_o$ of $z_o$ so that $\partial^j_{13}(z_o) - (z-z_o) \sigma^j_{13}$ is invertible for all $j=0, ..., n$ and $z \in U_o$. Then $\partial^j_z$ is represented by an upper triangular matrix whose $(1,3)$-entry $\partial^j_{13}(z)$ is invertible for $z \in U_o$.  

We claim that one can apply isomorphisms $\phi^j_z: C^j_{z_o} \to C^j_{z_o}$ that depend holomorphically on $z \in U_o$ so that the matrix representing $\phi^{j+1}_z \circ \partial^j_z \circ (\phi^j_z)^{-1} : C^j_{z_o} \to C^{j+1}_{z_o}$ takes the form 
\[
a^j_z = 
\begin{pmatrix}
0 & 0 & a^j_{13}(z) \\
0 & a^j_{22}(z) & 0 \\
0 & 0 & 0 
\end{pmatrix}
\]
with $a^j_{13}(z)$ invertible. Indeed, since $\partial^j_{13}(z)$ is invertible, one can apply column operations making the $(1,1)$- and $(1, 2)$-entries zero. Combining the relation $\partial^{j+1}=0 \circ \partial^j_z = 0$ and the invertibility of $\partial^j_{13}(z)$, we see that the $(3,1)$- and $(3, 2)$-entries have to vanish as well. Then we apply row operations to eliminate the $(2, 3)$- and $(3,3)$-entries. Again, the relation $\partial^{j+1}_z\circ \partial^j_z = 0$ implies that the $(2,1)$-entry has to vanish. So only the $(2,2)$- and $(1,3)$-entries survive in this process, which gives the desired form above. Note that performing column and row operations corresponds to conjugating $\partial^j_z$ via isomorphisms of the domain $C^j_{z_o}$ and range $C^{j+1}_{z_o}$ which are represented by matrices with entries given by the product of $(\partial^j_{13}(z))^{-1}$ and other entries of $\partial^j_z$. Since one can write $\partial^j_{13}(z) = \partial^j_{13}(z_o) - (z-z_o)\sigma^j_{13}$ with $\partial^j_{13}(z_o)$ invertible and $\sigma^j_{13}$ is a compact operator, we see that $(\partial^j_{13}(z))^{-1}$ depends holomorphically on $z$, which further implies that the isomorphisms on $C^j_{z_o}$ depend holomorphically on $z$. 

Now it suffices to construct a meromorphic homotopy resolvent $P^j_z$ for $a^j_z = \phi^{j+1}_z \circ \partial^j_z \circ (\phi^j_z)^{-1}$. Then $R^j_z = (\phi^{j-1}_z)^{-1} \circ P^j_z \circ \phi^j_z$ is a meromorphic homotopy resolvent for $\partial^j_z$. Let's write $\alpha^j_z:= a^j_{22}(z)$. Then $(H^*(E_{z_o}(V)), \alpha^*_z)$ forms a holomorphic family of finite dimensional cochain complexes that are acyclic when $z \neq z_o$. Tracing back the construction, the differential $\alpha^j_z$ takes the form
\[
\begin{split}
\partial^j_{22}(z) - & \partial^j_{12}(z) (\partial^j_{13}(z))^{-1} \partial^j_{23}(z) 
= -(z-z_o)\sigma^j_{22} - (z-z_o)^2\sigma^j_{12}\left(\partial^j_{13}(z_o) - (z-z_o)\sigma^j_{13}\right)^{-1} \sigma^j_{23} \\
& = -(z-z_o)\sigma^j_{22} - (z-z_o)\sigma^j_{12}\left ((z-z_o)^{-1} - (\partial^j_{13}(z_o))^{-1}\sigma^j_{13}\right)^{-1}(\partial^j_{13}(z_o))^{-1} \sigma^j_{23}
\end{split}
\]
Since operator $\sigma^j_{13}$ comes from the zeroth order differential operator given by the symbol of $\partial^j$, the operator $(\partial^j_{13}(z_o))^{-1}\sigma^j_{13}$ is compact, whose resolvent $(\lambda - (\partial^j_{13}(z_o))^{-1}\sigma^j_{13})^{-1}$ is thus meromorphic in $\lambda$. Substituting $\lambda$ by $(z-z_o)^{-1}$, we see that $\alpha^j_z$ is expressed as a polynomial of $z-z_o$. 

With this description, we can now construct a meromorphic homotopy resolvent $Q^j_z$ for $\alpha^j_z$ by induction on $j$. When $j=1$, we have the short exact sequence:
\begin{equation}\label{e2.13}
0 \to H^0(E_{z_o}(V)) \xrightarrow{\alpha^0_z} H^1(E_{z_o}(V)) \xrightarrow{\alpha^1_z} \im \alpha^1_z \to 0. 
\end{equation}
Let's write $U^c_o = U^o\backslash{z_o}$ for the punctured disk around $z_o$. Since $\alpha^j_z$ is holomorphic in $z$, $\im \alpha^1_z$ forms a holomorphic bundle over $U^c_o$ as we vary $z$. So (\ref{e2.13}) can be thought as a short exact sequence of holomorphic bundles over $U^c_o$. All possible extensions of the trivial bundle $H^0(E_{z_o}(V)) \times U^c_o$ by $\im \alpha^1_z$ are parametrized by $H^1(U^c_o, H^0(E_{z_o}(V)) \otimes (\im \alpha^1_z)\smvee)$ which vanishes due to the fact that $U^c_o$ is a connected non-compact Riemann surface. Thus we can find a holomorphic splitting $H^1(E_{z_o}(V)) \times U^c_o = A^1_z \oplus B^1_z$ with $A^1_z=\im \alpha^0_z$ and $B^1_z$ both holomorphic bundles over $U^c_o$. The corresponding splitting map is the homotopy resolvent 
\begin{equation}
Q^1_z(a)=
\begin{cases}
(\alpha^0_z)^{-1} a & \text{ if $a \in A^1_z$} \\
0 & \text{ if $a \in B^1_z$}
\end{cases}
\end{equation}
Since $\alpha^0_z: H^0(E_{z_o}(V)) \to A^1_z$ is a bundle isomorphism that can be expressed as a polynomial in $z$, it's inverse is a rational function in $z$. This shows that the homotopy resolvent $Q^1_z$ is meromorphic. Inductively at the $j$-th stage for $j > 1$, we get the short exact sequence:
\[
0 \to B^{j-1}_z \xrightarrow{\alpha^{j-1}_z} H^j(E_{z_o}(V)) \xrightarrow{\alpha^j_z} \im \alpha^j_z \to 0,
\]
which enables us to choose a holomorphic splitting $H^j(E_{z_o}(V)) = A^j_z \oplus B^j_z$ with $A^j_z = \im \alpha^{j-1}_z$, and the splitting map gives the homotopy resolvent $Q^j_z$ which is meromorphic due to the same reason above. 

Recall that $a^j_{13}(z) = \partial^j_{13}(z_o) - (z-z_o) \sigma^{13} = \partial^j_{13}(z_o)(1 - (z-z_o)(\partial^j_{13}(z_o))^{-1}\sigma^j_{13})$ whose inverse is meromorphic due to the compactness of $\sigma^j_{13}$. Then it's straightforward to check that $P^j_z:=(a^j_{13}(z))^{-1} \oplus Q^j_z$ is a meromorphic homotopy resolvent of $a^j_z$. In terms of matrices, $a^j_z$ and $P^j_z$ take the following form:
\[
a^j_z=
\begin{pmatrix}
0 & 0 & a^j_{13}(z) \\
0 & a^j_{22}(z) & 0 \\
0 & 0 & 0  
\end{pmatrix}
\qquad 
P^j_z:=
\begin{pmatrix}
0 & 0 & 0 \\
0 & Q^j_z & 0 \\
(a^j_{13}(z))^{-1} & 0 & 0
\end{pmatrix}.
\]
This completes the local construction. 

{\bf \em \large{Step 2. }} Next we move on to consider the global construction. To better cooperate with the $2\pi i$-periodicity of $\partial^j_z$, we consider the infinite cylinder $A:= \C/ \sim$ with $z \sim z+ 2\pi i$ and the holomorphic Hilbert bundles $\mathcal{C}^j:= \C \times L^2_{k-j}(V, E^j) / \sim$ with $(z, s) \sim (z+2\pi i , e^{2\pi i f} s e^{-2\pi if})$. Due to the $2\pi i$-periodicity of $\partial^j_z$, we get a holomorphic differential complex $(\mathcal{C}^*, \mathfrak{d}^*)$ consisting of holomorphic Hilbert bundles over $A$, where $\mathfrak{d}^j: \mathcal{C}^j \to \mathcal{C}^{j+1}$ is the induced bundle map from $\partial^j_z$. A meromorphic homotopy resolvent of $(\mathcal{C}^*, \mathfrak{d}^*)$ will then gives back a meromorphic homotopy resolvent for $(E_z(V), \partial_z)$ satisfying the required $2\pi i $-periodicity condition. 

Let $\Hom(\mathcal{C}, \mathcal{C}[-n])$ be the holomorphic bundle over $A$ consisting of fiberwise bounded homomorphisms of degree $-n$. The corresponding Dolbeault complexes $\Omega^{0,q}(A, \Hom(\mathcal{C}, \mathcal{C}[-n]))$ are equipped with chain maps 
\[
\mathscr{d}_n: \Omega^{0,q}(A, \Hom(\mathcal{C}, \mathcal{C}[-n])) \longrightarrow \Omega^{0,q}(A, \Hom(\mathcal{C}, \mathcal{C}[-n+1])). 
\]
We note that $\mathcal{C}^j$ is holomorphically trivial over $A$ under the identification $[z, s] \mapsto ([z], e^{zf} se^{-zf})$. Thus $\Hom(\mathcal{C}, \mathcal{C}[-n])$ is holomorphically trivial over $A$ as well. In particular, the Dolbeault cohomology $H^{0,1}(A, \Hom(\mathcal{C}, \mathcal{C}[-n])) =0$ since $A$ is a non-compact Riemann surface. 

Although in step 1 we only constructed meromorphic homotopy resolvent locally around a spectral point $z_o \in \Sigma(E)$, the construction readily applies to non-spectral points and results in local holomorphic homotopy resolvents. Now we cover the infinite cylinder $A$ with countable many open disks $U_{\alpha} \subset \C$ so that each spectral point is the center of the unique open disk containing it. We further assume over each open disk $U_\alpha$ a meromorphic or holomorphic resolvent $R_{\alpha}$ has been constructed depending whether it contains a spectral point or not. Let $\{\rho_{\alpha}\}$ be a partition of unity subordinate to the cover $\{U_{\alpha}\}$. Then $r = \sum \rho_{\alpha}R_{\alpha}$ is a $C^{\infty}$ homotopy resolvent whose $(0,1)$-differential $\bar{\partial} r$ is a closed $(0,1)$-form valued in $\Hom(\mathcal{C}, \mathcal{C}[-1])$ and satisfies $\mathscr{d}_1(\bar{\partial}r) = 0$. It follows from \autoref{l2.8} that there exists $\xi \in \Omega^1_2$ with $\mathscr{d}_2 \xi = \bar{\partial} r$. Since $A$ is a Riemann surface, the $(0,1)$-form $\xi$ is automatically $\bar{\partial}$-closed. Invoking the vanishing fact $H^{0,1}=0$, we then find $\eta \in \Omega^0_2$ with $\bar{\partial} \eta = \xi$. Then $R = r - \mathscr{d}_2 \eta$ is a meromorphic homotopy resolvent defined over $A$ whose poles consist of the spectral points. 
\end{proof}

\subsection{\em Index Comparison}\label{ss2.4} \hfill
 
\vspace{3mm}

Let $(E(V), 1_V)$ be a non-degenerate pair in the sense of \autoref{d2.4}, and $Z$ an end-periodic manifold formed as before Suppose $\delta_1$ and $\delta_2$ are two constant weights chosen so that $E_{\delta_i}(Z)$ is Fredholm for $i=1, 2$. We wish to compare the difference between $\ind E_{\delta_1}(Z)$ and $\ind E_{\delta_2}(Z)$. To this end, we choose a smooth function $\tilde{\delta}: \tilde{V} \to \R$ satisfying that 
\begin{equation}
\tilde{\delta}|_{W_n}=
\begin{cases}
\delta_1 & \text{ if $n \leq -1$} \\
\delta_2 & \text{ if $n \geq 1$}
\end{cases}
\end{equation}
serving as a weight function. We further require that $\supp d\tilde{\delta} \cap \supp \tilde{f} = \varnothing$ after a possible modification near $W_0$. This implies that $d(\tilde{\delta}\tilde{f}) = \tilde{\delta} d\tilde{f}$. 

\begin{lem}\label{l2.11}
With the notations above, we have 
\[
\ind E_{\delta_2}(Z) - \ind E_{\delta_1}(Z) = \ind E_{\tilde{\delta}}(\tilde{V}). 
\]
\end{lem}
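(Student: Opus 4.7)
The plan is to apply the Atiyah--Singer excision principle in a four-way cut-and-paste argument, and then to reduce the lemma to the vanishing $\ind E_{\delta_1}(\tilde V) = 0$. The four Fredholm complexes in play are $E_{\delta_1}(Z)$, $E_{\delta_2}(Z)$, $E_{\delta_1}(\tilde V)$, and $E_{\tilde \delta}(\tilde V)$. Fredholmness of the first two is assumed; Fredholmness of $E_{\delta_1}(\tilde V)$ follows from that of $E_{\delta_1}(Z)$ by the cylindrical-end argument recorded after \autoref{l2.5}, and Fredholmness of $E_{\tilde\delta}(\tilde V)$ follows from the same argument applied to each periodic end of $\tilde V$ separately, on each of which $\tilde\delta$ agrees with a constant Fredholm weight.

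Decompose $Z = M \cup_Y \tilde V_+$ and $\tilde V = \tilde V_- \cup_Y \tilde V_+$. The two complexes $E_{\delta_1}(Z)$ and $E_{\delta_1}(\tilde V)$ both contain the piece $\tilde V_+$ equipped with weight $\delta_1$, while $E_{\delta_2}(Z)$ and $E_{\tilde\delta}(\tilde V)$ both contain $\tilde V_+$ equipped with weight $\delta_2$; the remaining pieces ($M$ and $\tilde V_-$ respectively) are unchanged in the relevant pairings. Choosing the representative $f$ so that $df$ vanishes in a neighborhood of $Y$ (a $\tau$-equivariant modification that can always be arranged), the factor $e^{\delta \tilde f/2}$ is locally constant near $Y$, and the weighted operator $\partial^j - (\delta/2)\sigma^j$ agrees with the underlying $\partial^j$ there, independent of the weight. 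Consequently the excision principle of \cite{AS68} applies and yields
\begin{equation*}
\ind E_{\delta_1}(Z) - \ind E_{\delta_2}(Z) \;=\; \ind E_{\delta_1}(\tilde V) - \ind E_{\tilde\delta}(\tilde V).
\end{equation*}

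It remains to prove $\ind E_{\delta_1}(\tilde V) = 0$. Since $E_{\delta_1}(\tilde V)$ is Fredholm, \autoref{l2.3} forces each slice complex $E_z(V)$ to be acyclic for every $z$ on the line $\Rea z = \delta_1/2$. The meromorphic homotopy resolvent $R_z$ produced in \autoref{p2.10} is therefore holomorphic on this line and, by the $2\pi i$-periodicity $R_{z+2\pi i} = e^{2\pi i f} R_z e^{-2\pi i f}$, uniformly bounded on the fundamental segment $I(\delta_1/2)$. By \autoref{l2.1}($\alpha$), the Fourier--Laplace transform provides an isomorphism that intertwines $\partial^j$ on $L^2_{k,\delta_1}(\tilde V, \tilde E_j)$ with the fiberwise family $\partial^j_z$ on $L^2_{k-j}(V, E_j)$ over $z \in I(\delta_1/2)$. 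Transporting $R_z$ back through this isomorphism yields a bounded chain homotopy on $E_{\delta_1}(\tilde V)$ from the identity to zero, so $E_{\delta_1}(\tilde V)$ is exact and $\ind E_{\delta_1}(\tilde V) = 0$. Substituting this into the excision identity gives the lemma.

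The main technical obstacle is the careful execution of the excision step in the weighted Sobolev category: one must verify that $f$ can be replaced by an equivariantly modified representative so that the two weighted operators genuinely coincide in a neighborhood of the cut, and that no spurious boundary term arises from the transition region of $\tilde\delta$. The reduction via the Fourier--Laplace transform in the last step is comparatively clean once the meromorphic homotopy resolvent from \autoref{p2.10} is in hand.
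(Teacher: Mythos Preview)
Your proof is correct and reaches the same conclusion, but it differs from the paper's argument in two places worth noting.

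\emph{Excision.} The paper does not cut $Z$ and $\tilde V$ against each other directly. Instead it introduces an auxiliary end-periodic manifold $Z^* = \tilde V_- \cup \overline{M}$ and applies excision twice, once with weight $\delta_1$ on both $Z$ and $Z^*$ and once with $\delta_2$ on $Z$ and $\delta_1$ on $Z^*$, obtaining the closed double $M \cup \overline{M}$ as an intermediary; subtracting the two identities cancels the $Z^*$- and $M\cup\overline{M}$-terms. Your single swap is slicker, but it hinges on the convention $df = 0$ near $Y$, which is the \emph{opposite} of the paper's standing choice that $df$ is supported in a collar of $Y$. Your modification is legitimate (one only needs an equivariant $\tilde f$ with $\tilde f(\tau x)=\tilde f(x)+1$), but you should flag that you are departing from the paper's normalisation. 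The paper's two-step route avoids this by only ever comparing operators with the \emph{same} weight across the cut, so it works with either convention for $f$.

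\emph{Vanishing of $\ind E_{\delta_1}(\tilde V)$.} The paper gives a bare-hands argument: apply the Fourier--Laplace transform to an element of $\ker Q_{\delta_1}(\tilde V)$ (or its adjoint) and use acyclicity of $E_z(V)$ on $\Rea z = \delta_1/2$ to conclude the transform, hence the original element, vanishes. Your route via the homotopy resolvent of \autoref{p2.10} is also valid and yields the stronger statement that $E_{\delta_1}(\tilde V)$ is acyclic, but it invokes a substantially deeper construction than the lemma requires; the paper saves that machinery for \autoref{t2.12}.
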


\begin{proof}
The argument is an application of the excision principle relating indices of elliptic operators. We shall work with the corresponding operators $Q$ rather than the complexes $E$. 

Let $Z^* = \tilde{V}_- \cup \overline{M}$ be the manifold obtained by attaching the negative end to the orientation-reversed manifold $\overline{M}$. We identify collar neighborhoods $\overline{Y} = \partial \overline{M}$ and $Y = \partial \tilde{V}_-$ with $[0, \epsilon) \times Y$ and $(-\epsilon, 0] \times Y$ respectively. Choose a smooth function $\rho^*: Z^* \to \R$ satisfying $\rho^*|_{\overline{M} \backslash [0, \epsilon) \times Y} = 0$, $\rho^*|_{\tilde{V}_-} = \tilde{f}$, and $\rho^*|_{(-\epsilon/2, \epsilon/2) \times Y} = \rho|_{(-\epsilon/2, \epsilon/2) \times Y}$. Thus we have 
\[
Q_{\delta_1}(Z)|_{(-\epsilon/2, \epsilon/2) \times Y} = Q_{\delta_1}(Z^*)|_{(-\epsilon/2, \epsilon/2) \times Y}. 
\]
Since the closure of the overlap $(-\epsilon/2, \epsilon/2) \times Y$ is compact, the excision principle of elliptic operators (c.f. \cite[Section 5.3]{MRS11} tells us that 
\begin{equation}\label{e2.12}
\ind Q_{\delta_1}(Z) + \ind Q_{\delta_1}(Z^*) = \ind Q(M \cup \overline{M}) + \ind Q_{\delta_1}(\tilde{V}).
\end{equation}

We claim that $\ind Q_{\delta_1}(\tilde{V}) = 0$. Indeed, given $s \in \ker Q_{\delta_1}(\tilde{V})$, the ellipticity implies that $s \in C^{\infty} \cap L^2_k$. Thus the Fourier-Laplace transform (\ref{e2.5}) $\hat{s}_z$ converges for all $z$ with $\Rea z = 0$ and $Q_{\delta_1, z}(\tilde{V}) \hat{s}_z = 0$. Note that $Q_{\delta_1, z}(\tilde{V}) = \partial_{\delta_1/2+z} \oplus \partial^*_{\delta_1/2+z}$ for all $\Rea z = 0$, where the latter is the operator corresponding to the elliptic complex $E_{\delta_1/2+z}(V)$. Due to the Fredholmness criterion, $E_{\delta_1/2+z}(V)$ is acyclic when $\Rea z =0$. Thus $\ker Q_{\delta_1, z}(\tilde{V}) = 0$, and $\hat{s}_z = 0$ as a consequence. Applying the inverse of the Fourier-Laplace transform, we conclude that $\ker Q_{\delta_1}(\tilde{V}) = 0$. The same argument can be applied to the adjoint $Q^*_{\delta}(\tilde{V})$ to conclude that $\cok Q_{\delta_1}(\tilde{V}) = 0$. This completes the proof of the claim. 

A priori, we can modify $\rho$ and $\rho^*$ respectively near $W_0 \subset Z$ and $W_{-1} \subset Z^*$ to get $\rho'$ and ${\rho^*}'$ satisfying $\delta_2 \rho'|_{\tilde{V}_+} = \tilde{\delta}\tilde{f}|_{\tilde{V}_+}$ and $\delta_1 {\rho^*}'|_{\tilde{V}_-} = \tilde{\delta}\tilde{f}|_{\tilde{V}_-}$. Since this modification is made over a compact region, the indices are not changed. Applying the exicision principle to $Q_{\delta_2}(Z)$ and $Q_{\delta_1}(Z^*)$ gives us 
\begin{equation}\label{e2.13}
\ind Q_{\delta_2}(Z) + \ind Q_{\delta_1}(Z^*) = \ind Q(M \cup \overline{M}) + \ind Q_{\tilde{\delta}}(\tilde{V}). 
\end{equation}
Combining (\ref{e2.12}) and (\ref{e2.13}), we get the desired result. 
\end{proof}

The next step is to formulate $\ind E_{\tilde{\delta}}(\tilde{V})$ in terms of certain spectral asymmetry coming from the holomorphic family of elliptic complexes $\{E_z(V)\}$. In what follows, we will actually identify the cohomology $H^*(E_{\tilde{\delta}}(\tilde{V})$ explicitly in terms of the cohomology $H^*_z(V)$. The idea lies behind the following observation. The cochain complex $E_{\tilde{\delta}}(\tilde{V})$ is a $\C[t, t^{-1}]$-module where $t$ acts by the pull-back of the covering transformation $\tau^*$. Since this complex is Fredholm, $H^j(E_{\tilde{\delta}}(\tilde{V}))$ must be a torsion $\C[t, t^{-1}]$-module, thus takes the following form:
\begin{equation}
H^j(E_{\tilde{\delta}}(\tilde{V})) \simeq \bigoplus_{z} \C[t, t^{-1}]/ (t - z)^{m_z},
\end{equation}
where $m_z$ is a positive integer given by the dimension of the generalized $z$-eigenspace of $\tau^*$ acting on $H^j(E_{\tilde{\delta}}(\tilde{V}))$. So our task is to determine which values of $z$ could come into the picture and formulate the multiplicity $m_z$ into a proper way. We shall construct representatives that are annihilated by $(t-z)^{m_z}$ directly and prove a one-to-one correspondence. To this end, we introduce a version of equivariant cohomology induced from $(E_z(V), \partial_z)$. 

Let $\C[u, u^{-1}]$ the ring of Laurent polynomials of a formal variable $u$, and $\mathcal{R}:= \C[u, u^{-1}]/ \C[u]$ the ring consisting of the principal part only. We define the cochain complex $(\widehat{C}_z, \widehat{\partial}_z)$ to be 
\begin{equation}
\widehat{C}^j_z:= C^j_z \otimes_{\C} \mathcal{R} \text{ and } \widehat{\partial}^j_z(\pmb{\alpha}) := \partial^j_z (\pmb{\alpha}) - u\sigma^j (\pmb{\alpha}).
\end{equation}
When $\pmb{\alpha}$ is represented by $\sum_{l=-m}^{-1} \alpha_lu^l$ with $\alpha_l \in C^j_z=L^2_{k-j}(V, E_j)$, the boundary operator $\widehat{\partial}^j_z$ is equivalent to the system of differential operators:
\begin{equation}
\partial^j_z \alpha_{-1} - \sigma^j \alpha_{-2}, \quad ... \quad , \partial^j_z \alpha_{-m+1} - \sigma^j \alpha_{-m}, \quad\partial^j_z \alpha_{-m}. 
\end{equation} 
It follows from the relations (\ref{e2.7}) that $\widehat{\partial}_z^2 = 0$, thus $(\widehat{C}_z, \widehat{\partial}_z)$ forms a cochain complex whose homology is denoted by $\widehat{H}^*_z(V):= H^*(\widehat{C}_z, \widehat{\partial}_z)$. We call\ $\widehat{H}^*_z(V)$ an equivariant cohomology because its construction takes a similar pattern of Cartan's definition of the equivariant cohomology on a $U(1)$-manifold. 

Given $\pmb{\alpha} = \sum_{l=-m}^{-1} \alpha_l u^l$, we refer to $m$ as the $u$-degree of $\pmb{\alpha}$ if $\alpha_{-m} \neq 0$ and $\alpha_{-k} =0$ for all $k \geq m$. We denote  the $u$-degree of $\pmb{\alpha}$ by $m(\pmb{\alpha})$. Then the $u$-degree gives us a decreasing filtration of the cochain complex $\widehat{C}_z$:
 \[
 \{0\} = F^0\widehat{C}_z \subset F^{-1} \widehat{C}_z \subset ... \subset F^{i}\widehat{C}_z \subset ... \subset \widehat{C}_z,
 \]
where $F^i\widehat{C}_z$ consists of elements with $u$-degree at most $-i$. Since this filtration is bounded above and preserved by the boundary operator $\widehat{\partial}_z$ , we get a spectral sequence $\{\widehat{E}^{*,*}_{z,r}, d_r\}$ that abuts to $\widehat{H}^*_z(V)$. The first page of the spectral sequence can be read as (see for instance \cite[Theorem 2.6]{M01}):
\begin{equation}
\widehat{E}^{i,j}_{z, 1} = H^{i+j}(E_z(V)) \cdot u^i \qquad d^{i,j}_1 = -u\bar{\sigma}^{i+j},
\end{equation}
where $\bar{\sigma}^{i+j}: H^{i+j}(E_z(V)) \to H^{i+j+1}(E_z(V))$ is induced from $\sigma^{i+j}$. 

Now we are ready to establish the index jump formula. 

\begin{thm}\label{t2.12}
Suppose $\delta_1 > \delta_2$. Let's consider the map:
\begin{equation}\label{e2.22}
\begin{split}
\varphi: \bigoplus_{[z]} \widehat{C}^j_z & \longrightarrow E^j_{\tilde{\delta}}(\tilde{V}) \\
\bigoplus_{[z]} \sum_{l=-m_z}^{-1} \alpha_l(z) u^l & \longmapsto \sum_z e^{-z \tilde{f}} \sum_{l=-m_z}^{-1} (-1)^{-l-1}(\tilde{f})^{-l-1}\alpha_l(z)/(-l-1)!
\end{split}
\end{equation}
where the sum ranges over $[z] \in \Sigma(E)/2\pi i$ with $\Rea z \in (\delta_2/2, \delta_1/2)$. Then the map $\varphi$ induces an isomorphism on cohomology. Moreover $H^n(E_{\tilde{\delta}}(\tilde{V})) = 0$. 
\end{thm}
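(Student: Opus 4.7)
The plan verifies in sequence: (i) $\varphi$ is a chain map with image in $L^2_{k,\tilde{\delta}}$, (ii) $\varphi_*$ is surjective on cohomology via contour integration of the Fourier--Laplace transform combined with the meromorphic homotopy resolvent $R_z$ from \autoref{p2.10}, (iii) $\varphi_*$ is injective by applying the same residue construction to $\varphi(\pmb{\alpha})$ itself, and (iv) $H^n(E_{\tilde{\delta}}(\tilde{V}))=0$ is read off the spectral sequence for $\widehat{H}^*_z$. The organizing principle is that $\varphi(\widehat{C}^j_z)$ sits inside the generalized $e^{-z}$-eigenspace of the covering translation $\tau^*$: the identity $\tau^*(e^{-z\tilde{f}}\tilde{f}^k\alpha)=e^{-z}\sum_{l\le k}\binom{k}{l}e^{-z\tilde{f}}\tilde{f}^l\alpha$ (using that $\alpha$ is pulled back from $V$) shows $(\tau^*-e^{-z})$ decreases the $\tilde{f}$-degree by one. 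Step (i) is the product-rule calculation
\[
\partial^j(e^{-z\tilde{f}}\tilde{f}^k\alpha)=e^{-z\tilde{f}}\tilde{f}^k\partial^j_z\alpha+k\,e^{-z\tilde{f}}\tilde{f}^{k-1}\sigma^j\alpha,
\]
which matches $\widehat{\partial}^j_z\pmb{\alpha}=\partial^j_z\pmb{\alpha}-u\sigma^j\pmb{\alpha}$ under the assignment $\alpha_l u^l\mapsto e^{-z\tilde{f}}(-\tilde{f})^{-l-1}\alpha_l/(-l-1)!$; the constraint $\Rea z\in(\delta_2/2,\delta_1/2)$ makes $e^{-(\Rea z)\tilde{f}}$ beat the weight $e^{\tilde{\delta}\tilde{f}/2}$ at both ends, absorbing polynomial factors $\tilde{f}^k$ and placing $\varphi(\pmb{\alpha})$ in $L^2_{k,\tilde{\delta}}$.

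For (ii), given a cocycle $s\in E^j_{\tilde{\delta}}(\tilde{V})$, split $s=s_-+s_+$ by cutoffs with $s_\pm$ supported in $\tilde{V}_\pm\cup W_0$. Then $\partial s_+=-\partial s_-$ is compactly supported, so its Fourier--Laplace transform $g(z)$ is entire, and by \autoref{l2.1} together with its $\tilde{V}_-$-mirror, $\widehat{s_+}_z$ is holomorphic on $\{\Rea z<\delta_2/2\}$ while $\widehat{s_-}_z$ is holomorphic on $\{\Rea z>\delta_1/2\}$. The resolvent identity $\id=\partial^{j-1}_z R^j_z+R^{j+1}_z\partial^j_z$ gives
\[
\widehat{s_\pm}_z=\pm R^{j+1}_z g(z)+\partial^{j-1}_z\bigl(R^j_z\widehat{s_\pm}_z\bigr),
\]
and inverting along $I(\delta_2/2)$ for $s_+$ and $I(\delta_1/2)$ for $s_-$, using the intertwining $\partial^{j-1}(e^{-z\tilde{f}}\cdot)=e^{-z\tilde{f}}\partial^{j-1}_z$, combines the $\partial^{j-1}_z$-pieces into a single coboundary $\partial^{j-1}\beta$ with $\beta\in L^2_{k,\tilde{\delta}}$. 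The remaining contributions assemble into a counterclockwise loop of $\pm e^{-z\tilde{f}}R^{j+1}_z g(z)$ around the strip on the cylinder $\C/2\pi i\Z$, and the residue theorem collapses them to $\sum_{[z_o]}\Res_{z=z_o}\bigl(e^{-z\tilde{f}(x)}R^{j+1}_z g(z)\bigr)$; Laurent expansion of $R^{j+1}_z$ and Taylor expansion of $e^{-z\tilde{f}}$ and $g$ at $z_o$ identify each residue with $\varphi(\pmb{\alpha}_{z_o})$ for the explicit cochain $\alpha_{-k-1}(z_o)=\sum_{m\ge 0}R^{j+1}_{z_o,-k-m-1}g^{(m)}(z_o)/m!$. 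Finite-order meromorphicity of $R_z$ ensures each $\pmb{\alpha}_{z_o}$ has finite $u$-degree, and the chain-map identity $\partial^j\varphi(\pmb{\alpha}_{z_o})=\varphi(\widehat{\partial}^j_{z_o}\pmb{\alpha}_{z_o})=0$ combined with chain-injectivity of $\varphi$ (the family $\{e^{-z\tilde{f}}\tilde{f}^k\}_{z,k}$ is linearly independent) forces each $\pmb{\alpha}_{z_o}$ to be a $\widehat{\partial}^j_{z_o}$-cocycle, proving surjectivity.

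For (iii), the residue construction of (ii) yields a map $\psi:s\mapsto\bigoplus\pmb{\alpha}_{z_o}$ on cocycles, well-defined modulo $\widehat{\partial}$-exact terms; applying it to $s=\varphi(\pmb{\alpha})$ (so that $\partial(\chi_+\varphi(\pmb{\alpha}))=\sigma(d\chi_+)\varphi(\pmb{\alpha})$ is compactly supported and $g(z)$ is entire), the Laurent--Taylor unwinding of the residue formula shows $\psi\circ\varphi=\id$ modulo $\widehat{\partial}$-coboundaries, which furnishes the cohomology-level inverse and hence injectivity. For (iv), the $E_1$-page $\widehat{E}^{i,p}_{z,1}=H^{i+p}(E_z(V))u^i$ (with $i\le-1$) vanishes at $i+p=n$ since then $p=n-i\ge n+1$ lies out of range of the finite complex $E_z(V)$, so $\widehat{H}^n_z(V)=0$ for every $z$, and the isomorphism from (ii)--(iii) forces $H^n(E_{\tilde{\delta}}(\tilde{V}))=0$.

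The principal analytic obstacle lies in Step (ii): rigorously justifying that the section $\beta$ produced by inverting $R^j_z\widehat{s_\pm}_z$ belongs to $L^2_{k,\tilde{\delta}}$ and that the contour deformation onto a closed loop around the strip is legitimate. Both require uniform operator-norm control on $R_z$ along the contours $I(\delta_2/2)$ and $I(\delta_1/2)$ as $|\Ima z|\to\infty$, together with the $2\pi i$-periodicity $R_{z+2\pi i}=e^{2\pi if}R_ze^{-2\pi if}$ from \autoref{p2.10} to close the contour on the cylinder.
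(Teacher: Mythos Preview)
Your steps (i) and (ii) follow the paper's argument closely and are correct in outline. The remaining two steps, however, contain genuine problems.

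\textbf{Step (iv) is incorrect.} You claim $\widehat{E}^{i,p}_{z,1}=H^{i+p}(E_z(V))u^i$ vanishes on the line $i+p=n$ because $p=n-i\ge n+1$ is out of range. But the $E_1$-term depends only on $i+p$, not on $p$ alone: for $i+p=n$ it equals $H^n(E_z(V))u^i$, which is typically nonzero for every $i\le -1$. So the spectral sequence does \emph{not} show $\widehat{H}^n_z(V)=0$ at the $E_1$-page. The paper goes the other way: it proves $H^n(E_{\tilde{\delta}}(\tilde{V}))=0$ directly (as the $j=n$ case of the surjectivity argument, where $\partial^n_z=0$ collapses the resolvent identity to $\partial^{n-1}_z R^n_z=\id$ so the entire section becomes a coboundary with no residue term), and only afterwards deduces $\widehat{H}^n_z(V)=0$ as a corollary (\autoref{p2.15}(3)). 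Indeed the paper remarks that a direct spectral-sequence proof of this vanishing would require understanding higher differentials, which it does not pursue.

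\textbf{Step (iii) has a gap.} Showing $\psi\circ\varphi=\id$ modulo $\widehat{\partial}$-coboundaries establishes only a left inverse \emph{on cocycles}; to conclude injectivity on cohomology you must also show that $\psi$ sends $\partial$-coboundaries to $\widehat{\partial}$-coboundaries, i.e.\ that $\psi$ descends to cohomology. You have not argued this. (It can be done: if $s=\partial^{j-1}b$ then one computes $g(z)=\partial^j_z h(z)$ for an entire $h$, whence $R^{j+1}_z g(z)=h(z)-\partial^{j-1}_z R^j_z h(z)$, and the principal part of the second term is exactly $\widehat{\partial}^{j-1}_{z_o}$ applied to the principal part of $R^j_z h(z)$ --- but this computation is absent from your sketch.) The paper avoids this issue entirely by a different route: it proves a structural lemma (\autoref{l2.13}) characterizing the image of $\varphi$ as the kernel of $(\tau^*-e^{-z})^m$, then uses this together with the already-established surjectivity to show that if $\varphi(\pmb{\alpha})=\partial^{j-1}a$, the primitive $a$ itself lies in the image of $\varphi$, whence chain-level injectivity of $\varphi$ forces $\pmb{\alpha}$ to be exact.
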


\begin{proof}
It's straightforward to check that $\varphi \circ \widehat{\partial} = \partial \circ \varphi$. So $\varphi$ induces a map on cohomology, which we denote by $\varphi^*$

We first establish the surjectivity, which is divided into three cases. The first case is when $j=0$, and follows a similar pattern as the operator case in \cite[Section 6.2]{MRS11}. Let $s \in L^2_{k, \tilde{\delta}}(\tilde{V}, \tilde{E}_0)$ be a representative of $[s] \in H^0(E_{\tilde{\delta}}(\tilde{V}))$. Choose a cut-off function $\eta=(\tilde{\delta}- \delta_2)/(\delta_1 - \delta_2): \tilde{V} \to \R$. Then 
\[
\eta|_{W_n} = 
\begin{cases}
1 & \text{ if $n \leq -1$} \\
0 & \text{ if $n \geq 1$}
\end{cases}.
\]
Moreover $\tilde{\delta} = \delta_1\eta + \delta_2(1-\eta)$. We decompose $s$ into two parts: $s = \eta s + (1-\eta) s=: s_1 + s_2$. In particular $s_i \in L^2_{k, \delta_i}(\tilde{V}, \tilde{E}_0)$ for $i=1, 2$, and $\partial^0 s_1 = - \partial^0 s_2$. Denote by $\zeta:= \partial^0 s_1$. Elliptic regularity tells us that $\zeta \in C^{\infty}_0(\tilde{V})$ with $\supp \zeta \subset \supp d\eta$. 
Note that $\supp s_1 \subset \tilde{V}_-$ and $\supp s_2 \subset \tilde{V}_+$. Applying the Fourier-Laplace transform gives us 
\begin{equation}\label{e2.23}
\partial^0_z \hat{s}_{1, z} = \hat{\zeta}_z \text{ when $\Rea z \geq \delta_1/2$} \qquad \partial^0_z \hat{s}_{2, z} = -\hat{\zeta}_z \text{ when $\Rea z \leq \delta_2/2$}.
\end{equation}
Since $R^1_z \circ \partial^0_z = \id$, applying the inverse Fourier-Laplace transform results in 
\[
s(x+n) = \frac{1}{2\pi i } \left( \int_{I(\delta_1/2+i\alpha)} e^{-z(\tilde{f}(x)+n)} R^1_z \hat{\zeta}_z(\pi(x)) dz - \int_{I(\delta_2/2+i\alpha)} e^{-z(\tilde{f}(x)+n)} R^1_z \hat{\zeta}_z(\pi(x)) dz \right),
\]
where $\alpha \in \R$ is chosen so that the poles of $R^1_z$ avoid the line $\Ima z = i\alpha$. Consider the rectangular region $S=[\delta_2/2, \delta_1/2] \times i [\alpha, \alpha+2\pi] \subset \C$ and $\Gamma = \partial S$ the contour endowed with the boundary orientation. Due to the fact that $R^1_{z+2\pi i } = e^{2\pi i f(x)} \circ R^1_z \circ e^{-2\pi i f(x)}$, the integration of $e^{-z(\tilde{f}(x)+n)} R^1_z \hat{\zeta}_z(\pi(x))$ along the two horizontal sides of $S$ cancels each other (c.f. \cite[Lemma 6.2]{MRS11}). Thus Cauchy's residue theorem tells us 
\begin{equation}\label{e2.24}
s(x+n) = \sum_{z_k \in \Sigma(E) \cap S} \Res(e^{-z(\tilde{f}(x)+n)} R^1_z \hat{\zeta}_z(\pi(x)); z_k). 
\end{equation}
Locally near a pole $z_k \in \Sigma(E) \cap S$, we write the Laurent series of $R^1_z\hat{\zeta}_z(\pi(x))$ as 
\begin{equation}\label{e2.25}
R^1_z\hat{\zeta}_z(\pi(x)) = \sum_{l=-m_k}^{\infty} \alpha_l(\pi(x)) (z - z_k)^l,
\end{equation}
where $\alpha_l \in L^2_k(V, E_0)$. Note that when $\Rea z > \delta_1/2$, we have $\partial^1_z \hat{\zeta}_z = \partial^1_z \circ \partial^0_z \hat{s}_{1, z} = 0$. Since $\partial^1_z \hat{\zeta}_z$ is an entire function in $z$, we see that $\partial^1_z \hat{\zeta}_z = 0$ for all $z \in \C$. Combining (\ref{e2.25}) with the identity $\partial^0_z \circ R^1_z + R^2_z \circ \partial^1_z = \id$, we conclude that 
\begin{equation}\label{e2.26}
\hat{\zeta}_z = \sum_{l=-m_k}^{\infty} \partial^0_z \alpha_l (z - z_k)^l = \sum_{l=-m_k}^{\infty} \left( \partial^0_{z_k} \alpha_l \cdot (z-z_k)^l  - \sigma^0 \alpha_l \cdot (z-z_k)^{l+1} \right),
\end{equation}
where we used the fact that $\partial^0_z = \partial^0_{z_k} - (z-z_k)\sigma^0$. Since $\hat{\zeta}_z$ is an entire function in $z$, the principal part of (\ref{e2.26}) has to vanish. Thus we have 
\begin{equation}\label{e2.27}
\partial^0_{z_k} \alpha_{-1} = \sigma^0 \alpha_{-2}, ..., \; \partial^0_{z_k} \alpha_{-m_k+1} = \sigma^0 \alpha_{-m_k}, \; \partial^0_{z_k} \alpha_{-m_k} = 0. 
\end{equation}
Substituting back to (\ref{e2.24}), we conclude that
\[
s(x) = \sum_{z_k \in \Sigma(E) \cap S} e^{-z_k (\tilde{f}(x)} \sum_{l=-m_k}^{-1} (-1)^{-l-1} (\tilde{f}(x))^{-l-1} \alpha_{-l}(\pi(x))/(-l-1)!,
\]
which is $\sum_{z_k} \varphi(\pmb{\alpha}(z_k))$ with $\pmb{\alpha}(z_k) = \sum_{l=-m_k}^{-1} \alpha_l u^l$. 

Next we deal with the case when $0<j<n$. Let $s \in L^2_{k-j, \tilde{\delta}}(\tilde{V}, \tilde{E}_j)$ be a representative of $[s] \in H^j(E_{\tilde{\delta}}(\tilde{V}))$. Same as above, we can decompose $s = s_1 + s_2$ and obtain the same equations in (\ref{e2.23}). The identity $R^{j+1}_z \partial^j_z+ \partial^{j-1}_z R^j_z = \id$ gives us 
\begin{equation}
(-1)^{m-1}R^{j+1}_z \hat{\zeta}_z + \partial^{j-1}_z R^j_z \hat{s}_{m, z} = \hat{s}_{i, z}, \qquad m = 1, 2. 
\end{equation}
Let $a_m=\mathscr{F}^{-1}_{I(\delta_m/2+i\alpha)}(R^j_z\hat{s}_{m, z})$ for some $\alpha \in \R$ chosen as above. It follows from \autoref{l2.1} that $a_m \in L^2_{k-j+1, \delta_m}(\tilde{V}, \tilde{E}_{j-1})$. The assumption that $\delta_1 > \delta_2$ implies $L^2_{k-j+1, \delta_m} \subset L^2_{k-j+1, \tilde{\delta}}$, we conclude that $a_m \in L^2_{k-j+1, \tilde{\delta}}(\tilde{V}, \tilde{E}_{j-1})$. Applying the inverse Fourier-Laplace transform, we get 
\begin{equation}
s-\partial^{j-1} (a_1 + a_2) = \frac{1}{2\pi i } \left( \int_{I(\delta_1/2+i\alpha)} e^{-z\tilde{f}} R^j_z \hat{\zeta}_z dz - \int_{I(\delta_2/2+i\alpha)} e^{-z\tilde{f}} R^j_z \hat{\zeta}_z dz \right). 
\end{equation}
The argument above implies that $s-\partial^{j-1}(a_1+a_2) = \sum_{z_k} \varphi(\pmb{\alpha}(z_k))$, where the sum ranges over the poles of $R^j_z$ inside the rectangle $S=[\delta_2/2, \delta_1/2] \times i[\alpha, \alpha+2\pi]$. Thus we have proved each class $[s] \in H^j(E_{\tilde{\delta}}(\tilde{V}))$ can be represented by an element in $\im \varphi$. 

The last case is when $j=n$. We show that $H^n(E_{\tilde{\delta}}(\tilde{V})) = 0$. Again, let $s$ be a representative of $[s] \in H^n(E_{\tilde{\delta}}(\tilde{V}))$. Adopting the cut-off function $\eta$ defined above, we get $s= \eta s+ (1-\eta)s=: s_1 + s_2$. Since $\partial^n_z = 0$, the identity (\ref{r2.12}) becomes $\partial^{n-1}_z \circ R_z^n = \id$. Fix an arbitrary $\alpha \in \R$, and denote by $a_m = \mathscr{F}_{I(\delta_m/2+i \alpha)}^{-1} (R^n_z \hat{s}_{m, z})$ the inverse Fourier-Laplace transform. Then $\partial^{n-1}a_m = s_m$, $m=1, 2$. The same argument above implies that $a_m \in L^2_{k-n+1, \tilde{\delta}}(\tilde{V}, \tilde{E}_{n-1})$.  We then conclude that $\partial^{n-1}(a_1+a_2) = s$.

Finally we prove the injectivity. Before procceeding further we make the following observation. 
\begin{lem}\label{l2.13}
Let $s \in L^2_{k-j, \tilde{\delta}}(\tilde{V}, \tilde{E}_j)$ be a section. Then $(\tau^* - e^{-z})^m s = 0$ if and only if $s=\varphi(\pmb{\alpha})$ with $\pmb{\alpha} \in \widehat{C}^j_z(V)$ whose $u$-degree satisfies $m(\pmb{\alpha}) \leq m$. Moreover, $(\tau^*-e^{-z})^k \varphi(\pmb{\alpha}) \neq 0$ for all $k < m(\pmb{\alpha})$. 
\end{lem}

\begin{proof}[Proof of \autoref{l2.13}]
It's straightforward to verify that $(\tau^*-e^{-z})^{m(\pmb{\alpha})} \varphi(\pmb{\alpha}) = 0$ and $(\tau^*-e^{-z})^k \varphi(\pmb{\alpha}) \neq 0$ for all $k \leq m(\pmb{\alpha})$. We prove by induction on the minimal order $m$ that makes $(\tau^* - e^{-z})^m s=0$. When $m=1$, $(\tau^* - e^{-z}) s =0$ implies that $e^{z\tilde{f}} s $ is $\tau^*$-invariant, thus descends to a section $\alpha \in L^2_{k-j}(V, E_j)$. Then we conclude $s = \varphi(\alpha \cdot u^{-1})$ with $\alpha = e^{z \tilde{f}} s$. In general suppose $(\tau^*-e^{-z})^m s = 0$ for $m > 1$, and $(\tau^*-e^{-z})^k s = 0$ for all $k < m$. By induction, we can find $\pmb{\beta} \in \widehat{C}^j_z(V)$ so that $\varphi(\pmb{\beta}) = (\tau^* - e^{-z}) s$ and $m(\pmb{\beta}) = m-1$. 

We claim that one can find $\pmb{\gamma} \in \widehat{C}^j_z(V)$ with $m(\pmb{\gamma}) = m$ satisfying $(\tau^* - e^{-z}) \varphi(\pmb{\gamma}) = \varphi(\pmb{\beta})$. To see this, let's write $\pmb{\gamma} = \sum_{l=-m}^{-1} \gamma_l \cdot u^l$. Then it amounts to solve the equation
\[
\begin{split}
(\tau^* -e^{-z}) \varphi(\pmb{\gamma})  & = e^{-z \tilde{f}} \cdot e^{-z} \left(\sum_{l=-m}^{-1} (-1)^{-l-1} \left( \frac{(\tilde{f} + 1)^{-l-l} - (\tilde{f})^{-l-1}}{(-l-1)!} \right) \cdot \gamma_l \right) \\
& = e^{-z \tilde{f}} \sum_{k=-m+1}^{-1} (-1)^{-k-1} \frac{(\tilde{f})^{-k-1}}{(-k-1)!} \cdot \beta_k = \varphi(\pmb{\beta}).  
\end{split}
\]
Comparing the coefficient of $(\tilde{f})^n$, we get 
\[
\sum_{q=n+2}^m {q-1 \choose n} \frac{(-1)^{q-1} e^{-z}}{(q-1)!} \gamma_{-q} = \frac{(-1)^n}{n!} \beta_{-n-1}, \quad n=0, ..., m-2
\]
We let $\gamma_{-1} = 0$. The matrix relating $(\beta_{-m+1}, ..., \beta_{-1})$ to $(\gamma_{-m}, ..., \gamma_{-2})$ is then an upper diagonal matrix whose diagonal entries are all non-zero, thus invertible. So we can solve $\pmb{\gamma}$ in terms of $\pmb{\beta}$. This completes the proof of the claim. 

Now we have $(\tau^* - e^{-z})(s- \varphi(\pmb{\gamma})) = 0$. The induction process gives us $\pmb{\alpha} \in \widehat{C}^j_z$ with $\varphi(\pmb{\alpha}) = s - \varphi(\pmb{\gamma})$. Then we get $s = \varphi(\pmb{\alpha} + \pmb{\gamma})$ as desired. 
\end{proof}

Going back to the proof, we first consider the case when $j=0$. \autoref{l2.13} implies that $\varphi$ is actually injective on the cochain level, thus induces an injective map on the zero-th cohomology. Next we consider the general case when $1 \leq j \leq n$. Suppose $[\pmb{\alpha}] \in \widehat{H}^j_z(V)$ satisfies $\varphi(\pmb{\alpha}) = \partial^{j-1} a$ for some $a \in L^2_{k-j+1, \tilde{\delta}}(\tilde{V}, \tilde{E}_{j-1})$. Then we know 
\[
(\tau^*-e^{-z})^{m(\pmb{\alpha})} \varphi(\pmb{\alpha}) =\partial^{j-1} (\tau^*-e^{-z})^{m(\pmb{\alpha})} a = 0. 
\]
It follows from the surjectivity that $(\tau^*-e^{-z})^{m(\pmb{\alpha})}a$ is cohomologous to $\sum_{z_k} \varphi(\pmb{\beta}_k)$ for some $\pmb{\beta}_k \in \widehat{C}^{j-1}_{z_k}(V)$. Since $\tau^*$ commutes with $\partial$ and $(\tau^*-e^{-z})$ restricts as an isomorphism on the generalized $e^{-z_k}$-eigenspace of $H^*(E_{\tilde{\delta}}(\tilde{V})$ for $z_k \neq z$, we may assume $(\tau^*-e^{-z})^{m(\pmb{\alpha})}a = \varphi(\pmb{\beta})$ for some $\pmb{\beta} \in \widehat{C}^{j-1}_z(V)$. The proof of \autoref{l2.13} implies that one can find $\pmb{\gamma} \in \widehat{C}^{j-1}_z(V)$ so that $(\tau^*-e^{-z})^{m(\pmb{\alpha})} \varphi(\pmb{\gamma}) = \varphi(\pmb{\beta})$. We  conclude that 
\[
(\tau^* - e^{-z})^{m(\pmb{\alpha})} (a - \varphi(\pmb{\gamma})) = 0.
\]
Invoking \autoref{l2.13} again, we get $\pmb{\lambda} \in \widehat{C}^{j-1}_z(V)$ so that $a - \varphi(\pmb{\gamma}) = \varphi(\pmb{\lambda})$, equivalently $a = \varphi(\pmb{\gamma} + \pmb{\lambda})$. Now we have 
\[
\varphi(\pmb{\alpha}) = \partial^{j-1} \varphi(\pmb{\gamma} + \pmb{\lambda}) = \varphi(\widehat{\partial}^{j-1}_z (\pmb{\gamma} + \pmb{\lambda})).
\]
Since $\varphi$ is injective on the cochain level, we conclude that $\pmb{\alpha}$ is exact. This completes the proof. 
\end{proof}

Through the proof of \autoref{t2.12}, we see that elements in $\widehat{H}^j_z(V)$ span the generalized $e^{-z}$-eigenspace of $H^j(E_{\tilde{\delta}}(\tilde{V}))$ under the map $\varphi$. It also tells us the range of $z$'s that can be realized as the generalized $e^{-z}$-eigenspace of the covering transformation. This process determines the index of $E_{\tilde{\delta}}(\tilde{V})$. \autoref{t1.1} then follows from \autoref{l2.11} and the following corollary. 

\begin{cor}\label{c2.14}
Suppose $\delta_1 > \delta_2$. Regarding $H^j(E_{\tilde{\delta}}(\tilde{V}))$ as a $\C[t, t^{-1}]$-module with $t$ acting by the pullback of the covering transformation, we have an isomorphism of $\C[t, t^{-1}]$-modules
\[
H^j(E_{\tilde{\delta}}(\tilde{V})) \simeq \bigoplus_{z} \C[t, t^{-1}]/ (t - e^{-z})^{m_z},
\]
where $m_z = \dim_{\C} \widehat{H}^j_z(V)$, and $z$ ranges over $(\delta_2/2, \delta_1/2) \times i[0, 2\pi)$. In particular 
\[
\ind \left(E_{\tilde{\delta}}(\tilde{V}) \right) = - \sum_z \chi\left(\widehat{H}^*_z(V) \right), 
\]
where $z$ ranges over $(\delta_2/2, \delta_1/2) \times i[0, 2\pi)$. 
\end{cor}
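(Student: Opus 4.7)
The proof is essentially an unpacking of \autoref{t2.12} together with \autoref{l2.13}, reorganized so as to read off the $\C[t,t^{-1}]$-module structure. The plan is as follows.

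First, I would recall from \autoref{t2.12} that the map $\varphi$ induces a $\C$-linear isomorphism
\[
\varphi^{*}: \bigoplus_{[z]} \widehat{H}^{j}_{z}(V) \xrightarrow{\;\simeq\;} H^{j}\!\bigl(E_{\tilde{\delta}}(\tilde{V})\bigr),
\]
where $[z]$ ranges over $\Sigma(E)/2\pi i$ with $\Rea z \in (\delta_{2}/2,\delta_{1}/2)$, which is the same as letting $z$ range over $(\delta_{2}/2,\delta_{1}/2)\times i[0,2\pi)$. Since $\Sigma(E)$ is discrete by \autoref{l2.5}, this sum is finite, and so only finitely many summands contribute nontrivially.

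Next, I would examine the $\C[t,t^{-1}]$-action with $t$ acting by $\tau^{*}$. The key input is \autoref{l2.13}: an element $s\in L^{2}_{k-j,\tilde{\delta}}(\tilde{V},\tilde{E}_{j})$ satisfies $(\tau^{*}-e^{-z})^{m}s=0$ if and only if $s=\varphi(\pmb{\alpha})$ for some $\pmb{\alpha}\in \widehat{C}^{j}_{z}(V)$ of $u$-degree at most $m$, and the minimal such $m$ equals $m(\pmb{\alpha})$. Passing to cohomology and combining with $\varphi^{*}$ being an isomorphism, this shows two things at once: (i) the image $\varphi^{*}\widehat{H}^{j}_{z}(V)$ lies in the generalized $e^{-z}$-eigenspace of $\tau^{*}$ on $H^{j}(E_{\tilde{\delta}}(\tilde{V}))$, and (ii) the direct sum decomposition over $[z]$ matches the generalized eigenspace decomposition of $\tau^{*}$ since the eigenvalues $e^{-z}$ are pairwise distinct as $z$ ranges over $(\delta_{2}/2,\delta_{1}/2)\times i[0,2\pi)$. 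Thus as $\C[t,t^{-1}]$-modules
\[
H^{j}\!\bigl(E_{\tilde{\delta}}(\tilde{V})\bigr) \;\simeq\; \bigoplus_{z}\, \widehat{H}^{j}_{z}(V),
\]
where the $z$-summand is supported on the eigenvalue $e^{-z}$.

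To produce the displayed Jordan-type form $\C[t,t^{-1}]/(t-e^{-z})^{m_{z}}$ (understood as a direct sum of cyclic $\C[t,t^{-1}]$-modules whose total $\C$-dimension equals $m_{z}=\dim_{\C}\widehat{H}^{j}_{z}(V)$, with $\tau^{*}$ acting with sole eigenvalue $e^{-z}$), I would decompose the $z$-summand into cyclic $\C[t,t^{-1}]$-modules: the filtration of $\widehat{C}^{j}_{z}$ by $u$-degree descends, via \autoref{l2.13}, to the filtration of the generalized eigenspace by the order of nilpotence of $\tau^{*}-e^{-z}$, and standard linear algebra then yields a direct sum of Jordan blocks of total size $m_{z}$. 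The only content of this step is the dimension count, which reads off from the isomorphism $\varphi^{*}$ above.

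Finally, the index formula follows immediately: by \autoref{t2.12} we have $H^{n}(E_{\tilde{\delta}}(\tilde{V}))=0$, and in every degree
\[
\dim_{\C} H^{j}\!\bigl(E_{\tilde{\delta}}(\tilde{V})\bigr) \;=\; \sum_{z} \dim_{\C} \widehat{H}^{j}_{z}(V),
\]
with $z\in (\delta_{2}/2,\delta_{1}/2)\times i[0,2\pi)$. Taking the alternating sum and using $\ind E_{\tilde{\delta}}(\tilde{V})= -\chi\bigl(H^{*}(E_{\tilde{\delta}}(\tilde{V}))\bigr)$ (our sign convention for the index) gives the stated formula. The least routine step is verifying that the $u$-degree filtration really does refine into the Jordan-block decomposition cleanly, but this is a direct consequence of \autoref{l2.13}, which was tailored for exactly this purpose; so there is no genuine obstacle beyond bookkeeping once \autoref{t2.12} and \autoref{l2.13} are in hand.
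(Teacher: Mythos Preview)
Your proposal is correct and follows essentially the same approach as the paper: the corollary is immediate from \autoref{t2.12}, with \autoref{l2.13} supplying the identification of $\varphi^{*}\widehat{H}^{j}_{z}(V)$ with the generalized $e^{-z}$-eigenspace of $\tau^{*}$, and the index formula then follows by taking alternating sums of dimensions. Your parenthetical reading of the displayed module formula (as a direct sum of cyclic modules of total dimension $m_{z}$ rather than a single Jordan block) is the intended one, consistent with the paper's discussion preceding the corollary.
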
 

Although we have stated in a way that makes the sum infinite, actually only the spectral points $z \in \Sigma(E)$ could potentially contribute non-zero terms as we will note shortly. With this index jump formula, the problem of computing the index of $E_{\tilde{\delta}}(\tilde{V})$ has been reduced to understanding the cohomology $\widehat{H}^*_z(V)$ which is more practical with the help of the spectral sequence $\{\widehat{E}^{*,*}_{z,r}, d_r\}$ mentioned above. Conversely the knowledge of $E_{\tilde{\delta}}(\tilde{V})$ also tells us properties of $\widehat{H}^*_z(V)$. We end this section by summarizing some properties of $\widehat{H}^*_z(V)$ for later use.

\begin{prop}\label{p2.15}
Let $(E(V), 1_V)$ be a non-degenerate pair. Then the following hold.
\begin{enumerate}
\item $H^j(E_z(V)) = 0$ implies that $\widehat{H}^j_z(V) = 0$, $j = 0, ..., n$.
\item $\dim \widehat{H}^j_z(V) < \infty$, $j=0, ..., n$.
\item $\widehat{H}^n_z(V) = 0$
\item Suppose $\dim_{\C} H^j(E_z(V)) = 1$, $\bar{\sigma}^j: H^j(E_z(V))/\im \bar{\sigma}^{j-1} \to H^{j+1}(E_z(V))$ is injective, and $H^{j-1}(E_z(V))/ \im \bar{\sigma}^{j-2} = 0$. Then $\dim_{\C} \widehat{H}^j_z(V) = 1$.
\end{enumerate}
\end{prop}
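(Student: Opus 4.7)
The plan is to combine the spectral sequence $\{\widehat{E}^{i,j}_{z,r}, d_r\}$ abutting to $\widehat{H}^*_z(V)$ (with first page $\widehat{E}^{i,j}_{z,1} = H^{i+j}(E_z(V)) \cdot u^i$ and $d_1 = -u\bar{\sigma}^{i+j}$) with the identification furnished by \autoref{t2.12} and \autoref{c2.14} of $\widehat{H}^j_z(V)$ as a generalized $e^{-z}$-eigenspace of the covering transformation acting on $H^j(E_{\tilde{\delta}}(\tilde{V}))$. Parts (1) and (4) will be spectral-sequence computations; parts (2) and (3) will fall out of the identification.

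For (1), I would note that at total cohomological degree $j$ the only first-page entries are $\widehat{E}^{i, j-i}_{z,1} = H^j(E_z(V)) \cdot u^i$ for $i \leq -1$; if $H^j(E_z(V))=0$ these all vanish, so the spectral sequence collapses in degree $j$ and $\widehat{H}^j_z(V) = 0$.

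For (2) and (3), given any $z \in \C$ I would pick weights $\delta_1 > 2\Rea z > \delta_2$ whose halves avoid the (discrete, by \autoref{l2.5}) projection of $\Sigma(E)$ to $\R$, so that both $E_{\delta_i}(\tilde{V})$ are Fredholm via \autoref{l2.3}. Then \autoref{c2.14} exhibits $\widehat{H}^j_z(V)$ as a direct summand of the finite-dimensional space $H^j(E_{\tilde{\delta}}(\tilde{V}))$, yielding (2); taking $j = n$ and using $H^n(E_{\tilde{\delta}}(\tilde{V})) = 0$ from \autoref{t2.12}, together with the arbitrariness of $z$, yields (3).

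For (4), the key feature is that $\mathcal{R} = \C[u, u^{-1}]/\C[u]$ carries only strictly negative powers of $u$, so $i \leq -1$ throughout. First, Condition~3 combined with $\bar{\sigma}^{j-1}\circ\bar{\sigma}^{j-2} = 0$ forces $\bar{\sigma}^{j-1} \equiv 0$ on $H^{j-1}(E_z(V))$; in particular $\im \bar{\sigma}^{j-1} = 0$, whence Condition~2 simplifies to $\bar{\sigma}^j$ being injective on all of $H^j(E_z(V))$. At the boundary $i = -1$, the outgoing $d_1$ vanishes because its target $H^{j+1}(E_z(V)) \cdot u^0$ is trivial in $\mathcal{R}$, and the incoming $d_1$ from $H^{j-1}(E_z(V)) \cdot u^{-2}$ vanishes by $\bar{\sigma}^{j-1}\equiv 0$; hence $\widehat{E}^{-1, j+1}_{z, 2} = H^j(E_z(V))$ is one-dimensional. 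For $i \leq -2$ the outgoing $d_1 = -u\bar{\sigma}^j$ is injective, so $\widehat{E}^{i, j-i}_{z, 2} = 0$. A short check rules out higher differentials on this surviving entry: for $r \geq 2$, $d_r$ out of $\widehat{E}^{-1, j+1}_{z,r}$ targets positions with $i \geq 0$, outside the range, while incoming sources sit in total degree $j-1$, where the analogous $E_2$-analysis (using Condition~3 and $\bar{\sigma}^{j-1} \equiv 0$) gives zero. Thus $\widehat{E}^{-1, j+1}_{z, \infty} = H^j(E_z(V))$ is the only surviving contribution, and $\dim_\C \widehat{H}^j_z(V) = 1$. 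The main obstacle is (4): carefully managing the boundary effects at $i = -1$ caused by the truncation in $\mathcal{R}$, and ensuring that the cross-talk between the $E_2$-terms in total degrees $j$ and $j-1$ does not revive any classes through higher differentials.
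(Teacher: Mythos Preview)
Your proposal is correct and follows essentially the same approach as the paper: part~(1) via vanishing of the $E_1$-page in total degree $j$, parts~(2) and~(3) via \autoref{t2.12} and \autoref{c2.14}, and part~(4) via the spectral sequence analysis showing that only $\widehat{E}^{-1,j+1}_{z,2} = H^j(E_z(V))\cdot u^{-1}$ survives. Your treatment of~(4) is in fact more thorough than the paper's, which simply asserts collapse at the second page without explicitly verifying that the incoming higher differentials from total degree $j-1$ vanish; your observation that Condition~3 forces $\bar{\sigma}^{j-1}\equiv 0$ (hence $E_2^{p,j-1-p}=0$ for all $p\leq -1$) fills in exactly this gap.
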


\begin{proof}
To prove (1), we note that $H^j(E_z(V)) = 0$ implies that the terms $\widehat{E}^{i,j-i}_{z, 1}$ in the first page vanishes identically, thus the cohomology $\widehat{H}^j_z(V)$ vanishes as well. (2) and (3) are implied by \autoref{t2.12}. To see (4), the injectivity of $\bar{\sigma}^j$ implies that $\widehat{E}^{i, j-i}_{z, 2} = 0$ for all $i < -1$. Since $H^{j-1}(E_z(V)) / \im \bar{\sigma}^{j-2} = 0$, we conclude that $\widehat{E}^{-1, j+1}_{z, 2} = H^j(E_z(V)) \cdot u^{-1}$, which collapses at the second page and contributes as the only term that abuts to $\widehat{H}^j_z(V)$. 
\end{proof}

We believe there is a direct proof for (2) and (3) in \autoref{p2.15} without appealing to \autoref{t2.12}. This requires a further understanding between the non-degeneracy assumption of $(E(V), 1_V)$ and the differentials of higher pages of the spectral sequence $\{\widehat{E}^{*,*}_{z, r}, d_r\}$, which we choose not to pursuit here. 

In the end, we explain why \autoref{t1.1} generalizes the main result of \cite{MRS14}. In the case of DeRham complexes, Miller \cite{M06} proved that the cohomology $H^*(E_{\tilde{\delta}}(\tilde{V}))$ stabilizes when $\delta_1 = -\delta_2 > 0$ is sufficiently large. Under such a situation, $H^*(E_{\tilde{\delta}}(\tilde{V}))$ can be identified with the singular cohomology $H^*(\tilde{V}; \C)$, which by the universal coefficient theorem can be further identified with the singular homology $H_*(\tilde{V} ;\C)$. The main result Theorem 1.2 in \cite{MRS14} asserts that the index jump of $E_{\delta}(Z)$ about $\delta$ is given by the multiplicities of the roots $e^{-z}$, which satisfies $\Rea z = \delta/2$, of the Alexander polynomial $A_*(t)$ with respect to the covering transformation $\tau_*: H_*(\tilde{V}; \C) \to H_*(\tilde{V}; \C)$. It's straightforward to see that such a multiplicity corresponds to the dimension of the generalized $e^{-z}$-eigenspace of $H^*(E_{\tilde{\delta}}(\tilde{V}))$ under the identification above, which is identified with $\dim_{\C} \widehat{H}^j_z(V)$.

\section{\large \bf Periodic Rho Invariant of the ASD DeRham Operator}\label{s3}

In this section, we consider a more concrete situation when manifolds are $4$-dimensional and further develop the index theory of the anti-self-dual(ASD) DeRham operator $-d^* \oplus d^+$ over end-periodic $4$-manifolds based on the work of Mrowka-Ruberman-Saveliev \cite{MRS16}. The goal is to define a periodic version of the Atiyah-Patodi-Singer rho invariant \cite{APS2} for certain classes of $4$-manifolds. We shall write $\Lambda^*_{\C} M = \Lambda^* T^*M \otimes \C$ for the complexified bundle of alternating tensors over a smooth manifold $M$. When the underlying manifold is clear from the context, we simply write $\Lambda^*_{\C}$. 

\subsection{\em The ASD DeRham Operator}\label{ss3.1} \hfill
 
\vspace{3mm}

Let $V$ be a closed smooth $4$-manifold with a fixed primitive class $1_V \in H^1(V;\Z)$. In particular, we have $b_1(V) \geq 1$. Same as \autoref{s2}, we denote by $\tilde{V} = \cup_{n \in \Z} W_n$ the cyclic cover of $V$ corresponding to $1_V$ obtained by cutting a codimension-$1$ non-separating representative $Y$ of $\PD1_V$ and gluing up infinite copies with head to tail. Such a representative $Y$ is referred to as a cross-section of the pair $(V, 1_V)$. We denote by $\tau$ the generating covering transformation, and $\tilde{f}: \tilde{V} \to \R$ a fixed smooth function equivariant with respect to $\tau$, i.e. $\tilde{f}(\tau(x)) = \tilde{f}(x) +1$. We also form an end-periodic $4$-manifold $Z = M\cup \tilde{V}_+$ by attaching the positive end of $V$ to a $4$-manifold $M$ with boundary $\partial M = Y$, and extend $\tilde{f}$ to a smooth function $\rho : Z \to \R$. We fix an integer $k \geq 1$ to define the Sobolev space $L^2_k$. 

Given $z \in \C$, the $z$-twisted anti-self-dual DeRham complex over $V$:
\begin{equation}\tag{$E_z(V)$}
0 \longrightarrow L^2_{k+1}(V, \Lambda^0_{\C}) \xrightarrow{-d_z} L^2_k(V, \Lambda^1_{\C}) \xrightarrow{d^+_z} L^2_{k-1}(V, \Lambda^+_{\C}) \longrightarrow 0,
\end{equation}
where $d_z = d - z \cdot df$, $d^+_z = d^+ - z \cdot ( df \wedge - )^+$. When $z=0$, we simply write $E(V) = E_0(V)$. 

\begin{lem}[{\cite[Lemma 3.2]{T87}}]\label{l3.2}
The pair $(E(V), 1_V)$ corresponding to the ASD DeRham complex is non-degenerate if and only if 
\begin{enumerate}
\item $1 - b_1(V) + b^+(V) = 0$; 
\item $\ker (1_V \smile: H^1(V; \R) \to H^2(V; \R) ) = \langle 1_V \rangle$. 
\end{enumerate}
\end{lem}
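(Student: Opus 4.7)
The plan is to unwind what non-degeneracy of $(E(V), 1_V)$ actually says for the ASD DeRham complex and then match it with the two conditions in the lemma. By \autoref{d2.4}, non-degeneracy is the exactness of
\[
0 \longrightarrow H^0(E(V)) \xrightarrow{\bar{\sigma}^0} H^1(E(V)) \xrightarrow{\bar{\sigma}^1} H^2(E(V)) \longrightarrow 0.
\]
The first step is to identify these cohomology groups via Hodge theory on the closed Riemannian $4$-manifold $V$: one gets $H^0(E(V)) \simeq \C$ (constants), $H^1(E(V)) \simeq H^1(V;\C)$ (harmonic $1$-forms), and $H^2(E(V)) \simeq H^+(V;\C)$ (self-dual harmonic $2$-forms). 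Next, since the symbol of the exterior derivative at a covector $\xi$ is $\xi \wedge \cdot$, I compute $\sigma^0 = df$ and $\sigma^1 = (df \wedge \cdot)^+$. On cohomology, these become $\bar{\sigma}^0(c) = c \cdot 1_V \in H^1(V;\C)$ and $\bar{\sigma}^1([\alpha]) = \pi^+(1_V \smile [\alpha])$, where $\pi^+ \colon H^2(V;\C) \to H^+(V;\C)$ is the orthogonal projection with respect to the intersection form decomposition $H^2 = H^+ \oplus H^-$.

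With these identifications, exactness at the $H^0$ spot is automatic: $\bar{\sigma}^0$ is injective because $1_V$ is primitive, hence nonzero in $H^1(V;\R)$. The heart of the proof is the following claim, which I would establish next:
\[
\ker\bar{\sigma}^1 \;=\; \ker\bigl(1_V \smile \colon H^1(V;\C) \to H^2(V;\C)\bigr).
\]
The inclusion $\supseteq$ is clear. For the converse, suppose $\gamma := 1_V \smile [\alpha]$ lies in $H^-(V;\C)$. Because $1_V \smile 1_V = 0$ in a ring of characteristic zero, and cup product satisfies $\alpha \smile 1_V = -1_V \smile \alpha$ on degree-$1$ classes, one computes $\gamma \smile 1_V = 0$, so in particular $\gamma \smile \gamma = (1_V \smile \alpha) \smile (1_V \smile \alpha) = 0$. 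On the other hand, the intersection form is negative definite on $H^-$, so $\gamma \in H^-$ with $\gamma \smile \gamma = 0$ forces $\gamma = 0$. This gives the claim, and identifies exactness at $H^1$ with condition (2).

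Finally, I would count dimensions to handle exactness at $H^2$. Once $\bar{\sigma}^0$ is injective (dimension $1$ in the image) and $\ker\bar{\sigma}^1 = \langle 1_V \rangle$ has dimension $1$, the rank of $\bar{\sigma}^1$ is $b_1(V) - 1$. Surjectivity of $\bar{\sigma}^1$ onto $H^+(V;\C)$ is therefore equivalent to $b_1(V) - 1 = b^+(V)$, which is condition (1). Conversely, exactness of the symbol sequence immediately gives (2) by the argument above and, via the alternating sum of dimensions, gives (1). The main obstacle, and really the only nontrivial piece, is the isotropy argument showing $\ker\bar{\sigma}^1 = \ker(1_V \smile \cdot)$; once this metric-dependent projection is absorbed into the purely cohomological cup-product kernel, everything else is linear algebra.
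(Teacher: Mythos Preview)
Your proposal is correct and follows the same route as the paper, which simply writes out the symbol sequence, notes that (1) and (2) are necessary, invokes ``Taubes' argument'' for exactness at $H^0$ and $H^1$ under (2), and then uses (1) for surjectivity onto $H^+$; you have spelled out that isotropy argument explicitly. One small slip to tighten: the step ``$\gamma \in H^-$ with $\gamma \smile \gamma = 0$ forces $\gamma = 0$'' appeals to negative definiteness, but that holds for the \emph{bilinear} form only over $\R$ (over $\C$ the space $H^-(V;\C)$ has isotropic vectors whenever $b^- \geq 2$). The fix is immediate: either observe that the symbol sequence is the complexification of a real one, so you may run the argument over $\R$ where condition (2) is stated anyway, or replace $\gamma \smile \gamma$ by the Hermitian pairing $\gamma \smile \bar{\gamma}$, which vanishes by the same manipulation since $1_V$ is real and is negative definite on $H^-(V;\C)$.
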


\begin{proof}
The symbol sequence takes the following form 
\[
0 \longrightarrow H^0(V; \C) \xrightarrow{1_V \smile} H^1(V;\C) \xrightarrow{(1_V \smile - )^+} H^+(V;\C) \longrightarrow 0. 
\]
It's clear that (1) and (2) are necessary conditions for the exactness of the symbol sequence. Taubes' argument \cite{T87} shows that the second condition (2) implies this sequence is exact at $H^0(V;\C)$ and $H^1(V;\C)$. The dimension condition (1) then implies that the sequence is also exact at $H^+(V;\C)$. 
\end{proof}

\begin{rem}
\autoref{l3.2} shows that the rational cohomology of a non-degenerate pair $(V, 1_V)$ is equivalent to that of $(S^1 \times Y, d\theta)$, where $Y$ is a closed $3$-manifold and $d\theta \in H^1(S^1; \Z)$ is the angular form. 
\end{rem}

Throughout this section, we always assume $(V, 1_V)$ is a non-degenerate pair. The $z$-twisted ASD DeRham operator corresponding to $E_z$ is given by
\begin{equation}\label{e3.1}
Q_z:= -d^*_z \oplus d_z^+: L^2_k(V, \Lambda^1_{\C})  \longrightarrow L^2_{k-1}(V, \Lambda^0_{\C} \oplus \Lambda^+_{\C}), 
\end{equation}
where $d^*_z  = d^* - \bar{z} \cdot \iota_{df}$ is the $L^2$-adjoint of $d_z$. Here $\iota_{df}$ is the normalized contraction by $df$, namely for non-zero $a \in \Lambda^*_{\C}$ it takes the form 
\begin{equation}
\iota_{df} df \wedge a = \frac{|df \wedge a|^2}{|a|^2} \cdot a.  
\end{equation}
Then one readily checks that $(df \wedge a) \wedge \star \bar{b} = a \wedge \star \iota_{df} \bar{b}$, which means $\iota_{df}$ is the adjoint of $df \wedge -$. Then we have 
\[
\dim \ker Q_z = \dim H^1(E_z), \quad \dim \cok Q_z = \dim H^0(E_z) + \dim H^2(E_z),
\]
which tells us that $z$ is a regular point of $E_z$ if and only if $Q_z$ is invertible. When $z=0$, we recover the usual ASD DeRham operator over $V$, which we denote by $Q:=-d^* \oplus d$. 

Now we switch to consider the ASD DeRham complex over the end-periodic manifold $Z$ with weight $\delta \in \R$:
\begin{equation}\tag{$E_{\delta}(Z)$}\label{eZ}
0 \longrightarrow L^2_{k+1, \delta}(Z, \Lambda^0_{\C}) \xrightarrow{-d} L^2_{k, \delta}(Z, \Lambda^1_{\C}) \xrightarrow{d^+} L^2_{k-1, \delta}(Z, \Lambda^+_{\C}) \longrightarrow 0. 
\end{equation}
Since $z=0$ is a spectral point of $E_z$, we need to work with weights $\delta \neq 0$ to get a Fredholm complex $E_{\delta}(Z)$ with well-defined index. The corresponding ASD DeRham operator of $E_{\delta}(Z)$  takes the form 
\begin{equation}
-d^*_{\delta} \oplus d^+: L^2_{k, \delta}(Z, \Lambda^1_{\C}) \longrightarrow L^2_{k-1, \delta}(Z, \Lambda^0_{\C} \oplus \Lambda^+_{\C}),
\end{equation}
where $d^*_{\delta}:=e^{-\delta \rho} d^* e^{\delta \rho} = d^* - \delta \cdot \iota_{d\rho}$ is the $L^2_{\delta}$-adjoint of $d$. 
After identifying $L^2_k = e^{\delta\rho/2}L^2_{k, \delta}$, this operator is equivalent to the following operator defined on Sobolev spaces with zero weight:
\begin{equation}
Q_{\delta}(Z):= -d^*_{\delta/2} \oplus d^+_{\delta/2}: L^2_{k}(Z, \Lambda^1_{\C}) \longrightarrow L^2_{k-1}(Z, \Lambda^0_{\C} \oplus \Lambda^+_{\C}),
\end{equation}
where $d^+_{\delta/2} = d^+ - \delta/2(d\rho \wedge -)^+$. 

\subsection{\em The Index Formula}\label{ss3.2} \hfill
 
\vspace{3mm}

In this subsection, we derive an Atiyah-Patodi-Singer type index formula for the ASD DeRham operator $Q_{\delta}(Z)$ following the strategy of \cite{MRS16}. Note that the $L^2$-adjoint of $Q_{\delta}(Z)$ is given by $Q^*_{\delta}(Z) = -d_{\delta/2} \oplus d^*_{\delta/2}$. Conjugating by the Fourier-Laplace transform, we get two $z$-twisted operators on $V$:
\begin{equation}\label{e3.5}
\begin{split}
Q_{\delta, z}(V) & = \left(-d^* + (\delta/2 - z)\iota_{df}, \; d^+- (\delta/2 + z)(df \wedge -)^+ \right)  \\
Q^*_{\delta, z}(V) & = \left(-d + (\delta/2 + z) df, \; d^* - (\delta/2 - z) \iota_{df} \right)
\end{split}
\end{equation}

Due to \autoref{l2.3} and \autoref{l2.5}, we may choose $\delta_0 > 0$ so that $Q_{\delta}(Z)$ is Fredholm for all non-zero $\delta \in (-\delta_0, \delta_0)$. In other words, $\delta_0$ is chosen so that 
\begin{equation}\label{e3.6}
E_z(V) \text{ is acyclic for all } \Rea z \in (-\delta_0/2, \delta_0/2)\backslash \{0\}. 
\end{equation}
Since $Q$ is an elliptic first order differential operator, it follows from the Atiyah-Singer index theory that $Q$ admits an index form which we denote by $\mathfrak{a}(Q)$. Over the closed $4$-manifold $V$, we have 
\[
\ind Q(V) = \int_V \mathfrak{a}(Q) = -\frac{1}{2}(\chi(V) + \sigma(V)) = 0.
\]
In particular, the index form is exact on $V$, say $d \omega=\mathfrak{a}(Q)$. Since $\ind Q_{\delta}(Z)$ is independent of the choice of $\delta \in (0, \delta_0)$, we denote it by $\ind_+ Q(Z)$. Similarly, we write $\ind_-Q(Z)$ for $\ind Q_{\delta}(Z)$ with $\delta \in (-\delta_0, 0)$. 

In the work of Atiyah-Patodi-Singer \cite{APS1}, the contribution to $\ind_{\pm} Q(Z)$ of the non-compact end comes from certain spectral asymmetry, which is referred to as the eta invariant. In our case, it's given by the periodic eta invariant. 

\begin{dfn}\label{d3.3}
Let $\delta \in (-\delta_0, \delta_0) \backslash \{0\}$ with $\delta_0 > 0$ satisfying (\ref{e3.6}). The periodic eta invariant of the ASD DeRham operator associated to a non-degenerate pair $(V, 1_V)$ is defined to be 
\begin{equation}
\tilde{\eta}_{\delta}(V) = \frac{1}{\pi i} \int_{I(\delta/2)} \Tr \left(df \cdot Q_z \exp(-tQ_z^*Q_z)\right) dz dt, 
\end{equation}
where $df \cdot$ is applied to $L^2_{k-1}(V, \Lambda^0_{\C} \oplus \Lambda^+_{\C})$ by $df \cdot (s, w) = (s df,  \iota_{df} w)$, and $I(\delta/2) = \delta/2 + i(0, 2\pi)$. 
\end{dfn}
We will see shortly in the proof of \autoref{t3.4} that $\tilde{\eta}_{\delta}(V)$ is independent of the choice of the sign of $\delta \in (-\delta_0, \delta_0) \backslash \{0\}$. Thus we will simply denote by $\tilde{\eta}_{\pm}(V)$ the corresponding periodic eta invariants given by positive and negative $\delta$ respectively. Finally we take the mean of the eta invariants and denote it by 
\begin{equation}
\tilde{\eta}(V):= \frac{\tilde{\eta}_+(V) + \tilde{\eta}_-(V)}{2}. 
\end{equation}
One can interpret the periodic eta invariants as the spectral asymmetry of the complex $E_z$. Explicitly, it's the regularization of the difference between the number of spectral points in $\Sigma(E_z)$ of length greater and less than $1$. See \cite[Section 6]{MRS16} for more details of this interpretation in the case of Dirac type operators. 

\begin{thm}\label{t3.4}
With the notations above, we have 
\[
\ind_{\pm} Q(Z) = \int_M \mathfrak{a}(Q) - \int_Y \omega + \int_V df \wedge \omega- \frac{\tilde{\eta}_{\pm}(V)}{2}.
\]
\end{thm}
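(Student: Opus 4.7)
The plan is to mirror the heat-kernel strategy used by Mrowka-Ruberman-Saveliev in \cite{MRS16} for Dirac-type operators, adapted to the ASD DeRham operator. I would start from the McKean--Singer-type identity
\[
\ind_{\pm} Q(Z) \;=\; \Tr\bigl(e^{-t Q_{\delta}^{*}Q_{\delta}}\bigr) - \Tr\bigl(e^{-t Q_{\delta}Q_{\delta}^{*}}\bigr)
\]
for any $t>0$, valid once $Q_\delta(Z)$ is Fredholm (which holds for $0<|\delta|<\delta_0$ by \autoref{l2.3} together with (\ref{e3.6})). The difference of heat kernels on the diagonal is pointwise integrable thanks to a Duhamel comparison with the model periodic heat kernel on $\tilde V$, so I would split the supertrace into an integral over the compact piece $M$ and an integral over the periodic end $\tilde V_+$.

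For the compact piece, the standard asymptotic expansion in $t$ yields $\int_M \mathfrak{a}(Q)$ in the small-time limit, exactly as in the APS setup; this contribution is independent of $\delta$. The hard part is the end. Here I would apply the Fourier--Laplace transform from \autoref{l2.1} to convert the heat supertrace of $Q_\delta$ on $\tilde V_+$ into a contour integral over $I(\delta/2)$ of the corresponding supertrace of the $z$-twisted operator $Q_z$ on the closed manifold $V$, using the formulas (\ref{e3.5}). Splitting the periodic end into fundamental domains $W_n$ and summing as in \cite[Section 5]{MRS16} produces, after small-$t$ expansion on $V$,
\[
\tfrac{1}{2\pi i}\int_{I(\delta/2)} \int_V \mathfrak{a}(Q_z)\,dz \;-\; \tfrac{1}{2}\tilde{\eta}_{\delta}(V),
\]
where $\mathfrak{a}(Q_z)$ is the local index form of the $z$-twisted operator.

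Next I would compute the residual $z$-integral of $\mathfrak{a}(Q_z)$ on $V$. Because $Q_z$ differs from $Q$ by an order-zero perturbation built from $df$, a transgression argument gives
\[
\mathfrak{a}(Q_z) - \mathfrak{a}(Q) \;=\; d\bigl(\psi(z)\bigr)
\]
for an explicit polynomial in $z$ with form-valued coefficients, and the integration in $z$ along $I(\delta/2)$ picks out the coefficient linear in $z$, producing the term $\int_V df\wedge\omega$ after using $d\omega=\mathfrak{a}(Q)$ on $V$. The compact-end gluing between $M$ and $\tilde V_+$ meets along $Y$; when combining the two local pieces into one global integral, Stokes' theorem on $M$ with primitive $\omega$ contributes the boundary correction $-\int_Y \omega$. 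Putting the four terms together gives the stated formula.

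The main obstacle I anticipate is the rigorous justification of the heat-kernel manipulations on the non-compact end: the convergence of the sum over fundamental domains, the interchange of $t\to 0$ with the Fourier--Laplace contour, and the tracial estimates that make $\tilde\eta_\delta(V)$ in \autoref{d3.3} well-defined. One must verify that no spectral points of $\{E_z(V)\}$ lie on $I(\delta/2)$ (ensured by (\ref{e3.6})) and that the residues swept by deforming the contour as $\delta$ changes sign cancel against the corresponding change in $\ind_\pm$; this cancellation is what makes $\tilde\eta_\delta(V)$ depend only on $\sign(\delta)$, and hence justifies writing $\tilde{\eta}_{\pm}(V)$.
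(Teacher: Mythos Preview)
Your outline follows the right template but has a genuine gap at the very first step. On an end-periodic manifold the heat operators $e^{-tQ_\delta^*Q_\delta}$ are \emph{not} trace-class: the supertrace density on the end is asymptotic to that of the model operator on $\tilde V$, which is $\tau$-periodic and hence not integrable over $\tilde V_+$ (its integral over a single period vanishes, since $\ind Q_\delta(\tilde V)=0$, but the density itself does not). The Duhamel comparison you invoke only controls the \emph{difference} between the actual kernel and the periodic model; it does not make the model contribution integrable. This is exactly why \cite{MRS16} introduces the regularized trace $\Tr^\flat$, and the paper's proof explicitly works with $\Str^\flat(\delta,t)=\Tr^\flat e^{-tQ_\delta^*Q_\delta}-\Tr^\flat e^{-tQ_\delta Q_\delta^*}$. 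With the regularized trace, McKean--Singer fails for finite $t$: instead one has $\lim_{t\to\infty}\Str^\flat=\ind Q_\delta(Z)$ and $\lim_{t\to 0}\Str^\flat=\int_M\mathfrak{a}(Q)$, and the argument proceeds by integrating $\frac{d}{dt}\Str^\flat$ over $t\in(0,\infty)$.

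Your derivation of the boundary terms is also off. The quantity $\tfrac{1}{2\pi i}\int_{I(\delta/2)}\int_V\mathfrak{a}(Q_z)\,dz$ vanishes identically, since $V$ is closed and $\mathfrak{a}(Q_z)-\mathfrak{a}(Q)$ is exact; so it cannot produce $\int_V df\wedge\omega$. In the paper's (i.e.\ \cite{MRS16}'s) argument, the formula for $\frac{d}{dt}\Str^\flat$ has two terms: the second is the integrand defining $\tilde\eta_\delta(V)$, while the first is an integral of $f\cdot(\text{supertrace density})$ over the fundamental domain $W_0$. Integrating this first term in $t$ and using the small-$t$ asymptotic $\mathfrak{a}(Q)=d\omega$ together with integration by parts on $W_0$ (where the lift $\tilde f$ takes the values $0$ and $1$ on the two boundary copies of $Y$) is what yields $-\int_Y\omega+\int_V df\wedge\omega$. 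So the two correction terms come from the structure of the regularized trace, not from a transgression of $\mathfrak{a}(Q_z)$ in $z$.
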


\begin{proof}
The proof is literally the same as that of \cite[Theorem A]{MRS16} except for some minor adjustment. Their Dirac operators $\mathcal{D}^+$ and $\mathcal{D}^-$ are replaced by $Q_{\delta}$ and $Q^*_{\delta}$ respectively in our case. The corresponding symmetric trace in \cite[Section 5]{MRS16} takes the form 
\[
\Str^{\flat}(\delta, t) = \Tr^{\flat} \exp(-tQ^*_{\delta}Q_{\delta}) - \Tr^{\flat}\exp(-tQ_{\delta}Q^*_{\delta}),
\]
where $\Tr^{\flat}$ means the regularized trace of an elliptic operator over the end-periodic manifold $Z$ defined in \cite[Section 3]{MRS16}. Since the differential operator $Q_{\delta}^*Q_{\delta}$ is self-adjoint and elliptic, the estimates of smoothing kernels in \cite{MRS16} can be carried over to $Q_{\delta}^*Q_{\delta}$ as well, which leads to the following limits:
\[
\lim_{t \to \infty} \Str^{\flat}(\delta, t)  = \ind Q_{\delta}(Z), \quad \lim_{t \to 0} \Str^{\flat}(\delta, t)  = \int_M \mathfrak{a}(Q). 
\]
In the meantime, the formula \cite[(23)]{MRS16} corresponding to $Q_{\delta}$ is 
\begin{equation}
\begin{split}
\frac{d}{dt} \Str^{\flat}(\delta, t) & = \frac{1}{2\pi i } \int_{I(0)}\int_{W_0} f \cdot \tr \left(K_{\exp(-tQ^*_{\delta, z}Q_{\delta, z})} - K_{\exp(-tQ_{\delta, z}Q^*_{\delta, z})}\right) dx dz\\
& - \frac{1}{2\pi i }\int_{I(0)} \Tr\left( \frac{\partial Q^*_{\delta, z}}{\partial z} \cdot Q_{\delta, z}\exp(-tQ^*_{\delta, z}Q_{\delta, z}) \right) dz. 
\end{split}
\end{equation}
We note that $\partial Q^*_{\delta, z} /\partial z = 1/z (df, \iota_{df})$ and when $\Rea z = 0$, we have 
\[
Q_{\delta, z} = Q_{\delta/2+ z} \text{ and } Q^*_{\delta, z} = Q^*_{\delta/2+z}.
\]
Thus $\tilde{\eta}_{\delta}(V)/2$ can be identified with the integration of the second term of (\ref{e2.12}). Then the rest of the proof follows from that in \cite[Section 5.2]{MRS16}. 
\end{proof}

As we vary $\delta \in (-\delta_0, \delta_0)$, the only value that makes $Q_{\delta}(Z)$ fail to be Fredholm is $0$, which is caused by the existence of spectral points in $\Sigma(E)$ with vanishing real part. Let's denote by 
\begin{equation}
\tilde{h}(V):= \sum_{z \in I(0)} -\chi\left(\widehat{H}^*_z(V) \right)
\end{equation}
the total multiplicity of spectral points with vanishing real part. Then \autoref{t2.12} implies that 
\begin{equation}
\ind_+ Q(Z) - \ind_- Q(Z) = - \tilde{h}(V). 
\end{equation}
In this way, we obtain the a variant of \cite[Theorem C]{MRS16} corresponding to the ASD DeRham operator. 
\begin{cor}\label{c3.5}
With the notations of \autoref{t3.4}, we have 
\[
\ind_+ Q(Z) = \int_M \mathfrak{a}(Q) - \int_Y \omega + \int_V df \wedge \omega- \frac{\tilde{h}(V)+\tilde{\eta}(V)}{2}.
\]
\end{cor}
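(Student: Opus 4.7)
The corollary is essentially a formal consequence of the APS-type index formula \autoref{t3.4} combined with the index jump formula \autoref{t1.1} (equivalently \autoref{c2.14}). My plan is to exhibit $\ind_+ Q(Z)$ as the mean of $\ind_+ Q(Z)$ and $\ind_- Q(Z)$ plus half of their difference, and compute each piece using results already established in the paper.

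First, I would record the two instances of \autoref{t3.4} by taking $\delta \in (0, \delta_0)$ and $\delta \in (-\delta_0, 0)$ separately, yielding
\[
\ind_{\pm} Q(Z) \;=\; \mathcal{A} \;-\; \tfrac{1}{2}\tilde{\eta}_{\pm}(V), \qquad \mathcal{A} := \int_M \mathfrak{a}(Q) - \int_Y \omega + \int_V df \wedge \omega,
\]
so that their average is $\tfrac{1}{2}(\ind_+ Q(Z) + \ind_- Q(Z)) = \mathcal{A} - \tfrac{1}{2}\tilde{\eta}(V)$ by the definition of $\tilde{\eta}(V)$.

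Second, I would compute the index jump $\ind_+ Q(Z) - \ind_- Q(Z)$ by applying \autoref{t1.1} (via \autoref{c2.14}) with $\delta_1 = \epsilon$ and $\delta_2 = -\epsilon$ for any $\epsilon \in (0, \delta_0)$. The sum
\[
\sum_{z \in (-\epsilon/2,\,\epsilon/2) \times i[0,2\pi)} \chi\bigl(\widehat{H}^*_z(V)\bigr)
\]
collapses to a sum over $z \in I(0) = \{0\} \times i[0,2\pi)$, because the choice of $\delta_0$ in $(\ref{e3.6})$ guarantees that $E_z(V)$ is acyclic for $\Rea z \in (-\delta_0/2, \delta_0/2) \setminus \{0\}$, and by \autoref{p2.15}(1) the corresponding $\widehat{H}^*_z(V)$ vanish off the imaginary axis in this strip. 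Thus $\ind_+ Q(Z) - \ind_- Q(Z) = -\tilde{h}(V)$ by definition of $\tilde{h}(V)$.

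Finally, combining $\ind_+ Q(Z) = \tfrac{1}{2}(\ind_+ + \ind_-) + \tfrac{1}{2}(\ind_+ - \ind_-)$ with the two identities above gives the stated formula. There is no genuine obstacle here: every ingredient is supplied by earlier results in the paper, and the only care required is ensuring that the strip $\Rea z \in (-\epsilon/2, \epsilon/2)$ in the index jump formula contains no spectral points off the imaginary axis — which is exactly how $\delta_0$ was chosen.
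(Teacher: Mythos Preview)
Your proof is correct and follows essentially the same approach as the paper: the paper obtains the corollary by combining the two instances of \autoref{t3.4} (for $\delta>0$ and $\delta<0$) with the index jump formula (via \autoref{t2.12}/\autoref{c2.14}) to get $\ind_+ Q(Z) - \ind_- Q(Z) = -\tilde{h}(V)$, exactly as you do. Your use of \autoref{p2.15}(1) to justify that only $\Rea z = 0$ contributes in the chosen strip is the right observation and matches the paper's implicit reasoning based on the choice of $\delta_0$ in (\ref{e3.6}).
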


\subsection{\em The Periodic Rho Invariant}\label{ss3.3} \hfill
 
\vspace{3mm}

Let $\varphi: \pi_1(V) \to U(1)$ be a $U(1)$-representation of the fundamental group. The covariant derivative of the flat connection on the trivial bundle corresponding to $\varphi$ is denoted by $d_{\varphi}$. Then for $z \in \C$, we have the twisted ASD DeRham complex:
\begin{equation}\tag{$E_{\varphi, z}$}\label{eaX}
0 \longrightarrow L^2_{k+1}(V, \Lambda^0_{\C}) \xrightarrow{-d_{\varphi, z}} L^2_k(V, \Lambda^1_{\C}) \xrightarrow{d^+_{\varphi, z}} L^2_{k-1}(V, \Lambda^+_{\C}) \longrightarrow 0
\end{equation}
defined as before. 

\begin{dfn}\label{d3.6}
A representation $\varphi: \pi_1(V) \to U(1)$ is admissible if the following hold. 
\begin{enumerate}[label=(\alph*)]
\item For all $z$ with $\Rea z=0$, $H^*(E_{\varphi, z}) = 0$. 
\item There exists a cross-section $Y$ of the pair $(V, 1_V)$ such that 
\[
H^0(Y; \C_{\varphi}) = H^1(Y; \C_{\varphi}) = 0,
\]
where $\C_{\varphi}$ is the twisted coefficient given by the restriction of $\varphi$ to $\pi_1(Y)$. 
\end{enumerate}
\end{dfn}

The admissibility condition should be regarded as an adaptation of the admissibility from Taubes' paper \cite[Definition 1.3]{T87}.  The first condition (a) will be seen to coincide with the non-degeneracy condition \autoref{d5.2} for the moduli space of singular instantons. Meanwhile, (a) also implies that the spectral set $\Sigma(E_{\varphi, z})$ of the complex $E_{\varphi, z}$ is non-empty and discrete. The second condition (b) implies that the rho invariant $\rho_{\varphi}(Y)$ is not a jumping point with respect to varying the representation $\varphi$. 

Let $Q_{\varphi}:= -d^*_{\varphi} \oplus d^+_{\varphi}$ be the ASD DeRham operator twisted by $\varphi$. Due to (a) in the admissibility assumption, we can define the corresponding eta invariant same as \autoref{d3.3}, which we denote by $\tilde{\eta}_{\varphi}(V)$. The periodic rho invariant is considered to be a reduced periodic eta invariant. 

\begin{dfn}
Let $\varphi: \pi_1(V) \to U(1)$ be an admissible representation. We define the periodic rho invariant to be 
\begin{equation}
\tilde{\rho}_{\varphi}(V)= \tilde{\eta}_{\varphi}(V) - \tilde{\eta}(V). 
\end{equation}
\end{dfn}

\begin{lem}
The periodic rho invariant $\tilde{\rho}_{\varphi}(V)$ is independent of the choice of the smooth function $f: V \to S^1$ and the Riemannian metric $g$ on $V$. 
\end{lem}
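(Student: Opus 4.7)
The approach is to derive, via the index formula of \autoref{c3.5}, an expression for $\tilde\rho_\varphi(V)$ in which the dependence on $(f,g)$ is manifestly cancelled. First, choose a compact oriented $4$-manifold $M$ with $\partial M = Y$ such that $\varphi|_{\pi_1(Y)}$ extends to a representation $\bar\varphi: \pi_1(M) \to U(1)$; such an $M$ always exists because $\Omega^{SO}_3(BU(1))$ is torsion, so any $U(1)$-character of $\pi_1(Y)$ bounds over some $4$-manifold (a multiplicity issue being handled by a standard rescaling argument on characters). Form the end-periodic manifold $Z = M \cup \tilde V_+$.

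Apply \autoref{c3.5} to both the untwisted ASD DeRham operator $Q$ on $Z$ and its $\varphi$-twisted version $Q_\varphi$. Admissibility condition (a) together with \autoref{p2.15}(1) forces $\tilde h_\varphi(V) = 0$. Since $\varphi$ arises from a flat connection on a trivial rank-one bundle, the Chern--Weil index form satisfies $\mathfrak a(Q_\varphi) = \mathfrak a(Q)$ pointwise, so $\beta := \omega_\varphi - \omega$ is a closed $3$-form on $V$ and the two $\int_M$ terms cancel. A Stokes computation on the cut-open manifold $W$, using the lift $\tilde f: W \to [0,1]$, yields
\[
\int_V df \wedge \beta \;=\; \int_{\partial W} \tilde f \cdot \beta \;=\; \int_Y \beta,
\]
which exactly cancels the boundary term $-\int_Y \beta$. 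Subtracting the two index formulas then produces the master identity
\[
\tilde\rho_\varphi(V) \;=\; \tilde h(V) \;-\; 2\bigl[\,\ind_+ Q_\varphi(Z) - \ind_+ Q(Z)\,\bigr].
\]

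It remains to show the right-hand side is independent of $(f,g)$. Connecting two choices $(f_0, g_0)$ and $(f_1, g_1)$ by a smooth path while keeping the data on $M$ fixed, both $Q(Z_t)$ and $Q_\varphi(Z_t)$ vary continuously as Fredholm operators: admissibility (a) is an open condition for $Q_\varphi$, and fixing a small weight $\delta \in (0, \delta_0)$ preserves Fredholmness of $Q$ under small perturbations of $(f, g)$. The integer-valued indices, and hence their difference, are therefore locally constant along the path. In parallel, \autoref{c2.14} identifies $\tilde h(V)$ with the dimension of the generalized $1$-eigenspace of the covering transformation acting on $H^*(E_{\tilde\delta}(\tilde V))$; for the ASD DeRham complex this is topological data of the infinite cyclic cover $\tilde V \to V$ (essentially the roots of an Alexander-type polynomial governing the $\tau^*$-action), hence independent of $(f, g)$.

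The main technical obstacle is handling potential wall-crossings along the deformation: a spectral point of $\{E_z(V)\}$ may cross the line $\Rea z = \delta/2$, producing integer jumps in $\ind_+ Q(Z_t)$. One must verify that every such jump is compensated by a corresponding jump in $\tilde h(V)$ or in $\ind_+ Q_\varphi(Z_t)$, so that the sum in the master identity stays constant --- a matching that is ultimately controlled by the residue-type identification of $H^*(E_{\tilde\delta}(\tilde V))$ supplied by \autoref{t2.12}.
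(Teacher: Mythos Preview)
Your master identity
\[
\tilde\rho_\varphi(V) \;=\; \tilde h(V) \;-\; 2\bigl[\ind_+ Q_\varphi(Z) - \ind_+ Q(Z)\bigr]
\]
is correctly derived, and the bordism argument for producing $M$ is fine (indeed $\Omega^{SO}_3(BU(1))=0$, not merely torsion). The approach is genuinely different from the paper's: rather than isolating $\tilde\rho_\varphi(V)$ via a single auxiliary end-periodic $Z$, the paper attaches \emph{two} periodic ends (one modeled on $(V,g_0)$, one on $(V,g_1)$) to a cobordism $W_0$ and reads off $\tilde\rho_\varphi(V,g_0)-\tilde\rho_\varphi(V,g_1)$ directly as a sum of indices on this two-ended manifold. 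That construction avoids having to find a bounding $(M,\bar\varphi)$ and does not require analyzing $\tilde h(V)$ separately.

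However, your final paragraph is a genuine gap, and the way you frame it points in the wrong direction. You anticipate wall-crossings and hope they are ``compensated''. In fact \emph{there are no wall-crossings at all}: the spectral set $\Sigma(E)$ (and likewise $\Sigma(E_\varphi)$) is independent of both $g$ and the choice of $f$ within the class $1_V$. The reason is the Chern--Weil observation the paper records in its own proof: since $d_{\varphi,z}$ is a flat connection on a trivial line bundle, $d^+_{\varphi,z}\omega=0$ forces $d_{\varphi,z}\omega=0$, so $H^1(E_{\varphi,z}(V,g)) \cong H^1(V;\C_{\varphi_z})$, which is purely topological. The spectral set therefore does not move under deformation of $(f,g)$, a single weight $\delta$ works along the entire path, and the indices in your master identity are locally constant for the straightforward reason that the operators stay Fredholm. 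Without this observation your argument is incomplete; with it, your scheme goes through. This same fact is exactly the engine of the paper's proof as well --- both routes rely on it, and it is the one step you did not supply.
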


\begin{proof}
The independence of $f$ follows from the argument of \cite[Lemma 8.2]{MRS16} directly. To see the independence on the metric, we adopt the argument in \cite[Lemma 8.3]{MRS16}. Let $g_0$ and $g_1$ be two metrics on $V$. We choose a metric $\tilde{g}$ and a function $\tilde{f}'$ on $\tilde{V} = \bigcup_{n \in \Z}W_n$ so that 
\[
\tilde{g}|_{W_n}=
\begin{cases}
g_0 & \text{ if $n < 0$ } \\
g_1 & \text{ if $n > 0$}
\end{cases}, \quad
\tilde{f}'|_{W_n}=
\begin{cases}
-\tilde{f} & \text{ if $n < 0$ } \\
\tilde{f} & \text{ if $n > 0$}
\end{cases}. 
\]
We regard $(\tilde{V}, \tilde{g}, \tilde{f}')$ as an end-periodic manifold with two periodic ends attached to $(W_0, \tilde{g}|_{W_0})$. Then \autoref{t3.4} implies that 
\[
\left( \ind_+ Q_{\varphi}(\tilde{V}, \tilde{g}) + \ind_- Q_{\varphi}(\tilde{V}, \tilde{g})\right)  - \left( \ind_+ Q(\tilde{V}, \tilde{g}) + \ind_- Q(\tilde{V}, \tilde{g})\right) = \tilde{\rho}_{\varphi}(V, g_0) - \tilde{\rho}_{\varphi}(V, g_1). 
\]
So it suffices to show that $\ind_{\pm} Q_{\varphi}(\tilde{V}, \tilde{g})$ is independent of the metric with respect to any admissible $\varphi: \pi_1(V) \to U(1)$.

Let $(g_t)_{t \in [0,1]}$ be a path of metrics connecting $g_0$ and $g_1$. This path can be lifted, after appropriate modification on $W_0$, to a path of metrics $\tilde{g}_t$ on $\tilde{V}$ from $\tilde{g}_0$ to $\tilde{g}$. Once we prove that one can find $\delta > 0$ such that $Q_{\varphi, \delta}(\tilde{V}, \tilde{g}_t)$ is a Fredholm operator on $L^2_k$ for all $t \in [0, 1]$. The conclusion then follows from the invariance of index under continuous deformation of Fredholm operators.

Taubes' criterion tells us that $Q_{\varphi, \delta}(\tilde{V}, \tilde{g}_t)$ is Fredholm if and only if $H^1(E_{\varphi, z}(g_t)) = 0$ for all $\Rea z =\delta/2$. Since $d^2_{\varphi, z} = 0$, $d_{\varphi, z}$ can be regarded as the covariant derivative given by a flat connection (not necessarily unitary) on the trivial bundle $\underline{\C}$ of $V$.  The Chern-Weil homomorphism implies that $d_{\varphi, z}a = 0$ once $d^+_{\varphi, z} a =0$. Thus $H^1(E_{\varphi, z}(g_t)) \simeq H^1(V; \C_{\varphi_z})$ with $\varphi_z: \pi_1(V) \to \C^*$ the representation given by the holonomy of the flat connection $d_{\varphi, z}$, which is clearly independent of the metric. 
\end{proof}


\subsection{\em Relation with Rho Invariants over $3$-Manifolds}\label{ss3.4} \hfill
 
\vspace{3mm}

We start by recalling the definition of the rho invariant defined over $3$-manifolds \cite{APS2}. Let $Y$ be a closed $3$-manifold and $\varphi: \pi_1(Y) \to U(1)$ a unitary representation. For simplicity, we further assume that $\varphi$ is trivial on the torsion part of $\pi_1(Y)$. Then $\varphi$ corresponds to a flat connection on the trivial bundle $\underline{\C}$ whose covariant derivative is denoted by $d_{\varphi}$. Then we have the even signature operator \cite[(4.6)]{APS1}:
\begin{equation}\label{e3.13}
\begin{split}
L'_{\varphi}: \bigoplus_{p=0, 1}\Omega^{2p}_{\C}(Y) & \longrightarrow \bigoplus_{p=0, 1} \Omega^{2p}_{\C}(Y) \\
\omega & \longmapsto (-1)^p(\star d_{\varphi} - d_{\varphi}\star) \omega
\end{split}
\end{equation}
Since $L'_{\varphi}$ is a self-adjoint elliptic operator, its spectrum is real and discrete. The regularization of the spectral asymmetry $\sum_{ \lambda \neq 0} \sign (\lambda)$ is defined as the eta invariant $\eta_{\varphi}(Y)$ in \cite{APS1}. The rho invariant is then defined as the reduced eta invariant:
\begin{equation}
\rho_{\varphi}(Y):= \eta_{\varphi}(Y) - \eta(Y). 
\end{equation}

We first consider the product case $V = S^1 \times Y$ and establish the equivalence of the periodic rho invariant with the usual rho invariant by comparing the indices of the ASD DeRham operator over end-periodic and end-cylindrical manifolds. Let $Z = M \cup [0, \infty) \times Y$ be a non-compact manifold with a cylindrical end modeled on $[0, \infty) \times Y$. We put a Riemannian metric $g$ on $Z$ so that $g|_{[0, \infty) \times Y} = dt^2 + g_Y$, where $g_Y$ is the Riemannian metric on $Y$. We identify a collar neighborhood of $Y$ in $M$ by $(-2, 0] \times Y$. Then we can choose a smooth function $\rho: Z \to \R$ of the form $\rho|_{M \backslash (-\epsilon, 0] \times Y} = 0$ and $\rho|_{[0, \infty) \times Y} = t$. Suppose the unitary representation $\varphi: Y \to U(1)$ extends to $\tilde{\varphi}: \pi_1(Z) \to U(1)$. We can then consider the ASD DeRham operator $Q_{\tilde{\varphi}, \delta}(Z): L^2_k \to L^2_{k-1}$ with $\delta$ chosen to ensure the Fredholmness. Let $I \subset [0, \infty)$ be a sub-interval. Recall there is a canonical way to identify forms on $I \times Y$ with time-dependent forms on $Y$:
\begin{equation}\label{e3.15}
\begin{split}
\Omega^0_{\C}(I \times Y) & = C^{\infty}(I, \Omega^0_{\C}(Y)) \text{ via } s \mapsto s|_{\{t \} \times Y} \\
\Omega^1_{\C}(I \times Y) & = C^{\infty}(I, \Omega^0_{\C}(Y) \oplus \Omega^1_{\C}(Y)) \text{ via } a = dt \otimes s(t) + b(t) \mapsto (s(t), b(t)) \\
\Omega^+_{\C}(I \times Y) & =C^{\infty}(I, \Omega^1_{\C}(Y)) \text{ via } \omega = dt \wedge b(t) + \star b(t) \mapsto 2b(t)
\end{split}
\end{equation}
With respect to this identification, the ASD operator $Q_{\tilde{\varphi}, \delta}|_{I \times Y}$ takes the form $Q_{\tilde{\varphi}, \delta} = d/dt + L_{\varphi, \delta}$, where $L_{\varphi, \delta}$ is the self-adjoint operator
\begin{equation}\label{e3.16}
L_{\varphi, \delta} = 
\begin{pmatrix}
\delta/2 & -d_{\varphi}^*  \\
-d_{\varphi}  & \star d_{\varphi} - \delta/2
\end{pmatrix}:
\Omega^0(Y) \oplus \Omega^1_{\C}(Y) \longrightarrow \Omega^0_{\C}(Y) \oplus \Omega^1_{\C}(Y).
\end{equation}
Since the spectrum of $L_{\varphi}$ is real and discrete, we see that for $\delta > 0$ sufficiently small $Q_{\tilde{\varphi}, \delta}(Z): L^2_k \to L^2_{k-1}$ is Fredholm. When $\delta = 0$, the operator $L_{\varphi}$ is equivalent to the even signature operator $L'_{\varphi}$ (\ref{e3.13}) under the identification $\Omega^1_{\C}(Y) \simeq \Omega^2_{\C}(Y)$ by $b \mapsto \star b$. 

\begin{prop}\label{p3.9}
Under the situation above, we write $h^i_{\varphi}(Y) = \dim H^i(Y, \C_{\varphi})$. Then we have 
\[
\ind_{\pm} Q_{\tilde{\varphi}}(Z) = -\frac{1}{2}( \chi(M) + \sigma(M)) - \frac{\eta_{\varphi}(Y)}{2} \pm \frac{h^0_{\varphi}(Y) - h^1_{\varphi}(Y)}{2},
\]
where $\chi(M)$ and $\sigma(M)$ are the Euler characteristic and signature respectively. 
\end{prop}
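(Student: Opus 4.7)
The plan is to apply the Atiyah-Patodi-Singer index theorem in its cylindrical-end formulation to $Q_{\tilde{\varphi}, \delta}(Z)$, then track the dependence on the sign of $\delta$. Using the identifications (\ref{e3.15}), the operator on the end is of the product form $d/dt + L_{\varphi, \delta}$ with $L_{\varphi, \delta}$ the self-adjoint tangential operator (\ref{e3.16}). First I would establish Fredholmness by verifying invertibility of $L_{\varphi, \delta}$ for all sufficiently small $\delta \neq 0$. The operator $L_{\varphi, 0}$ is unitarily conjugate via the involution $J = \mathrm{diag}(1, -1)$ to the twisted even signature operator $L'_\varphi$, so its kernel is identified (through the Hodge star on the $\Omega^2$-slot) with $\mathcal{H}^0_\varphi(Y) \oplus \mathcal{H}^1_\varphi(Y) \subset \Omega^0_\C(Y) \oplus \Omega^1_\C(Y)$, of total dimension $h^0_\varphi + h^1_\varphi$. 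The perturbation $L_{\varphi, \delta} - L_{\varphi, 0} = (\delta/2) J$ restricts on $\ker L_{\varphi, 0}$ to the scalar $+\delta/2$ on the $\mathcal{H}^0$-summand and $-\delta/2$ on the $\mathcal{H}^1$-summand, pushing all zero modes off zero for $\delta \neq 0$ small.

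Next I would execute an APS-style heat-kernel argument paralleling \autoref{t3.4}: the regularized supertrace $\Str^\flat(\delta, t)$ has large-$t$ limit $\ind Q_{\tilde{\varphi}, \delta}(Z)$ and small-$t$ limit $\int_M \mathfrak{a}(Q_\varphi)$, while the analysis on the cylindrical end shows that the difference between these limits is a multiple of $\eta(L_{\varphi, \delta})$, yielding the schematic identity
\[
\ind Q_{\tilde{\varphi}, \delta}(Z) = \int_M \mathfrak{a}(Q_\varphi) - \frac{\eta(L_{\varphi, \delta})}{2}.
\]
Since the twist is by a flat $U(1)$-bundle, the local index density coincides with that of the untwisted ASD DeRham operator, namely $-\frac{1}{2}(e + L_1)$ pointwise. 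With the product metric near $Y$, Chern-Gauss-Bonnet gives $\int_M e = \chi(M)$, and the APS signature theorem together with the Hodge-star cancellation of the even- and odd-degree parts of the boundary signature operator on the $3$-dimensional boundary yields $\int_M L_1 = \sigma(M)$. Consequently $\int_M \mathfrak{a}(Q_\varphi) = -\frac{1}{2}(\chi(M) + \sigma(M))$.

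Finally, the conjugacy of $L_{\varphi, 0}$ to $L'_\varphi$ gives $\eta(L_{\varphi, 0}) = \eta_\varphi(Y)$, and the spectral perturbation described above produces the jump $\pm(h^0_\varphi - h^1_\varphi)$ in $\eta(L_{\varphi, \delta})$ as $\delta$ crosses zero, depending on its sign. Substituting into the APS identity yields the claimed formula. The main obstacle is keeping all sign conventions aligned between the APS eta invariant, the orientation of the cylindrical end used to split $Q_{\tilde{\varphi}, \delta} = d/dt + L_{\varphi, \delta}$, and the direction of spectral crossing of the $(\delta/2) J$-perturbation; in particular one must verify that the topological integral $\int_M L_1$ simplifies to $\sigma(M)$ without a residual eta correction, which uses the Hodge duality $\star\colon \Omega^{ev}(Y) \to \Omega^{odd}(Y)$ to conclude that the untwisted full-signature boundary eta vanishes on an odd-dimensional boundary.
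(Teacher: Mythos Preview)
Your route---applying an APS-type formula directly to $Q_{\tilde\varphi,\delta}(Z)$ with its invertible tangential operator $L_{\varphi,\delta}$, then reading off the $\pm$-dependence from the first-order spectral perturbation $(\delta/2)J$ on $\ker L_{\varphi,0}=\mathcal H^0_\varphi\oplus\mathcal H^1_\varphi$---is a genuinely different strategy from the paper's. The paper instead quotes the APS formula for $Q_{\tilde\varphi,\Pi}(M)$ on the \emph{compact} piece with the global boundary condition built from the \emph{unperturbed} $L_\varphi$, and then relates $\ind Q_{\tilde\varphi,\delta}(Z)$ to $\ind Q_{\tilde\varphi,\Pi}(M)$ by excision, reducing to an explicit ODE on the half-cylinder $N'=[-2,\infty)\times Y$. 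That ODE produces the correction $+h^0_\varphi$ (resp.\ $+h^1_\varphi$) directly, with no need to track how $\eta$ moves or how the interior density depends on $\delta$.

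There is, however, a real gap in your execution. Your claim that $\int_M L_1=\sigma(M)$ because ``the untwisted full-signature boundary eta vanishes on an odd-dimensional boundary'' is not correct. The Hodge star on $Y^3$ intertwines the even and odd parts $B_{ev},B_{odd}$ of the boundary signature operator, but it \emph{commutes} with them; hence $B_{ev}$ and $B_{odd}$ are isospectral and their eta invariants coincide rather than cancel. The APS signature theorem gives $\int_M L_1=\sigma(M)+\eta(Y)$, and $\eta(Y)$ is nonzero in general (e.g.\ for lens spaces). Consequently your identification $\int_M\mathfrak a(Q_\varphi)=-\tfrac12(\chi(M)+\sigma(M))$ is not justified by the argument you give, and if you carry your schematic identity through with $\lim_{\delta\to 0^+}\eta(L_{\varphi,\delta})=\eta_\varphi(Y)+(h^0_\varphi-h^1_\varphi)$ you will find the pieces do not assemble to the stated formula without further input. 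This is exactly the kind of sign/boundary bookkeeping you flag as ``the main obstacle,'' and it does not resolve as you suggest. The paper's excision-plus-ODE approach is designed to sidestep this: once the APS formula for $Q_{\tilde\varphi,\Pi}(M)$ is taken as a package (with interior and boundary contributions already combined), the half-cylinder computation fixes the remaining $h^i_\varphi$ terms with no residual eta ambiguity.
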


\begin{proof}
The argument follows the proof of \cite[Proposition 8.4.1]{MMR94} closely. Let's denote by $\Pi^{\pm}$ the $L^2$-projection of  $\Omega^0_{\C}(Y) \oplus \Omega^1_{\C}(Y)$ to the positive/negative eigenspaces of the operator $L_{\varphi}$. Let $C^{\infty}(M, \Lambda^1_{\C}M; 1 - \Pi^-)$ be the space of $C^{\infty}$ $1$-forms $a$ on $M$ so that $(1- \Pi^-) a|_{\partial M} = 0$, where $a|_{\partial M}$ means the restriction of $a$ to $\partial M$ as a path in $\Omega^0_{\C}(Y) \oplus \Omega^1_{\C}(Y)$ under the identification of (\ref{e3.13}). Then \cite[Theorem (3.10)]{APS1} tells us the index of the ASD DeRham operator 
\[
Q_{\tilde{\varphi}, \Pi}(M):= -d^*_{\tilde{\varphi}} \oplus d^+_{\tilde{\varphi}}: C^{\infty}(M, \Lambda^1_{\C}M; 1 - \Pi^-) \to  C^{\infty}(M, \Lambda^0_{\C}M \oplus \Lambda^+_{\C}M)
\]
is given by
\begin{equation}
\ind Q_{\tilde{\varphi}, \Pi}(M) =  -\frac{1}{2}( \chi(M) + \sigma(M)) - \frac{1}{2} (h^1_{\varphi}(Y) + h^0_{\varphi}(Y) + \eta_{\varphi}(Y))
\end{equation}
Let's pick $\delta > 0$ sufficiently small. The result follows once we show that 
\[
\ind Q_{\tilde{\varphi}, \delta}(Z) = \ind Q_{\tilde{\varphi}, \Pi}(M) + h^0_{\varphi}(Y) \qquad \ind Q_{\tilde{\varphi}, -\delta}(Z) = \ind Q_{\tilde{\varphi}, \Pi}(M) + h^1_{\varphi}(Y) 
\]

The index comparison follows from an excision argument. We start with the positive case. Let's denote a closed collar neighborhood of $Y$ in $M$ by $N = [-2, 0] \times Y$, and write $M' = M \backslash (-2, 0] \times Y$. Consider the operator 
\[
Q_{\tilde{\varphi}, \Pi}(N): C^{\infty}(N, \Lambda^1_{\C}N; \Pi^- \cup 1-\Pi^-) \longrightarrow C^{\infty}(N, \Lambda^0_{\C}N \oplus \Lambda^+_{\C}N),
\]
where $C^{\infty}(N, \Lambda^1_{\C}N; \Pi^- \cup 1-\Pi^-)$ is the space of smooth $1$-forms on $N$ so that $\Pi^- a|_{\{-2\} \times Y} = 0$ and $(1-\Pi^-) a|_{\{0\} \times Y} = 0$. Since $d\rho|_{M} = 0$ near $\{-1\} \times Y$, we have $Q_{\tilde{\varphi}, \delta}(Z)|_{\{-1\} \times Y} = Q_{\tilde{\varphi}, \Pi}(N)|_{\{-1\} \times Y}$. Applying excision \cite[Chapter 7]{DK90} to the pair $(Z, N)$, we get $M$ and $N' = [-2, \infty) \times Y$ with corresponding operators $Q_{\tilde{\varphi}, \Pi}(M)$ and 
\[
Q_{\tilde{\varphi}, \Pi, \delta}(N'): L^2_k(N', \Lambda^1_{\C}N'; \Pi^-) \longrightarrow L^2_{k-1}(N', \Lambda^0_{\C}N' \oplus \Lambda^+_{\C}N'),
\]
where $L^2_k(N', \Lambda^1_{\C}N'; \Pi^-) = \{ a \in L^2_k(\Lambda^1_{\C}N'): \Pi^- a|_{\{-2\} \times Y} = 0\}$. The index of these operators are related by 
\begin{equation}
\ind Q_{\tilde{\varphi}, \delta}(Z) + \ind Q_{\tilde{\varphi}, \Pi}(N) = \ind Q_{\tilde{\varphi}, \Pi, \delta}(N') + \ind Q_{\tilde{\varphi}, \Pi}(M).  
\end{equation}
After identifying the domain and range of $Q_{\tilde{\varphi}, \Pi}(N)$ via (\ref{e3.13}) and dualizing boundary conditions, we see that $Q_{\tilde{\varphi}, \Pi}(N)$ is self-adjoint. Thus $\ind Q_{\tilde{\varphi}, \Pi}(N) = 0$. 

Using the interpretation \cite[Corollary (3.14)]{APS1}, $\ker Q_{\tilde{\varphi}, \Pi, \delta}(N')$ consists of $1$-forms $a$ on $\R \times Y$ that are asymptotically constant as $t \to -\infty$, decay in $L^2_k([0, \infty) \times Y)$, and satisfy $Q_{\tilde{\varphi}, \delta} a = 0$. This equation can be solved explicitly as follows. Let's write $\Spec(L_{\varphi}) = \{ \lambda_i\}_{i=-\infty}^{\infty}$ with $\lambda_0 = 0$, and $a_i = dt \otimes s_i(t) + b_i(t)$ be the $\lambda_i$-eigenforms of $L_{\varphi}$. Then we need to solve
\begin{equation}\label{e3.18}
\begin{split}
\dot{s}_i + (\delta\rho'(t)/2 + \lambda_i) s_i & = 0 \\
\dot{b}_i - ( \delta \rho'(t)/2 - \lambda_i) b_i & = 0
\end{split}
\end{equation}
The solution to the second equation is $b_i(t) = \mathfrak{c}_i \exp( \delta \rho(t)/2 - \lambda_i t)$. Since $\delta/2 \in (0, \lambda_1)$, the decay conditions force $\mathfrak{c}_i = 0$ and $\lambda_i = 0$. Thus each $s \in H^0(Y, \C_{\varphi})$ gives rise to a unique solution $s_i(t)$ with $s_i(0) = s$. This shows that $\dim \ker Q_{\tilde{\varphi}, \Pi, \delta}(N') = h^0_{\varphi}(Y)$. 

To identify $\cok Q_{\tilde{\varphi}, \Pi, \delta}(N')$ we consider the adjoint  operator 
\[
Q^*_{\tilde{\varphi}, \Pi, \delta}(N'): L^2_k(N', \Lambda^0_{\C}N' \oplus \Lambda^+_{\C}N'; 1- \Pi^-) \longrightarrow L^2_{k-1}(N', \Lambda^1_{\C}N'). 
\]
Then $\ker Q^*_{\tilde{\varphi}, \Pi, \delta}(N') \simeq \cok Q_{\tilde{\varphi}, \Pi, \delta}(N')$ and elements in $\ker Q^*_{\tilde{\varphi}, \Pi, \delta}(N')$ are represented by sections $(s, \omega)$ that decay in $L^2_k$ on both ends and satisfy $Q^*_{\tilde{\varphi}, \delta}(s, \omega) = 0$. We note that $Q^*_{\tilde{\varphi}, \delta}(s, \omega) = -d/dt + L_{\varphi, \delta}$. Writing $\omega = 1/2(dt \wedge b(t) + \star b(t))$ and decomposing $s(t)$ and $b(t)$ into eigenforms $(s_i, b_i)$ of $L_{\varphi, \delta}$, the corresponding equation to solve is 
\begin{equation}
\begin{split}
\dot{s}_i - (\delta \rho'(t)/2 + \lambda_i) s_i & = 0 \\
\dot{b}_i + ( \delta \rho'(t)/2 - \lambda_i) b_i & = 0
\end{split}
\end{equation}
Due to the $L^2_k$-decay assumption on both ends, there are no solutions to this equation. Thus $\dim \cok Q_{\tilde{\varphi}, \Pi, \delta}(N') = 0$, which completes the proof for the positive part. 

For the negative part, the argument is the same except that the equation (\ref{e3.18}) now takes the form 
 \begin{equation}\label{e3.20}
\begin{split}
\dot{s}_i - (\delta \rho'(t)/2 - \lambda_i) s_i & = 0 \\
\dot{b}_i + ( \delta \rho'(t) /2+ \lambda_i) b_i & = 0
\end{split}
\end{equation}
Due to the decay constraints on both ends, solutions takes the form $s_i = 0$, $\lambda_i = 0$. Then we conclude that $\dim \ker Q_{\tilde{\varphi}, \Pi, -\delta}(N') = h^1_{\varphi}(Y)$, which implies the desired result. 
\end{proof}

\autoref{t3.4} and \autoref{p3.9} allow us to establish the equivalence of the periodic rho invariant of the ASD DeRham operator with the usual rho invariant of the signature operator in \cite{APS2}. 
\begin{cor}
Let $V= S^1 \times Y$ be the product of $S^1$ with a closed $3$-manifold $Y$, and $\varphi: \pi_1(V) \to U(1)$ an admissible representation. Then 
\[
\tilde{\rho}_{\varphi}(V) = \rho_{\varphi}(Y). 
\]
\end{cor}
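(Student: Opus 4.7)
The plan is to exploit the fact that for $V=S^1\times Y$ with $1_V$ dual to $\{\text{pt}\}\times Y$, the cyclic cover $\tilde V$ is literally $\R\times Y$, so the positive periodic end $\tilde V_+$ coincides with the standard cylindrical end $[0,\infty)\times Y$. Consequently any $Z=M\cup\tilde V_+$ is simultaneously an end-periodic and a cylindrical-end manifold, and the index $\ind_{\pm}Q_{\tilde\varphi}(Z)$ admits two descriptions: one via the periodic index theorem \autoref{t3.4} in terms of $\tilde\eta_{\varphi,\pm}(V)$, and one via the APS-type calculation of \autoref{p3.9} in terms of $\eta_\varphi(Y)$. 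Equating them and subtracting the analogous identities for the trivial representation should deliver the result.

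Concretely, I first choose a compact bordism $M$ with $\partial M=Y$ such that $\varphi|_{\pi_1(Y)}$ extends to $\tilde\varphi:\pi_1(Z)\to U(1)$; since this restriction factors through $H_1(Y;\Z)$ such an $M$ can always be produced. Admissibility condition (a), together with \autoref{p2.15}(1), shows $\widehat H^*_z(V)=0$ on every line $\Rea z=\delta/2$ near $0$, so the twisted version of \autoref{t3.4} (whose proof carries over verbatim to first order elliptic complexes, as remarked after \autoref{p1.4}) applies and reads
\[
\ind_{\pm}Q_{\tilde\varphi}(Z)=\int_M\mathfrak a(Q_{\tilde\varphi})-\int_Y\omega_\varphi+\int_V df\wedge\omega_\varphi-\frac{\tilde\eta_{\varphi,\pm}(V)}{2}.
\]
Because $\tilde\varphi$ is a flat unitary line bundle, its Chern character equals $1$, so $\mathfrak a(Q_{\tilde\varphi})=\mathfrak a(Q)$ as differential forms and one may take the same primitive $\omega_\varphi=\omega$; the topological block is therefore $\varphi$-independent. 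On the other hand \autoref{p3.9}, together with admissibility condition (b) giving $h^0_\varphi(Y)=h^1_\varphi(Y)=0$, yields
\[
\ind_{\pm}Q_{\tilde\varphi}(Z)=-\tfrac{1}{2}(\chi(M)+\sigma(M))-\tfrac{\eta_\varphi(Y)}{2}.
\]

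Writing down the analogous pair of equations for the trivial representation (for which the $\pm(h^0(Y)-h^1(Y))/2$ terms of \autoref{p3.9} are present) and subtracting, the $\int_M\mathfrak a$, $\int_Y\omega$, $\int_V df\wedge\omega$, and $\tfrac{1}{2}(\chi(M)+\sigma(M))$ contributions cancel and one is left with
\[
\frac{\tilde\eta_{\varphi,\pm}(V)-\tilde\eta_{\pm}(V)}{2}=\frac{\eta_\varphi(Y)-\eta(Y)}{2}\pm\frac{h^0(Y)-h^1(Y)}{2}.
\]
Averaging over the two signs annihilates the Betti-number correction, and using $\tilde\eta_{\varphi,+}(V)=\tilde\eta_{\varphi,-}(V)=\tilde\eta_\varphi(V)$ (again from admissibility) together with the definition $\tilde\eta(V)=\tfrac{1}{2}(\tilde\eta_+(V)+\tilde\eta_-(V))$ produces $\tilde\eta_\varphi(V)-\tilde\eta(V)=\eta_\varphi(Y)-\eta(Y)$, which is precisely $\tilde\rho_\varphi(V)=\rho_\varphi(Y)$.

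The main technical point to verify is the invariance of the local index data $(\mathfrak a,\omega)$ under the flat $U(1)$-twist, so that the topological block in \autoref{t3.4} is genuinely independent of $\varphi$ and the cancellation actually takes place; this is where the flatness and unitarity of $\tilde\varphi$ are essential. The side issue of producing $M$ over which $\varphi$ extends is a routine abelian-filling argument on $H_1(Y;\Z)$ and should not cause genuine difficulty.
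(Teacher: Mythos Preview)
Your proposal is correct and follows essentially the same strategy as the paper: for $V=S^1\times Y$ the periodic end $\tilde V_+$ is the cylindrical end $[0,\infty)\times Y$, so one may equate the periodic index formula of \autoref{t3.4} with the APS-type formula of \autoref{p3.9} for both the twisted and untwisted operators and read off the equality of rho invariants.

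The packaging differs slightly. The paper uses the geometric observation that on the product one has $\int_Y\omega=\int_V df\wedge\omega$ (indeed for a product metric the index density $\mathfrak a(Q)$ vanishes, so one may take $\omega=0$), which kills those terms in \autoref{t3.4} outright; it then deduces $\tilde\eta_\varphi(V)=\eta_\varphi(Y)$ and $\tilde\eta(V)=\eta(Y)$ separately by comparing each index to $-\tfrac12(\chi(M)+\sigma(M))-\tfrac12\eta_\bullet(Y)$. You instead argue that the local index data $(\mathfrak a,\omega)$ are $\varphi$-independent (correct, since a flat unitary line-bundle twist has trivial Chern character), subtract the two identities so that the topological block cancels, and average over the signs to remove the Betti-number correction. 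Both routes arrive at the same place; your version has the minor advantage of not invoking the product-specific identity on $\omega$, while the paper's is a line shorter.

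One remark: you flag the need to produce $M$ with $\partial M=Y$ over which $\varphi|_{\pi_1(Y)}$ extends, calling it ``routine''. The paper glosses over this too, but it is not entirely free --- for instance $D^2\times T^2$ does not work for a generic $\varphi$ on $T^3$. A clean way to sidestep the issue (which is essentially what the paper does in the subsequent \autoref{p3.12}) is to take the two-ended manifold $\R\times Y$ with one end treated as cylindrical and the other as periodic; here $\varphi$ extends tautologically and all the bulk integrals vanish.
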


\begin{proof}
When $V = S^1 \times Y$, we have $\int_Y \omega = \int_V df \wedge \omega$. The admissibility of $\varphi$ implies that $h^0_{\varphi}(Y) = h^1_{\varphi}(Y) = 0$. We conclude that $\tilde{\eta}_{\varphi}(V) = \eta_{\varphi}(Y)$. On the other hand, and \autoref{t3.4} and \autoref{p3.9} tell us 
\[
\int_M \mathfrak{a}(Q) - \tilde{\eta}(V) = \ind Q_+(Z) + \ind Q_-(Z) =-\frac{1}{2}(\chi(M) + \sigma(M)) -\eta(Y).
\]
Since $\int_M \mathfrak{a}(Q) = -1/2(\chi(M) + \sigma(M))$, the corresponding rho invariants are equal. 
\end{proof}

The same idea can be applied to the general case. Let $(V, 1_V)$ be a non-degenerate pair, $\varphi: \pi_1(V) \to U(1)$ an admissible representation, and $Y$ a cross-section of $(V, 1_V)$ satisfying $H^0(Y;\C_{\varphi}) = H^1(Y; \C_{\varphi}) = 0$. We wish to compare $\tilde{\rho}_{\varphi}(V)$ with $\rho_{\varphi}(Y)$, which will be achieved with the help of an index computation from \cite{T87}. 

Let $Z = M \cup \tilde{V}_+$ be an end-periodic manifold as before. Since $\pi_1(\tilde{V}) \to \pi_1(V)$ is injective, we get a unitary representation $\tilde{\varphi}: \pi_1(\tilde{V}) \to U(1)$ by restriction. Suppose we can extend this representation to one over $Z$, which we still denote by $\tilde{\varphi}: \pi_1(Z) \to U(1)$. We further assume that $\tilde{\varphi}$ is trivial on the torsion part of $\pi_1(Z)$. Then $\tilde{\varphi}$ gives rise to a flat connection $d_{\tilde{\varphi}}$ on the trivial bundle $\underline{\C}$. We can consider the complex 
\begin{equation}\tag{$E_{\tilde{\varphi}}(Z)$}\label{eZ1}
0 \longrightarrow L^2_{k+1}(Z, \Lambda^0_{\C}) \xrightarrow{-d_{\tilde{\varphi}}} L^2_{k}(Z, \Lambda^1_{\C}) \xrightarrow{d^+_{\tilde{\varphi}}} L^2_{k-1}(Z, \Lambda^+_{\C}) \longrightarrow 0.
\end{equation}
The following identification of the cohomology is an adaptation of the corresponding result in Taubes' paper {\cite[Proposition 5.1]{T87}}. 

\begin{prop}\label{p3.11}
Let $\varphi: \pi_1(V) \to U(1)$ be an admissible representation. Then the complex $E_{\tilde{\varphi}}(Z)$ is Fredholm with cohomology given by 
\[
H^0(E_{\tilde{\varphi}}(Z)) = 0, \quad H^1(E_{\tilde{\varphi}}(Z)) = H^1(M; \C_{\tilde{\varphi}}), \quad H^2(E_{\tilde{\varphi}}(Z)) = H^+(M; \C_{\tilde{\varphi}}),
\]
where $H^+(M; \C_{\tilde{\varphi}}):=\left\{\omega \in \Omega^2_{\C}(M): d_{\tilde{\varphi}} \omega = d^*_{\tilde{\varphi}} \omega = 0, \; \; \omega|_{\partial M = 0}, \; \;  \int_{M} \omega \wedge \omega > 0 \right\}$. 
\end{prop}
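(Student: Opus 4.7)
The argument follows the template of Taubes' Proposition 5.1 in \cite{T87}, combining Fourier-Laplace analysis on the periodic end with Hodge theory on $M$. Fredholmness of $E_{\tilde\varphi}(Z)$ at weight $\delta=0$ is immediate from \autoref{l2.3}: condition (a) of admissibility (\autoref{d3.6}) says $H^*(E_{\varphi,z})=0$ on the line $\Rea z=0$, which carries over from $\tilde V$ to $Z$ by the transfer principle noted after \autoref{l2.5}.

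For the vanishing of $H^0$, suppose $f\in L^2_{k+1}(Z,\Lambda^0_\C)$ satisfies $d_{\tilde\varphi} f=0$. On $\tilde V_+$, the Fourier-Laplace transform $\hat f_z$ from \autoref{l2.1} exists on $\Rea z=0$ and satisfies $d_{\varphi,z}\hat f_z=0$, placing $\hat f_z$ in $H^0(E_{\varphi,z})=0$. Inverse Fourier-Laplace forces $f\equiv 0$ on $\tilde V_+$, and unique continuation for the injectively elliptic equation $d_{\tilde\varphi}f=0$ (governing parallel sections of the flat line bundle) extends the vanishing to all of $Z$.

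To identify $H^1(E_{\tilde\varphi}(Z))$ with $H^1(M;\C_{\tilde\varphi})$, I show that every class has a representative compactly supported in $M$. Given $a\in L^2_k$ with $d^+_{\tilde\varphi}a=0$, apply Fourier-Laplace on $\tilde V_+$: because $H^1(E_{\varphi,z})=0$ for $\Rea z=0$, one can solve $d_{\varphi,z}\hat g_z=\hat a_z$, with $\hat g_z$ holomorphic in $z$ in a strip about the imaginary axis since no spectral points lie there. Inverse Fourier-Laplace produces $g\in L^2_{k+1}(\tilde V_+)$ with $d_{\tilde\varphi}g=a$ on the end, and a cutoff $\chi$ supported near infinity gives $a':=a-d_{\tilde\varphi}(\chi g)$ cohomologous to $a$ and compactly supported in $M$. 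The class $[a'|_M]\in H^1(M;\C_{\tilde\varphi})$ is well defined modulo exact forms, with uniqueness of the primitive $g$ controlled by $H^0(E_{\varphi,z})=0$. The reverse map extends a closed representative on $M$ by zero to $Z$ and applies $L^2$-harmonic projection. The identification $H^2(E_{\tilde\varphi}(Z))=H^+(M;\C_{\tilde\varphi})$ is entirely parallel: an $L^2$ self-dual $d_{\tilde\varphi}$-harmonic representative $\omega$ on $Z$ decays on $\tilde V_+$ by acyclicity of $E_{\varphi,z}$ there, and restricts to $M$ as a harmonic self-dual form satisfying the Dirichlet condition $\omega|_{\partial M}=0$, with $\int_M\omega\wedge\omega>0$ built in from the pointwise identity $\omega\wedge\omega=|\omega|^2 d\vol$.

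The main obstacle is controlling the global $L^2_k$ norms: the Fourier-Laplace primitive $g$ is only locally $L^2_{k+1}$ a priori, and the decay on $\tilde V_+$ needed to make $d_{\tilde\varphi}(\chi g)$ honest must be extracted from the holomorphic extension of $\hat g_z$ across $\Rea z=0$, which is precisely what admissibility condition (a) provides. Condition (b), $H^0(Y;\C_\varphi)=H^1(Y;\C_\varphi)=0$, then ensures that restriction-to-$M$ and extension-from-$M$ induce mutually inverse isomorphisms on cohomology without boundary contributions from $H^*(Y;\C_\varphi)$ through the Mayer-Vietoris comparison of $Z$, $M$, and $\tilde V_+$.
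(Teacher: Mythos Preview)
Your outline has the right architecture, but two steps fail as written.

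For $H^1$: the Fourier--Laplace step does not go through. To transform $a$ you must first cut it off to $\chi a$ supported on $\tilde V_+$; then $d^+_{\tilde\varphi}(\chi a)=(d\chi\wedge a)^+$ is compactly supported but nonzero, so $d^+_{\varphi,z}\widehat{(\chi a)}_z\neq 0$ and the vanishing of $H^1(E_{\varphi,z})$ does \emph{not} place $\widehat{(\chi a)}_z$ in $\im d_{\varphi,z}$. The paper circumvents this by first integrating by parts on $Z$:
\[
0=\int_Z d\bigl(a\wedge d_{\tilde\varphi}a\bigr)=\int_Z|d^+_{\tilde\varphi}a|^2-\int_Z|d^-_{\tilde\varphi}a|^2,
\]
forcing $d_{\tilde\varphi}a=0$. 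Once $a$ is honestly closed, the primitive on the end comes from the \emph{topological} vanishing $H^1(\tilde V_+;\C_{\tilde\varphi})=0$ (deduced from (a) and (b) together), and the cutoff map lands in $H^1(M,\partial M;\C_{\tilde\varphi})\simeq H^1(M;\C_{\tilde\varphi})$ via the long exact sequence of the pair.

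For $H^2$: your assertion that an $L^2$ self-dual harmonic $\omega$ on $Z$ restricts to $M$ with $\omega|_{\partial M}=0$ is false---$L^2$ decay along the end says nothing about the value at the fixed slice $Y=\partial M$. The paper instead builds cutoff maps $r_n$ to $H^+(Z_n,\partial Z_n;\C_{\tilde\varphi})$ by modifying $\omega$ near the outgoing boundary of $Z_n=M\cup W_0\cup\cdots\cup W_n$ via a local primitive $\alpha_n(\omega)$ (available because $H^2(Y;\C_\varphi)=0$). Showing that $r_{n,*}$ is an isomorphism for large $n$, and that $H^+(Z_n,\partial Z_n;\C_{\tilde\varphi})$ stabilizes to $H^+(M;\C_{\tilde\varphi})$, invokes two nontrivial lemmas of Taubes \cite[Lemmas~5.6 and~5.7]{T87}; the second needs $H^+(V;\C_\varphi)=0$, which is condition (a) at $z=0$. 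Your restriction-plus-Mayer--Vietoris shortcut neither produces the Dirichlet boundary condition nor supplies this stabilization argument.
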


\begin{proof}
Due to (a) in the admissibility assumption, \autoref{l2.3} implies that $E_{\tilde{\varphi}}(Z)$ is Fredholm. Since $H^0(E_{\tilde{\varphi}}(Z))$ consists of $\tilde{\varphi}$-invariant functions of $L^2$-decay, only the constant zero function lies here. 

To identify $H^1(E_{\tilde{\varphi}}(Z))$, we take $w \in \ker d^+_{\tilde{\varphi}}$ as a representative of $[\omega] \in H^1(E_{\tilde{\varphi}}(Z))$. Stokes' formula gives us 
\[
0  = \int_Z d( w \wedge d_{\tilde{\varphi}} w) = \int_Z |d^+_{\tilde{\varphi}} w|^2 - \int_Z |d^-_{\tilde{\varphi}} w|^2. 
\]
So we conclude that $d_{\tilde{\varphi}} \omega = 0$. Let's fix a cut-off function $\eta$ over $Z$ with $\supp \eta \subset \tilde{V}_+$.  Combining (a) and (b) in \autoref{d3.6}, we see that $H^1(\tilde{V}_+; \C_{\tilde{\varphi}}) = 0$. Since $H^0(\tilde{V}_+; \C_{\tilde{\varphi}}) =0$, one can find a unique function $f(\omega): \tilde{V}^+ \to \C$ satisfying $\eta f(\omega) \in L^2_{k+1}(Z, \Lambda^0_{\C})$ and $\omega|_{\tilde{V}^+} = d_{\tilde{\varphi}} f(w)$.Then it's straightforward to verify that the map
\[
r: \omega \longmapsto \omega - d_{\tilde{\varphi}} (\eta f(\omega))
\]
induces an isomorphism on cohomology $r_*: H^1(E_{\tilde{\varphi}}(Z)) \to H^1(M, \partial M; \C_{\tilde{\varphi}})$. The long exact sequence of the pair $(M, \partial M)$ with local coefficient $\C_{\tilde{\varphi}}$ gives the identification $H^1(M, \partial M; \C_{\tilde{\varphi}}) \simeq H^1(M; \C_{\tilde{\varphi}})$. 

Finally, we identify $H^2(E_{\tilde{\varphi}}(Z))$. A non-zero class $[\omega] \in H^2(E_{\tilde{\varphi}}(Z))$ is represented by a smooth $2$-form $\omega$ satisfying 
\[
\omega = \star \omega \quad  \text{ and } \quad d^*_{\tilde{\varphi}} \omega = 0. 
\]
Let's write $Z_n = M \cup W_0 \cup ... \cup W_n$ for $n \in \mathbb{N}$, and $N_{n-1}$ a neighborhood of the outgoing boundary of $W_{n-1}$ in $Z_n$. The Hodge star identifies $H^2(Y;\C_{\varphi}) \simeq H^1(Y; \C_{\varphi}) = 0$. Then one can find a unique $\alpha_n(\omega) \in \Omega^2_{\C}(N_{n-1})$ satisfying $\omega|_{N_{n-1}} = d_{\tilde{\varphi}} \alpha_n(\omega)$. Take a cut-off function $\eta_n$ on $Z_n$ with $\eta_n|_{Z_{n-1}} = 1$ and $\eta_n|_{W_n \backslash N_{n-1}} = 0$. Then we get a map $r_{n, *}: H^2(E_{\tilde{\varphi}}(Z)) \to H^+(Z_n, \partial Z_n; \C_{\tilde{\varphi}})$ induced from 
\[
r_n: \omega \longmapsto
\begin{cases}
\omega & \text{ on $Z_{n-1}$ } \\
d_{\tilde{\varphi}} \left( \eta_n \alpha_n(\omega) \right) & \text{ on $N_{n-1}$ } \\
0 & \text{ on $W_n \backslash N_{n-1}$ } \\
\end{cases}
\]
Under our admissibility assumption (b), \cite[Lemma 5.6]{T87} implies that $r_{n, *}$ is an isomorphism for $n$ sufficiently large. By taking $z = 0$ in the admissibility assumption (a), we get $H^+(V; \C_{\varphi}) = H^2(E_{\varphi, 0}) = 0$. Then \cite[Lemma 5.7]{T87} can be applied to identify $H^+(Z_n, \partial Z_n;\C_{\tilde{\varphi}})$ with $H^+(M; \C_{\tilde{\varphi}})$ for all $n \in \mathbb{N}$. This completes the proof. 
\end{proof}

The strength of \autoref{p3.11} enables us to compare the periodic rho invariant with the usual rho invariant in general cases.

\begin{prop}\label{p3.12}
Let $(V, 1_V)$ be a non-degenerate pair, $\varphi: \pi_1(V) \to U(1)$ an admissible representation, and $Y$ a cross-section of $(V, 1_V)$ satisfying $H^0(Y;\C_{\varphi}) = H^1(Y; \C_{\varphi}) = 0$. Then 
\[
\tilde{\rho}_{\varphi}(V) - \rho_{\varphi}(Y) = \tilde{\eta}(V) - \eta(Y). 
\]
\end{prop}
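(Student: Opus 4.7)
The plan is to compute $\ind Q_{\tilde\varphi}(Z)$ and $\ind Q_{\tilde\varphi}(Z_{cyl})$ in two ways---analytically via \autoref{t3.4} and \autoref{p3.9}, and topologically via \autoref{p3.11}---and then to pair this twisted identity with an analogous averaged computation for the trivial representation. Since $d_{\tilde\varphi}$ is a flat unitary $U(1)$-connection, the index density $\mathfrak{a}(Q_{\tilde\varphi})$ coincides with $\mathfrak{a}(Q)$, so one may share the primitive $\omega$ with $d\omega = \mathfrak{a}(Q)$. Admissibility condition (a) makes $Q_{\tilde\varphi}(Z)$ Fredholm at weight $\delta = 0$, so $\tilde{\eta}_{\varphi,+}(V) = \tilde{\eta}_{\varphi,-}(V) = \tilde{\eta}_\varphi(V)$, and the proof of \autoref{t3.4} applies verbatim to the twisted operator to give
\[
\ind Q_{\tilde\varphi}(Z) = \int_M \mathfrak{a}(Q) - \int_Y \omega + \int_V df \wedge \omega - \tfrac{1}{2}\tilde{\eta}_\varphi(V).
\]
Condition (b) makes $h^0_\varphi(Y) = h^1_\varphi(Y) = 0$, so \autoref{p3.9} gives
\[
\ind Q_{\tilde\varphi}(Z_{cyl}) = -\tfrac{1}{2}\bigl(\chi(M) + \sigma(M)\bigr) - \tfrac{1}{2}\eta_\varphi(Y),
\]
where $Z_{cyl} = M \cup [0,\infty)\times Y$.

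By \autoref{p3.11}, $\ind Q_{\tilde\varphi}(Z) = \dim H^1(M;\C_\varphi) - \dim H^+(M;\C_\varphi)$, and the standard Hodge--APS identification on the cylindrical-end manifold (enabled by $h^0_\varphi = h^1_\varphi = 0$) gives the same topological quantity for $\ind Q_{\tilde\varphi}(Z_{cyl})$. Equating the two analytic expressions and using $\int_M \mathfrak{a}(Q) = -\tfrac{1}{2}(\chi(M)+\sigma(M))$ yields the twisted identity
\[
\tilde{\eta}_\varphi(V) - \eta_\varphi(Y) = -2\int_Y \omega + 2\int_V df \wedge \omega.
\]

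The same recipe applied to the trivial representation requires an extra averaging step, because that representation fails admissibility and the complexes $E_\delta(Z)$ and $E_\delta(Z_{cyl})$ are Fredholm only for small non-zero $\pm\delta$. Using a weighted analog of \autoref{p3.11} to identify $H^*(E_\delta(Z))$ topologically, then combining with \autoref{p3.9} averaged over $\pm\delta$, the $\pm\tfrac{1}{2}(h^0(Y) - h^1(Y))$ jumps in \autoref{p3.9} together with the jump invariant $\tilde h(V)$ from \autoref{c3.5} cancel in the average, producing exactly \emph{half} of the twisted identity:
\[
\tilde{\eta}(V) - \eta(Y) = -\int_Y \omega + \int_V df \wedge \omega.
\]
Subtracting this from the twisted identity then gives
\[
\tilde{\rho}_\varphi(V) - \rho_\varphi(Y) = \bigl(\tilde{\eta}_\varphi(V) - \eta_\varphi(Y)\bigr) - \bigl(\tilde{\eta}(V) - \eta(Y)\bigr) = -\int_Y \omega + \int_V df \wedge \omega = \tilde{\eta}(V) - \eta(Y).
\]

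The main obstacle will be establishing the untwisted identity: the weighted analog of \autoref{p3.11} for $E_\delta(Z)$ with small non-zero $\delta$ demands a careful treatment of $L^2_\delta$-cohomology on the periodic end $\tilde V_+$, where the non-vanishing $H^*(Y;\C)$ and the jump invariant $\tilde h(V)$ interact in ways the admissible twisted case cleanly avoids. The factor-of-two discrepancy between the two identities, crucial for extracting $\tilde\eta(V) - \eta(Y)$ on the right-hand side of the proposition, arises precisely because one must average the two weight choices $\pm\delta$ in the non-admissible case.
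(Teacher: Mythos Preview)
Your strategy has a genuine gap at the untwisted step, and the paper's route is much shorter because it sidesteps the untwisted case entirely.

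First observe what the proposition actually says algebraically: since $\tilde\rho_\varphi(V)=\tilde\eta_\varphi(V)-\tilde\eta(V)$ and $\rho_\varphi(Y)=\eta_\varphi(Y)-\eta(Y)$, the identity $\tilde\rho_\varphi(V)-\rho_\varphi(Y)=\tilde\eta(V)-\eta(Y)$ rearranges (up to sign) to a relation involving only the \emph{twisted} etas, $\tilde\eta_\varphi(V)$ and $\eta_\varphi(Y)$. The paper exploits this: it considers the single manifold $\tilde V^\infty_+=(-\infty,0]\times Y\cup\tilde V_+$ with one cylindrical and one periodic end, and takes $M=[-1,0]\times Y$. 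On this $M$ the bulk term vanishes, and \autoref{p3.11} applies (admissibility is exactly what is needed) to give $\ind E_{\tilde\varphi}=0$ since $H^1([-1,0]\times Y;\C_\varphi)=H^+([-1,0]\times Y;\C_\varphi)=0$. Combining the periodic-end index formula \autoref{t3.4} with the cylindrical-end formula \autoref{p3.9} on this manifold then yields $\eta_\varphi(Y)=\tilde\eta_\varphi(V)$ directly. No untwisted computation is ever performed.

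Your approach, by contrast, needs the untwisted identity $\tilde\eta(V)-\eta(Y)=-\int_Y\omega+\int_V df\wedge\omega$, and this is where the argument breaks. You assert that averaging over $\pm\delta$ produces ``exactly half'' of the twisted identity, but you give no mechanism for this factor of two. If the averaged index $\tfrac12(\ind_+ Q(Z)+\ind_- Q(Z))$ actually equaled $\tfrac12(\ind_+ Q(Z_{cyl})+\ind_- Q(Z_{cyl}))$---which is what the cancellation of the $\pm\tfrac12(h^0-h^1)$ and $\tilde h(V)$ terms would naively suggest---you would obtain the \emph{same} right-hand side $-2\int_Y\omega+2\int_V df\wedge\omega$ as in the twisted case, and subtracting would give $\tilde\rho_\varphi(V)-\rho_\varphi(Y)=0$, not the proposition. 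So your factor of two must come from a genuine discrepancy between the averaged periodic and cylindrical indices in the untwisted case; that discrepancy would have to equal exactly $\tfrac12\bigl(\int_Y\omega-\int_V df\wedge\omega\bigr)$, and nothing in your outline explains why. The ``weighted analog of \autoref{p3.11}'' you invoke does not exist in the paper, and establishing it with the precise constant you need would require a careful analysis of $L^2_\delta$-cohomology on $\tilde V_+$ when $H^*(Y;\C)\neq 0$---essentially the work the paper's choice of $M=[-1,0]\times Y$ is designed to avoid.
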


\begin{proof}
It suffices to show $\eta_{\varphi}(Y) = \eta_{\tilde{\varphi}}(V)$. Let $\tilde{V}^{\infty}_+ = (-\infty, 0] \times Y \cup \tilde{V}_+$ be non-compact $4$-manifold with one cylindrical end and one periodic end. The representation $\varphi$ extends naturally to one on $\tilde{V}^{\infty}_+$, say $\tilde{\varphi}$. \autoref{t3.4} and \autoref{p3.9} implies that 
\[
\ind E_{\tilde{\varphi}} = \frac{\eta_{\varphi}(Y) - \eta_{\tilde{\varphi}}(V)}{2}. 
\]
On the other hand, we can apply \autoref{p3.11} to $M=[-1,0] \times Y$, which gives us $\ind E_{\tilde{\varphi}} =0$ as desired.
\end{proof}

\section{\large \bf Periodic Spectral Flow}\label{s4}

The periodic spectral flow was first introduced by Mrowka-Ruberman-Saveliev \cite{MRS11} for Dirac operators over homology $S^1 \times S^3$, and later generalized by the author \cite{M1} to Dirac operators over manifolds with cylindrical ends. The goal of this section is to recover such a notion for a path of ASD DeRham operators. We first consider the periodic spectral flow over closed $4$-manifolds, then derive a corresponding notion over end-cylindrical $4$-manifolds. 

\subsection{\em The Definition}\label{ss4.1} \hfill

\vspace{3mm}

Let $V$ be a closed Riemannian smooth $4$-manifolds equipped with a primitive class $1_V \in H^1(V;\Z)$ so that the pair $(E(V), 1_V)$ associated to the ASD DeRham complex is non-degenerate. We choose a smooth function $f: V \to S^1$ corresponding to $1_V$ as before. We denote by $\A_k(V, \C)$ the space of $L^2_k$ unitary connections on the trivial line bundle over $V$ for some fixed $k \geq 3$. To distinguish with $SU(2)$-connections, we write $A^{\dagger}$ for a typical connection in $\A_k(V, \C)$. To better cooperate with Yang-Mills theory, we consider perturbations of the ASD DeRham operator. Let 
\begin{equation}
\mathcal{P}= L^2_k\left(V, (\Lambda^1_{C}V)\smvee \otimes \Lambda^+_{\C}V \right)
\end{equation}
be the space of perturbations. Since $k \geq 3$, the Sobolev multiplication tells us $\pi: L^2_k(V, \Lambda^1_{\C}) \to L^2_k(V, \Lambda^+_{\C})$ is a bounded linear map with norm given by $\|\pi\|_{L^2_k}$. Typical examples of such perturbations arise as the differential of holonomy perturbations \cite{D87}. We are primarily interested in the case when the unitary connection $A^{\dagger}$ is flat. Given $\delta > 0$ and $\pi \in \mathcal{P}$, the $(\delta, \pi)$-perturbed ASD DeRham operator is denoted by
\begin{equation}
Q_{A^{\dagger}, \delta}(V, \pi) = -d_{A^{\dagger}, \delta}^* \oplus d^+_{A^{\dagger}, \delta, \pi}: L^2_k(V, \Lambda^1_{\C}) \longrightarrow L^2_{k-1}(V, \Lambda^0_{\C} \oplus \Lambda^+_{\C}),
\end{equation}
where $d_{A^{\dagger}, \delta}^* = d_{A^{\dagger}}^* - \delta/2 \cdot \iota_{df}$, and $d^+_{A^{\dagger}, \delta, \pi} = d^+_{A^{\dagger}} - \delta/2 \cdot (df \wedge -)^+ + \pi$. We note that as an operator from $L^2_k$ to $L^2_{k-1}$ spaces, the perturbation $\pi$ is compact. So 
\[
\ind Q_{A^{\dagger}, \delta}(V, \pi) = \ind Q_{A^{\dagger}, \delta}(V)  = \ind Q(V) = 0.
\] 
Given $z \in \C$, we can equally consider the $z$-twisted operator $Q_{A^{\dagger}, \delta, z}(V, \pi) = -d^*_{A^{\dagger}, \delta, z} \oplus d^+_{A^{\dagger}, \delta, z, \pi}$ corresponding to the elliptic complex
\begin{equation}\tag{$E_{A^{\dagger}, \delta, z}(V, \pi)$}
0 \longrightarrow L^2_{k+1}(V, \Lambda^0_{\C}) \xrightarrow{-d_{A^{\dagger}, \delta, z}} L^2_{k}(V, \Lambda^1_{\C}) \xrightarrow{d^+_{A^{\dagger}, \delta, z, \pi}} L^2_{k-1}(V, \Lambda^+_{\C}) \longrightarrow 0.
\end{equation}
Due to the $2\pi i $-periodicity, sometimes it would be more convenient to work with $e^z$. We denote the spectral set by 
\begin{equation}
\Sigma_{\delta}(A^{\dagger}, \pi):= \{ e^z \in \C^*: E_{A^{\dagger}, \delta, z}(V, \pi) \text{ is not acyclic} \}.
\end{equation}

\begin{dfn}\label{d4.1}
A continuous path $(A^{\dagger})_{t \in [0, 1]}$ of connections in $\A_k$ is $\delta$-regular with respect to a perturbation $\pi \in \mathcal{P}$ if the following hold. 
\begin{enumerate}[label=(\alph*)]
\item When $i=0, 1$, the operator $Q_{A^{\dagger}_i, \delta, z}(V, \pi)$ has trivial kernel with respect to all $z \in \C$ of vanishing real part. 
\item The parametrized spectral set 
\[
\Sigma^I_{\delta}(A^{\dagger}_t, \pi):=\left\{(t, e^z) \in [0, 1] \times S^1: \ker Q_{A^{\dagger}_t, \delta, z}(V, \pi) \neq 0 \right\}
\]
is discrete and 
\[
\ker Q_{A^{\dagger}_t, \delta, z}(V, \pi)  = \C, \;\; \forall (t, e^z) \in \Sigma^I_{\delta}(A^{\dagger}_t, \pi). 
\]
\end{enumerate}
\end{dfn}

\begin{rem}\label{r4.2}
When $d_{A^{\dagger}, \delta, z}$ is not the product connection, it's necessary that $H^0(E_{A^{\dagger}, \delta, z}) = 0$. Because $H^0(E_{A^{\dagger}, \delta, z})$ consists of complex valued functions on $V$ that are parallel with respect to $d_{A^{\dagger}, \delta, z}$. Thus the evaluation of such a function at a given point is invariant under the action of the holonomy group of $d_{A^{\dagger}, \delta, z}$. When the holonomy group is nontrivial, the only invariant value is $0$. So the $\delta$-regularity essentially means at a spectral point $(t, e^z)$, the symbol sequence of $H^*(E_{A^{\dagger}_t, \delta, z}(V, \pi))$ is exact and $-\chi(\widehat{H}^*_z(V)) = 1$ due to (4) of \autoref{p2.15}. 
\end{rem}

\begin{lem}\label{l4.3}
Given $\delta \in \R$, let $(A^{\dagger}_t)$ be a path of flat connections in $\A_k$ such that $A^{\dagger}_t - (\delta/2 + z) \cdot df$ is not the product connection for all $t \in [0, 1]$ and $e^z \in S^1$. Then $(A^{\dagger}_t)$ is a $\delta$-regular path with respect to a generic perturbation $\pi \in \mathcal{P}$. 
\end{lem}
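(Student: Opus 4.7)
The plan is a Sard-Smale transversality argument on two universal parametrized moduli spaces over the perturbation Banach space $\mathcal{P}$: one for condition (b), and one at each endpoint $t = 0, 1$ for condition (a). First I would set up
\[
\mathcal{M} = \left\{ (t, z, \pi, a) \in [0,1] \times (i\R/2\pi i \Z) \times \mathcal{P} \times L^2_k(V, \Lambda^1_{\C}) : \|a\|_{L^2} = 1, \; Q_{A^{\dagger}_t, \delta, z}(V, \pi) a = 0 \right\}
\]
(assuming the path is smooth; a continuous path can be smoothly approximated) and linearize the defining equation to
\[
L(\dot t, \dot z, \dot \pi, \dot a) = Q_{A^{\dagger}_t, \delta, z}(V, \pi) \dot a + \dot t \cdot (\partial_t Q) a + \dot z \cdot (\partial_z Q) a + (0, \dot \pi (a)),
\]
subject to $\Rea \langle a, \dot a\rangle_{L^2} = 0$. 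The decisive step will be to show that $L$ surjects onto $L^2_{k-1}(V, \Lambda^0_{\C} \oplus \Lambda^+_{\C})$. By \autoref{r4.2}, the non-product hypothesis on $A^{\dagger}_t - (\delta/2 + z) df$ forces $H^0(E_{A^{\dagger}_t, \delta, z}(V, \pi)) = 0$, so the cokernel of $Q_{A^{\dagger}_t, \delta, z}(V, \pi)$ lies entirely in the self-dual factor $L^2_{k-1}(V, \Lambda^+_{\C})$. For any nonzero cokernel element $\omega$, I would combine elliptic regularity with Aronszajn's unique continuation for $Q$ to conclude that $a$ and $\omega$ are both smooth and jointly nonvanishing on a common open set $U \subset V$; a bump-supported perturbation $\dot \pi \in \mathcal{P}$ on $U$ then realizes $\langle \dot \pi(a), \omega\rangle_{L^2} \neq 0$, yielding surjectivity of $L$ and making $\mathcal{M}$ a smooth Banach submanifold.

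The next step is to quotient by the free $U(1)$-action $a \mapsto e^{i\theta} a$ and apply Sard-Smale to the projection $\bar\pi: \mathcal{M}/U(1) \to \mathcal{P}$, which is Fredholm of index
\[
\ind(Q) + \dim\bigl([0,1] \times (i\R/2\pi i\Z)\bigr) - 1 - 1 = 0 + 2 - 1 - 1 = 0
\]
(the two $-1$'s account respectively for the sphere constraint and the $U(1)$-quotient). For $\pi$ in a residual subset $\mathcal{P}_b \subset \mathcal{P}$, the fiber $\bar\pi^{-1}(\pi)$ is a discrete subset whose image in $[0,1] \times S^1$ recovers the parametrized spectral set, proving discreteness. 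A standard dimension count for families of complex Fredholm operators shows that the locus $\{(t, z, \pi) : \dim_{\C} \ker Q \geq 2\}$ has sufficiently high codimension in the universal space that its projection to $\mathcal{P}$ is meager; removing it yields a further residual $\mathcal{P}'_b \subset \mathcal{P}_b$ on which $\ker Q = \C$ at every spectral point. Condition (a) is handled by the same construction with $t = i \in \{0, 1\}$ fixed:
\[
\mathcal{M}_i = \left\{ (z, \pi, a) : \|a\|_{L^2} = 1, \; Q_{A^{\dagger}_i, \delta, z}(V, \pi) a = 0 \right\},
\]
whose $U(1)$-quotient projects to $\mathcal{P}$ with Fredholm index $0 + 1 - 1 - 1 = -1$; the generic fiber is therefore empty, meaning $Q_{A^{\dagger}_i, \delta, z}(V, \pi)$ is invertible for every $z$ with $\Rea z = 0$. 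Intersecting the residual subsets $\mathcal{P}'_b$, $\mathcal{P}_a^{(0)}$, $\mathcal{P}_a^{(1)}$ produces the desired $\delta$-regular perturbations.

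The main obstacle will be the surjectivity of $L$. Without the non-product hypothesis one cannot eliminate $H^0$, and the cokernel of $Q$ acquires a $\Lambda^0_{\C}$-direction that no perturbation $\dot\pi \in \mathcal{P}$ can reach (since $\dot\pi$ is valued in $\Lambda^+_{\C}$). Granting $H^0 = 0$, what remains is the classical unique continuation property for the first-order elliptic operator $Q_{A^{\dagger}_t, \delta, z}(V, \pi)$, which follows from the second-order theory applied to $Q^*Q$.
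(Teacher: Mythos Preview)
Your proposal is correct and rests on the same two mechanisms as the paper --- the vanishing of $H^0$ coming from the non-product hypothesis, and the fact that perturbations in $\mathcal{P}$ can hit any self-dual cokernel direction --- but the packaging is different. You run a global Sard--Smale argument on a universal moduli space over $\mathcal{P}$ and invoke unique continuation explicitly; the paper instead works locally at each $(t_0,e^{z_0})$, performs a Kuranishi/implicit-function reduction to a finite-dimensional map $\xi:U_1\times U_2\to\Hom_{\C}(\mathcal{H}_1,\mathcal{H}_2)$ (with $\mathcal{H}_1=\ker Q$, $\mathcal{H}_2=(\im Q)^{\perp}$), checks that this map is a submersion in the $\pi$-direction, and then appeals to the complex rank stratification of $\Hom_{\C}(\mathcal{H}_1,\mathcal{H}_2)$ together with compactness of $[0,1]\times S^1$. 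The paper's route avoids setting up Banach manifolds but is a bit terser about why perturbations surject onto $\Hom_{\C}(\mathcal{H}_1,\mathcal{H}_2)$; your route is the standard gauge-theoretic template and makes the unique continuation step visible.

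One small imprecision: the assertion that ``the cokernel of $Q_{A^{\dagger}_t,\delta,z}(V,\pi)$ lies entirely in $\Lambda^+_{\C}$'' is only literally true at $\pi=0$, since for general $\pi$ the sequence $E_{A^{\dagger}_t,\delta,z}(V,\pi)$ need not be a chain complex. The clean order of argument is the reverse of how you phrased it: for $(\xi,\omega)$ orthogonal to $\im L$, first use $\dot\pi$-variation plus unique continuation for $a$ to force $\omega=0$; then the cokernel equation collapses to $d_{A^{\dagger}_t,\delta,z}\xi=0$, and $H^0=0$ gives $\xi=0$. This is exactly what your final paragraph says, so nothing is actually missing --- just reorder the sentence in the body of the proof. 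Note also that once the $U(1)$-quotiented fiber is a $0$-manifold, the condition $\dim_{\C}\ker Q=1$ is automatic (a $\mathbb{CP}^{d-1}$ with $d\ge 2$ cannot sit inside a $0$-manifold), so your separate stratification step, while correct, is redundant.
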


\begin{proof}
Let $C^{[0,1]} = [0, 1] \times S^1$ be the parameter space. Take $(t_0, e^{z_0}) \in C^{[0, 1]}_{\delta}:=[0, 1] \times S^1_{\delta}$, where $S^1_{\delta}$ means the circle in $\C$ with radius $\delta/2$. Let's write $A^{\dagger}_0$ for $A^{\dagger}_{t_0}$. Denote by $\mathcal{H}_1 = \ker Q_{A^{\dagger}_0, \delta, z_0} $ and $\mathcal{H}_2 =( \im Q_{A^{\dagger}_0, \delta, z_0})^{\perp}$. Since $Q_{A^{\dagger}_0, \delta, z_0}$ is Fredholm of index $0$, we know $\dim \mathcal{H}_1 = \dim \mathcal{H}_2$ is finite. We consider the map 
\begin{equation}
\begin{split}
\eta: \mathcal{P} \times C^{[0,1]}_{\delta} \times \ker d^*_{A^{\dagger}_0, \delta, z_0} & \longrightarrow \im d^+_{A^{\dagger}_0, \delta, z_0} \\
(\pi, t, e^z, a) & \longmapsto \Pi d^+_{A^{\dagger}_{t}, \delta, z, \pi} a,
\end{split}
\end{equation}
where $\Pi: L^2_{k-1}(V, \Lambda^+_{\C}) \to \im d^+_{A^{\dagger}_0, \delta, z_0}$ is the $L^2$-projection to the image of $d^+_{A^{\dagger}_0, \delta, z_0}$. Since $A^{\dagger}_0$ is a flat connection, Hodge theory tells us that the differential $D\eta|_{(0, t_0, e^{z_0}, a)}$ is surjective on the $\ker d^*_{A^{\dagger}_0, \delta, z_0}$ component. Invoking the implicit function theorem, we can find a neighborhood $U_1 \times U_2$ of $(0, t_0, e^{z_0})$ in $\mathcal{P} \times C^{[0,1]}_{\delta}$  and a map $h: U_1 \times U_2 \times \mathcal{H}_1 \to \ker d^*_{A^{\dagger}_0, \delta, z_0}$ such that for all $(\pi, t, e^z, a) \in U_1 \times U_2 \times \mathcal{H}_1$ we have $\eta(\pi, t, e^z, a+h(\pi, t, e^z, a)) = 0$. Equivalently, $d^+_{A^{\dagger}_t, \delta, z, \pi}(a+ h(\pi, t, e^z, a)) \in \mathcal{H}_2$. Since $\eta$ is linear in $a$, so is $h$. In this way we get a map 
\begin{equation}
\begin{split}
\xi: U_1 \times U_2 & \longrightarrow \Hom_{\C}(\mathcal{H}_1, \mathcal{H}_2) \\
(\pi, t, e^z) & \longmapsto \left(a \mapsto d^+_{A^{\dagger}_t, \delta, z, \pi}(a + h(\pi, t, e^z, a)) \right). 
\end{split}
\end{equation}
The differential of $\xi$ at $(0, 0)$ is $D\xi|_{(0,0)} (\pi, 0): a \mapsto \pi (a) + d^+_{A^{\dagger}_0, \delta, z_0} \circ Dh|_{(0, 0, a)} \pi$. By the construction when $\pi(\mathcal{H}_1) \subset \mathcal{H}_2$, we have $D\xi|_{(0, 0)}(\pi, 0) = \pi(a)$. Restricting to such perturbations, we conclude that $\rho$ is a submersion at $(0, 0)$. Note that the stratum of linear maps in $\Hom_{\C}(\mathcal{H}_1, \mathcal{H}_2)$ with dimension-$i$ kernels is of complex codimension $i^2$. Since $\dim_{\R} U_2 = 2$, the Sard-Smale theorem tells us that for a generic perturbations $\dim_{\C} \Pi' \ker d^+_{A^{\dagger}_t, \delta, z, \pi} \leq 1$ with equality only at a discrete set of points, where $\Pi'$ is the projection to $\ker d^*_{A^{\dagger}_0, \delta, z_0} \cap \ker d^+_{A^{\dagger}_0, \delta, z_0} = \ker Q_{A^{\dagger}_0, \delta, z_0}$. 

Note that the dimension of kernel is an upper semi-continuous function on Fredholm operators and the projection $\Pi': \ker Q_{A^{\dagger}, \delta, z} \to \ker Q_{A^{\dagger}_0, \delta, z_0}$ is injective for $A^{\dagger}$ near $A_0$ and $z$ near $z_0$. By shrinking $U_2$ possibly, we have proved that for each $(t_0, e^{z_0})$ one can find a neighborhood $U_2$ of $(t_0, e^{z_0})$ in $C^{[0, 1]}$  over which the $\delta$-regular condition is satisfied with respect to generic perturbations. Since $C^{[0, 1]}$ is compact, we can run the argument in finite steps to conclude. We also remark that since $\dim_{\R} \partial C^{[0, 1]} = 1$, one can first run the argument there to arrive at the case that $Q_{A^{\dagger}_t, \delta, z}$ is invertible for all $(t, e^z) \in \partial C^{[0, 1]}$.
\end{proof}

Given a $\delta$-regular path $(A^{\dagger}_t)$ with respect to $\pi$, we write $(t_j, e^{z_j}) \in \Sigma^I_{\delta}(A^{\dagger}_t, \pi)$ for the spectral points on the cylinder. The following result in \cite{MRS11} describes the spectral set in a neighborhood of the cylinder. Although the original statement is proved for Dirac operators, the proof only uses the property of index zero Fredholm operators. So it also works in our case. 

\begin{prop}{\label{p4.4}\cite[Theorem 4.8]{MRS11}}
Let $(A^{\dagger}_t)$ be a $\delta$-regular path with respect to a perturbation $\pi \in \mathcal{P}$. Then there exists $\epsilon_0 > 0$ and $\epsilon_1 > 0$
\[
\bigcup_{|t-t_j|< \epsilon_0} \{t \} \times (\Sigma_{\delta}(A^{\dagger}_t, \pi) \cap B_{\epsilon_1}(e^{z_j})) \subset [0, 1] \times \C^*
\]
is a smoothly embedded curve for all $j$, where $B_{\epsilon_1}(e^{z_j})$ is the open disk of radius $\epsilon_1$ centered at $e^{z_j}$. 
\end{prop}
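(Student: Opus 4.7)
The plan is a Lyapunov--Schmidt reduction at the level of the elliptic complex (so as to preserve holomorphicity in $z$), followed by the implicit function theorem. Fix a spectral point $(t_j, e^{z_j})$. By the $\delta$-regular assumption and \autoref{r4.2}, the cohomology of $E_{A^{\dagger}_{t_j}, \delta, z_j}(V, \pi)$ satisfies $\dim H^1 = 1$ and $H^0 = H^2 = 0$. Using Hodge orthogonal decompositions of each chain space taken at $(t_j, z_j)$, the differentials of the family $E_{A^{\dagger}_t, \delta, z}(V, \pi)$ take block form on a small neighborhood $U$ of $(t_j, e^{z_j})$ in $[0, 1] \times \C^\ast$, with all but one block remaining invertible after possibly shrinking $U$. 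Schur-complement elimination leaves a reduced complex between one-dimensional spaces whose single differential is a scalar function $\xi(t, e^z)$, smooth in $t$ and holomorphic in $e^z$, and whose zero locus in $U$ coincides with the set of $(t, e^z)$ for which $E_{A^{\dagger}_t, \delta, z}(V, \pi)$ fails to be acyclic.

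It remains to show that $\partial_{e^z}\xi(t_j, e^{z_j}) \neq 0$. The holomorphic function $\xi(t_j, \cdot)$ vanishes at $e^{z_j}$ to some finite order $m$, finite because the spectral set $\Sigma_\delta(A^{\dagger}_{t_j}, \pi)$ is discrete by \autoref{l2.5}. A Laurent expansion analogous to the one used in Step~1 of the proof of \autoref{p2.10} identifies this order $m$ with $\dim_\C \widehat{H}^1_{z_j}(V)$ for the complex $E_{A^{\dagger}_{t_j}, \delta, z}(V, \pi)$. By \autoref{r4.2} we have $-\chi(\widehat{H}^*_{z_j}(V)) = 1$, while \autoref{p2.15} gives $\widehat{H}^0_{z_j}(V) = \widehat{H}^2_{z_j}(V) = 0$ (using $H^0(E_{z_j}) = 0$ and the top-degree vanishing, respectively). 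Hence $\dim_\C \widehat{H}^1_{z_j}(V) = 1$ and $m = 1$, so the zero is simple.

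With $\partial_{e^z}\xi(t_j, e^{z_j}) \neq 0$ in hand, the implicit function theorem produces constants $\epsilon_0, \epsilon_1 > 0$ and a smooth function $t \mapsto e^{z(t)}$ on $(t_j - \epsilon_0, t_j + \epsilon_0)$ with $e^{z(t_j)} = e^{z_j}$ whose graph is exactly the zero locus of $\xi$ inside $(t_j - \epsilon_0, t_j + \epsilon_0) \times B_{\epsilon_1}(e^{z_j})$; this graph is the asserted smoothly embedded curve. The main obstacle is the identification of $m$ with $\dim_\C \widehat{H}^1_{z_j}(V)$ in the second paragraph, which crucially uses the one-dimensionality built into the $\delta$-regular hypothesis---a higher-dimensional kernel could produce a higher-order zero of $\xi$ and obstruct the smoothness needed for the implicit function theorem.
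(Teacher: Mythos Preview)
Your overall strategy---Lyapunov--Schmidt reduction followed by the implicit function theorem---is exactly the approach of the cited result \cite[Theorem~4.8]{MRS11}, and the paper itself supplies no independent proof beyond remarking that the MRS11 argument uses only the index-zero Fredholm property and hence carries over. So the approach matches.

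There are, however, two issues. First, a factual slip: the claim $H^2 = 0$ is wrong. Since $\ind Q_{A^\dagger_{t_j},\delta,z_j}(V,\pi)=0$ and $H^0=0$ (by \autoref{r4.2}), the hypothesis $\dim H^1=1$ forces $\dim H^2=1$. The reduced complex is thus $0\to 0\to\C\xrightarrow{\xi}\C\to 0$; fortunately this still yields a single scalar $\xi$, so the architecture of your argument survives.

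Second, and more substantive, the assertion that the vanishing order $m$ of $\xi(t_j,\cdot)$ equals $\dim_\C\widehat H^1_{z_j}(V)$ is not justified: Step~1 of \autoref{p2.10} constructs a meromorphic resolvent, it does not establish any such multiplicity formula, and the passage from pole order to $\dim\widehat H^1$ is not immediate. A more direct route bypasses $\widehat H^*$ entirely. The explicit computation in Step~1 of \autoref{p2.10} shows the reduced differential satisfies $\alpha^1_z=-(z-z_j)\bar\sigma^1+O((z-z_j)^2)$, so $\partial_z\xi(t_j,z_j)=-\bar\sigma^1$, where $\bar\sigma^1\colon H^1(E_{z_j})\to H^2(E_{z_j})$ is the induced symbol map. \autoref{r4.2} asserts that $\delta$-regularity makes the symbol sequence on cohomology exact, so $\bar\sigma^1$ is an isomorphism of one-dimensional spaces; hence $\partial_z\xi(t_j,z_j)\neq 0$ and the implicit function theorem applies directly.
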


We recall the definition of the periodic spectral flow introduced in \cite{MRS11}. For more details and explanation of the following definition, one may consult \cite[Section 4]{M1}

\begin{dfn}
We say $\lambda > 0$ is an excluded value for $Q_{A^{\dagger}, \delta}(V, \pi)$ if
\[
\text{ $Q_{A^{\dagger}, \delta, z}(V, \pi)$ is invertible for all $z$ with $\Rea z = \ln \lambda$}. 
\]
\end{dfn}

\begin{dfn}
Let $(A^{\dagger}_t)$ be a $\delta$-regular path with respect to $\pi$. A system of excluded values for $Q_{A^{\dagger}_t, \delta}(X, \pi)$ is a finite sequence of pairs $\{(t_l, \lambda_l)\}_{l=1}^n$ satisfying 
\begin{enumerate}[label=(\alph*)]
\item $0=t_0< t_1 < ... < t_n =1$ is a partition of $[0, 1]$;
\item $\lambda_l \in (1 - \epsilon_1, 1 + \epsilon_1)$ is an excluded value for $Q_{A^{\dagger}_t, \delta}(V, \pi)$ for all $t \in [t_{l-1}, t_l]$, where $\epsilon_1$ is the constant in \autoref{p4.4}. Moreover $\lambda_1 = \lambda_n = 1$.
\end{enumerate}
\end{dfn}

\begin{dfn}
Let $\{(t_l, \lambda_k)\}_{l=1}^n$ be a system of excluded values for a $\delta$-regular path $(A^{\dagger}_t)$ with respect to $\pi$. For $l=1, ..., n-1$, we define 
\[
a_l=
\begin{cases}
1 & \text{ if $\lambda_l > \lambda_{l+1}$} \\
0 & \text{ if $\lambda_l = \lambda_{l+1}$} \\
-1 & \text{ if $\lambda_l < \lambda_{l+1}$} 
\end{cases}
\]
and 
\[
b_l = \#\left\{e^z \in \Sigma_{\delta}(A^{\dagger}_{t_l}, \pi): \min(\lambda_l, \lambda_{l+1}) \leq |e^z| \leq \max(\lambda_l, \lambda_{l+1}) \right\}. 
\]
The periodic spectral flow of the family $Q_{A^{\dagger}_t, \delta}(V, \pi)$ along the $\delta$-regular path $(A^{\dagger}_t)$ is defined to be 
\[
\widetilde{\Sf}(Q_{A^{\dagger}_t, \delta}(V, \pi)) := \sum_{l=1}^{n-1} a_l b_l \in \Z. 
\]
\end{dfn}

\begin{rem}\label{r3.7}
We remark several facts about the periodic spectral flow.
\begin{enumerate}[label=(\alph*)]
\item It follows from \cite[Lemma 4.9]{M1} that the periodic spectral flow is independent of the choice of systems of excluded values. 

\item A more intuitive definition of the periodic spectral flow was initially adopted by Mrowka-Ruberman-Saveliev \cite{MRS11} as follows. Near each spectral point $(t_j, e^{z_j})$, the spectral curve in \autoref{p4.4} takes the form $(t, e^{z(t)})$, $t \in (t_j-\epsilon , t_j+ \epsilon)$. We write $z(t) = r(t) + is(t)$. The $\delta$-regularity implies that $\dot{r}(t_j) \neq 0$. To $(t_j, e^{z_j})$, we assign $+1$ if $\dot{r}(t_j) > 0$, and $-1$ otherwise. Then the periodic spectral flow is given by the signed count of spectral points on $[0, 1] \times S^1$. 

\item It follows from \autoref{p4.4} that a $\delta$-regular path $(A^{\dagger}_t)$ is also $\delta_{\epsilon}$-regular for all $\delta_{\epsilon}$ near $\delta$. 
\end{enumerate}
\end{rem}

Given a path $(A^{\dagger}_t)$ of flat connections satisfying the assumption of \autoref{l4.3}, \cite[Lemma 4.10]{M1} implies that as long as the generic perturbations are small enough, the periodic spectral flow is independent of the choice of perturbations as well. In this case, we shall write $\widetilde{\Sf}(Q_{A^{\dagger}_t, \delta}(V))$ for the periodic spectral flow with respect to small perturbations. In other words, the perturbation is only auxiliary and added for the purpose of achieving transversality. 

Just like the case for Dirac operators in \cite{M1}, we also consider periodic spectral flow defined over manifolds with boundary. The idea is to attach a cylindrical end and impose certain decay assumptions for the connections, which enables us to treat the ASD DeRham operator associated to these connections as if defined over closed manifolds. 

Let $M$ be a smooth compact $4$-manifold with $\partial M = Y$. For simplicity we assume $Y$ is connected. We write $M_o = M \cup [0, \infty) \times Y$ for the manifold obtained by attaching a cylindrical end to $M$. We assume that $b_1(M) > 0$. Then we fix a primitive class $1_M \in H^1(M_o; \Z)$ represented by a function $f: M_o \to S^1$ whose restriction on the end, $f|_{\{t\} \times Y} = f_Y$ for $t \in [0, \infty)$, is time-independent. 

Any connection $A$ on the trivial line bundle over $M_o$ takes the form $A^{\dagger}|_{[0, \infty) \times Y} =dt \otimes c(t) + B^{\dagger}(t)$ with $B^{\dagger}(t)$ a time-dependent connection on $Y$ and $c(t)$ a time-dependent complex-valued function on $Y$. Using the identification (\ref{e3.13}), the ASD DeRham operator over the end $[0, \infty) \times Y$ takes the form $Q_{A^{\dagger}, \delta} = d/dt + c(t) + L_{B^{\dagger}(t), \delta}$ with 
\begin{equation}\label{e4.6}
L_{B^{\dagger}, \delta}=
\begin{pmatrix}
0 & -d^*_{B^{\dagger}} + \delta/2 \cdot \iota_{df_Y} \\
-d_{B^{\dagger}} + \delta/2 \cdot df_Y & \star (d_{B^{\dagger}} - \delta/2  \cdot df_Y)
\end{pmatrix}.
\end{equation}
In contrast with $L_{\delta}$ in (\ref{e3.16}), the $(1,1)$-entry of $L_{B^{\dagger}, \delta}$ vanishes because $f|_{\{t\} \times Y}$ is independent of $t$. Given $z \in \C$, we get the $z$-twisted operator $Q_{A^{\dagger}, \delta, z} = d/dt + c(t) + L_{B^{\dagger}(t), \delta, z}$ as in the closed case, where  
\begin{equation}\label{e4.7}
L_{B^{\dagger}, \delta, z}=
\begin{pmatrix}
0 & -d^*_{B^{\dagger}} + (\delta/2 + \bar{z}) \cdot \iota_{df_Y} \\
-d_{B^{\dagger}} + (\delta/2 +  z) \cdot df_Y & \star (d_{B^{\dagger}} - (\delta/2 + z)  \cdot df_Y)
\end{pmatrix}.
\end{equation}



\begin{dfn}\label{d4.8}
Given $\delta \in \R$, we say a connection $A^{\dagger}$ on $M_o$ is $\delta$-regularly asymptotically flat ($\delta$-RAF) if the following hold:
\begin{enumerate}[label=(\alph*)]
\item There exists $T > 0$, $\mu > 0$ and a flat connection $B^{\dagger}_o$ on $Y$ such that 
\[
\|B^{\dagger}(t) - B^{\dagger}_o\|_{C^0(Y)} + \|c(t)\|_{C^0(Y)} \leq \mathfrak{c} e^{-\mu(t - T)/2}
\]
for some constant $\mathfrak{c} > 0$;
\item The twisted operator $L_{B^{\dagger}_o, \delta, z}: L^2_{k-1/2} \to L^2_{k-3/2}$ is invertible for all $z$ with $\Rea z =0$;
\item There exists $\epsilon_Y > 0$ such that $Q_{A^{\dagger}, \delta, z}(M_o): L^2_k \to L^2_{k-1}$ has index $0$ for all $z$ satisfying $\Rea z \in (\ln (1 - \epsilon_Y), \ln (1 + \epsilon_Y))$. 
\end{enumerate}
\end{dfn}

This definition deserves some explanation. Let's write $B_z := B^{\dagger}_o - (\delta/2 +z)df_Y$ for a moment. Then we get a twisted DeRham complex 
\[
0 \to \Omega^0_{\C}(Y) \xrightarrow{d_{B_z}} \Omega^1_{\C}(Y) \xrightarrow{d_{B_z}} \Omega^2_{\C}(Y) \xrightarrow{d_{B_z}} \Omega^3_{\C}(Y) \to 0.
\]
The second assumption (b) is equivalent to the fact that $H^0_{B_z}(Y) = H^1_{B_z}(Y) = 0$ for all $\Rea z =0$. Note that 
\[
Q_{A^{\dagger}, \delta, z}|_{[T, \infty) \times Y} = \frac{d}{dt} + L_{B^{\dagger}_o, \delta, z} + L_{B^{\dagger}(t), \delta, z} - L_{B^{\dagger}_o, \delta, z} + c(t). 
\]
The assumption of exponential decay in (a) implies that the zeroth order operator over $[T, \infty) \times Y$ given by 
\[
L_{B^{\dagger}(t), \delta, z} - L_{B^{\dagger}_o, \delta, z} + c(t) : L^2_k \to L^2_{k-1}
\]
is compact. Then the second assumption (b) implies that $Q_{A^{\dagger}, \delta, z}(M_o)$ is Fredholm for $\Rea z = 0$. The openness of being Fredholm gives us the existence of such $\epsilon_Y$ to guarantee that $Q_{A, z}(M_o)$ is Fredholm with respect to $z$ in the asserted range. The requirement of having index zero can be justified by the topology of $M$ using the theory of Atiyah-Patodi-Singer \cite{APS1}:
\begin{equation}
\ind Q_{A^{\dagger}, \delta, z}(M_o) = -\frac{1}{2}\left(\chi(M) + \sigma(M) + \eta_{B_z}(Y)\right). 
\end{equation}

Over the non-compact manifold $M_o$, we consider perturbations of exponential decay along the cylindrical end. We shall fix a weight $\mu > 0$ and let 
\begin{equation}\label{e4.8}
\mathcal{P}_{\mu} = L^2_{k, \mu} \left(M_o, (\Lambda^1_{\C}M_o)\smvee \otimes \Lambda^+_{\C}M_o\right)
\end{equation}
be the space of perturbations. Then for each $\pi \in \mathcal{P}_{\mu}$, Sobolev multiplication gives us a compact operator $\pi:L^2_k \to L^2_{k-1}$. The notion of $\delta$-regularity in \autoref{d4.1} can be defined for a path $(A_t)$ of $\delta$-RAF connections over $M_o$ due to the assumption (c) in \autoref{d4.8}. The argument of \autoref{l4.3} can be applied verbatim to show that any path $(A_t)$ of flat $\delta$-RAF connections is $\delta$-regular with respect to a generic perturbation $\pi \in \mathcal{P}_{\mu}$. This leads us to define the periodic spectral flow of such a path, which we denote by $\widetilde{\Sf}(Q_{A^{\dagger}_t, \delta}(M_o, \pi)).$

\subsection{\em Properties}\label{ss3.2} \hfill
 
\vspace{3mm}

We shall demonstrate two properties of periodic spectral flow. The first is a gluing property relating the notion over closed manifolds to manifolds with cylindrical end. The second is to establish its relation with the periodic rho invariant. Both properties are known for the usual spectral flow, and proved in the case of Dirac operators in \cite{M1} and \cite{MRS11} respectively. 

We restrict ourselves to a concrete model. Suppose the $4$-manifold $V$ considered above is decomposed along an embedded $3$-torus $Y$ into two compact pieces: $V = M \cup_Y N$. Let $g$ be a metric on $V$ so that $g|_{(-1, 1) \times Y} = dt^2 + h$ for a metric $h$ on the $3$-manifold $Y$. Given $T > 0$, we write $V_T$ for the Riemannian manifold obtained from $V$ by inserting a "neck" of length $2T$ about $Y$. More specifically, let's write $M_T = M \cup [0, T] \times Y$, $N_T = [-T, 0] \times Y \cup N$. Then $V_T = M_T \cup N_T$. The "neck" is identified with $[-T, T] \times Y \subset V_T$. The limit of such metrics breaks $V$ into two pieces, which we denote by $M_o = M \cup [0, \infty) \times Y $ and $N_o = (-\infty, 0] \times Y \cup N$. 

Assume the primitive class $1_V \in H^1(V; \Z)$ restricts to primitive classes on $M$ and $N$. We then choose a representative function $f: V \to S^1$ satisfying $f|_{(-1, 1) \times Y} = f_Y$ for some function $f_Y: Y \to S^1$. The restriction of $f$ can be extended to $f_M: M_o \to S^1$ and $f_N: N_o \to S^1$ by taking $f|_{\{t\} \times Y}=f_Y$ on the cylindrical ends. 

Suppose for each $T \in [0, \infty)$ we have a path $(A^{\dagger}_{T, s})_{s\in [0, 1]}$ of flat connections over the trivial line bundle of $V_T$ which converges to $\delta$-RAF paths $(A^{\dagger}_{M, s})_{s \in [0,1]}$ and $(A^{\dagger}_{N, s})_{s \in [0, 1]}$ on $M_o$ and $N_o$ respectively in the following sense:
\begin{enumerate}[label=(\alph*)]
\item For each $s \in [0, 1]$, $A^{\dagger}_{M, s}$ and $A^{\dagger}_{N, s}$ are both exponentially asymptotic to the same flat connection $B^{\dagger}_s$ over $Y$ in the sense of \autoref{d4.8} (a);
\item For each $s \in [0, 1]$, $A^{\dagger}_{T, s}|_{M_T}$ and $A^{\dagger}_{T, s}|_{N_T}$ converges to $A^{\dagger}_{M, s}$ and $A^{\dagger}_{N, s}$ respectively as $T \to \infty$ in the $L^2_{k, loc}$ topology;
\item Given any $\epsilon > 0$, one can find $T_{\epsilon}$ such that for all $T > T_{\epsilon}$ we have 
\[
\|A^{\dagger}_{T, s}|_{[T_{\epsilon}-T, T - T_{\epsilon}] \times Y} - A^{\dagger}_{B^{\dagger}_s}\|_{L^2_k} < \epsilon, 
\]
where $A^{\dagger}_{B^{\dagger}_s} = dt + B^{\dagger}_s$ is the connection on $[T_{\epsilon}-T, T - T_{\epsilon}] \times Y$. 
\end{enumerate}
Moreover assume we can find perturbations $\pi_T \in \mathcal{P}(V_T)$ converging to $\pi_M \in \mathcal{P}_{\mu}(M_o)$ and $0 \in \mathcal{P}_{\mu}(N_o)$ as $T \to \infty$ so that the paths $(A^{\dagger}_{T, s})$, $(A^{\dagger}_{M, s})$, and $(A^{\dagger}_{N, s})$ are $\delta$-regular with respect to the perturbations $\pi_T$, $\pi_M$, and $0$ respectively. 

The following result is drawn from \cite{M1}. Although originally the author was considering Dirac operators, the proof goes through without any change. The idea is that when we impose the condition that the path $(A_{N^{\dagger}, s})$ admits empty spectral curve near the unit circle $S^1$, the spectral curves of paths $(A^{\dagger}_{T, s})$ and $(A^{\dagger}_{M, s})$ can be identified when $T \gg 0$ with the help of implicit function theorem. In general if $(A^{\dagger}_{N, s})$ contributes spectral curves, the author expects an additive formula of the periodic spectral curve as in the work of \cite{CLM96} for usual spectral flows. 

\begin{prop}[\label{p4.10}{\cite[Theorem 4.16]{M1}}]
Under the situation as above, we further assume that 
\[
Q_{A^{\dagger}_{N, s}, \delta, z}(N_o): L^2_k(N_o, \Lambda^1_{\C}) \longrightarrow L^2_{k-1}(N_o, \Lambda^0_{\C} \oplus \Lambda^+_{\C})
\]
is invertible for all $(s, e^z) \in [0, 1] \times S^1$. Then there exists $T_o > 0 $ such that for all $T > 0$ we have 
\[
\widetilde{\Sf}(Q_{A^{\dagger}_{T, s}, \delta}(V_T, \pi_T)) = \widetilde{\Sf}(Q_{A^{\dagger}_{M, s}, \delta}(M_o, \pi_M)). 
\]
\end{prop}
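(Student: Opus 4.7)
The plan is to reduce the equality of periodic spectral flows to a bijection between spectral curves near the unit circle on the two sides, preserving the sign conventions in \autoref{r3.7}(b). Since the periodic spectral flow is computed from the signed count of intersections of spectral curves with $[0,1]\times S^1$, it suffices to establish, for $T$ large, a canonical correspondence between spectral points of $Q_{A^{\dagger}_{T,s},\delta,z}(V_T,\pi_T)$ in a fixed neighborhood of $[0,1]\times S^1$ and spectral points of $Q_{A^{\dagger}_{M,s},\delta,z}(M_o,\pi_M)$ in the same neighborhood, with matching derivatives of the real part of $z$ at each spectral point.

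First I would exploit the hypothesis that $Q_{A^{\dagger}_{N,s},\delta,z}(N_o)$ is invertible for all $(s,e^{z})\in[0,1]\times S^1$. Because the parameter space is compact and the family is continuous in $(s,z)$, there is a uniform bound $\|Q_{A^{\dagger}_{N,s},\delta,z}(N_o)^{-1}\|\le C_0$ on a neighborhood $U$ of $[0,1]\times S^1$. Next, I would combine this with Taubes' Fredholmness criterion and the uniform spectral gap of $L_{B^{\dagger}_o,\delta,z}$ on $Y$ (from \autoref{d4.8}(b)) to produce, on each neck $[-T,T]\times Y \subset V_T$, exponentially decaying Green-type estimates for $Q_{A^{\dagger}_{T,s},\delta,z}$ when $(s,e^z)\in U$. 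This is the standard analytic input for linear gluing.

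With these estimates in hand, the main construction is a pair of cut-off and pregluing maps. Given a kernel element $a$ of $Q_{A^{\dagger}_{M,s},\delta,z}(M_o,\pi_M)$ at a spectral point $(s_0,e^{z_0})\in U$, cut off $a$ in the region $[T/2,T]\times Y$ using a smooth bump to obtain $a^{\mathrm{pre}}_T$ on $V_T$; the exponential decay of $a$ and of $(A^{\dagger}_{T,s},\pi_T)-(A^{\dagger}_{M,s},\pi_M)$ on $[T_\epsilon,\infty)\times Y$ makes $Q_{A^{\dagger}_{T,s_0},\delta,z_0}(V_T,\pi_T)a^{\mathrm{pre}}_T$ small in $L^2_{k-1}$. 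Using the uniform invertibility on $N_o$ together with the assumed $\delta$-regularity on $M_o$, standard implicit-function-type arguments (as in \cite{M1}, especially the construction preceding Theorem~4.16 there) then upgrade $a^{\mathrm{pre}}_T$ to a genuine section in the kernel of $Q_{A^{\dagger}_{T,\tilde s(T)},\delta,\tilde z(T)}(V_T,\pi_T)$ with $(\tilde s(T),e^{\tilde z(T)})\to (s_0,e^{z_0})$ as $T\to\infty$. Conversely, any spectral point $(s,e^z)\in U$ of the $V_T$ family with $T$ large yields, after passing to a subsequence, a nontrivial limit kernel element on $M_o$ (the component on $N_o$ vanishing in the limit by uniform invertibility), producing a spectral point of the $M_o$ family by the unique continuation/elliptic-regularity argument of \autoref{p4.4}. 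Together these give a bijection of spectral sets on $U$.

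The last step is to check that the correspondence respects the $\pm 1$ assignments in \autoref{r3.7}(b), i.e.\ that the sign of $\dot r(t_j)$ is preserved. I would parametrize the local spectral curves produced by the implicit function theorem and show that the derivative $\partial_s z$ at a spectral point on $V_T$ converges as $T\to\infty$ to the corresponding derivative on $M_o$; this follows from differentiating the pregluing identity and using the uniform bounds again, since the first-order variation of the glued kernel element differs from that of the $M_o$ kernel element by a term exponentially small in $T$. Once the signs agree, choosing a common system of excluded values $\{(t_l,\lambda_l)\}$ with $\lambda_l$ close to $1$ and comparing the two counts gives the required equality. The main obstacle will be the analytic bookkeeping in the gluing step: getting uniform control (in $s,z$, and $T$) over the right inverses on both pieces and on the glued manifold simultaneously, so that the implicit function theorem applies with constants independent of the parameter point on $U$. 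Everything else, once this uniform package is in place, follows the Dirac-operator template of \cite{M1}.
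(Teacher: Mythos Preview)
Your proposal is correct and follows essentially the same approach the paper indicates: the paper does not give a self-contained proof but cites \cite[Theorem 4.16]{M1} and sketches the idea that, under the invertibility hypothesis on $N_o$, the spectral curves of $(A^{\dagger}_{T,s})$ near $[0,1]\times S^1$ can be identified with those of $(A^{\dagger}_{M,s})$ for $T\gg 0$ via the implicit function theorem. Your outline (uniform invertibility on $N_o$, pregluing with exponential decay, implicit function theorem to match spectral points and their derivatives, hence matching signs) is exactly this Dirac-operator template carried over to the ASD DeRham setting.
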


Next we relate the periodic spectral flow with the periodic rho invariant. Let $\varphi: \pi_1(V) \to U(1)$ be an admissible representation. Then we have a well-defined rho invariant $\tilde{\rho}_{\varphi}(V, Q) = \rho(V, Q_{\varphi}) - \rho(V, Q)$. Denote by $A^{\dagger}_{\varphi}$ the flat connection on the trivial line bundle of $V$ whose holonomy corresponds to $\varphi$. Let $A^{\dagger}_0$ be the product connection, and $A^{\dagger}_t = A^{\dagger}_0 + t(A^{\dagger}_{\varphi} - A^{\dagger}_0)$ be the path of flat connections from $A^{\dagger}_0$ to $A^{\dagger}_{\varphi}$. 

\begin{prop}\label{p4.11}
Let $\varphi$ and $(A^{\dagger}_t)$ be chosen as above, and $\delta_0> 0$ be a positive constant satisfying (\ref{e3.6}). Then for each $\delta \in (0, \delta_0)$, after choosing generic small perturbations $\pi \in \mathcal{P}$ turning $(A^{\dagger}_t)$ into a regular path with respect to both $\delta$ and $-\delta$ we have
\[
-\tilde{\rho}_{\varphi}(V, Q) = \widetilde{\Sf}(Q_{A^{\dagger}_t, \delta}(V, \pi)) + \widetilde{\Sf}(Q_{A^{\dagger}_t, -\delta}(V, \pi))
\]
\end{prop}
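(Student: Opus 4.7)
My plan is to deduce the identity from a variation-of-eta-invariant formula along the path $(A^{\dagger}_t)$, analogous to the classical derivation that the spectral flow of a path of self-adjoint Dirac operators equals half of the jump of the eta invariant plus a smooth remainder. Concretely, I would extend \autoref{d3.3} to the perturbed, connection-twisted setting by replacing $Q_z$ with $Q_{A^{\dagger}_t,\pm\delta,z}(V,\pi)$ in the heat-kernel integral
\[
\tilde\eta_{A^{\dagger}_t,\pm\delta}(V,\pi) = \frac{1}{\pi i}\int_{I(\pm\delta/2)}\Tr\bigl(df\cdot Q_{A^{\dagger}_t,\pm\delta,z}\exp(-\tau\,Q^{*}Q)\bigr)\,dz\,d\tau,
\]
which is well-defined and smooth in $t$ away from the parametrized spectral set $\Sigma^{I}_{\pm\delta}(A^{\dagger}_t,\pi)$ because only at these values does the integrand fail to be holomorphic across the contour. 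Using the local spectral-curve description of \autoref{p4.4} and the $\delta$-regularity hypothesis, I would show by a Cauchy residue calculation at each crossing $(t_j,e^{z_j})$ that the eta invariant jumps by $-2a_j$, where $a_j\in\{\pm 1\}$ is precisely the sign of the intersection contributing to $\widetilde{\Sf}$ (see \autoref{r3.7}(b)).

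Combining the smooth variation and the jumps gives, for each choice of sign,
\begin{equation}\label{eq:var}
\tilde\eta_{A^{\dagger}_1,\pm\delta}(V,\pi)-\tilde\eta_{A^{\dagger}_0,\pm\delta}(V,\pi) = -2\,\widetilde{\Sf}(Q_{A^{\dagger}_t,\pm\delta}(V,\pi))+R_{\pm},
\end{equation}
where $R_{\pm}=\int_0^{1} r_{\pm\delta}(t)\,dt$ is a smooth remainder built by Duhamel expansion out of $\dot A^{\dagger}_t=A^{\dagger}_{\varphi}-A^{\dagger}_0$, $df$, and the heat kernel of $Q^{*}Q$. A short argument shows that small generic perturbations do not affect the periodic spectral flow nor the limiting eta invariants at the endpoints, so $\tilde\eta_{A^{\dagger}_0,\pm\delta}(V,\pi)=\tilde\eta_{\pm}(V)$; moreover, by the admissibility condition (a) in \autoref{d3.6} the complex $E_{\varphi,z}$ is acyclic on the whole line $\Rea z = 0$, whence the family $\tilde\eta_{\varphi,\delta}(V)$ extends continuously through $\delta=0$ and we obtain $\tilde\eta_{A^{\dagger}_1,+\delta}(V,\pi)=\tilde\eta_{A^{\dagger}_1,-\delta}(V,\pi)=\tilde\eta_{\varphi}(V)$. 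Adding the two instances of \eqref{eq:var} and dividing by $2$ yields
\[
\tilde\eta_{\varphi}(V)-\tilde\eta(V) = -\widetilde{\Sf}(Q_{A^{\dagger}_t,\delta})-\widetilde{\Sf}(Q_{A^{\dagger}_t,-\delta})+\tfrac{1}{2}(R_{+}+R_{-}),
\]
so the proposition reduces to verifying the cancellation $R_{+}+R_{-}=0$.

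The hard part will be to establish this cancellation. My plan is to exploit the antisymmetry between the two weight contributions $-\delta/2\cdot\iota_{df}$ in $d^{*}_{\delta}$ and $-\delta/2\cdot(df\wedge-)^{+}$ in $d^{+}_{\delta}$, which are formal $L^{2}$-adjoints and swap roles under $\delta\mapsto-\delta$, together with the fact that along the chosen path of flat $U(1)$-connections on the trivial line bundle the derivative $\dot A^{\dagger}_{t}$ is a fixed closed $1$-form. After expanding the local variation density explicitly via Duhamel's principle, the terms symmetric in $\delta$ should be topological in nature (indeed already encoded in the metric-independent invariant $\tfrac{1}{2}(\tilde\eta_{+\delta}+\tilde\eta_{-\delta})$, whose constancy along the flat path can be verified independently), while the terms antisymmetric in $\delta$ cancel upon summation. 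This step requires delicate heat-kernel bookkeeping and a careful identification of which traces survive after the $\delta\leftrightarrow-\delta$ symmetrization; it is the main analytic obstacle, and once it is settled the conclusion $-\tilde\rho_{\varphi}(V)=\widetilde{\Sf}(Q_{A^{\dagger}_{t},\delta})+\widetilde{\Sf}(Q_{A^{\dagger}_{t},-\delta})$ follows immediately.
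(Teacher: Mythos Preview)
Your approach differs substantially from the paper's and contains a genuine gap at exactly the point you flag as ``the main analytic obstacle.''

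The paper does not attempt to track the smooth variation of the periodic eta invariant along the path and then cancel remainders. Instead it introduces an auxiliary end-periodic manifold $Z=M\cup\tilde V_+$, lifts the path $(A^{\dagger}_t)$ to $Z$, and proves the key identity
\[
\ind Q_{\tilde A^{\dagger}_1,\delta}(Z,\tilde\pi)-\ind Q_{\tilde A^{\dagger}_0,\delta}(Z,\tilde\pi)=\widetilde{\Sf}(Q_{A^{\dagger}_t,\delta}(V,\pi))
\]
by applying the index jump formula (\autoref{c2.14}) locally near each spectral crossing. The index theorem \autoref{t3.4} then converts the left-hand side into $-\tfrac12(\tilde\eta_{\varphi}(V)-\tilde\eta_{\pm}(V))$, because the local terms $\int_M\mathfrak a(Q)$, $\int_Y\omega$, $\int_V df\wedge\omega$ are topological and hence identical for $Q$ and $Q_\varphi$. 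Summing the $\pm\delta$ versions gives the result with no remainder to cancel.

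Your route replaces this with a direct Duhamel expansion of $\frac{d}{dt}\tilde\eta_{A^{\dagger}_t,\pm\delta}$ and a hoped-for symmetry under $\delta\mapsto-\delta$. The problem is that you have not established $R_++R_-=0$ (or, what is really true, $R_\pm=0$ individually). The heuristic you offer---that the $\delta$-antisymmetric terms cancel and the $\delta$-symmetric terms are ``topological''---is not a proof: identifying which heat-kernel traces survive after symmetrization, and showing the surviving ones integrate to zero along a path of flat connections, is precisely the content of an index theorem like \autoref{t3.4}. In effect you are trying to reprove \autoref{t3.4} by hand inside this argument. Without that, the variation formula \eqref{eq:var} only yields the proposition up to an undetermined constant.

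A secondary issue: the jump of the periodic eta at a spectral crossing is not something you can simply read off by analogy with the self-adjoint case. In the paper this is again handled by \autoref{c2.14}, which computes the index jump in terms of $-\chi(\widehat H^*_z(V))$; the $\delta$-regularity hypothesis (via \autoref{r4.2}) then forces this to be $1$ at each crossing. Your residue sketch would have to reproduce this, and the sign bookkeeping with excluded values (as in the paper's proof of the claim) is more delicate than a single Cauchy integral.

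In short, the paper's proof works because it funnels both the smooth variation and the jumps through the already-established end-periodic index machinery (\autoref{t3.4} and \autoref{c2.14}); your proposal tries to bypass that machinery and leaves the crucial cancellation unproved.
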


\begin{proof}
For fixed $\delta \in (-\delta_0, \delta_0) \backslash \{0\}$, we can choose generic perturbations $\pi \in \mathcal{P}$ which makes $(A^{\dagger}_t)$ a $\delta$-regular path appealing to \autoref{l4.3}.

Let $Z = M \cup \tilde{V}_+$ be an end-periodic manifold with its end modeled on the cyclic cover $\tilde{V}$ corresponding to $1_V$. We lift and extend the path of flat connections $A^{\dagger}_t$ to $Z$, say $\tilde{A}^{\dagger}_t$, in a way that $\tilde{A}^{\dagger}_0$ is the product connection. The function $f: V \to S^1$ is also lifted and extended to a function $\rho: Z \to \R$ as before. Taubes' criterion \cite[Lemma 4.3]{T87} implies that
\[
Q_{\tilde{A}^{\dagger}_i, \delta}(Z): L^2_k(Z, \Lambda^1_{\C}) \longrightarrow L^2_{k-1}(Z, \Lambda^0_{\C} \oplus \Lambda^+_{\C})
\]
is Fredholm for $i = 0, 1$. Given a perturbation $\pi \in \mathcal{P}$, we can equally lift and extend to one, say $\tilde{\pi}$, on $Z$. Now Sobolev multiplication implies the map $\tilde{\pi}: L^2_k\to L^2_{k-1}$ is bounded but not necessarily compact. The operator norm of $\tilde{\pi}$ is bounded by $\|\pi\|_{L^2_k} + \|\tilde{\pi}|_M\|_{L^2_k}$, which can be chosen as small as we want while allowing varying $\pi$ generically. Let's denote by $Q_{\tilde{A}^{\dagger}_i, \delta}(Z, \tilde{\pi})$ the operator obtained by perturbing the $d^+$-component of $Q_{\tilde{A}^{\dagger}_i, \delta}(Z)$ using the lift $\tilde{\pi}$ of $\pi$. 

We claim that the argument of \cite[Theorem 7.3]{MRS11} can be applied in our case to obtain
\begin{equation}\label{e4.9}
\ind Q_{\tilde{A}^{\dagger}_1, \delta}(Z, \tilde{\pi}) - \ind Q_{\tilde{A}^{\dagger}_0, \delta}(Z, \tilde{\pi}) = \widetilde{\Sf}(Q_{A^{\dagger}_t, \delta}(V, \pi)). 
\end{equation}
Let's assume this claim temporarily and continue with the proof. By choosing lifts $\tilde{\pi}$ of small operator norm, we have 
\begin{equation}
\ind Q_{\tilde{A}^{\dagger}_i, \delta}(Z, \tilde{\pi}) = \ind Q_{\tilde{A}^{\dagger}_i, \delta}(Z). 
\end{equation}
We can then apply \autoref{t3.4} to $\delta$ and $-\delta$ respectively to get
\[
-\tilde{\rho}_{\varphi}(V, Q) =\widetilde{\Sf}(Q_{A^{\dagger}_t, \delta}(V, \pi)) + \widetilde{\Sf}(Q_{A^{\dagger}_t, -\delta}(V, \pi)).
\]

Now we prove the claim. The argument is essentially local. Let $(t_0, e^{z_0}) \in \Sigma^I_{\delta}(A^{\dagger}_t, \pi)$ be a spectral point. We note that the index of $Q_{\tilde{A}^{\dagger}_t, \delta}(Z, \tilde{\pi}):L^2_k \to L^2_{k-1}$ equals the index of the differential complex:
\begin{equation}\tag{$E(\tilde{A}^{\dagger}_t, \delta, \tilde{\pi})$}
0 \longrightarrow L^2_{k+1}(Z, \Lambda^0_{\C}) \xrightarrow{-d_{t, \delta}} L^2_{k}(Z, \Lambda^1_{\C}) \xrightarrow{d^+_{t, \delta, \tilde{\pi}}} L^2_{k-1}(Z, \Lambda^+_{\C}) \longrightarrow 0,
\end{equation}
where we have omitted $\tilde{A}^{\dagger}$ in the notation of the differential operators. We choose $\delta' > 0$ so that the weighted differential complex $E_{\delta'}(\tilde{A}^{\dagger}_t, \delta, \tilde{\pi})$ is Fredholm for all $t \in (t_0-\epsilon, t_0+\epsilon)$ for some $\epsilon$ small enough. Then the index jump 
\[
\ind Q_{\tilde{A}^{\dagger}_{t_0+\epsilon}, \delta}(Z, \tilde{\pi}) - \ind Q_{\tilde{A}^{\dagger}_{t_0-\epsilon}, \delta}(Z, \tilde{\pi})
\]
is given by the difference
\begin{equation}\label{e4.10}
\left(\ind E(\tilde{A}^{\dagger}_{t_0+\epsilon}, \delta, \tilde{\pi}) - \ind_{\delta'} E(\tilde{A}^{\dagger}_{t_0+\epsilon}, \delta, \tilde{\pi}) \right) - \left(\ind E(\tilde{A}^{\dagger}_{t_0 - \epsilon}, \delta, \tilde{\pi}) - \ind_{\delta'} E(\tilde{A}^{\dagger}_{t_0 - \epsilon}, \delta, \tilde{\pi}) \right).
\end{equation}
The index jump formula \autoref{c2.14} tells us 
\begin{equation}\label{e4.11}
\ind E(\tilde{A}^{\dagger}_{t_0+\epsilon}, \delta, \tilde{\pi}) - \ind_{\delta'} E(\tilde{A}^{\dagger}_{t_0+\epsilon}, \delta, \tilde{\pi}) = \sum_{z} -\chi(\widehat{H}^*_z(V)),
\end{equation}
where the sum ranges over $z \in (0, \delta' /2) \times i(0, 2\pi)$ and $\widehat{H}^*_z(V)$ is defined with respect to the elliptic complex $E_{A^{\dagger}_{t_0+\epsilon}, \delta, z}(V, \pi)$. As pointed out in \autoref{r4.2}, the $\delta$-regularity of the path $A^{\dagger}_t$ implies that $-\chi(\widehat{H}^*_z(V)) =1$ for each spectral point $z$ and vanishes otherwise. The appearance of $\Rea z \in (0, \delta'/2)$ in the spectral curve on $t \in (t_0, t_0+\epsilon)$ enables us to take excluded values near $t_0$ as $\lambda_l \in (1, e^{\delta'/2})$ and $\lambda_{l+1} \in (e^{-\delta'/2}, 1)$. In particular $\lambda_l > \lambda_{l+1}$, and the contribution to the periodic spectral flow near $t_0$ is the number of the spectral points with $|e^z| \in (\lambda_{l+1}, \lambda_l)$, which matches up with the index jump (\ref{e4.11}). Similarly we apply the index jump formula to deal with the second term in (\ref{e4.10}). The only difference is now we are looking at $t \in (t_0-\epsilon, t_0)$ so the excluded values near $t_0$ get reversed, which would cause a minus sign in the spectral flow counting. This matches with the minus sign of the second term in (\ref{e4.10}). So we get the claimed formula (\ref{e4.9}).
\end{proof}

\begin{rem}
A priori, the choice of perturbations $\pi$ depends on the choice of $\delta$. However, if we only allow $\delta$ to vary within a compact subset of $(0, \delta_1)$, the perturbations $\pi$ can be chosen to work for all $\delta$ in this range using the argument in \autoref{l4.3}. Thus the corresponding spectral flow does not depend on the choices of small $\pi$ and $\delta$ in this range. When $\delta = 0$, defining periodic spectral flow requires perturbing the $-d^*$-component of the ASD DeRham operator $Q$ due to the appearance of the product connection. After adding such perturbations, the periodic spectral flow might depend on the choice of perturbations. 
\end{rem}

\section{\large \bf Singular Furuta-Ohta Invariants}\label{s5}

In this section, we first recall the definition of the singular Furuta-Ohta invariant introduced by Echeverria \cite{E19} as the count of singular instantons over homology $S^1 \times S^3$. Then we derive an equivalent formulation using the structure theorem of the moduli space of degree zero instantons over a homology $D^2 \times T^2$ deduced by the author \cite{M2}. 

\subsection{\em The Definition}\label{ss4.1} \hfill
 
\vspace{3mm}

We briefly recall the set-up of singular instantons in \cite{KM93}. Let $X$ be a smooth connected closed $4$-manifold with an embedded surface $\Sigma$. Let $E$ be an $SU(2)$-bundle over $X$ with a reduction $E|_{\nu(\Sigma)} = L \oplus L^*$ in a neighborhood $\nu(\Sigma)$ of the surface, where $L \to \nu(\Sigma)$ is a line bundle. Such a bundle is characterized by two topological quantities:
\begin{equation}
m = c_2(E)[X] \text{ and } l = -c_1(L)[\Sigma],
\end{equation}
which are referred to as the instanton number and monopole number respectively. We can regard $\nu(\Sigma)$ as a disk bundle over $\Sigma$. Let $\eta \in \Omega^1(\nu(\Sigma) \backslash \Sigma; \R)$ be the angular form on the punctured disk bundle. Pick a smooth unitary connection $A_0$ on $E$ as a reference connection. To a holonomy parameter $\alpha \in (0, 1)$, we assign a model connection:
\begin{equation}
A^{\alpha}_0 = A_0 + i\beta 
\begin{pmatrix}
\alpha & 0 \\
0 & -\alpha
\end{pmatrix}
\eta,
\end{equation}
where $\beta$ is a cut-off function on the radial direction of $\nu(\Sigma)$ that equals $1$ near $\Sigma$. We think of $A^{\alpha}_0$ as a unitary connection on $E|_{X \backslash \Sigma}$. 

So far the notion of metric has not entered the picture. Let $\nu$ be a positive integer. We denote by $g^{\nu}$ a metric on $X$ that is smooth on $X \backslash \Sigma$ and singular along $\Sigma$ with cone-angle $2\pi/\nu$. Alternatively we can think of $g^{\nu}$ as an orbifold metric after turning $X$ into an orbifold whose singular set is $\Sigma$ with isotropy group $\Z/\nu$. Let's write $\g_E$ for the adjoint bundle associated to $E$. We denote by $\check{L}^2_k(X, \Lambda^* \otimes {\g_E})$ the space of $\g_E$-valued forms completed by the $L^2_k$ Sobolev norm associated to the Levi-Civita connection of $g^{\nu}$ and $A^{\alpha}_0$. The key analytic input in the theory of singular instantons is the following result proved by Kronheimer-Mrowka \cite{KM93}.

\begin{prop}[\label{p5.1}{\cite[Proposition 4.17]{KM93}}]
Given any compact interval $I \subset (0, 1/2)$ and integer $k_0 > 0$, there exists $\nu_0 > 0$ such that for all $\nu \geq \nu_0$ and $k \leq k_0$ the operator 
\[
Q_{A^{\alpha}_0}= - d^*_{A^{\alpha}_0} \oplus d^+_{A^{\alpha}_0}: \check{L}^2_k(X, \Lambda^1 \otimes \g_E) \longrightarrow \check{L}^2_{k-1}(X, (\Lambda^0 \oplus \Lambda^+)\otimes \g_E)
\]
is Fredholm, $L^2$-self-adjoint, and satisfies the Fredholm alternative. 
\end{prop}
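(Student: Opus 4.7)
My plan is to reduce the global claim to a local model problem near $\Sigma$, analyze the resulting cone operator, and piece things back together. Away from $\Sigma$, the operator $Q_{A^{\alpha}_0}$ is a standard elliptic first-order operator, so the only delicate issue is the behavior in the tubular neighborhood $\nu(\Sigma)$, where both the metric $g^{\nu}$ has cone singularity $2\pi/\nu$ and the connection $A^{\alpha}_0$ has the asymptotic prescribed logarithmic singularity $i\alpha\, d\theta\cdot H$ with $H=\operatorname{diag}(1,-1)$. Writing $\g_E\big|_{\nu(\Sigma)}=i\R\oplus\C$ using the reduction $E=L\oplus L^*$, the operator decomposes into a diagonal (abelian) part on $i\R$ where the connection is smooth, and an off-diagonal part on $\C$ which carries the actual holonomy $e^{-2\pi i\alpha}\cdot 2$ around meridians.

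First I would localize: choose a cutoff $\chi$ supported on $\nu(\Sigma)$ and equal to $1$ on a smaller neighborhood, write $Q_{A^\alpha_0} = \chi Q_{A^\alpha_0}\chi + (1-\chi)Q_{A^\alpha_0}(1-\chi) + [\text{cross terms}]$, and treat the complement of $\Sigma$ by standard elliptic theory for the smooth bundle over a compact manifold with boundary. On $\nu(\Sigma)\cong D^2\times\Sigma$ equipped with the orbifold metric $ds^2+(s/\nu)^2 d\theta^2+g_\Sigma$, I would pass to the radial coordinate $r=s^{1/\nu}$ (or equivalently work on the cyclic $\nu$-fold cover branched along $\Sigma$) to convert $g^\nu$ into a smooth orbifold metric and turn $A^\alpha_0$ into a connection with the modified holonomy parameter $\alpha/\nu$; this is the standard trick for realizing $(X,\Sigma,g^\nu)$ as a smooth $\Z/\nu$-orbifold. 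After the cover, the model operator becomes a $\Z/\nu$-equivariant ASD DeRham operator twisted by a smooth connection with very small holonomy $\alpha/\nu$, to which the usual Kato-Rellich perturbation of the flat model on $D^2\times\Sigma$ applies.

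The core analytic step is the indicial root computation for the $\C$-valued part: separating variables on the cone $\R_+\times S^1\times\Sigma$, the radial ODE has indicial roots of the form $k\pm\alpha\nu^{-1}$ for $k\in\Z$ (coupled with the $\Sigma$-Laplacian's spectrum), and Fredholmness of $Q_{A^\alpha_0}$ on $\check L^2_k$ holds iff none of these roots coincides with the critical weight built into the $\check L^2_k$ norm. For $\alpha\in I\Subset(0,1/2)$ and $\nu\ge\nu_0$ large enough, every indicial root is bounded away from the critical value uniformly in $\alpha\in I$, which is precisely the hypothesis needed for the weighted elliptic package (the Lockhart--McOwen / Melrose b-calculus theorem on conic manifolds) to yield the Fredholm conclusion on $\check L^2_k$ for all $k\le k_0$. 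I expect uniform control of the indicial roots across $I$ and $\nu\ge\nu_0$ to be the main obstacle, since one needs the spectrum of the transverse operator to stay off a specific set of real values as $\alpha$ varies.

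Self-adjointness will follow once Fredholmness is established: integration by parts gives $\langle Q_{A^\alpha_0}a,(b,\omega)\rangle = \langle a, Q_{A^\alpha_0}^{*}(b,\omega)\rangle$ modulo boundary terms over spheres $\{s=\epsilon\}\times\Sigma$, and the weight built into $\check L^2_k$ (chosen so that the indicial roots lie to one side) forces these boundary terms to vanish as $\epsilon\to 0$; hence $Q_{A^\alpha_0}^{*}=Q_{A^\alpha_0}$ on the completed domain. The Fredholm alternative $\operatorname{coker}Q_{A^\alpha_0}\cong \ker Q_{A^\alpha_0}$ is then immediate from self-adjointness together with the Fredholm property just established.
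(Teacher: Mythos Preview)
The paper does not supply its own proof of this proposition: it is stated as a citation of \cite[Proposition~4.17]{KM93} and used as a black box. So there is nothing in the present paper to compare your argument against.

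That said, your outline is a reasonable sketch of the Kronheimer--Mrowka argument and captures the main ingredients: localization to a cone model near $\Sigma$, separation of the adjoint bundle into the diagonal $i\R$ piece (where the connection is smooth) and the off-diagonal $\C$ piece (which carries holonomy with parameter $2\alpha$ around meridians), an indicial-root analysis governing Fredholmness on the weighted $\check L^2_k$ spaces, and the observation that for $\alpha$ in a compact interval of $(0,1/2)$ and $\nu$ large the indicial roots stay uniformly away from the critical weight. A couple of small points: the off-diagonal holonomy should read $e^{\pm 4\pi i\alpha}$ (weight $\pm 2$), not ``$e^{-2\pi i\alpha}\cdot 2$''; and the branched-cover substitution you describe is one way to regularize, but Kronheimer--Mrowka work directly with the cone metric and prove a G\aa rding-type inequality for the model operator, which is what actually yields the uniform control over $k\le k_0$ and $\alpha\in I$. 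Your weighted b-calculus viewpoint is equivalent in spirit but would need the same uniform estimates to close.
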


With \autoref{p5.1} in hand, we define the configuration space of singular connections to be 
\[
\A^{\alpha}_k(X, \Sigma, E):=A^{\alpha}_0 + \check{L}^2_k(X, \Lambda^1\otimes \g_E). 
\]
Denote by $\G(E)$ the $\check{L}^2_{k+1}$-gauge group. Then we have the moduli space of $\alpha$-singular instantons:
\[
\M^{\alpha}_{m, l}(X, \Sigma):= \left\{ A \in \A^{\alpha}_k(X, \Sigma, E): F^+_A = 0 \right\} / \G(E). 
\]
A priori, the moduli space depends on the cone-angle parameter $\nu$. We say a singular connection $A \in \A^{\alpha}(X, \Sigma, E)$ is reducible if the splitting $L \oplus L^*$ over $\nu(\Sigma)$ can be extended to all $X$ and respects $A$. Otherwise the $A$ is called irreducible. 

Now we restrict our attention to the following particular case. The $4$-manifold $X$ is a homology $S^1 \times S^3$, and the surface is an essentially embedded torus $\mathcal{T}$. We let $E$ be the trivial $\C^2$-bundle over $X$ with a trivial reduction near $\mathcal{T}$, in which case both the instanton and monopole numbers vanish. Due to the Chern-Weil formula \cite[Proposition 5.7]{KM93}, an anti-self-dual $\alpha$-singular connection $A$ on $E$ is flat, thus corresponds to an element in the character variety 
\begin{equation}
\chi^{\alpha}(X, \mathcal{T}):= \left\{ \varphi: \pi_1(X \backslash \Sigma) \to SU(2): \varphi(\mu_{\mathcal{T}}) \sim e^{2\pi i \alpha} \right\} / \Ad,
\end{equation}
where $\mu_{\mathcal{T}}$ is a meridian of $\mathcal{T}$. In fact, this assignment is a one-to-one correspondence between $\M^{\alpha}(X, \mathcal{T})$ and $\chi^{\alpha}(X, \mathcal{T})$. We shall denote $\chi^{\alpha, \Red}(X, \mathcal{T})$ for the subspace of reducible representations. We identify $U(1)$ as a standard maximal torus of $SU(2)$ by 
\begin{equation}\label{e5.4}
e^{i \theta} \longmapsto 
\begin{pmatrix}
e^{i \theta} & 0 \\
0 & e^{-i\theta}
\end{pmatrix}. 
\end{equation}
To define the singular Furuta-Ohta invariant, we first introduce the following non-degeneracy notion on $\chi^{\alpha}(X, \mathcal{T})$. 

\begin{dfn}\label{d5.2}
We say a reducible $\alpha$-representation $[\varphi] \in \chi^{\alpha, \Red}(X, \mathcal{T})$ is non-degenerate if $H^1(X \backslash \mathcal{T}; \C_{\varphi^2}) = 0$. We say $\chi^{\alpha, \Red}(X, \mathcal{T})$ is non-degenerate if every reducible $\alpha$-representation is non-degenerate. 
\end{dfn}

\begin{rem}
As pointed out by Echeverria \cite{E19}, the non-degeneracy condition is equivalent to the non-vanishing of the Alexander polynomial of $\mathcal{T}$ evaluating on the corresponding representation. 
\end{rem}

The singular Furuta-Ohta invariant introduced by Echeverria \cite{E19} is defined as follows. Let $(X, \mathcal{T})$ be a pair as above. For each $\alpha \in (0, 1/2)$ with $\chi^{\alpha, \Red}(X, \mathcal{T})$ non-degenerate, one can perturb the moduli space so that the irreducible moduli space $\M^{\alpha, *}_{\sigma}(X, \mathcal{T})$ is a compact $0$-manifold, where $\sigma$ stands for a generic perturbation. A choice of a generator in $H^1(X; \Z)$ provides us a homology orientation, which can be used to orient $\M^{\alpha,*}_{\sigma}(X, \mathcal{T})$. Then the singular Furuta-Ohta invariant is defined to be the signed count:
\begin{equation}
\lambda_{FO}(X, \mathcal{T}, \alpha):= \# \M^{\alpha, *}_{\sigma}(X, \mathcal{T}),
\end{equation}
which is independent of the choice of generic perturbations. Note that for each $\alpha \in (0, 1/2)$ satisfying the non-degeneracy condition we need an orbifold metric $g^{\nu}$ to apply the Fredholm package of instanton theory. The moduli space $\M^{\alpha}(X, \mathcal{T})$ thus depends on the choice of the cone-angle $2\pi/\nu$. We will show, later in this section, that the singular Furuta-Ohta invariant is independent of such a choice, which provides a positive answer to this dependence question raised by Echeverria \cite{E19}.  

\subsection{\em Instantons over Manifolds with Cylindrical End}\label{ss4.2} \hfill
 
\vspace{3mm}

We review the structure theorem of degree zero instantons over a homology $D^2 \times T^2$ with a cylindrical end modeled on $[0, \infty) \times T^3$ deduced by the author in \cite{M2}. 

Let $M_o = M \cup [0, \infty) \times T^3$ be a $4$-manifold with cylindrical end, where $M$ is a compact $4$-manifold with the same integral homology as that of $D^2 \times T^2$. We fix a metric $g$ on $M_o$ of the form $g|_{[0, \infty) \times T^3} = dt^2 + h$, where $h$ is a flat metric on $T^3$. We let $E$ be the trivial $\C^2$-bundle over $M_o$. Let $\A_k$ be the space of $L^2_{k, loc}$ $SU(2)$-connections on $E$, and $\G_{k+1}$ the space of $L^2_{k+1, loc}$ gauge transformations. The action and energy of a connection $A \in \A_k$ are defined respectively to be 
\begin{equation}
\kappa(A):= \frac{1}{8\pi^2} \int_{M_o} \tr(F_A \wedge F_A) \qquad \mathcal{E}(A):= \int_{M_o} |F_A|^2. 
\end{equation}
The unperturbed moduli space of degree zero instantons over $Z$ is given by 
\begin{equation}
\M(M_o):= \{A \in \A_k: F^+_A = 0, \; \kappa(A) = 0, \; \mathcal{E}(A) < \infty\}/\G_{k+1}. 
\end{equation}
Let $\mu > 0$ be a weight. As in \cite[Subsection 2.2]{M2}, one can consider a Banach space $\mathscr{P}_{\mu}(M_o)$ of holonomy perturbations on $\A_k$ which stand as $\G_{k+1}$-equivariant maps
\begin{equation}
\sigma: \A_k \longrightarrow L^2_{k, \mu}(Z, \Lambda^+_{\mathfrak{su}(2)}). 
\end{equation}
From its construction, we see that the differential $D\sigma|_A \in \mathcal{P}_{\mu}$, the perturbation space considered in (\ref{e4.8}), for each $A \in \A_k$. The $\sigma$-perturbed moduli space is denoted by $\M_{\sigma}(M_o)$. Let $\chi(T^3):= \Hom(\pi_1(T^3), SU(2))/\Ad$ be the $SU(2)$-character variety of the $3$-torus. In \cite[Theorem 1.7]{M2} the author deduced an asymptotic map 
\begin{equation}\label{e4.8}
\partial_+: \M_{\sigma}(M_o) \longrightarrow \chi(T^3)
\end{equation}
by showing the limit $[A|_{\{t\} \times T^3}]$ exists as $t \to \infty$ for all $[A] \in \M_{\sigma}(M_o)$ and $\sigma \in \mathscr{P}_{\mu}(M_o)$. 

Inspired by the treatment in \cite{H97}, it would be easier to work with the double cover of $\chi(T^3)$ for our purpose. We let $\tilde{\chi}(T^3):= \Hom(\pi_1(T^3), U(1))$ the $U(1)$-character variety of $T^3$, which is a double cover of $\chi(T^3)$ branched along the eight central representations. Then $\tilde{\chi}(T^3)$ is equivalent to the space of flat $U(1)$-connections on the trivial line bundle of $T^3$ modulo $U(1)$-gauge transformations. We note that the identification (\ref{e5.4}) of $U(1)$ in $SU(2)$ gives us an identification $i\R \oplus \C \simeq \mathfrak{su}(2)$ by 
\begin{equation}\label{e5.10}
(v, z) \longmapsto 
\begin{pmatrix}
v & z \\
-\bar{z} & -v
\end{pmatrix}. 
\end{equation}
We consider the restricted configuration space 
\begin{equation}
\tilde{\A}_k:=\left \{ A \in \A_k : (A|_{\{t\} \times T^3} - d)_{\C} \to 0 \text{ in } L^2_{k-1/2}(Y) \right\},
\end{equation}
where $(A|_{\{t \times Y\}} - d)_{\C}$ means the projection to the $\C$ component with respect to the decomposition (\ref{e5.10}). Then $\tilde{A}_k$ consists of connections that are asymptotically abelian. We also consider the restricted gauge group
\begin{equation}
\tilde{\G}_{k+1}:=\left\{ u \in \G_{k+1} : (du_t \cdot u^{-1}_t)_{\C}  \to 0  \text{ in } L^2_{k-1/2}(Y) \right\},
\end{equation}
where $u_t = u|_{\{t\} \times Y }$. Then for each holonomy perturbation, we get a perturbed moduli space 
\begin{equation}
\widetilde{\M}_{\sigma}(M_o) :=\{ A \in \tilde{\A}_k: F^+_A = \sigma(A), \; \kappa(A) = 0, \; \mathcal{E}(A) < \infty\} / \tilde{\G}_{k+1}. 
\end{equation}
The corresponding asymptotic map is denoted by $\tilde{\partial}_+: \widetilde{\M}_{\sigma}(M_o) \longrightarrow \tilde{\chi}(T^3)$. 


\begin{lem}\label{l5.4}
$\widetilde{\M}_{\sigma}(M_o)$ is a double cover of $\M_{\sigma}(M_o)$ branched along central instantons. 
\end{lem}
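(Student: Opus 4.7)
The plan is to construct an explicit forgetful map $p: \widetilde{\M}_{\sigma}(M_o) \to \M_{\sigma}(M_o)$ induced by the inclusions $\tilde{\A}_k \hookrightarrow \A_k$ and $\tilde{\G}_{k+1} \hookrightarrow \G_{k+1}$, and analyze its fibers. The guiding principle is that the $U(1)$-character variety is the branched double cover of the $SU(2)$-character variety by the Weyl group $W(SU(2)) = \Z/2$, acting by inversion $e^{i\theta}\mapsto e^{-i\theta}$, with branch locus the eight central representations (those factoring through $\pm 1 \subset U(1)$). The asymptotic map $\tilde{\partial}_+$ is the $\widetilde{\M}_{\sigma}(M_o)$-level refinement of $\partial_+$ with respect to this covering.

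For surjectivity, I would start with $[A]\in \M_{\sigma}(M_o)$ and invoke the existence of the asymptotic limit $\partial_+[A]\in \chi(T^3)$ from \cite[Theorem 1.7]{M2}, which is a flat $SU(2)$-connection on $T^3$ conjugate to a flat $U(1)$-connection. Using a smoothly cut-off, time-dependent $SU(2)$-valued gauge transformation that asymptotes to this conjugator, one produces a representative whose $\C$-component decays as $t\to\infty$ in $L^2_{k-1/2}$, i.e.\ a point of $\tilde{\A}_k$. The perturbation equation $F^+_A = \sigma(A)$ is preserved because $\sigma$ is $\G_{k+1}$-equivariant and the resulting connection still has zero energy and zero action.

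For the fiber count, suppose $A_1, A_2 \in \tilde{\A}_k$ satisfy $u\cdot A_1 = A_2$ for some $u\in \G_{k+1}$ and write $u_t := u|_{\{t\}\times T^3}$. The asymptotic vanishing of the $\C$-components of $A_1$ and $A_2$ forces the adjoint action of $u_t$ to asymptotically preserve the abelian reduction corresponding to $U(1)\subset SU(2)$, hence $u_t$ must asymptotically take values in the normalizer $N(U(1))\subset SU(2)$. Its two connected components are $U(1)$ and $U(1)\cdot w$, where
\[
w = \begin{pmatrix} 0 & 1 \\ -1 & 0 \end{pmatrix}
\]
is a Weyl-group representative. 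If $u_t$ asymptotes into $U(1)$ then $(du_t\cdot u_t^{-1})_{\C}\to 0$, so $u\in\tilde{\G}_{k+1}$ and $[A_1],[A_2]$ define the same class in $\widetilde{\M}_{\sigma}(M_o)$; if $u_t$ asymptotes into $U(1)\cdot w$, then $[A_1]\neq[A_2]$ in $\widetilde{\M}_{\sigma}(M_o)$, and their asymptotic limits in $\tilde{\chi}(T^3)$ differ by the inversion $e^{i\theta}\mapsto e^{-i\theta}$. Combining with the explicit description of the covering $\tilde{\chi}(T^3)\to\chi(T^3)$ then gives $\#p^{-1}([A]) = 2$ when $\partial_+[A]$ is non-central and $\#p^{-1}([A]) = 1$ when it is central, which is precisely the branched double-cover statement. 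Covering smoothness away from the branch locus follows because on the non-central locus the $\Z/2$-action is free and compatible with the local model of $\widetilde{\M}_{\sigma}$ from the slice theorem.

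The main obstacle is the asymptotic gauge-theoretic analysis in the fiber computation: one must justify that $u_t$ itself (not merely its derivative) asymptotically lies in $N(U(1))$, starting only from the $L^2_{k-1/2}$ condition on the $\C$-components of $u\cdot A_1$ and $A_2$. This reduces to the centralizer computation in $SU(2)$ together with a bootstrap argument using the explicit decomposition (\ref{e5.10}) and Sobolev multiplication on $T^3$, and the care needed is to ensure that all limits are taken in the correct completed spaces and that constant-in-time modifications of $u_t$ at infinity can be absorbed without affecting $\tilde{\G}_{k+1}$-equivalence.
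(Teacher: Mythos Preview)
Your strategy coincides with the paper's: both construct the surjection $p$ by asymptotically gauging into the diagonal $U(1)$ and then analyze the fibers through the normalizer $N(U(1)) = U(1) \sqcup jU(1)$ (the paper writes $Pin(2)$). The paper's proof is more compressed but structurally identical up to the final step.

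The gap is in your identification of the branch locus. You conclude that $\#p^{-1}([A])=1$ precisely when $\partial_+[A]$ is central, by reading off the branch locus of $\tilde{\chi}(T^3)\to\chi(T^3)$ through $\tilde{\partial}_+$. But equal asymptotic values do not force $[A_1]=[A_2]$ in $\widetilde{\M}_{\sigma}(M_o)$: the asymptotic map need not separate points, and a non-central instanton can in principle have central asymptotic limit (the lemma is stated for arbitrary $\sigma$, so you cannot invoke the genericity statement \autoref{t5.5}(e)). The assertion to be proved says the branching occurs along \emph{central instantons}, i.e.\ those with stabilizer all of $SU(2)$; this is a condition on $[A]$ itself, not on $\partial_+[A]$. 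Concretely, if $[A]$ is irreducible with central asymptotic value, your argument would give $\#p^{-1}([A])=1$, whereas in fact $[j\cdot A]\neq[A]$ in $\widetilde{\M}_{\sigma}(M_o)$: any $v\in\tilde{\G}_{k+1}$ with $v\cdot A=j\cdot A$ would force $j^{-1}v\in\Stab(A)=\{\pm1\}$, hence $v=\pm j$, which does not asymptote into $U(1)$. The paper's argument at this step is exactly this stabilizer computation: for non-central $[A]$ one has $\Stab(A)\in\{\{\pm1\},U(1)\}$, and in either case $j^{-1}v\in\Stab(A)$ with $v\in\tilde{\G}_{k+1}$ is impossible; for central $[A]$ one has $j\cdot A=A$ on the nose. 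Replace your asymptotic-value argument by this stabilizer argument.
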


\begin{proof}
Since $\chi(T^3)$ consists of abelian flat connections, each instanton $[A] \in \M_{\sigma}(M_o)$ admits representatives in $\tilde{\A}_k$. Since $\tilde{\G}_{k+1}$ is a subgroup of $\G_{k+1}$, we get a surjective map $p: \widetilde{\M}_{\sigma}(M_o) \to \M_{\sigma}(M_o)$.

Let's denote by $\G'_{k+1} \subset \G_{k+1}$ the space of gauge transformations that preserve $\tilde{A}_k$. We claim that $\tilde{\G}_{k+1}$ is an index two subgroup of $\G'_{k+1}$. Note that elements in $\tilde{\G}_{k+1}$ take value in $U(1)$ asymptotically along the cylindrical end. To preserve $\tilde{A}_k$,  elements in $\G'_{k+1}$ take value in the normalizer of $U(1)$ asymptotically. The normalizer of $U(1)$ is $Pin(2) = U(1) \cup j\cdot U(1) \subset SU(2)$. Thus for each element in $\G'_{k+1}$ takes the form of either $u$ or $j\cdot u$ for some $u \in \tilde{G}_{k+1}$. 

When $[A] \in \M_{\sigma}(M_o)$ is non-central, the stabilizer of $A$ is either $\Z/2$ or $U(1)$. Thus $[j \cdot A] \neq [A]$ in $\widetilde{\M}_{\sigma}(M_o)$. When $[A] \in \M_{\sigma}(M_o)$ is central, $j \cdot A = A$. This finishes the proof. 
\end{proof}

With the help of \autoref{l5.4}, the structural theorem for $\M_{\sigma}(M_o)$ in \cite{M2} can be adapted to the double cover $\widetilde{\M}_{\sigma}(M_o)$ directly.

\begin{thm}[\label{t5.5}{\cite[Theorem 1.9]{M2}}]
With respect to generic small holonomy perturbations $\sigma \in \mathscr{P}_{\mu}(M_o)$, the moduli space $\widetilde{\M}_{\sigma}(M_o)$ is a compact smooth oriented stratified space of the following structure.
\begin{enumerate}[label=(\alph*)]
\item The reducible locus $\widetilde{\M}^{\Red}_{\sigma}(M_o)$ is diffeomorphic to a $2$-torus. 
\item The irreducible locus $\widetilde{\M}^*_{\sigma}(M_o)$ is a smooth oriented $1$-manifold of finitely many components, each of which is either diffeomorphic to $S^1$ or $(0, 1)$. 
\item The ends of the arcs in $\widetilde{\M}^*_{\sigma}(M_o)$ lie in $\widetilde{\M}^{\Red}_{\sigma}(M_o)$ but stay away from the central instantons. Near each end $[A] \in \widetilde{\M}^{\Red}_{\sigma}(M_o)$, the moduli space $\widetilde{\M}_{\sigma}(M_o)$ is modeled on a neighborhood of $0$ in $\mathfrak{o}^{-1}(0) \subset \R^2 \oplus \R_+$, where 
\[
\begin{split}
\mathfrak{o}: \R^2 \oplus \R_+ \longrightarrow \R \\
(x_1, x_2, r) & \longmapsto (x_1 + i x_2) \cdot r
\end{split}
\]
is the Kuranishi obstruction map. 
\item The asymptotic map $\tilde{\partial}_+: \widetilde{\M}^*_{\sigma}(M_o) \to \tilde{\chi}(T^3)$ is $C^2$ transverse to a given submanifold. 
\item The image of the irreducible locus $\widetilde{\M}^*_{\sigma}(M_o)$ under the asymptotic map $\tilde{\partial}_+$ misses central flat connections in $\tilde{\chi}(T^3)$. 
\end{enumerate}
\end{thm}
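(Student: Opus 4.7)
The plan is to derive the structure of $\widetilde{\M}_\sigma(M_o)$ from the analogous statement in \cite[Theorem 1.9]{M2}, transferring information along the branched double cover $p: \widetilde{\M}_\sigma(M_o) \to \M_\sigma(M_o)$ established in \autoref{l5.4}. Away from the central instantons, $p$ is an honest local diffeomorphism, so any smooth oriented stratified structure, local Kuranishi model, or transversality statement on the base pulls back to the cover without change.

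First I would handle (b), (c), and (d), which all concern behavior away from (or only meeting) central reducibles. The irreducible locus $\widetilde{\M}^*_\sigma(M_o) = p^{-1}(\M^*_\sigma(M_o))$ is an unbranched double cover of a smooth oriented $1$-manifold, hence itself a disjoint union of circles and arcs. For (c), the ends of irreducible arcs in \cite[Theorem 1.9]{M2} lie in non-central reducibles, where $p$ is a local diffeomorphism, so the Kuranishi model with obstruction $\mathfrak{o}(x_1, x_2, r) = (x_1 + ix_2) r$ pulls back unchanged. For (d), the map $\tilde{\chi}(T^3) \to \chi(T^3)$ is itself a branched double cover that restricts to a local diffeomorphism away from central $SU(2)$ representations, and $\tilde{\partial}_+$ covers $\partial_+$, so the $C^2$ transversality statement descends from the base.

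Next, for (a), I would compute the reducible locus directly. An element of $\widetilde{\M}^{\Red}_\sigma(M_o)$ corresponds to a flat $U(1)$-connection on the trivial line bundle over $M_o$ modulo $\tilde{\G}_{k+1}$. Since $M$ has the integral homology of $D^2 \times T^2$, the space of such connections is the Pontryagin dual of $H_1(M;\Z) \cong \Z^2$, namely a $2$-torus, and small holonomy perturbations preserve this parametrization. Under $p$, this $T^2$ maps $2:1$ onto the pillowcase $T^2/\{\pm 1\}$ from \cite[Theorem 1.9]{M2}, branched at the four $\Z/2$-valued characters, which are exactly the central instantons; this recovers the claimed $T^2$. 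Property (e) then follows from the corresponding statement in \cite[Theorem 1.9]{M2}: the base image avoids central flat connections in $\chi(T^3)$, and since $\tilde{\partial}_+$ lifts $\partial_+$ along $\tilde{\chi}(T^3) \to \chi(T^3)$, the cover image in $\tilde{\chi}(T^3)$ avoids the preimages of the central stratum.

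The main subtlety I expect is keeping track of orientations: the homology orientation used to define $\lambda_{FO}$ is inherited from the generator of $H^1(X;\Z)$, and I need to verify that it transfers consistently through $p$ on each component. Since $p$ is unbranched precisely where the signed count is taken (irreducible arcs and their non-central reducible endpoints), this reduces to a local comparison of determinant line bundles for the deformation operators of the base and the cover. This should follow from the index-two relation $\tilde{\G}_{k+1} \subset \G'_{k+1}$ identified in the proof of \autoref{l5.4}, together with the observation that the extra $\mathrm{Pin}(2)$-component acts by an involution whose fixed set is precisely the central locus we are excising; the bookkeeping is somewhat delicate but not substantively difficult.
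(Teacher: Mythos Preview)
Your proposal is correct and follows exactly the route the paper indicates: the paper does not give a standalone proof of \autoref{t5.5} but simply cites \cite[Theorem 1.9]{M2} for the structure of $\M_\sigma(M_o)$ and remarks that, with the help of \autoref{l5.4}, the result adapts directly to the double cover $\widetilde{\M}_\sigma(M_o)$. Your write-up is precisely this adaptation spelled out, and your orientation concern is addressed in the paper immediately after the theorem by invoking \cite[Section 9]{H94} and noting that the involution on both $\widetilde{\M}_\sigma(M_o)$ and $\tilde{\chi}(T^3)$ is orientation-reversing.
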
 

The end points $[A] \in \widetilde{\M}_{\sigma}^{\Red}(M_o)$ appeared in (c) of \autoref{t5.5} is referred to as bifurcation points. In the following, we shall see that these points are characterized by the cohomology of the deformation complex. The deformation complex at $[A]$ is given by 
\begin{equation}\tag{$E^{SU(2)}_{A, \mu, \sigma}$}\label{ems}
 \hat{L}^2_{k+1, \mu}(M_o, \Lambda^0_{\mathfrak{su}(2)}) \xrightarrow{-d_{A}} \hat{L}^2_{k, \mu}(M_o, \Lambda^1_{\mathfrak{su}(2)}) \xrightarrow{d^+_{A, \sigma}} L^2_{k-1, \mu}(M_o, \Lambda^+_{\mathfrak{su}(2)}),
\end{equation}
where $\hat{L}^2_{k, \mu}(M_o, \Lambda^0_{\mathfrak{su}(2)})$ means $\mathfrak{su}(2)$-valued functions on $M_o$ that are asymptotically to constant functions with respect to the norm $L^2_{k+1, \mu}$, and similarly for $\hat{L}^2_{k, \mu}(M_o, \Lambda^1_{\mathfrak{su}(2)})$. Since $[A]$ is reducible, we write $A = A_{\mathfrak{l}} \oplus A_{\mathfrak{l}}^*$ with respect to the splitting $E= \underline{\C} \oplus \underline{\C}$ , and $A^{\dagger} = A^{\otimes 2}_{\mathfrak{l}}$. Then $E^{SU(2)}_{A, \mu, \sigma}$ splits into the direct sum of 
\begin{equation}\tag{$E^{U(1)}_{\mu, \sigma}$}\label{emu}
\hat{L}^2_{k+1, \mu}(M_o, \Lambda^0_{i\R}) \xrightarrow{-d} \hat{L}^2_{k, \mu}(M_o, \Lambda^1_{i\R}) \xrightarrow{d^+_{\sigma}} L^2_{k-1, \mu}(M_o, \Lambda^+_{i\R}) 
\end{equation}
and 
\begin{equation}\tag{$E_{A^{\dagger}, \mu}$}\label{emc}
L^2_{k+1, \mu}(M_o, \Lambda^0_{\C}) \xrightarrow{-d_{A^{\dagger}}} L^2_{k, \mu}(M_o, \Lambda^1_{\C}) \xrightarrow{d^+_{A^{\dagger}, \sigma}} L^2_{k-1, \mu}(M_o, \Lambda^+_{\C}).
\end{equation}
Then $[A]$ is a bifurcation point if and only if 
\begin{equation}\label{e4.14}
H^1(E_{A^{\dagger}, \mu}(M_o)) = \C. 
\end{equation}
It was proved in \cite{M2} that there are only finite many bifurcation points with respect to small generic perturbations in  $\mathscr{P}_{\mu}$. Moreover all of them are away from the central instantons in $\widetilde{\M}^{\Red}(M_o)$. Let's denote the set of bifurcation points in $\widetilde{M}^{\Red}_{\sigma}(M_o)$ by $\widetilde{\Bf}(M_o, \sigma)$. 

The orientability of the moduli space $\widetilde{\M}_{\sigma}(M_o)$ can be worked out the same as that in \cite[Section 9]{H94}. In particular, the involution on the double covers $\widetilde{\M}_{\sigma}(M_o)$ and $\tilde{\chi}(T^3)$ are not orientation-preserving. Since the bifurcation points consist of the ends of the oriented irreducible locus $\widetilde{\M}^*_{\sigma}(M_o)$, one can assign a sign given by the boundary orientation to each bifurcation points. This gives us a well-defined count $\# \widetilde{\Bf}(M_o, \sigma)$, which will play an important role in the proof of \autoref{t1.1}. In practice the sign is determined as follows. 

When $[A]$ is not a central instanton, one can choose small $\mu > 0$ so that $E^{SU(2)}_{A, \mu}$ is a Fredholm complex. Since we are using holonomy perturbations which fix central instantons, we may choose a small neighborhood $\mathcal{O}$ of the central flat connections in $\tilde{\chi}(T^3)$ so that 
\[
\widetilde{\M}^{\Red}_{\sigma}(M_o, \mathcal{O}^c):= \tilde{\partial}_+^{-1}(\mathcal{O}^c) \cap \widetilde{\M}^{\Red}_{\sigma}(M_o)
\]
contains no central instantons, where $\mathcal{O}^c = \tilde{\chi}(T^3) \backslash \mathcal{O}$ is the complement of $\mathcal{O}$. Then one can choose small $\mu > 0$ so that $E^{SU(2)}_{A, \mu}$ is Fredholm for all $[A] \in \widetilde{\M}^{\Red}_{\sigma}(M_o, \mathcal{O}^c)$. Since $E_{A^{\dagger}, \mu}$ is complex, we get a canonical trivialization on its determinant line bundle. The trivialization of $E^{U(1)}_{\mu}$ is given by the homology orientation of $M_o$. Thus we get a preferred trivialization of the determinant line $\det E^{SU(2)}_{A, \mu}$ over $\widetilde{\M}^{\Red}(M_o, \mathcal{O}^c)$. 

Now suppose $[A] \in \widetilde{\Bf}(M_o, \sigma)$ is a bifurcation point. Let's write $H^i_A$ for the $i$-th cohomology of $E^{SU(2)}_{A, \mu, \sigma}$. Then 
\[
H^0_A= i\R, \quad  H^1_A = i\R \oplus i\R \oplus \C, \quad H^2_A = \C. 
\]
We fix an ordered real basis for these spaces that agree with the trivialization of the determinant line above. Since $\Stab_A = U(1)$, then $U(1)$ acts on the complex part of $H^1_A$ and $H^2_A$ with weight $2$. One applies the Kuranishi argument to obtain a $U(1)$-equivariant map $\mathfrak{q}_A: H^1_A \to H^1_A$ so that the moduli space near $[A]$ is modeled on the $U(1)$-quotient of the zero set of the obstruction map 
\begin{equation}\label{e4.15}
\mathfrak{o}_A(a) =\Pi_A \left( F^+_{A+a+\mathfrak{q}_{A}(a)} - \sigma(A + a + \mathfrak{q}_A(a)) \right), 
\end{equation}
where $\Pi_A$ is the $L^2_{\mu}$-projection onto $H^2_A$. As proved in \cite[Proposition 5.11]{M2}, the lowest order term of $\mathfrak{o}_A$ takes the form $f_0(x_1, x_2) \cdot z$, where $f_0: i\R \oplus i\R \to \C$ a function with non-vanishing first order term sending $(0, 0)$ to $0$. Since the irreducible locus near $[A]$ is given by the $U(1)$-quotient of $\{(0, 0, z) \in \mathfrak{o}_A^{-1}(0)\}$, we assign $+1$ to $[A]$ if $f_0$ is orientation-preserving and $-1$ if $f_0$ is orientation-reversing. 

\subsection{\em A Reformulation of $\lambda_{FO}$}\label{ss4.3} \hfill
 
\vspace{3mm}

We are going to deduce another formulation of the singular Furuta-Ohta invariant using a gluing argument based on \autoref{t5.5}. 

Let $X$ be an integral homology $S^1 \times S^3$ with an essentially embedded torus $\mathcal{T}$. We also fix a primitive class $1_X \in H^1(X; \Z)$ serving the role of homology orientation. Let $M = X \backslash \nu(\mathcal{T})$ be the complement of a tubular neighborhood of $\mathcal{T}$. From the Mayer-Vietoris sequence, we see that $M$ is a homology $D^2 \times T^2$ with $\partial M = T^3$. We first choose a basis of curves $\{\mu, \lambda, \gamma\}$ on $\partial \nu(\mathcal{T}) = - \partial M$ as follows:
\begin{itemize}
\item $[\mu]$ generates $\ker H_1(\partial \nu(\mathcal{T}); \Z) \to H_1(\nu(\mathcal{T}); \Z)$;
\item $[\lambda]$ generates $\ker H_1(\partial \nu(\mathcal{T}); \Z) \to H_1(M; \Z)$;
\item $1_X \cdot [\gamma] = 1$. 
\end{itemize}
Up to isotopy and orientation-reversing, $\mu$ and $\lambda$ are uniquely determined. We will not specify the isotopy class of $\gamma$, but simply make a choice.

To a flat $U(1)$-connection $B=d+b$ on $T^3$, we assign coordinates
\[
x(B) = \frac{1}{2\pi i }\int_{\mu} b, \quad y(B) = \frac{1}{2\pi i }\int_{\lambda} b, \quad z(B) = \frac{1}{2\pi i }\int_{\gamma} b.
\]
The holonomies of $B$ around $\mu, \lambda, \gamma$ are given respectively by 
\[
\Hol_{\mu} B  =e^{-2\pi i x(B)}, \quad \Hol_{\lambda} B  =e^{-2\pi i y(B)}, \quad  \Hol_{\gamma} B  =e^{-2\pi i z(B)}.
\]
Let $\mathcal{C}_{T^3} = \left\{(x, y, z): x, y, z \in [-1/2, 1/2] \right\}$ be the fundamental cube so that $\tilde{\chi}(T^3)$ is identified with the quotient of $\mathcal{C}_{T^3}$ by identifying opposite faces of the cube. The $SU(2)$-character variety is identified as $\chi(T^3) = \tilde{\chi}(T^3)/ \sim$ under the antipodal relation $ \pmb{x} \sim - \pmb{x}$. Given $x_0 \in [-1/2, 1/2]$, we get a $2$-torus $\widetilde{T}_{x_0} \subset \tilde{\chi}(T^3)$ given by the quotient of the plane in $\mathcal{C}_{T^3}$ with fixed $x$-coordinate $x_0$. We write $ T_{x_0} = \widetilde{T}_{x_0}/ \sim$ for its quotient in $\chi(T^3)$. We note that when $x_0 \in \{-1/2, 1/2\}$, $\widetilde{T}_{x_0}$ is a double cover of $T_{x_0}$ branched along central connections. When $x_0 \in (-1/2, 1/2)$, $\widetilde{T}_{x_0}$ is diffeomorphic to $T_{x_0}$ as a $2$-torus. 

\begin{lem}\label{l5.6}
We continue with the notations as above. Let $\alpha \in (0, 1/2)$ be a holonomy parameter satisfying that $\chi^{\alpha, \Red}(X, \mathcal{T})$ is non-degenerate. Then with respect to generic small holonomy perturbations $\sigma$, we have 
\[
\lambda_{FO}(X, \mathcal{T}, \alpha) = \# \M^*_{\sigma}(M_o) \cap \partial_+^{-1}( T_{\alpha}). 
\]
\end{lem}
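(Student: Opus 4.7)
The plan is to identify $\M^{\alpha,*}_{\sigma}(X,\mathcal{T})$ with $\M^*_{\sigma}(M_o) \cap \partial_+^{-1}(T_{\alpha})$ via the standard dictionary between singular Yang--Mills theory with cone-angle metrics and cylindrical-end Yang--Mills theory with prescribed holonomy at infinity. The key geometric fact is that a punctured tubular neighborhood of $\mathcal{T}$ endowed with the cone-angle metric $g^{\nu}$, after a radial change of variable $t = -\log r$, is conformally equivalent to a cylindrical neighborhood of $T^3$; under this change the singular model connection $A_0^{\alpha}$ becomes asymptotically flat with holonomy $e^{-2\pi i \alpha}$ around the meridian $\mu$, which by the definition of the coordinates $(x,y,z)$ on $\tilde{\chi}(T^3)$ places the asymptotic limit $\partial_+(A)$ precisely on the torus $T_{\alpha}$.

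The first step is to set up this correspondence rigorously at the level of configuration spaces. Under the conformal change of coordinates, the orbifold Sobolev completion $\check{L}^2_k(X, \Lambda^1 \otimes \g_E)$ appearing in \autoref{p5.1} is equivalent to a weighted Sobolev completion $L^2_{k,\mu}$ on $M_o$ for a suitably small weight $\mu > 0$, the orbifold gauge group maps to the restricted gauge group $\tilde{\G}_{k+1}$ preserving the asymptotic $U(1)$-reduction, and anti-self-duality is preserved by conformal invariance. Since both the instanton number and monopole number vanish in our setting, the Chern--Weil formula on the singular pair forces $\kappa(A) = 0$, matching the finite-energy degree-zero condition defining $\widetilde{\M}(M_o)$ via \autoref{l5.4}.

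Second, I would verify that the irreducible moduli spaces match as oriented transverse $0$-manifolds. A generic small holonomy perturbation on $(X, \mathcal{T})$ used to regularize $\M^{\alpha,*}_{\sigma}(X,\mathcal{T})$ can be chosen supported away from $\mathcal{T}$, hence induces a perturbation in $\mathscr{P}_{\mu}(M_o)$ supported away from the cylindrical end. The non-degeneracy of $\chi^{\alpha,\Red}(X,\mathcal{T})$ translates via the deformation-complex splitting $E^{SU(2)}_{A,\mu,\sigma} = E^{U(1)}_{\mu,\sigma} \oplus E_{A^{\dagger},\mu}$ into the condition that no bifurcation point in $\widetilde{\Bf}(M_o,\sigma)$ lies over $T_{\alpha}$ (since (\ref{e4.14}) would force $H^1(X\backslash\mathcal{T};\C_{\varphi^2}) \neq 0$). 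Combined with the transversality of $\partial_+$ along the irreducible locus from \autoref{t5.5}(d), the intersection $\M^*_{\sigma}(M_o) \cap \partial_+^{-1}(T_{\alpha})$ is a finite, oriented, zero-dimensional transverse intersection in bijection with $\M^{\alpha,*}_{\sigma}(X,\mathcal{T})$.

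The main obstacle is the sign comparison. Both moduli spaces inherit orientations from the homology orientation $1_X$, and I must check that the determinant line of the singular ASD deformation complex on $(X,\mathcal{T})$ is canonically isomorphic, as an \emph{oriented} line, to the determinant line of the cylindrical-end ASD complex coupled with the constraint $\partial_+(A) \in T_{\alpha}$. This should follow from excision for determinant line bundles together with the explicit form of $\partial_+$ near a transverse intersection point and the canonical orientation on $T_{\alpha}$ coming from the curves $\lambda,\gamma$, but it requires careful bookkeeping to reconcile the two orientation conventions and to confirm that the answer is independent of the auxiliary cone-angle parameter $\nu$.
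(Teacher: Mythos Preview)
Your approach is correct in outline but takes a genuinely different route from the paper. You propose a \emph{direct} identification: conformally straighten the cone-angle metric on $X\setminus\mathcal{T}$ to a cylindrical-end metric on $M_o$, then match the orbifold Sobolev spaces $\check{L}^2_k$ with weighted spaces $L^2_{k,\mu}$ and carry the singular moduli space across. The paper instead runs a \emph{neck-stretching and gluing} argument: it inserts a long cylinder between $M$ and $\nu(\mathcal{T})$, and in the limit expresses each irreducible $\alpha$-singular instanton on $(X_T,\mathcal{T})$ as a fiber product of an irreducible instanton on $M_o$ with a \emph{reducible} $\alpha$-singular instanton on $N_o=(-\infty,0]\times T^3\cup\nu(\mathcal{T})$; since the latter moduli space is unobstructed and its image under $\partial_-$ is exactly $T_\alpha$, the count reduces to $\#\M^*_\sigma(M_o)\cap\partial_+^{-1}(T_\alpha)$.

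The paper's approach buys you two things cheaply. First, it decouples the problem from the cone-angle analysis entirely: the only input from the singular side is that $\M^{\alpha,\Red}(N_o,\mathcal{T})$ is cut out transversally (which is automatic since $b^+(N_o)=0$) and maps onto $T_\alpha$. There is no need to compare $\check{L}^2_k$ with $L^2_{k,\mu}$ or to match perturbation spaces across the conformal map. Second, independence from the cone-angle parameter $\nu$ falls out immediately, since the $M_o$ side never sees $\nu$; in your approach this independence is one of the ``careful bookkeeping'' items you flag. Your approach, on the other hand, is conceptually more direct and avoids the limiting procedure, but the identification of orbifold and weighted Sobolev completions (and the dependence of the weight $\mu$ on $\nu$) is real analytic work that the paper sidesteps.
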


\begin{proof}
Let's write $N_o = (-\infty, 0] \times T^3 \cup \nu(\mathcal{T})$. Given $\alpha \in (0, 1/2)$, we put a metric $g^{\nu}$ on $X$ with cone angle $2\pi/\nu$ along $\mathcal{T}$ satisfying \autoref{p5.1} and cylindrical near $\partial \nu(\mathcal{T})$. Since we are working with instantons of topological action $0$, the neck-stretching process causes no lost of action. Recall in \autoref{t5.5}, the asymptotic map $\partial_+: \M^*_{\sigma}(M_o) \to \chi(T^3)$ can be made to be transverse to any given submanifold in $\chi(T^3)$ and misses the central flat connections. Then when the length $T$ of the "neck" is long enough, each irreducible $\alpha$-singular instanton $[A_T]$ on $(X_T, \mathcal{T})$ is glued by one irreducible instanton $[A_M] \in \M^*_{\sigma}(M_o)$ and one reducible $\alpha$-singular instanton on $[A_N] \in \M^{\alpha, \Red}(N_o, \mathcal{T})$ whose asymptotic values agree in $\chi(T^3)$. Note that $N_o$ is simply the completion of the product $D^2 \times T^2$. The unperturbed moduli space of reducible $\alpha$-singular flat connections is smoothly cut out due to the vanishing of $b^+$. So we don't need to perturb the moduli space further to achieve transversality. Thus the singular Furuta-Ohta invariant can be expressed as the count of the fiber product:
\[
\lambda_{FO}(X, \mathcal{T}, \alpha) = \# \M^*_{\sigma}(M_o) \times_{(\partial_+, \partial_-)} \M^{\alpha, \Red}(N_o, \mathcal{T}).
\]
Now it remains to identify $\partial_+(\M^{\alpha, \Red}(N_o, \mathcal{T}))$ with $T_{\alpha} \subset \chi(T^3)$. Since $\M^{\alpha, \Red}(N_o, \mathcal{T})$ consists of flat connections whose holonomy around the meridian of $\mathcal{T}$ is $e^{-2\pi i \alpha}$, the image under the asymptotic map is given by restricting to $\partial \nu(T)$ which is $T_{\alpha}$ by construction. 
\end{proof}

\begin{rem}
This formulation of the singular Furuta-Ohta invariant implies that it's independent of the choice of cone angles $2\pi/\nu$ of metrics along the singular surface $\mathcal{T}$.  
\end{rem}

In the end, we remark on the definition of the usual Furuta-Ohta invariant for an integral homology $S^1 \times S^3$ \cite{FO93}. Just like the non-degeneracy condition for defining singular Furuta-Ohta invariants, the corresponding non-degeneracy condition for the non-singular case is the following:
\begin{equation}\label{e5.16}
H^1(X; \C_{\varphi}) = 0 \text{ for all non-trivial representation } \varphi: \pi_1(X) \to U(1). 
\end{equation}
When this non-degeneracy condition is satisfied, the Furuta-Ohta invariant for a pair $(X, 1_X)$ is defined to be 
\begin{equation}
\lambda_{FO}(X):=\frac{1}{4} \# \M^*_{\sigma}(X)
\end{equation}
as a quarter of the signed count of irreducible instantons over the trivial bundle of $X$ under small generic perturbations. By the same neck-stretching argument of \autoref{l5.6}, one can show that
\begin{equation}\label{e5.18}
\lambda_{FO}(X) = \frac{1}{4} \# \M^*_{\sigma}(M_o) \cap \partial^{-1}_+(T_0) = \frac{1}{8} \#\widetilde{\M}^*_{\sigma}(M_o) \cap \tilde{\partial}^{-1}_+(\widetilde{T}_0),
\end{equation}
since $\widetilde{T}_0$ is a double cover of $T_0$ and the image of $\widetilde{\M}^*_{\sigma}(M_o)$ under the asymptotic map misses the branching points. 

\subsection{\em A Surgery Formula}\label{ss4.4} \hfill
 
\vspace{3mm}

In this section we prove the surgery formula \autoref{t1.10} using the reformulation of the singular Furuta-Ohta invariant. Given the pair $(X, \mathcal{T})$ of an essentially embedded torus in a homology $S^1 \times S^3$, the $1/q$-surgery of $X$ along $\mathcal{T}$ is given by 
\[
X_{1/q}(\mathcal{T}) = X \backslash \nu(\mathcal{T}) \cup_{\psi_q} D^2 \times T^2,
\]
where $\psi_q: \partial D^2 \times T^2 \to \partial \nu(\mathcal{T})$ is given by the matrix
\[
\psi_q = 
\begin{pmatrix}
1 & 0 & 0 \\
q & 1 & 0 \\
0 & 0 & 1
\end{pmatrix}
\]
under the basis of curves $\{\mu, \lambda, \gamma\}$ chosen as in \autoref{ss4.3}. We are going to prove the following surgery formula.

\begin{prop}\label{p4.8}
Let $\alpha \in (0, 1/2)$ be a non-degenerate parameter for the pairs $(X, \mathcal{T})$, $(X_1(\mathcal{T}), \mathcal{T}_1)$, and $(X_0(\mathcal{T}), \mathcal{T}_0)$. Then 
\[
\lambda_{FO}(X_1(\mathcal{T}), \mathcal{T}_1, \alpha) = \lambda_{FO}(X, \mathcal{T}, \alpha) +\lambda_{FO}(X_0(\mathcal{T}), \mathcal{T}_0, \alpha). 
\]
\end{prop}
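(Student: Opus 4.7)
The strategy is to reduce all three singular Furuta--Ohta invariants to signed intersection counts on a common moduli space and then verify additivity component-by-component. Let $M = X \setminus \nu(\mathcal{T})$, a homology $D^2 \times T^2$, and let $M_o = M \cup [0,\infty) \times T^3$ be its completion. Invoking \autoref{l5.6} together with its evident analogs for the $1$- and $0$-surgered pairs, and using the basis $\{\mu,\lambda,\gamma\}$ on $\partial\nu(\mathcal{T})$ fixed in \autoref{ss4.3} with associated holonomy coordinates $(x,y,z)$ on $\tilde{\chi}(T^3) = (\R/\Z)^3$, tracing the surgery gluing matrices $\psi_0$ and $\psi_1$ identifies
\[
\lambda_{FO}(X,\mathcal{T},\alpha) = \#\M^*_\sigma(M_o) \cap \partial_+^{-1}(T_\alpha),
\]
\[
\lambda_{FO}(X_1(\mathcal{T}),\mathcal{T}_1,\alpha) = \#\M^*_\sigma(M_o) \cap \partial_+^{-1}(T^{(1)}_\alpha),
\]
\[
\lambda_{FO}(X_0(\mathcal{T}),\mathcal{T}_0,\alpha) = \#\M^*_\sigma(M_o) \cap \partial_+^{-1}(T^{(0)}_\alpha),
\]
where $T_\alpha = \{x=\alpha\}/{\sim}$, $T^{(1)}_\alpha = \{x+y=\alpha\}/{\sim}$, and $T^{(0)}_\alpha = \{y=\alpha\}/{\sim}$ in $\chi(T^3)$.

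The key homological input is that $[\lambda]=0$ in $H_1(M)$, so every reducible flat connection on $M_o$ restricts to $T^3$ with trivial holonomy around $\lambda$. Consequently $\tilde\partial_+(\widetilde{\M}^{\Red}_\sigma(M_o)) \subset \{y=0\}$; in particular the image of the bifurcation set $\widetilde{\Bf}(M_o,\sigma)$ lies in $\{y=0\}$, and for generic $\alpha \in (0,1/2)$ none of the three tori meets this image. By \autoref{t5.5}(d), a single generic small perturbation $\sigma \in \mathscr{P}_\mu(M_o)$ can be chosen making $\tilde\partial_+$ simultaneously transverse to the lifted tori $\widetilde{T}_\alpha$, $\widetilde{T}^{(1)}_\alpha$, $\widetilde{T}^{(0)}_\alpha$ in the double cover $\widetilde{\M}^*_\sigma(M_o) \to \M^*_\sigma(M_o)$ of \autoref{l5.4}, so the signed intersection counts are all well defined and it suffices to prove the additivity after lifting to the double cover.

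With transversality in hand I verify additivity component-by-component on $\widetilde{\M}^*_\sigma(M_o)$. For a circle component $C$, the identity $d(x+y)=dx+dy$ on $T^3$ immediately yields additivity, since the signed count of intersections of $\tilde\partial_+(C)$ with a hyperplane torus $\{f=\alpha\}$ equals the degree $\int_{\tilde\partial_+(C)} df$ and depends only on $[df] \in H^1(T^3;\Z)$. For an arc component $\gamma$ with endpoints in $\tilde\partial_+^{-1}\{y=0\}$, lifting $\tilde\partial_+\circ\gamma$ to $(\tilde x,\tilde y,\tilde z):[0,1]\to\R^3$ gives $\tilde y(0),\tilde y(1) \in \Z$, so the floor-function formula for signed preimage counts under a generic smooth map $[0,1]\to S^1$ yields
\[
\bigl\lfloor\tilde x(1)+\tilde y(1)-\alpha\bigr\rfloor - \bigl\lfloor\tilde x(0)+\tilde y(0)-\alpha\bigr\rfloor = \bigl(\lfloor\tilde x(1)-\alpha\rfloor - \lfloor\tilde x(0)-\alpha\rfloor\bigr) + \bigl(\tilde y(1)-\tilde y(0)\bigr),
\]
where the integrality of $\tilde y(0),\tilde y(1)$ is used to pull them out of the floor on the left. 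The right-hand side is precisely the sum of the signed intersection counts of $\gamma$ with $\widetilde{T}_\alpha$ and $\widetilde{T}^{(0)}_\alpha$. Summing over components of $\widetilde{\M}^*_\sigma(M_o)$ and descending through the double cover gives the proposition. The main technical point I expect to manage with care is the simultaneous choice of a generic $\sigma$ realizing \autoref{t5.5} and making $\tilde\partial_+$ transverse to all three tori at once; this is accomplished by intersecting countably many open dense subsets in $\mathscr{P}_\mu(M_o)$ as in the proof of \autoref{l5.6}.
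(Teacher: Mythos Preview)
Your approach is correct and takes a genuinely different route from the paper's. Both begin by reformulating the three invariants as signed intersections of $\widetilde{\M}^*_\sigma(M_o)$ with the $2$-tori $\widetilde T_\alpha=\{x=\alpha\}$, $\widetilde T^{(0)}_\alpha=\{y=\alpha\}$, $\widetilde T^{(1)}_\alpha=\{x+y=\alpha\}$ in $\tilde\chi(T^3)$. From there the paper argues homologically: it $J$-symmetrizes, observes that the union $\widetilde T_\alpha\cup J\widetilde T_\alpha\cup\widetilde T^{(0)}_\alpha\cup J\widetilde T^{(0)}_\alpha\cup(-\widetilde T^{(1)}_\alpha)\cup(-J\widetilde T^{(1)}_\alpha)$ is null-homologous in $\tilde\chi(T^3)\setminus\tilde{\mathcal C}$, bounds a $3$-chain $\tilde{\mathcal V}$, and reads the identity off from $\#\partial\bigl(\widetilde{\M}^*_\sigma(M_o)\cap\tilde\partial_+^{-1}(\tilde{\mathcal V})\bigr)=0$, the $J$-symmetry being what cancels arc-endpoint (bifurcation) contributions in pairs. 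You instead work component-by-component: circles by additivity of degree under $d(x+y)=dx+dy$, and arcs by the floor-function identity, exploiting directly the homological fact that bifurcation points sit in $\{y=0\}$ so that $\tilde y(\text{endpoint})\in\Z$. Your argument is more elementary and makes the role of $[\lambda]=0$ in $H_1(M)$ explicit; the paper's avoids the endpoint bookkeeping at the cost of the $J$-doubling trick.

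One small correction: where you write ``for generic $\alpha\in(0,1/2)$ none of the three tori meets this image,'' you should instead invoke the non-degeneracy hypothesis, since $\alpha$ is fixed in the statement. The tori $\{x=\alpha\}$ and $\{x+y=\alpha\}$ do meet $\{y=0\}$ along the circle $\{x=\alpha,\,y=0\}$; what your floor-function argument actually requires is that no (perturbed) bifurcation point has $x$-coordinate $\alpha$. But this is precisely the content of the non-degeneracy of $\alpha$ for $(X,\mathcal T)$: the reducibles with $x=\alpha$ satisfy $H^1(E_{A^\dagger,\mu}(M_o))=0$, hence are not bifurcation points, and for small $\sigma$ this persists by openness.
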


Note that with respect to the framing of $\mathcal{T}_q$ in $X_{1/q}(\mathcal{T})$, performing $1$-surgery and $0$-surgery gives us $X_{1/(q+1)}(\mathcal{T})$ and $X_0(\mathcal{T})$ respectively. Thus \autoref{t1.10} is proved by applying \autoref{p4.8} repetitively. 

\begin{proof}[Proof of \autoref{p4.8}]
For $y_0 \in (-1/2, 1/2)$, we denote by $\tilde{S}_{y_0}$ the $2$-torus in $\tilde{\chi}(T^3)$ with fixed $y$-coordinate $y_0$. Then the neck-stretching argument tells us that 
\begin{equation}
\lambda_{FO}(X_0(\mathcal{T}), \mathcal{T}_0, \alpha)  = \# \widetilde{\M}^*_{\sigma}(M_o) \cap \tilde{\partial}_+^{-1}(\tilde{S}_{\alpha}). 
\end{equation}
We denote by $\tilde{P} \subset \tilde{\chi}(T^3)$ the $2$-torus given by $\{ x + y = \alpha\}$. Then 
\begin{equation}
\lambda_{FO}(X_1(\mathcal{T}), \mathcal{T}_1, \alpha)  = \# \widetilde{\M}^*_{\sigma}(M_o) \cap \tilde{\partial}_+^{-1}(\tilde{P}_{\alpha}). 
\end{equation}
Denote by $J$ the involution on $\tilde{\chi}(T^3)$ given by $J \cdot \pmb{x} = - \pmb{x}$. Since the asymptotic map intertwines with the involutions, we write $J$ for the involution on $\widetilde{\M}^*_{\sigma}(M_o)$ as well. Then we know 
\begin{equation}\label{e4.21}
\# \widetilde{\M}^*_{\sigma}(M_o) \cap \tilde{\partial}_+^{-1}( \tilde{S}_{\alpha})  =\# J \widetilde{\M}^*_{\sigma}(M_o) \cap \tilde{\partial}_+^{-1}(J \tilde{S}_{\alpha}) = \# \widetilde{\M}^*_{\sigma}(M_o) \cap \tilde{\partial}_+^{-1}(J \tilde{S}_{\alpha}) 
\end{equation}
since $J$ reverse the orientation for both $\widetilde{\M}^*_{\sigma}(M_o)$ and $\tilde{\partial}_+^{-1}( \tilde{S}_{\alpha})$, and $J \tilde{S}_{\alpha} \cap \tilde{S}_{\alpha} = \varnothing$. Note that the union 
\begin{equation}\label{e5.22}
\tilde{T}_{\alpha} \cup J \tilde{T}_{\alpha} \bigcup \tilde{S}_{\alpha} \cup J \tilde{S}_{\alpha} \bigcup  -\tilde{P}_{\alpha} \cup -J\tilde{P}_{\alpha} 
\end{equation}
is null-homologous in $\tilde{\chi}(T^3) \backslash \tilde{\mathcal{C}}$, where $\tilde{\mathcal{C}}$ is the set of the eight central representations. Thus the union (\ref{e5.22}) bounds a $3$-complex in $\tilde{\chi}(T^3) \backslash \tilde{\mathcal{C}}$, say $\tilde{\mathcal{V}}$. We can use \autoref{t5.5} to pick a generic small perturbation so that $\tilde{\partial}_+|_{\tilde{\M}^*}$ is transverse to $\tilde{\mathcal{V}}$. Then $\widetilde{\M}^*_{\sigma}(M_o) \cap \tilde{\partial}^{-1}_+(\tilde{\mathcal{V}})$ is an oriented $1$-manifold with boundary, and the surgery formula follows from the observation that 
\[
\# \partial \left(\widetilde{\M}^*_{\sigma}(M_o) \cap \tilde{\partial}^{-1}_+(\tilde{\mathcal{V}}) \right) = 0.
\]
\end{proof}

\section{\large \bf Equivariant Torus Signature}\label{s6}

In this section, we will prove the equivalence of the signature invariants. Let's recap the settings to start with. Let $(X, \mathcal{T})$ be the pair consisting of an essentially embedded torus in an integral homology $S^1 \times S^3$. We decompose $X = M \cup \nu(\mathcal{T})$ into a homology $D^2 \times T^2$ with a tubular neighborhood of $\mathcal{T}$. We fix a primitive class $1_X \in H^1(X; \Z)$. A basis of curves $\{\mu, \lambda, \gamma\}$ are chosen as in \autoref{ss4.3} on $\partial \nu(\mathcal{T})=-\partial M$, where the isotopy classes of $\mu$ and $\lambda$ are determined. Such a basis provides us with a framing $\nu(\mathcal{T}) \simeq D^2 \times T^2$. Then we can perform $0$-surgery along $\mathcal{T}$ to get $V = M \cup_{\psi} D^2 \times T^2$ with the gluing map $\psi: \partial D^2 \times T^2 \to \partial M$ specified by 
\[
\psi = 
\begin{pmatrix}
0 & 1 & 0 \\
1 & 0 & 0 \\
0 & 0 & 1
\end{pmatrix}. 
\]
Note that the gluing map preserves the third curve $\gamma$. So the surgery is well-defined despite of the ambiguity of choosing $\gamma$. Either by Mayer-Vietoris sequence or a geometric argument given in \cite[Section 3.1]{R20}, $1_X$ gives rise to a primitive class $1_V \in H^1(V;\Z)$. In the same paper, Ruberman also showed that $V$ is a cohomology $T^2 \times S^2$ endowed with a degree-$1$ map to $T^2 \times S^2$. We pick a smooth function $f: X \to S^1$ representing $1_X$ so that its restriction $f|_{\nu(\mathcal{T})}$ is the projection to the $S^1$-factor corresponding to $\gamma$. In this way, $f$ is invariant under the gluing map $\psi$, thus gives rise to a map, still denoted by $f$, on the $0$-surgered manifold $V$ representing $1_V$. We will abuse the notations $\mu, \lambda, \gamma$ for corresponding curves in either $X$ or $V$ when the context is clear. Given a representation $\varphi: \pi_1(V) \to U(1)$, we still write $\varphi$ for the representation of $\pi_1(Y_0)$ arising as the restriction of $\varphi$ to a cross-section $Y_0$ of $V$. 

\subsection{\em Proof of \autoref{t1.6}}\label{ss6.1} \hfill
 
\vspace{3mm}

To a holonomy parameter $\alpha \in (0, 1/2)$, we assign a representation $\varphi_{2\alpha}: \pi_1(V) \to U(1)$ by 
\begin{equation}
\varphi_{2\alpha}(\mu) = e^{-4\pi i \alpha} \quad \text{ and } \quad \varphi_{2\alpha}(\gamma) = 1. 
\end{equation}

\begin{lem}\label{l5.1}
$\chi^{\alpha, \Red}(X, \mathcal{T})$ is non-degenerate if and only if $H^*(E_{\varphi_{2\alpha}, z}(V)) = 0$ for all $\Rea z = 0$. 
\end{lem}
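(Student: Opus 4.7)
The plan is to translate both conditions into statements about twisted cohomology of $M \simeq X \setminus \mathcal{T}$ and match them, using that for $\Rea z = 0$ the connection $\Phi_z := d_{\varphi_{2\alpha}} - z \cdot df$ is a flat unitary character of $\pi_1(V)$ with $\Phi_z(\mu) = e^{-4\pi i \alpha}$ and $\Phi_z(\gamma) = e^{-z} \in U(1)$. The fact that $df$ is supported in the $\gamma$-direction of $\nu(\mathcal{T})$ by our choice of $f$ ensures the twist by $-z df$ does not disturb the meridian holonomy.

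From $H_1(M;\Z) = \Z\langle\mu\rangle \oplus \Z\langle\gamma\rangle$, reducible $\alpha$-representations $\varphi \in \chi^{\alpha,\Red}(X,\mathcal{T})$ correspond, after choosing a $U(1)$-lift and using $X \setminus \mathcal{T} \simeq M$, to unitary characters of $\pi_1(M)$ with $\varphi(\mu) = e^{2\pi i \alpha}$ and $\varphi(\gamma) \in U(1)$ arbitrary; their squares $\varphi^2$ sweep out the fiber $\chi(\mu) = e^{4\pi i \alpha}$, while $\{\Phi_z|_M\}_{\Rea z = 0}$ sweeps the conjugate fiber $\chi(\mu) = e^{-4\pi i \alpha}$. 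Since $\dim H^1(M; \C_\chi) = \dim H^1(M; \C_{\chi^{-1}})$ for unitary $\chi$, the non-degeneracy hypothesis $H^1(X \setminus \mathcal{T}; \C_{\varphi^2}) = 0$ for every $\varphi \in \chi^{\alpha,\Red}$ is equivalent to $H^1(M; \C_{\Phi_z|_M}) = 0$ for all $\Rea z = 0$. I then pass from $M$ to $V$ by Mayer--Vietoris on $V = M \cup_\psi (D^2 \times T^2)$ with intersection $T^3$: because $e^{-4\pi i \alpha} \neq 1$ for $\alpha \in (0, 1/2)$, both $\Phi_z|_{T^3}$ and $\Phi_z|_{D^2 \times T^2}$ are nontrivial characters of tori and so have vanishing twisted cohomology by a K\"unneth calculation; the sequence then collapses to an isomorphism $H^*(V; \C_{\Phi_z}) \cong H^*(M; \C_{\Phi_z|_M})$.

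The last step is to identify $H^*(V; \C_{\Phi_z})$ with the cohomology of the ASD DeRham complex $E_{\varphi_{2\alpha}, z}(V)$. Since $\Phi_z$ is unitary and $V$ is closed, the Stokes--Chern--Weil identity used in the proof of \autoref{p3.11} gives $\int_V d_{\Phi_z} w \wedge \overline{d_{\Phi_z} w} = 0$, so $\|d^+_{\Phi_z} w\|_{L^2}^2 = \|d^-_{\Phi_z} w\|_{L^2}^2$, whence $\ker d^+_{\Phi_z} = \ker d_{\Phi_z}$ on 1-forms and $H^1(E_{\Phi_z}) = H^1(V; \C_{\Phi_z})$. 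The vanishing $H^0(E_{\Phi_z}) = H^0(V; \C_{\Phi_z}) = 0$ is automatic from $\Phi_z(\mu) \neq 1$; combining $\chi(V) = 0$ with Poincar\'e duality $\dim H^i = \dim H^{4-i}$ and $H^0 = H^4 = 0$ forces $\dim H^2(V; \C_{\Phi_z}) = 2\dim H^1(V; \C_{\Phi_z})$, so $H^1 = 0$ implies $H^2 = 0$ and in particular $H^+_{\Phi_z} = H^2(E_{\Phi_z}) = 0$. Chaining the three reductions yields the equivalence. The main obstacle will be the careful bookkeeping around the gluing map $\psi$: one must verify that the longitude $\lambda'$ of $D^2 \times T^2$ is identified with $\mu \in \pi_1(M)$ in $V$ (so that $\Phi_z|_{D^2 \times T^2}$ is genuinely nontrivial), and that the $\psi$-invariance of $f$ makes the $\gamma$-coordinate on $V$ restrict correctly to the $\gamma$-coordinate on $M$.
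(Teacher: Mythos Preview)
Your argument is correct and follows essentially the same route as the paper: identify the $\{\varphi^2\}$ (resp.\ $\{\Phi_z\}_{\Rea z=0}$) as the circle of unitary characters of $\pi_1(M)$ with meridian holonomy $e^{\pm 4\pi i\alpha}$, use Mayer--Vietoris on $V=M\cup_\psi D^2\times T^2$ (with the $\mu$-nontriviality killing the $T^3$ and $D^2\times T^2$ terms) to pass from $H^1(M;\C_{\Phi_z})$ to $H^1(V;\C_{\Phi_z})$, and then identify the latter with $H^1(E_{\varphi_{2\alpha},z}(V))$ via the $\ker d^+=\ker d$ argument for unitary flat connections. The only cosmetic differences are: (i) the paper's sign convention makes $\varphi^2(\mu)=e^{-4\pi i\alpha}$ directly, so the detour through $H^1(M;\C_\chi)\cong H^1(M;\C_{\chi^{-1}})$ is unnecessary; and (ii) for $H^2(E_{\varphi_{2\alpha},z})=0$ the paper simply invokes $H^0=0$ together with $\chi(E_{\varphi_{2\alpha},z}(V))=0$ (the index of the ASD DeRham complex on a non-degenerate pair vanishes), which is shorter than your Poincar\'e-duality route through the full $H^*(V;\C_{\Phi_z})$.
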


\begin{proof}
Let $[\varphi] \in \chi^{\alpha, \Red}(X, \mathcal{T})$ be an $\alpha$-reducible representation. Since $H_1(X \backslash \mathcal{T})$ and $H_1(V)$ are both freely generated by $[\mu]$ and $[\gamma]$. We can regard $\varphi^2$ as a representation from $\pi_1(V)$ to $U(1)$. By our convention $\varphi^2(\mu) = e^{-4\pi i \alpha}$. Let $B_{\varphi}$ and $B_{\alpha}$ be flat connections corresponding to $\varphi^2$ and $\varphi_{2\alpha}$ respectively. From their holonomies one can tell $B_{\alpha} - B_{\varphi} = z \cdot df $ for some $\Rea z = 0$. Since $H^1(D^2 \times T^2; \C_{\varphi^2}) = 0$ for all $[\varphi] \in \chi^{\alpha, \Red}(X, \mathcal{T})$, the Mayer-Vietoris sequence tells us that 
\[
H^1(X\backslash \mathcal{T}; \C_{\varphi^2}) \simeq H^1(V; \C_{\varphi^2}) = H^1(E_{\varphi_{2\alpha}, z}(V)). 
\]
As $[\varphi]$ ranges over $\chi^{\alpha, \Red}(X, \mathcal{T})$, $z$ ranges over $i[0, 2\pi)$. The proof is completed after combining with the fact that $H^0(E_{\varphi_{2\alpha, z}}(V)) = 0$ and $\chi(E_{\varphi_{2\alpha}, z}(V)) = 0$. 
\end{proof}

Now let $\alpha$ be a non-degenerate holonomy parameter. We further assume that $V$ satisfies the non-degenerate condition (\ref{e5.16}) in order to get well-defined Furuta-Ohta invariant. Then \autoref{l5.6} and (\ref{e5.18}) tells us that 
\begin{equation}
\lambda_{FO}(X, \mathcal{T}, \alpha) - 8 \lambda_{FO}(X) = \# \widetilde{\M}^*_{\sigma}(M_o) \cap \partial_+^{-1}( \widetilde{T}_{\alpha}) - \#\widetilde{\M}^*_{\sigma}(M_o) \cap \tilde{\partial}^{-1}_+(\widetilde{T}_0). 
\end{equation}
Let $\widetilde{P}_{[0, \alpha]} = \{(x, y, z) \in \mathcal{C}_{T^3}: x \in [0, \alpha]\} /\sim$ be the product torus in $\widetilde{\chi}(T^3)$ with boundary $\partial \widetilde{P}_{[0, \alpha]} =  -\widetilde{T}_0 \cup \widetilde{T}_{\alpha}$. It follows from \autoref{t5.5} that the closure of $\widetilde{\M}^*_{\sigma}(M_o) \cap \tilde{\partial}^{-1}_+(\widetilde{P}_{[0, \alpha]})$ in the moduli space $\widetilde{\M}_{\sigma}(M_o)$ is an oriented compact manifold whose boundary consists of 
\[
\widetilde{\M}^*_{\sigma}(M_o) \cap \partial_+^{-1}( \widetilde{T}_{\alpha})  \bigcup -\widetilde{\M}^*_{\sigma}(M_o) \cap \tilde{\partial}^{-1}_+(\widetilde{T}_0) \bigcup \widetilde{\Bf}(M_o, \sigma)  \cap \tilde{\partial}^{-1}_+(\widetilde{P}_{[0, \alpha]}),
\]
where $\widetilde{\Bf}(M_o, \sigma)$ is the set of bifurcation points in $\widetilde{\M}^{\Red}_{\sigma}(M_o)$. Thus we conclude that 
\begin{equation}\label{e6.3}
\lambda_{FO}(X, \mathcal{T}, \alpha) - 8 \lambda_{FO}(X) = - \# \widetilde{\Bf}(M_o, \sigma)  \cap \tilde{\partial}^{-1}_+(\widetilde{P}_{[0, \alpha]}) . 
\end{equation}

Due to the non-degeneracy of $V$, the set of bifurcation points $\widetilde{\Bf}(M_o, \sigma)$ is away from the preimage of the torus $\widetilde{T}_0$ under the asymptotic map $\tilde{\partial}^{-1}_+$ with respect to perturbations $\sigma$ small enough. Thus we can find $\epsilon > 0$ sufficiently small so that 
\begin{equation}\label{e6.4}
\# \widetilde{\Bf}(M_o, \sigma)  \cap \tilde{\partial}^{-1}_+(\widetilde{P}_{[0, \alpha]})  = \# \widetilde{\Bf}(M_o, \sigma)  \cap \tilde{\partial}^{-1}_+(\widetilde{P}_{[\epsilon, \alpha]}) 
\end{equation}

Let $(A_t)_{t \in [0, 1]}$ be a path of flat $SU(2)$-connections on $M_o$ whose holonomies around $\mu$ and $\gamma$ are respectively $e^{-2\pi i \alpha(\epsilon + t(1-\epsilon))}$ and $1$. With respect to the splitting $E= \underline{\C} \oplus \underline{\C}$, we write $A_t = A_{\mathfrak{l}, t} \oplus A^*_{\mathfrak{l}, t}$ and $A_{\mathfrak{l}, t} = d+ a_{\mathfrak{l}, t}$ with $a_{\mathfrak{l}, t} \in \Omega^1(M_o, i\R)$. Let's denote by $A^{\dagger}_t = d + 2a_{\mathfrak{l}, t}$ the path of flat connections on the trivial bundle $\underline{\C}$, and by $B^{\dagger}_t = A^{\dagger}_t|_{\{0\} \times T^3}$ the restriction to a slice of the end. Then the coordinates of $B^{\dagger}_t$ in $\tilde{\chi}(T^3)$ are given respectively by 
\[
x(B^{\dagger}_t) = 2\alpha(\epsilon+t(1-\epsilon)), \quad y(B^{\dagger}_t) = 1, \quad z(B^{\dagger}_t) =1.
\]

\begin{lem}\label{l6.2}
The path $(A^{\dagger}_t)$ consists of $0$-regularly asymptotically flat connections. 
\end{lem}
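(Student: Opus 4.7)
The plan is to verify, for each $t \in [0,1]$, the three conditions of \autoref{d4.8} for the connection $A^{\dagger}_t$ with weight $\delta = 0$.

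First, condition (a) is immediate after gauging $A^{\dagger}_t$ into temporal gauge on the cylindrical end: since $A^{\dagger}_t$ is flat and the metric there is a product, this gauge transformation yields $c(t) \equiv 0$ and a time-independent $B^{\dagger}(t) \equiv B^{\dagger}_o := A^{\dagger}_t|_{\{0\}\times T^3}$, so the required exponential decay holds trivially for any $\mu > 0$.

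For condition (b), I need to show that $L_{B^{\dagger}_o, 0, z}$ is invertible for every $z$ with $\Rea z = 0$, equivalently $H^*(T^3; \C_{B_z}) = 0$ where $B_z = B^{\dagger}_o - z\cdot df_Y$. Since $df_Y$ is Poincar\'e dual to $\gamma$, the $\mu$-holonomy of $B_z$ equals that of $B^{\dagger}_o$, namely $e^{-4\pi i \alpha(\epsilon + t(1-\epsilon))}$. With $\alpha \in (0, 1/2)$ and $\epsilon + t(1-\epsilon) \in [\epsilon, 1]$, the exponent $2\alpha(\epsilon + t(1-\epsilon))$ lies in $(0, 1)$, so this holonomy is not $1$. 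Hence $B_z$ corresponds to a non-trivial representation of $\pi_1(T^3) = \Z^3$, and a direct Koszul-complex calculation yields the desired vanishing of cohomology.

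For condition (c), the Atiyah--Patodi--Singer formula as in \autoref{p3.9}, combined with $\chi(M) = \sigma(M) = 0$ (since $M$ is a homology $D^2 \times T^2$), reduces the problem to showing $\eta_{B_z}(T^3) = 0$ at $\Rea z = 0$; the extension to a neighborhood of $\Rea z = 0$ then follows from the constancy of the index on the connected Fredholm locus, which is open by (b). To prove the $\eta$-vanishing I would use the antipodal involution $\phi: T^3 \to T^3$, $x \mapsto -x$. It is orientation-reversing, and since, after gauge, $B_z$ is a constant-coefficient $1$-form on $T^3 = \R^3/\Z^3$, we have $\phi^* B_z = -B_z$. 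The identity $\phi^* \star = -\star\,\phi^*$ then yields $L'_{B_z}\, \phi^* = -\phi^*\, L'_{-B_z}$, so $\mathrm{spec}\,L'_{B_z} = -\mathrm{spec}\,L'_{-B_z}$. Meanwhile, $B_z$ is purely imaginary when $\Rea z = 0$, so complex conjugation intertwines $L'_{B_z}$ with $L'_{-B_z}$ and gives $\mathrm{spec}\,L'_{B_z} = \mathrm{spec}\,L'_{-B_z}$. Combining, the spectrum of $L'_{B_z}$ is symmetric about zero, forcing $\eta_{B_z}(T^3) = 0$.

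The main delicacy is step (c): both symmetries, the antipodal orientation-reversal and complex-conjugation, are needed, and their applicability relies on $B_z$ being purely imaginary, which holds precisely on the line $\Rea z = 0$. Upgrading to a two-dimensional neighborhood of that line is not spectral but index-theoretic, using the deformation invariance of the Fredholm index on the component where (b) already guarantees Fredholmness.
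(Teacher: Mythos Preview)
Your proof is correct and follows essentially the same route as the paper: condition (a) is immediate from flatness, (b) reduces to the non-triviality of the $\mu$-holonomy, and (c) uses constancy of the Fredholm index together with \autoref{p3.9} and the homology of $M$. The only difference is that you spell out the spectral-symmetry argument for $\eta_{B_z}(T^3)=0$, which the paper leaves implicit in its appeal to \autoref{p3.9} and the fact that $M_o$ is a homology $D^2\times T^2$.
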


\begin{proof}
Since $A^{\dagger}_t$ is flat, condition (a) in \autoref{d4.8} is automatic. Condition (b) follows from the fact that $\ker L_{B^{\dagger}_t, z}=H^0_{B^{\dagger}_{t, z}}(T^3) = H^1_{B^{\dagger}_{t, z}}(T^3)=0$ since $B^{\dagger}_{t, z}$ is not the product connection. To verify the third condition (c), Note that for $(t, z) \in [0, 1] \times S^1$, the operator 
\[
Q_{A^{\dagger}_{t, z}}(M_o): L^2_1 \longrightarrow L^2
\]
is always Fredholm, thus its index is unaltered with respect to varying $(t, z)$. In particular, by letting $t=0$ and $z=0$, the vanishing of the index follows from \autoref{p3.9} and the fact that $M_o$ is a homology $D^2 \times T^2$. 
\end{proof}

With \autoref{l6.2}, the periodic spectral flow of this path $(A^{\dagger}_t)$ is well-defined with respect to a generic small perturbations $\pi \in \mathcal{P}_{\mu}$. 

\begin{prop}\label{p6.3}
Let $(A^{\dagger}_t)$ be the path of flat connections on $M_o$ as above. Then 
\[
\# \widetilde{\Bf}(M_o, \sigma)  \cap \partial^{-1}_+(\widetilde{P}_{[\epsilon, \alpha]}) = 2 \widetilde{\Sf}(Q_{A^{\dagger}_t}(M_o, \pi))
\]
with respect to generic small perturbations $\sigma \in \mathscr{P}_{\mu}$ and $\pi \in \mathcal{P}_{\mu}$.
\end{prop}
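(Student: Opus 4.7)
The plan is to view both sides of the identity as counts of crossings of the same $U(1)$-cohomology jumping locus on $M_o$, with a $2{:}1$ correspondence accounting for the factor of two. First I would recast bifurcation points: by \eqref{e4.14}, a point $[A]\in\widetilde{\Bf}(M_o,\sigma)$ is precisely a reducible with $H^1(E_{A^{\dagger},\mu}(M_o))=\mathbb{C}$, where $A^{\dagger}=A_{\mathfrak{l}}^{\otimes 2}$. Since $H_1(M_o;\mathbb{Z})=\mathbb{Z}\langle\mu\rangle\oplus\mathbb{Z}\langle\gamma\rangle$ (because $\lambda$ bounds in $M$), the reducible locus $\widetilde{\M}^{\Red}_{\sigma}(M_o)$, which is a $2$-torus by \autoref{t5.5}(a), is canonically parametrized by the asymptotic coordinates $(x_A,0,z_A)$ of $A_{\mathfrak{l}}$ in $\widetilde{\chi}(T^3)$, and the squaring map $A\mapsto A^{\dagger}$ becomes the $4{:}1$ doubling map $(x_A,z_A)\mapsto(2x_A,2z_A)\bmod 1$.

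Next I would match the two counting domains. The family $\{E_{A^{\dagger}_t,z}(M_o)\}_{(t,z)\in[0,1]\times i[0,2\pi)}$ corresponds to connections on $M_o$ whose asymptotic coordinates in $\widetilde{\chi}(T^3)$ are $\bigl(2\alpha(\epsilon+t(1-\epsilon)),\,0,\,z/(2\pi i)\bigr)$, and by \autoref{r4.2} the spectral points of the path are precisely the parameter values at which this $U(1)$-complex fails to be acyclic. For a fixed spectral point, the four preimages under the doubling map have $x_A\in\{\alpha(\epsilon+t(1-\epsilon)),\,\alpha(\epsilon+t(1-\epsilon))+\tfrac12\}$ and $z_A\in\{z/(4\pi i),\,z/(4\pi i)+\tfrac12\}$. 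Since $\alpha<1/2$ forces the second $x_A$ alternative to exceed $1/2$ and thus leave $[\epsilon,\alpha]$, while both $z_A$ alternatives survive in $[0,1)$, exactly two of the four preimages lie in $\tilde{\partial}_+^{-1}(\widetilde{P}_{[\epsilon,\alpha]})$. A further shrinking of $\epsilon$, analogous to \eqref{e6.4}, ensures that no bifurcation point or spectral point of the path lies in the small edge regions where the $x$-ranges $[2\alpha\epsilon,2\alpha]$ and $[2\epsilon,2\alpha]$ disagree, so the correspondence is bijective on the counted subsets.

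The last step is sign matching. I would first verify that the path is $0$-regular at its endpoints, which follows from the admissibility of $\varphi_{2\alpha}$ and, for $\epsilon$ generically small, of $\varphi_{2\alpha\epsilon}$, by adapting the argument of \autoref{l6.2}. Then by \autoref{p4.4} each spectral point $(t_*,z_*)$ lies on a smooth spectral arc whose contribution to $\widetilde{\Sf}$ is $\sign(\dot r(t_*))$. Near a matched bifurcation point $[A]$, \autoref{p2.15}(4) combined with the admissibility hypothesis yields $H^1(E_{A^{\dagger},\mu})\cong H^2(E_{A^{\dagger},\mu})\cong\mathbb{C}$, and the leading term $f_0(x_1,x_2)\cdot w$ of the Kuranishi obstruction in \eqref{e4.15} is precisely the composition of the $(t,z)$-derivative of the family $Q_{A^{\dagger}_t,z}$ with the $L^2$-projection onto $H^2_A$. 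Hence $\sign\det f_0=\sign\dot r(t_*)$, so the boundary-orientation sign assigned to the irreducible arc emanating from $[A]$ matches the spectral-flow contribution at the corresponding crossing, and combining this with the $2{:}1$ correspondence gives the equality.

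The main obstacle is this sign comparison: rigorously identifying $f_0$ with the $(t,z)$-linearization of $Q_{A^{\dagger}_t,z}$ requires carefully tracking the preferred trivialization of $\det E^{SU(2)}_{A,\mu}$ (built from the canonical complex orientation of $E_{A^{\dagger},\mu}$ and the homology-orientation trivialization of $E^{U(1)}_{\mu}$) against the convention of \autoref{r3.7}(b) used to orient the spectral curve, with particular attention to the $U(1)$-weight-$2$ action on $H^*_A$ that enters the local Kuranishi model and could a priori introduce a sign.
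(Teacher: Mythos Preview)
Your overall strategy---realizing both counts as detections of the same $U(1)$-jumping locus and using the squaring map $[A]\mapsto[A^{\dagger}]$ to produce the factor of two---is essentially the paper's approach. The $2{:}1$ counting you extract from the four preimages of the doubling map agrees with what the paper does: the paper phrases it as the assignment $[A]\mapsto[A^{\dagger}]$ being two-to-one on $\tilde{\partial}_+^{-1}(\widetilde{P}_{[\epsilon,\alpha]})$ because twisting $A_{\mathfrak{l}}$ by the order-two flat connection along $\gamma$ preserves $A^{\dagger}$, while twisting along $\mu$ pushes $x_A$ out of $[\epsilon,\alpha]$.

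There is, however, a genuine gap that you do not identify as the main obstacle. The bifurcation locus on the left-hand side is detected by the \emph{weighted} operator $Q^{\mu}_{A^{\dagger},\sigma}$ (equivalently, by $H^1(E_{A^{\dagger},\mu})$ with the differential $d^+_{A,\sigma}$ incorporating $D\sigma|_A$), whereas the spectral-flow points on the right are detected by the \emph{unweighted} operator $Q_{A^{\dagger}_t,z}+\pi$. These are genuinely different operator families: different Sobolev weights, different perturbation types ($\sigma$ is a nonlinear holonomy perturbation, $\pi$ is linear), and the reducibles in $\widetilde{\M}^{\Red}_{\sigma}(M_o)$ are themselves perturbed away from flat connections. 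Your proposal treats both as ``the same $U(1)$-cohomology jumping locus'' without justifying why the two signed counts agree. The paper closes this gap with a cobordism argument: it builds a one-parameter family over $s\in[0,1]$ that simultaneously interpolates the weight (from $0$ to $\mu$ via $A^{\dagger}(s)=A^{\dagger}-\tfrac{s\mu}{2}\,d\beta$), the holonomy perturbation (from $0$ to $\sigma$ via $\sigma_s$), and the linear perturbation (from $\pi$ to $0$ via $\pi_s$), and shows that the resulting parametrized jumping locus $\mathcal{S}^{\dagger}_I$ is a compact oriented $1$-manifold, so its boundary counts match.

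Regarding the sign comparison you flag as the main obstacle: the paper does this by an explicit computation rather than an abstract identification. It takes a local spectral curve $A^{\dagger}_x=A^{\dagger}+x\,idg-z(x)\,df$, differentiates $d^+_{A_x}\phi_x=0$ to express $\Pi_A(idg\wedge\phi_0)^+$ in terms of $\dot r(0),\dot s(0)$ and the basis $\{\Pi_A(df\wedge\phi_0)^+,\Pi_A(df\wedge i\phi_0)^+\}$ of $H^2_A$, verifies this is indeed a basis via a transversality argument, and then reads off $f_0$ as an upper-triangular $2\times 2$ matrix with determinant $\dot r(0)$ up to positive scale. Your description of $f_0$ as ``the $(t,z)$-derivative of $Q_{A^{\dagger}_t,z}$'' is morally right but imprecise: $f_0$ arises from the quadratic term $\Pi_A\bigl((x_1\,idg+x_2\,idf)\wedge\phi\bigr)^+$ of the Kuranishi map, and its relation to $\dot r$ comes only after invoking the spectral-curve equation.
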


\begin{proof}
Let $\sigma \in \mathcal{P}_{\mu}$ be a generic small holonomy perturbation satisfying \autoref{t5.5}. As discussed before, $[A] \in \widetilde{\M}^{\Red}_{\sigma}(M_o)$ is a bifurcation point if and only if $H^1(E_{A^{\dagger}, \mu}(M_o)) = \C$. Just like the case for end-periodic manifolds, rather than the ASD DeRham complex we consider the associated operator
\begin{equation}
Q^{\mu}_{A^{\dagger}, \sigma}(M_o) := \left(-d^*_{A^{\dagger}} + \mu/2 \cdot \iota_{d\beta}, d^+_{A^{\dagger}, \sigma} - \mu/2 \cdot (d\beta \wedge -)^+ \right): L^2_k \to L^2_{k-1},
\end{equation}
where $\beta: M_o \to \R_{\geq 0}$ is a non-negative function satisfying $\beta|_{\{t\} \times T^3} = t$. The function $\beta$ is used to define the weighted Sobolev spaces $L^2_{k, \mu}$ of forms over $M_o$. If we write $A^{\dagger}_{\mu} = A^{\dagger} - \mu/2 \cdot d\beta$, then $Q^{\mu}_{A^{\dagger}} = Q_{A^{\dagger}_{\mu}, \sigma}$. In this way, bifurcation points are characterized by $\ker Q^{\mu}_{A^{\dagger}, \sigma}(M_o) = \C$. As shown in \cite[Proposition 5.6]{M2}, one can find a path $\sigma_s$, $s \in [0, 1]$, of holonomy perturbations in $\mathscr{P}_{\mu}$ from $0$ to $\sigma$ such that the parametrized moduli space 
\[
\mathcal{Z}_I:= \bigcup_{s \in [0, 1]} \{s\} \times \left(\widetilde{\M}^{\Red}_{\sigma_s}(M_o) \cap \tilde{\partial}^{-1}_+(\widetilde{P}_{[\epsilon, \alpha]}) \right)
\]
is a product cobordism. Let's denote by $\mathcal{Z}_s := \{s\} \times \left(\widetilde{\M}^{\Red}_{\sigma_s}(M_o) \cap \tilde{\partial}^{-1}_+(\widetilde{P}_{[\epsilon, \alpha]}) \right)$ the $s$-slice.

For each $[A] \in \mathcal{Z}_I$ of the form $A=A_{\mathfrak{l}} \oplus A^*_{\mathfrak{l}}$, we get a connection $[A^{\dagger}]$ on the trivial bundle $\underline{\C}$ with $A^{\dagger} = A^{\otimes 2}_{\mathfrak{l}}$. We note that this assignment $[A] \to [A^{\dagger}]$ is two to one since we can twist $A_{\mathfrak{l}}$ by the flat connection $\xi_{\mathfrak{l}}$ on $M_o$ whose holonomies around $\mu$ and $\gamma$ are $1$ and $-1$ respectively, and then get $(A_{\mathfrak{l}} \otimes \xi_{\mathfrak{l}} )^{\otimes 2} = A_{\mathfrak{l}}^{\otimes 2}$. Due to the assumption that the holonomy of $A_{\mathfrak{l}}$ around the meridian $\mu$ is asymptotically $e^{-2\pi i \alpha_{\mathfrak{l}}}$ with $\alpha_{\mathfrak{l}} \in [0, \alpha] \subset [0, 1/2)$, twisting around the meridian with $-1$ is not allowed. Such an assignment gives rise to a family of connections $\mathcal{Z}^{\dagger}_I$ on $\underline{\C}$. For the brevity of notations, given $[A] \in \mathcal{Z}_s$ we write 
\[
A^{\dagger}(s) = A^{\dagger} - s/2 \cdot \mu d \beta
\]
and 
\[
Q_{A^{\dagger}}(s):= Q_{A^{\dagger}(s), \sigma_s} + \pi_s : L^2_k \longrightarrow L^2_{k-1}
\]
for some path of perturbations $\pi_s \in \mathcal{P}_{\mu}$ to be determined soon. Since $\ind Q_{A^{\dagger}(s), \sigma_s} = 0$ for all $s \in [0, 1]$, we can apply the transversality argument in \autoref{l3.2} to get a generic path of small perturbations $\pi_s \in \mathcal{P}_{\mu}$, with $\pi_1 = 0$, so that the set
\[
\mathcal{S}^{\dagger}_I:=\left\{ [A^{\dagger}] \in \mathcal{Z}^{\dagger}_s: \ker Q_{A^{\dagger}}(s) \neq 0 \right\} \subset \mathcal{Z}^{\dagger}_I
\]
is a compact submanifold of codimension-$2$. Moreover for each $[A^{\dagger}] \in \mathcal{S}^{\dagger}_I$, $\ker Q_{A^{\dagger}}(s) = \C$. Everything above can be made oriented once we choose a trivialization of the index bundle of the $SU(2)$ ASD DeRham operator over the configuration space of $SU(2)$-connections over $M_o$ exponentially asymptotic to flat connections as in the usual Yang-Mills theory \cite{D87}. The exponential convergence of instantons on $\widetilde{\M}^{\Red}_{\sigma_s}(M_o)$ was derived in \cite{M2}. Thus we get a well-defined count on the compact $0$-dimensional manifold $\partial \mathcal{S}^{\dagger}_I = \mathcal{S}^{\dagger}_1 \cup -\mathcal{S}^{\dagger}_0$. 

Converting from $\mathcal{S}^{\dagger}_I$ to $\mathcal{S}_I$ as $SU(2)$-connections, we see that $\mathcal{S}_1$ is precisely the set of bifurcation points $\widetilde{\Bf}(M_o, \sigma)  \cap \tilde{\partial}^{-1}_+(\widetilde{P}_{[\epsilon, \alpha]}) $. Since the assignment $[A] \to [A^{\dagger}]$ is two-to-one, we get 
\[
\# \widetilde{\Bf}(M_o, \sigma)  \cap \tilde{\partial}^{-1}_+(\widetilde{P}_{[\epsilon, \alpha]}) = \#\mathcal{S}_1 =  2\# \mathcal{S}^{\dagger}_1 = 2\# \mathcal{S}^{\dagger}_0,
\]
where $\pi = \pi_0$. So all that left is to identify $\# \mathcal{S}^{\dagger}_0$ with the periodic spectral flow $\widetilde{\Sf}(Q_{A^{\dagger}_t, \delta}(M_o, \pi))$. Note that the spectral flow counts points with non-trivial kernel on the cylinder $(t, e^z) \in [0, 1] \times S^1$ for the operators $Q_{A^{\dagger}_t, z, \pi}: L^2_k \to L^2_{k-1}$.

Now we show that the sign of each point in $\mathcal{S}^{\dagger}_0$ agrees with that in the definition of spectral flow. Since along each component of $\mathcal{S}^{\dagger}_I$ the dimension of $\ker Q$ is constant, the orientation transport is canonical. Allowing only small perturbations, the orientation transport is canonical as well along two perturbations. So we may assume $[A] \in \M^{\Red}(M_o)$ is a bifurcation point which gives a regular RAF connection $[A^{\dagger}] \in \mathcal{S}^{\dagger}_0$ in the first place, namely $Q_{A^{\dagger}}: L^2_k \to L^2_{k-1}$ is Fredholm with $\ker Q_{A^{\dagger}} = \C$ without appealing to the weight $\mu$ and perturbations $\sigma, \pi$. Under the notation of \autoref{ss4.2}, we have 
\[
H^0_A = i\R, \quad H^1_A=i\R \oplus i\R \oplus \C, \quad H^2_A = \C,
\]
where $i\R \oplus i\R = H^1(M; i\R)$ in $H^1_A$. Let $\{idg, idf\}$ be an ordered basis for $H^1(M;i\R)$ where $dg$ represents the dual of $\mu$, and $df$ represents the dual of $\gamma$. Let $A^{\dagger}_x = A^{\dagger} + x \cdot idg - z(x) \cdot df$, $x \in (-\epsilon, \epsilon)$, be a local spectral curve near $A^{\dagger}$. We may write $z(x) = r(x) + is(x)$ with $r(0) = 0$, $s(0)=0$ and $\dot{r}(0) \neq 0$. The contribution of $A^{\dagger}_x$ in the periodic spectral flow is given by the sign of $\dot{r}(0)$. We need to show this is the sign at $[A]$ when counting bifurcation points. 

Since $ \ker Q_{A^{\dagger}_x} =\C$ for all $x \in (-\epsilon, \epsilon)$, we can pick a path of non-zero $1$-forms $\phi^{\dagger}_x \in \ker Q_{A^{\dagger}_x}$. Let $A_x$ be the path of connections on the trivial $\C^2$-bundle given by $A^{\dagger}_x$ and $\phi_x = (0, \phi^{\dagger}_x)$ the $\mathfrak{su}(2)$-valued $1$-form associated to $\phi^{\dagger}_x$. We let $\{\phi_0, i\phi_0\}$ be an ordered basis for the $\C$-component of $H^1_A$. Denote by $\Pi_A: L^2_{k-1}(M_o, \Lambda^+_{\mathfrak{su}(2)}) \to H^2_A$ the $L^2$-projection. Then the second order term of $\mathfrak{o}_A$ in (\ref{e4.15}), with respect to the ordered basis $\{idg, idf, \phi_0, i\phi_0\}$ of $H^1_A$, takes the form 
\begin{equation}\label{e5.42}
\mathcal{D}^2\mathfrak{o}_A: (x_1, x_2, x_3, x_4) \longmapsto 4 \cdot \Pi_A \left( (x_1 idg + x_2 idf) \wedge (x_3 \phi_0 + x_4 i\phi_0) \right)^+. 
\end{equation}
Note that 
\begin{equation}\label{e5.41}
d^+_{A_x} \phi_x = d^+_A \phi_x + x \cdot (dg \wedge i\phi_x)^+ - z(x) \cdot (df \wedge \phi_x)^+=0
\end{equation}
Differentiating (\ref{e5.41}) at $x=0$ gives us 
\[
 d^+_A \dot{\phi}_0  = - (idg \wedge \phi_0)^+ + \dot{r}(0) (df \wedge \phi_0)^+ + \dot{s}(0) (df \wedge i\phi_0)^+,
\]
which implies that 
\begin{equation}\label{e5.61}
\Pi_A (idg \wedge \phi_0)^+ = \dot{r}(0) \Pi_A(df \wedge \phi_0)^+ + \dot{s}(0) \Pi_A  (df \wedge i\phi_0)^+. 
\end{equation}
We claim that $\{\Pi_A(df \wedge \phi_0)^+, \Pi_A(df \wedge i\phi_0)^+\}$ forms a basis of $H^2_A$. It suffices to show that $\Pi_A (idf \wedge \phi_0)^+ \neq 0$. Suppose this fails to hold.  Then (\ref{e5.61}) shows that $\Pi_A (idg \wedge \phi_0)^+ = 0$ as well. Let's write $A = d + a$ with $a = q_1\cdot  idg + q_2 \cdot idf$.Then for any $\phi \in \C = \langle \phi_0, i\phi_0 \rangle$, we have $\Pi_A (a \wedge \phi)^+ = 0$. Then we get a non-zero complex $1$-form $\psi$ so that $d^+_A \psi = (a \wedge \phi)^+$. Equivalently, we have $d^+_{A+ \phi}(a + \psi) = 0$. Since $a + \psi \neq 0 \in H^1_A$ for all $(a, \phi)$, so we conclude that $\ker d^+_{A+\phi} \cap H^1_A \neq 0$ for all $\phi$, which violates the transversality. 

Now with respect to ordered bases $\{idg, idf\}$ of $i\R \oplus i\R \subset H^1_A$ and $\{\Pi_A(df \wedge \phi_0)^+, \Pi_A(df \wedge i\phi_0)^+\}$ of $H^2_A$, (\ref{e5.42}) and (\ref{e5.61}) tell us the map $f_0$ considered in the end of \autoref{ss4.2} takes the form 
\[
f_0 = 
\begin{pmatrix}
\dot{r}(0) & 0 \\
\dot{s}(0) & 1
\end{pmatrix}
\]
up to a positive scale. Note that $\det f_0 = \dot{r}(0)$. Thus the sign of $[A]$ as a bifurcation point is given by the sign of $\dot{r}(0)$. This completes the proof. 
\end{proof}

Now we can complete the proof of \autoref{t1.6}. Since the first homology of $V = M \cup_{T^3} D^2 \times T^2$ is generated by $[\mu]$ and $[\gamma]$, each flat connection $A^{\dagger}_t$ on $M$ extends uniquely to one on $V$ up to $U(1)$-gauge transformations. We write $N = D^2 \times T^2$ and $N_o = (-\infty, 0] \times T^3 \cup D^2 \times T^2$. Let's denoted this new path of connections by $(A^{\dagger, V}_t)$, and its restriction on $D^2 \times T^2$ by $(A^{\dagger, N}_t)$. We note that 
\[
\ker Q^{\mu}_{A^{\dagger, N}_{t, z}}(N_o) = H^1(D^2 \times T^2, A^{\dagger, N}_{t, z}),
\]
where $A^{\dagger, N}_{t, z}$ is the flat connection on $D^2 \times T^2$ whose holonomies around $\mu$ and $\gamma$ are given respectively by $e^{-4\pi i \alpha(\epsilon+t(1-\epsilon))}$ and $2 z$, since under the $0$-surgery gluing map the fundamental group of $D^2 \times T^2$ is generated by $[\mu]$ and $[\gamma]$. For all $(t, e^z) \in [0, 1] \times S^1$, $A^{\dagger, N}_{t, z}$ is not the product connection, thus admits no non-zero twisted harmonic $1$-forms by the Künneth formula. Since $\ind Q_{A^{\dagger, N}_{t, z}}(N_o) = 0$, we conclude that $Q_{A^{\dagger, N}_{t, z}}(N_o) : L^2_1 \to L^2$ is invertible for all $(t, z) \in [0, 1] \times S^1$. Now we can apply the gluing property of the periodic spectral flow \autoref{p4.10} to conclude that 
\[
\widetilde{\Sf}(Q_{A^{\dagger}_t}(M_o, \pi)) = \widetilde{\Sf}(Q_{A^{\dagger, V}_t}(V, \pi)). 
\]
Then \autoref{p4.11} gives us 
\[
\tilde{\rho}_{\varphi_{2\epsilon}}(V)-\tilde{\rho}_{\varphi_{2\alpha}}(V) = 2\widetilde{\Sf}(Q_{A^{\dagger, V}_t}(V, \pi))
\]
Combining $(\ref{e6.3})$, $(\ref{e6.4})$ and \autoref{p6.3}, we conclude that 
\[
\lambda_{FO}(X, \mathcal{T}, \alpha) - 8 \lambda_{FO}(X) = \tilde{\rho}_{\varphi_{2\alpha}}(V) - \tilde{\rho}_{\varphi_{2\epsilon}}(V).
\]
Since $\tilde{\rho}_{\varphi_{2\epsilon}}(V)$ admits no jumps near $0$, \autoref{t1.6} is thus proved.

\subsection{\em Proof of \autoref{t1.7}}\label{ss6.2} \hfill
 
\vspace{3mm}

Now we start in earnest to prove \autoref{t1.7}. Let $V = X_0(\mathcal{T})$ be the $0$-surgered manifold and $Y_0$ a cross-section of $V$ so that the pairing $C'_{\alpha}$ defined in $(\ref{e1.5})$ is non-degenerate and has zero signature. Using the Hodge star, the pairing $C'_{\alpha}$ is equivalent to one defined on $H^0(Y;\C) \oplus H^1(Y;\C)$:
\begin{equation}
C_{\alpha}(x, y):= \langle x, (L_{\varphi_{2\alpha}} - L) y \rangle_{L^2}.
\end{equation}
Such a symmetric pairing shows up in the study of jumps of the eta invariant along a path of self-adjoint elliptic operators, see for instance \cite{KK94, FL96}. 

\begin{lem}\label{l6.4}
Suppose $C_{\alpha}$ is non-degenerate. Then for all but finitely many holonomy parameters $\beta \in (0, 1/2)$, the corresponding representation $\varphi_{2\beta}: \pi_1(V) \to U(1)$ is admissible.
\end{lem}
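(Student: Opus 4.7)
The plan is to verify both conditions of admissibility (\autoref{d3.6}) for $\varphi_{2\beta}$ separately, and to show each fails on a finite subset of $(0,1/2)$.

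For condition (a), I will invoke \autoref{l5.1} to identify it with non-degeneracy of $\chi^{\beta,\Red}(X,\mathcal{T})$, which by the remark following \autoref{d5.2} is equivalent to non-vanishing of the Alexander polynomial of $\mathcal{T}$ on the $S^1$-family of reducible representations whose $\mu$-holonomy is frozen at $e^{-4\pi i\beta}$. This is an algebraic condition on $e^{-4\pi i\beta}$, so the bad set of $\beta$ is discrete in $\R$, hence finite in $(0,1/2)$.

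For condition (b), I will take $Y_0$ to be the cross-section furnished by the hypothesis. Because $f$ was chosen constant along $\mu$ in $\nu(\mathcal{T})$, the meridian $\mu$ lies in $Y_0$, and $\varphi_{2\beta}(\mu)=e^{-4\pi i\beta}\neq 1$ for $\beta\in(0,1/2)$ gives $H^0(Y_0;\C_{\varphi_{2\beta}})=0$ for free. Via Hodge theory $\ker L_{\varphi_{2\beta}}\cong H^0(Y_0;\C_{\varphi_{2\beta}})\oplus H^1(Y_0;\C_{\varphi_{2\beta}})$, so condition (b) reduces to $\ker L_{\varphi_{2\beta}}=0$. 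To extract this from the non-degeneracy of $C_\alpha$, I will fix a harmonic 1-form $\eta$ on $Y_0$ Poincar\'e-dual to $\mu$ and set $A_{2\beta}=-4\pi i\beta\,\eta$, so that $L_{\varphi_{2\beta}}=L+\beta M$ for a fixed bounded zeroth-order self-adjoint operator $M$. This realizes $L_{\varphi_{2\beta}}$ as a real-analytic family of self-adjoint elliptic operators on a fixed Hilbert space, hence by Kato's analytic perturbation theory $\dim\ker L_{\varphi_{2\beta}}$ is constant on $\R$ except at an isolated exceptional set, and the eigenvalues bifurcating from $0$ near $\beta=0$ are, to leading order, the eigenvalues of $\beta\,\Pi M\Pi$ on $\ker L$, where $\Pi$ is the $L^2$-projection onto $\ker L$. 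Since $L_{\varphi_{2\alpha}}-L=\alpha M$, the pairing $C_\alpha$ restricted to $\ker L$ equals $\alpha\,\langle\cdot,M\cdot\rangle$, and its non-degeneracy is exactly the invertibility of $\Pi M\Pi$. Every bifurcating eigenvalue is therefore non-zero for small $\beta>0$, forcing $\ker L_{\varphi_{2\beta}}=0$ on a punctured neighborhood of $0$; the analytic dichotomy then propagates this to all but finitely many $\beta\in(0,1/2)$.

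The main hurdle I anticipate is making the Kato perturbation step airtight in this elliptic setting: I must verify that $L_{\varphi_{2\beta}}$ genuinely defines a real-analytic family of self-adjoint Fredholm operators on a single Sobolev space once the connections are gauge-fixed linearly in $\beta$, and confirm that the pairing $C_\alpha$ (as defined in the text using $L_{\varphi_{2\alpha}}-L$ and Hodge-star identification) really coincides with $\alpha\,\Pi M\Pi$ after restriction to $\ker L$. Once these identifications are in place, combining the two genericity statements for (a) and (b) yields admissibility for all but finitely many $\beta\in(0,1/2)$.
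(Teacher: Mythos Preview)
Your handling of condition (b) is essentially the paper's argument: both realise $L_{\varphi_{2\beta}}$ as a real-analytic one-parameter family, invoke Kato's perturbation theory to get analytic eigenvalue branches, identify the first-order variation at $\beta=0$ with the form $C_\alpha$ (up to the scalar $\alpha$), and use its non-degeneracy to force every branch through $0$ to have nonzero slope, whence $\ker L_{\varphi_{2\beta}}=0$ for small $\beta>0$ and then, by analyticity, for all but a discrete set. Your remark that $H^0(Y_0;\C_{\varphi_{2\beta}})=0$ follows from $\varphi_{2\beta}(\mu)\neq 1$ is correct but not needed, since the Kato step already controls $\ker L_{\varphi_{2\beta}}=H^0\oplus H^1$ at once.

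For condition (a), your route differs from the paper's and has a gap. You reduce via \autoref{l5.1} and the remark after \autoref{d5.2} to non-vanishing of the Alexander polynomial of $\mathcal{T}$, then assert this is ``an algebraic condition on $e^{-4\pi i\beta}$''. But $H_1(X\setminus\mathcal{T})\cong\Z^2$, so the polynomial is two-variable, and non-degeneracy of $\chi^{\beta,\Red}(X,\mathcal{T})$ requires $\Delta(e^{-4\pi i\beta},s)\neq 0$ for \emph{every} $s\in S^1$ (the $\gamma$-holonomy ranges over a whole circle). The failure locus in $\beta$ is therefore the projection of $\{\Delta=0\}\cap(S^1\times S^1)$ to the first factor, which is not a polynomial condition in $e^{-4\pi i\beta}$ alone; a priori such a projection can be an arc rather than a finite set (components of $\{\Delta=0\}$ that meet the real $2$-torus along a curve are not automatically excluded). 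The paper instead works directly with the holomorphic family of twisted ASD DeRham complexes on $V$ parametrised by $H^1(V;i\R)\cong\C$ and invokes Taubes' dichotomy to get discreteness of the spectral set, from which finiteness of the bad $\beta$ follows. To make your Alexander-polynomial route rigorous you would need an additional argument ruling out one-dimensional intersections of $\{\Delta=0\}$ with the unitary torus whose first-coordinate projection is a proper arc---for instance by showing that any such component forces \emph{every} $\beta$ to be bad, contradicting the standing non-degeneracy of $\alpha$.
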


\begin{proof}
We first prove there are only finitely many $\beta$ violating condition (a) in \autoref{d3.6}. Let $g, f:V \to S^1$ be smooth circle-valued functions whose differentials are dual to $\mu$ and $\gamma$ respectively. Then $\{idg, idf\}$ forms an ordered basis of  $H^1(V;i\R)$ as before. We can identify $H^1(V;i\R)$ with $\C$ by imposing an complex structure $J$ satisfying $J \cdot idf = idg$. Then $A^{\dagger}_{(x, y)}:= d + x \cdot idg - y \cdot idf$ is a holomorphic family of connections on the trivial line bundle of $V$. Since $(V, 1_V)$ is non-degenerate, Lemma 4.5 in \cite{T87} implies that only at a discrete subset of $\C$ can the elliptic complex $E_{A^{\dagger}(x, y)}(V)$ admits non-trivial homology. Since $E_{\varphi_{2\beta}, z}=E_{(2\beta, iz)}$ when $\Rea z=0$, we see that it has non-trivial homology only at finitely many values $\beta \in (0, 1/2)$. 

Next we show that condition (b) in \autoref{d3.6} is satisfied by most holonomy parameters. We follow the argument in \cite[Theorem 5.1]{KK94}. As $\beta$ varies in $\R$, we get an analytic path of self-adjoint elliptic operators $L_{\varphi_{2\beta}}$. Then it follows from \cite[Chapter 7]{K66} that the orthonormal eigenvectors and eigenvalues of this path are analytic as well. We note that  $\ker L_{\varphi_{2\beta}} = H^0(Y; \C_{\varphi_{2\beta}}) \oplus H^1(Y; \C_{\varphi_{2\beta}})$. So it suffices to show that $\ker L_{\varphi_{2\beta}} = 0$ for some $\beta$, in which case the set of $\beta$'s with non-trivial $\ker L_{\varphi_{2\beta}}$ has to be discrete. 

Now we go back to $\alpha \in (0, 1/2)$ with $C_{\alpha}$ non-degenerate. Note that the differential of the path $(L_{\varphi_{2t\alpha}})_{t \in (-\epsilon, \epsilon)}$ at $t=0$ is given by $L_{\varphi_{2\alpha}} - L$. Let $\lambda(t)$ be a path of eigenvalues of $L_{\varphi_{2t\alpha}}$ satisfying $\lambda(0) = 0$, and $x(t)$ a path of $\lambda(t)$-eigenvectors. Then we know $x(0) \in \ker L$ and 
\[
L \dot{x}(0) + (L_{\varphi_{2\alpha}} - L)x(0) = \evalat*{{d \over dt}}{t=0} L_{\varphi_{2t\alpha}} x(t) = \evalat*{{d \over dt}}{t=0} \lambda(t) x(t) = \dot{\lambda}(0) x(0). 
\]
In particular we have 
\[
\langle (\dot{\lambda}(0) x(0), y \rangle  = \langle (L_{\varphi_{2\alpha}} - L)x(0), y \rangle. 
\]
As we vary the path $\lambda(t)$, the eigenvectors $x(0)$'s at $t=0$ form a basis of $\ker L$. The non-degeneracy of $C_{\alpha}$ now implies that $\dot{\lambda}(0) \neq 0$ for all paths. In particular, we know the kernel of $L_{\varphi_{2t\alpha}}$ is nonvanishing when $t>0$ is small enough. 
\end{proof}

Due to \autoref{l6.4}, we may assume the holonomy parameter $\alpha \in (0, 1/2)$ is chosen so that $\varphi_{2\alpha}: \pi_1(V) \to U(1)$ is admissible. It follows from \autoref{p3.12} that 
\begin{equation}
\tilde{\rho}_{\varphi_{2\alpha}}(V) - \tilde{\rho}_{\varphi_{2\epsilon}}(V) = \rho_{\varphi_{2\alpha}}(Y) - \rho_{\varphi_{2\epsilon}}(Y) =  \eta_{\varphi_{2\epsilon}}(Y) - \eta(Y)
\end{equation}
when $\epsilon > 0$ is sufficiently small. In the proof of \autoref{l6.4}, we see that the eigenvalues of $C_{\alpha}$ are precisely the derivatives $\dot{\lambda}(0)$ of the eigenvalues of the paht $L_{\varphi_{2t\alpha}}$ at $t=0$. The non-degeneracy of $C_{\alpha}$ implies that the spectral flow of $L_{\varphi_{2t\alpha}}$ near $t=0$ is given by the signature of $C_{\alpha}$, which we assume to be zero. Thus there are no jumps of the eta invariant near $0$. This shows that $\eta_{\varphi_{2\epsilon}}(Y) = \eta(Y)$ when $\epsilon$ is small enough, hence completes the proof of \autoref{t1.7}. 

As mentioned in the introduction, the assumption on the pairing $C_{\alpha}$ is satisfied by a large class of pairs $(X, \mathcal{T})$. So we provide some examples here. We choose $Y$ to be a cross-section of $X$ intersecting $\mathcal{T}$ into a knot $\mathcal{K}$, which can always get achieved by the argument of \cite[Lemma 2.3]{M1}. Then the cross-section $Y_0$ of $X_0(\mathcal{T})$ is obtained as the surgery of $Y$ along $\mathcal{K}$ so that the meridian $\mu$ of the knot $\mathcal{K}$ generates a $\Z$-summand in $H_1(Y_0;\Z)$. Let $dg \in H^1(Y_0; \C)$ be the class arising as the differential of a harmonic function $g: Y_0 \to \R$ which is dual to $[\mu]$. After possibly rescaling, the difference $L_{\varphi_{2\alpha}} - L$ takes the form 
\[
L_{\varphi_{2\alpha}} - L = 2\alpha
\begin{pmatrix}
0 & -\star dg \star \\
-dg & \star dg
\end{pmatrix}. 
\]
Regarding $1$ as the generator of $H^0(Y_0; \C)$, the restriction of $C_{\alpha}$ to $\Span\{1, dg\}$ is represented by the hyperbolic matrix 
\[
\begin{pmatrix}
0 & -2\alpha \\
-2\alpha & 0
\end{pmatrix},
\]
which is non-degenerate and of zero signature when $\alpha \neq 0$. One can complete the basis $\{1, dg\}$ to a basis of $H^0(Y_0;\C) \oplus H^1(Y_0; \C)$. If the real cohomology of $Y_0$ takes the form $H^*(S^1 \times \Sigma; \R)$, where $\Sigma$ is a closed surface, and $dg$ generates the summand corresponding to $S^1$, then $C_{\alpha}$ satisfies the assumption we have imposed. This has covered all the examples considered by Echeverria \cite{E19} and Ruberman \cite{R20}. In particular, when $b_1(Y_0) = 1$, the assumption always holds.

The assumption on $C_{\alpha}$ is nevertheless not sharp, although we are not aware of any examples violating it. A sharp assumption can be interpreted from the work of Farber-Levine \cite{FL96}, where they have given a complete description on the jumps of eta invariants. However, one cannot expect the representation $\varphi_{2\alpha}$ to be admissible for a generic $\alpha$ in general. In this case, the index computation \autoref{p3.11} no longer works, and the periodic eta invariant cannot be identified with the usual eta invariant as a result. However, the author expects this difference compensates the jumps of the eta invariant near $0$, which would lead to the proof of \autoref{t1.7} in the general case. This requires the study of the jumps for periodic eta invariants that are not generic as the case considered in \autoref{s4}, and a complete description (which only depends on homology in the case of ASD DeRham complexes) should generalize the result of Farber-Levine \cite{FL96} in some sense. Such a result would also generalize the index computation \autoref{p3.11}, and lead to the comparison between $\lim_{\epsilon \to 0^+} \tilde{\rho}_{\varphi_{2\epsilon}}(V)$ and $\tilde{\rho}(V)$. 

\subsection{\em Computations on Mapping Tori}\label{ss6.3} \hfill
 
\vspace{3mm}

In this subsection, we supply an argument for (\ref{e1.7}). For ease of notation, we write $\Sigma$ for the $n$-fold branched cover $\Sigma_n(Y, \mathcal{K})$, $\tau$ for the covering transformation $\tau_n$, and $X$ for the mapping torus $X_n(Y, \mathcal{K})$. We write $\mathcal{J} \subset \Sigma$ for the preimage of $\mathcal{K}$, which we used to write $\mathcal{K}_n$ in the introduction, and $\mathcal{T}$ for the mapping torus of $\mathcal{J}$ in $X$. Let's write $i: \Sigma \to X$ for the inclusion map on a slice of the mapping torus.

The argument of \cite[Proposition 3.1]{RS04} tells us that the irreducible $\alpha$-representations of $(X, \mathcal{T})$ is in two-to-one correspondence with the $\tau$-invariant irreducible $\alpha$-representations of $(\Sigma, \mathcal{J})$, namely $i^*: \chi^{\alpha, *}(X, \mathcal{T}) \to \chi^{\alpha, *}_{\tau}(\Sigma, \mathcal{J})$ is a double cover. As pointed out in \cite[Section 8]{E19}, the Zariski tangent spaces and orientations of $\M^{\alpha, *}_{\sigma}(X, \mathcal{T})$ can be identified with those of $\M_{\sigma, \tau}^{\alpha, *}(\Sigma, \mathcal{J})$ with respect to generic perturbations, where $\M^{\alpha, *}_{\sigma, \tau}(\Sigma, \mathcal{J})$ means the space of irreducible perturbed $\tau$-invariant flat $\alpha$-connections on $(\Sigma, \mathcal{J})$. Thus it suffices to count $\M^{\alpha, *}_{\sigma, \tau}(\Sigma, \mathcal{J})$. Since the holonomy perturbations preserve the $\tau$-invariance, and away from the support of a perturbation each connection in $\M^{\alpha, *}_{\sigma, \tau}(\Sigma, \mathcal{J})$ is flat, we conclude that each connection in $\M^{\alpha, *}_{\sigma, \tau}(\Sigma, \mathcal{J})$ is pulled-back from a perturbed flat connection in $(Y, \mathcal{K})$ with respect to certain holonomy parameter $\alpha'$. 

In \cite[Lemma 48]{E19}, Echeverria proved that the pull-back map $p^*: \M^{\alpha', *}_{\sigma}(Y, \mathcal{K}) \to \M^{\alpha, *}_{\sigma, \tau}(\Sigma, \mathcal{J})$ is either injective or empty given a holonomy parameter $\alpha' \in (0, 1/2)$. So the key point is to determine the holonomy parameters $\alpha'$ with respect to which the pull-back map is non-empty. Let $[A] = p^*[A'] \in \M^{\alpha, *}_{\sigma, \tau}(\Sigma, \mathcal{J})$. The the holonomy of $A$ around the meridian $\mu_{\mathcal{J}}$ is $e^{-2\pi i \alpha}$. Thus one can choose representative $A'$ of $[A']$ such that the holonomy of $A'$ around the meridian $\mu_{\mathcal{K}}$ satisfies
\begin{equation}\label{e6.11}
\left(\Hol_{\mu_{\mathcal{K}}} A' \right)^n = \Hol_{\mu_{\mathcal{J}}} A = e^{-2\pi i \alpha} \Longrightarrow \Hol_{\mu_{\mathcal{K}}} A' = e^{-2\pi i (\alpha+ j)/n},
\end{equation}
for some $j = 0,..., n-1$. Conversely any perturbed flat connection $A'$ on $Y \backslash K$ satisfies (\ref{e6.11}) pulls back to one on $\Sigma \backslash \mathcal{J}$. Due to the normalization that the holonomy parameter lies in $(0, 1/2)$, we conclude that there are $n$ such choices parametrized by $j=0, ..., n-1$:
\begin{equation}\label{e6.12}
\alpha'_j = 
\begin{cases}
(\alpha+j)/n & \text{ if $(\alpha+j)/n \in (0, 1/2)$} \\
1- (\alpha+j)/n & \text{ if $(\alpha+j)/n \in (1/2, 1)$}
\end{cases}
\end{equation}
Since $\alpha \in (0, 1/2)$, it's direct to check that all values of $\alpha'_j$ in (\ref{e6.12}) are distinct. Invoking the result of Herald \cite[Theorem 0.1]{H97}, when $e^{-4\pi i \alpha'_j}$ is not a root of the Alexander polynomial $\Delta_{(Y, \mathcal{K})}$ for each $j=0 , ..., n-1$, we get 
\[
\# \M^{\alpha'_j, *}_{\sigma}(Y, \mathcal{K}) = 4\lambda(Y) + \frac{1}{2} \sigma_{2\alpha'_j}(Y, \mathcal{K}) = 4\lambda(Y) + \frac{1}{2} \sigma_{2(\alpha+j)/n}(Y, \mathcal{K}),
\] 
where the second equality has used the symmetry of the Levine-Tristram invariant \cite[Proposition 2.3]{L92} in the case when $(\alpha+j)/n \in (1/2, 1)$. Thus we arrive at the claimed computation
\begin{equation}
\lambda_{FO}(X, \mathcal{T}, \alpha) = 2\sum_{j=0}^{n-1} \# \M^{\alpha'_j, *}_{\sigma}(Y, \mathcal{K}) = 8n\lambda(Y) + \sum_{j=0}^{n-1} \sigma_{2(\alpha+j)/n}(Y, \mathcal{K}). 
\end{equation}

\bibliographystyle{alpha}
\bibliography{RefTS}
\end{document}